\newtheorem{assumption}{Assumption}
\colorlet{symbols}{blue}
\colorlet{testcolor}{green!60!black}
\def\symbol#1{\textcolor{symbols}{#1}}
\def\1{\mathbf{\symbol{1}}}
\tikzset{
	dot/.style={circle,fill=black,inner sep=0pt, minimum size=1mm},
	graydot/.style={circle,draw=gray,inner sep=0pt, minimum size=1mm},
	loopnode/.style={circle,draw=black,inner sep=0pt, minimum size=1.5mm},
	treenode/.style={circle,fill=black,inner sep=0pt, minimum size=1.5mm},
	charge/.style={circle,draw=black,inner sep=0pt, minimum size=3mm},
	loopline/.style={->,semithick,shorten >=1pt,shorten <=1pt},
	treeline/.style={semithick, densely dashed, shorten >=1pt,shorten <=1pt},
	homoge/.style={font=\scriptsize,draw=black,fill=white,inner sep=2pt},
	homo/.style={font=\scriptsize},
	KK/.style={thick,shorten >=1pt,shorten <=1pt, densely dashed, >=latex},
kerAlg/.style={thick},
kerAlg2/.style={semithick,dashed},
kerP/.style={very thick},
arrow/.style={->,thick,shorten >=3pt,shorten <=3pt},
renorm/.style={ultra thick},
	kernel/.style={semithick,shorten >=1pt,shorten <=1pt},
	}
\def\E{\mathbf{E}}
\def\P{\mathbf{P}}
\def\T{\mathbf{T}}
\def\${|\!|\!|}
\def\power{\wp}
\def\J{\mathcal{J}}
\def\s{\mathfrak{s}}
\def\diam{\mathop{\mathrm{diam}}}
\let\emptyset\varnothing
\def\Wick#1{{:\!#1\!:}}
\definecolor{darkred}{rgb}{0.9,0.1,0.1}
\definecolor{darkgreen}{rgb}{0.1,0.6,0.1}
\begin{document}

\title{The dynamical sine-Gordon model}
\author{Martin Hairer$^1$, Hao Shen$^2$}
\institute{University of Warwick, \email{M.Hairer@Warwick.ac.uk}
\and University of Warwick, \email{pkushenhao@gmail.com}}
\maketitle

\begin{abstract}
We introduce the dynamical sine-Gordon equation in two space dimensions
with parameter $\beta$, which is the natural dynamic associated to the usual quantum
sine-Gordon model. It is shown that when $\beta^2 \in (0,\frac{16\pi}{3})$
the Wick renormalised equation is well-posed. In the regime $\beta^2 \in (0,4\pi)$,
the Da Prato--Debussche method \cite{MR1941997,MR2016604} applies,
while for $\beta^2 \in [4\pi,\frac{16\pi}{3})$, the solution theory is provided
via the theory of regularity structures \cite{Regularity}.
We also show that this model arises naturally from a class of $2+1$-dimensional 
equilibrium interface fluctuation models with periodic nonlinearities.

The main mathematical difficulty arises in the construction of the model
for the associated regularity structure where the role of the noise is played
by a non-Gaussian random distribution similar to the complex multiplicative Gaussian
chaos recently analysed in \cite{RhodesVargas}.
\end{abstract}

\tableofcontents

\section{Introduction}

The aim of this work is to provide a solution theory for the 
stochastic PDE
\begin{equ}[e:model]
\d_t u = \frac{1}{2}\Delta u + c \sin\bigl( \beta u + \theta\bigr) 
 + \xi\;,
\end{equ}
where $c,\beta,\theta$ are real valued constants, $\xi$ denotes space-time white noise, and the spatial dimension is fixed to be $2$.

The model \eref{e:model} is interesting for a number of reasons. First and foremost, 
it is of purely mathematical interest as a 
very nice testbed for renormalisation techniques. Indeed, even though we work
with \textit{fixed} spatial dimension $2$, this model exhibits many features
comparable to those of various models arising in constructive quantum field theory (QFT)
and / or statistical mechanics,
but with the dimension $d$ of those models being a function of the parameter $\beta$. 

More precisely, at least at a heuristic level, \eref{e:model} is comparable to $\Phi^3_d$ Euclidean QFT
with $d = 2 + {\beta^2 \over 2\pi}$, $\Phi^4_d$ Euclidean QFT
with $d = 2 + {\beta^2 \over 4\pi}$, or the KPZ equation in dimension $d = {\beta^2 \over 4\pi}$.
In particular, one encounters divergencies when trying to define solutions
to \eref{e:model} or any of the models just mentioned
as soon as $\beta$ is non-zero. (In the case of the KPZ equation recall that, via
the Cole-Hopf transform it is equivalent to the stochastic heat equation. In dimension $0$,
this reduces to the SDE $du = u\,dW$ which is ill-posed if $W$ is a Wiener process but is
well-posed as soon as it is replaced by something more regular, say fractional Brownian
motion with Hurst parameter greater than $1/2$.)

These divergencies can however be cured in all of these models by Wick renormalisation as long as
$\beta^2 < 4\pi$. At $\beta^2 = 4\pi$ (corresponding to $\Phi^3_4$, $\Phi^4_3$, and KPZ in
dimension $1$), Wick renormalisation breaks down and higher order renormalisation schemes
need to be introduced. One still expects the theory to be renormalisable
until $\beta^2 = 8\pi$, which corresponds to $\Phi^3_6$, $\Phi^4_4$ and KPZ in dimension $2$,
at which point renormalisability breaks down. This suggests that the value $\beta^2 = 8\pi$ 
is critical for \eref{e:model} and that there is no hope to give it any non-trivial meaning
beyond that, see for example \cite{MR1777310,Falco} and, in a slightly different
context, \cite{RhodesVargas}. This heuristic (including the fact that Wick renormalisability breaks down
at $\beta^2 = 4\pi$) is well-known and has been demonstrated in 
\cite{Jurg,MR649810,MR702570,MR849210,MR1777310} at the
level of the behaviour of the partition function for the corresponding
lattice model.

From a more physical perspective, an interesting feature of \eref{e:model} is that it is closely
related to models of a globally neutral gas of interacting charges.
With this interpretation, the gas forms a plasma at high temperature (low $\beta$) 
and the various threshold values for $\beta$ could be interpreted
as threshold of formation of macroscopic fractions of dipoles / quadrupoles / etc.
The critical value $\beta^2 = 8\pi$ can be interpreted as the critical inverse
temperature at which total collapse takes place, giving rise to a 
Kosterlitz-Thouless phase transition \cite{KT,MR634447}.
Finally, the model \eref{e:model} has also been proposed as a model for 
the dynamic of crystal-vapour interfaces
at the roughening transition \cite{DynRough,Neudecker} and as a model of crystal
surface fluctuations in equilibrium \cite{SurfModel2,SurfModel1}.

In order to give a non-trivial meaning to \eqref{e:model}, we 
first replace $\xi$ by a smoothened version $\xi_\eps$ which has a correlation
length of order $\eps > 0$ and then study the limit $\eps \to 0$.
Since we are working in two space dimensions, one expects the solution to become
singular (distribution-valued) as $\eps \to 0$. As a consequence, there will be some
``averaging effect'' so that one expects to have $\sin(\beta u_\eps) \to 0$
in some weak sense as $\eps \to 0$. It therefore seems intuitively clear that if we wish to obtain
a non-trivial limit, we should simultaneously send the constant $c$ to $+\infty$.
This is indeed the case, see Theorem~\ref{theo:main} below.

We also study a class of $2+1$-dimensional 
equilibrium interface fluctuation models with more general periodic nonlinearities:
\begin{equ}[e:model-1]
\d_t u_\eps = \frac{1}{2}\Delta u_\eps 
 + c_\eps \,F_\beta(u_\eps) + \xi_\eps\;,
\end{equ}
where  $F_\beta$ is a trigonometric polynomial of the form
\begin{equ}[e:f_beta]
 F_\beta(u) = \sum_{k=1}^Z \zeta_k \sin\bigl( k \beta u + \theta_k \bigr)
 \qquad (Z\in\N) \;,
\end{equ}
and $\zeta_k$, $\beta$ and $\theta_k$ 
are real-valued constants. 
One may expect that the ``averaging effect'' of $\sin(k\beta u_\eps + \theta_k)$ 
is stronger for larger values of $k$. This is indeed the case and, as a consequence of this,
we will see that provided the constant $c_\eps\to \infty$ at a proper rate,
the limiting process obtained as $\eps \to 0$ only depends on 
$F_\beta$ via the values $\beta$, $\theta_1$, and $\zeta_1$.
In this sense, the equation \eqref{e:model} also arises as the limit 
of the models \eqref{e:model-1}.

The main result of this article can be formulated as follows. (See Section~\ref{sec:notations} below
for a definition of the spaces appearing in the statement; $\T^2$ denotes the
two-dimensional torus, and $\CD'$ denotes the space of distributions.)
\begin{theorem}\label{theo:main}
Let $0<\beta^2< {16\pi \over 3}$ and $\eta\in(-\frac{1}{3},0)$. 
For $u^{(0)} \in \CC^\eta(\T^2)$ fixed, 
consider the solution $u_\eps$ to
\begin{equ}
  \d_t u_\eps = 
  	\frac{1}{2}\Delta u_\eps 
	+  C_\rho \eps^{-\beta^2 / 4\pi} F_\beta(  u_\eps ) 
	+ \xi_\eps \;,
 \quad u(0,\cdot) = u^{(0)}\;,
\end{equ}
where $F_\beta$ is defined in \eref{e:f_beta}, $\xi_{\eps}=\rho_{\eps} \ast \xi$ with 
$\rho_{\eps}(t,x)= \eps^{-4}\rho(\eps^{-2}t,\eps^{-1}x)$
for some smooth and compactly supported function $\rho$ integrating to $1$. Then there
exists a constant $C_\rho$ (depending only on $\beta$ and the mollifier $\rho$)
such that the sequence
$u_{\eps}$ converges in probability to a limiting distributional
process $u$ which is independent of $\rho$. 

More precisely, there exist random variables $\tau > 0$ and $u \in \CD'(\R_+\times \T^2)$
such that, for every $T' > T > 0$, the natural restriction of $u$ to $\CD'((0,T)\times \T^2)$
belongs to $\CX_{T,\eta} = \CC([0,T], \CC^\eta(\T^2))$ on the set $\{\tau \ge T'\}$.
Furthermore, on the same set, one has $u_\eps \to u$ in probability in the 
topology of $\CX_{T,\eta}$.

Finally, one has $\lim_{t \to \tau} \left\Vert u(t,\cdot)\right\Vert _{\mathcal{C}^{\eta}(\mathbf{T}^{2})}=\infty$
on the set $\{\tau < \infty\}$. 
The limiting process $u$ depends on the numerical values $\beta$, $\zeta_1$ and $\theta_1$, but it
depends neither on the choice of mollifier $\rho$, nor on the numerical values $\zeta_k$ and $\theta_k$ for $k\geq 2$.
\end{theorem}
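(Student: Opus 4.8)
The plan is to treat the two ranges of $\beta^2$ separately, as announced in the abstract, but to set both up through the same regularity-structure formalism so that the mollifier-independence and the dependence only on $(\beta,\zeta_1,\theta_1)$ come out uniformly. First I would fix the relevant regularity structure: the driving ``noise'' is not $\xi_\eps$ itself but the family of renormalised exponentials $\eps^{\beta^2/8\pi}\Wick{e^{\pm i\beta v_\eps}}$, where $v_\eps$ solves the linear equation $\d_t v_\eps = \frac12\Delta v_\eps + \xi_\eps$; writing $u_\eps = v_\eps + w_\eps$ one sees that $\sin(k\beta u_\eps+\theta_k)$ is a linear combination of $e^{\pm ik\beta v_\eps}\,e^{\pm ik\beta w_\eps}$, and the factor $C_\rho\eps^{-\beta^2/4\pi}$ is exactly what is needed so that the $k=1$ term has a nondegenerate limit while every $k\ge 2$ term carries a surplus power $\eps^{(k^2-1)\beta^2/4\pi}\to 0$. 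This last observation is the mechanism behind the claim that only $\zeta_1,\theta_1$ survive, and it should be isolated as a lemma: in the appropriate space of modelled distributions (or, for small $\beta$, of Hölder–Besov functions), the reconstruction of the $k\ge2$ contributions tends to $0$ in probability.

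Second, for $\beta^2\in(0,4\pi)$ I would run the Da Prato–Debussche argument directly: one shows that $\Psi_\eps^{(\pm)} := \eps^{\beta^2/8\pi}\Wick{e^{\pm i\beta v_\eps}}$ converges in probability in $\CC([0,T],\CC^{-\beta^2/4\pi-\kappa})$ to a limit $\Psi^{(\pm)}$ independent of $\rho$ — this is the complex-Gaussian-multiplicative-chaos computation, done via a second-moment bound on $\E|\langle\Psi_\eps^{(\pm)}-\Psi_{\eps'}^{(\pm)},\phi\rangle|^2$ using the explicit covariance $\exp(\beta^2 G_\eps)$ of the Gaussian field $v_\eps$, plus a Kolmogorov/Garsia–Rodemich–Rumsey argument in time — and then the remainder $w_\eps$ solves a PDE with a right-hand side built multiplicatively from $\Psi_\eps^{(\pm)}$ and $e^{\pm i\beta w_\eps}$, for which Schauder estimates and the product $\CC^{-\alpha}\times\CC^{\alpha+\kappa}\to\CC^{-\alpha}$ give a local fixed point that is stable under $\Psi_\eps^{(\pm)}\to\Psi^{(\pm)}$. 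Local existence up to an explosion time $\tau$, the blow-up of $\|u(t,\cdot)\|_{\CC^\eta}$ as $t\to\tau$, and convergence of $u_\eps$ in $\CX_{T,\eta}$ on $\{\tau\ge T'\}$ then follow from the usual continuity of the solution map composed with a cutoff/stopping argument.

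Third, for $\beta^2\in[4\pi,\tfrac{16\pi}{3})$ the product $\CC^{-\alpha}\times\CC^{\alpha+\kappa}$ is no longer subcritical and one must invoke \cite{Regularity}: build a regularity structure containing symbols for the $\Psi^{(\pm)}$ and the abstract integrations/products needed to iterate the equation up to the (finitely many, since $\beta^2<\tfrac{16\pi}{3}<8\pi$) symbols of negative homogeneity, construct an admissible model $Z_\eps$ from $\xi_\eps$ together with the renormalisation constant(s) $C_\rho$ (here is where $C_\rho$ enters through the Wick constant and possibly a higher-order counterterm), prove $Z_\eps\to Z$ in probability in the model topology with $Z$ independent of $\rho$, solve the fixed-point problem for the modelled distribution $U$ in $\CD^{\gamma,\eta}$ with $\gamma$ slightly above $0$, and reconstruct. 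The mollifier-independence of $Z$ reduces to showing that the limiting model depends on $\rho$ only through $C_\rho$, which is arranged by choosing $C_\rho$ to cancel precisely the $\rho$-dependent divergent part. I expect the main obstacle to be exactly this last bundle of estimates: proving convergence of the non-Gaussian model $Z_\eps$, i.e. uniform-in-$\eps$ bounds on the relevant moments of $\langle Z_\eps\tau,\phi_x^\lambda\rangle$ for each symbol $\tau$ together with their differences across two mollifiers, using Wick-type expansions of products of the complex chaos fields and carefully tracking which graphs are integrable — the threshold $\tfrac{16\pi}{3}$ is dictated precisely by integrability of the worst such graph, so the computation is delicate and must be organised (e.g. by a Wick/cumulant bookkeeping over the driving exponentials) rather than brute-forced.
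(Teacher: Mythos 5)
Your overall roadmap is the right one and agrees with the paper's strategy at the structural level: decompose $u_\eps$ into the Gaussian part plus a remainder, recognise $C_\rho\eps^{-\beta^2/4\pi}e^{ik\beta\Phi_\eps}$ as a Wick-normalised complex exponential, observe that the $k\ge 2$ modes vanish because of the surplus power $\eps^{(k^2-1)\beta^2/4\pi}$, and treat $\beta^2<4\pi$ by Da~Prato--Debussche and $\beta^2\in[4\pi,16\pi/3)$ by regularity structures. Two of the paper's essential ideas are missing, however.

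First, you gesture at proving model convergence ``using Wick-type expansions of products of the complex chaos fields and carefully tracking which graphs are integrable.'' This is exactly what is \emph{not} available here, and the paper says so explicitly: the fields $\Psi_\eps^{a,k}$ are neither Gaussian nor of fixed Wiener chaos order, so there is no Wick formula, no equivalence-of-moments, and no graph expansion in the usual sense. What replaces it is the content of Section~3: the $2N$-th moment of $\scal{\phi^\lambda_x,\Psi_\eps}$ is a neutral Coulomb-gas integral with $N(N-1)$ attractive and $N^2$ repulsive factors of $\J_\eps^{\pm 1}$, and the paper proves a \emph{pointwise} bound (Proposition~\ref{prop:hierarchical}, via the dyadic merging scheme $\CA_n$, $\CS_n$) that collapses this to a product of only $N$ factors $\J_\eps^{-1}$ indexed by a dynamically chosen dipole pairing. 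The same construction underlies the second-order estimates in Sections~4--5, where the extra difficulty is that one must also extract the cancellation coming from the renormalisation constant; this is done via the identity of Proposition~\ref{prop:cancellations} and the quadrupole estimate of Lemma~\ref{lem:Delta-by-A}, not by a Wick bookkeeping. Your proposal therefore replaces the paper's central technical contribution with a method that is explicitly unavailable.

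Second, your account of renormalisation is incomplete. Beyond the Wick constant absorbed into $C_\rho$, the renormalised fixed-point equation at second order \emph{generically} carries an additional counterterm of the form $-\sum_k C_\eps^{(k)}\bigl(f_{c,k}f_{c,k}'+f_{s,k}f_{s,k}'\bigr)$, with $C_\eps^{(k)}\to\infty$ for $\beta^2\ge 4\pi$. For the sine-Gordon nonlinearity one has $f_{c,k}(v)=\zeta_k\sin(k\beta v+\theta_k)$, $f_{s,k}(v)=\zeta_k\cos(k\beta v+\theta_k)$, so this counterterm vanishes identically by the Pythagorean identity; this is \emph{why} the equation solved by $u_\eps$ needs no further modification beyond the prefactor $C_\rho\eps^{-\beta^2/4\pi}$, and it has to be said. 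Without this observation, the statement as written (with no counterterm in the equation for $u_\eps$) would be incorrect for $\beta^2\ge 4\pi$. Two smaller points: the threshold $16\pi/3$ is not dictated by ``integrability of the worst graph'' but by the restriction $\gamma<4/3$ that comes from only constructing second-order model entries (third-order would push it to $6\pi$, which is where the actual integrability obstruction in the paper's second-order estimate sits); and the negative initial regularity $\eta\in(-1/3,0)$ requires a further decomposition $v_\eps = Gv^{(0)}+w_\eps$ and a verification that $\sin(\nu\,Gv^{(0)}+\theta)$ lies in a weighted modelled-distribution space $\CD^{\mu,2\eta}$, which your sketch does not address.
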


\begin{remark}\label{rem:thresholds}
As already mentioned, one expects the boundary $\beta^2 = {16 \pi \over 3}$ to be artificial and a
similar result is expected to hold for any $\beta^2 \in (0,8\pi)$. In fact, $8\pi$ 
is the natural boundary for the method of proof developed in \cite{Regularity} and employed here. 
However, as $\beta^2 \to 8\pi$,
the theory requires proofs of convergence of more and more auxiliary objects.
In the current context, we unfortunately do not have a general convergence result 
for all of these objects but instead we need to treat all of them separately ``by hand''. 
Furthermore, the bounds we have on the simplest ``second-order'' object unfortunately 
appear to break down at $\beta^2 = 6\pi$.
\end{remark}

\begin{remark}
It is interesting to note that for $\beta^2 \in (0,4\pi)$, we only need to construct one 
auxiliary process, and this construction does indeed involve a careful tracking of cancellations
due to the grouping of terms into ``dipoles'', while for  $\beta^2 \in [4\pi,{16\pi \over 3})$,
we need to build a second auxiliary process which requires to keep track of cancellations
obtained by considering ``quadrupoles''. See Section \ref{sec:linear} and 
Section \ref{sec:second-order} below for more details. 
\end{remark}

\begin{remark}
The limiting process $u$ is a continuous function of time,
taking values in a suitable space of spatial distributions. See Remark~\ref{rem:regularity} 
below for more details.
Regarding the right hand side of the equation however, 
it only makes sense as a random distribution at fixed time when $\beta^2 < 4\pi$.
For $\beta^2 \ge 4\pi$ however, it exists only as a random space-time distribution.
\end{remark}

\begin{remark}
The article \cite{AlbRuss} appears in principle to cover \eref{e:model} as part of a larger class
of nonlinearities. It is however unclear what the meaning of the solutions constructed there is
and how they relate to the construction given in the present article. The interpretation of the
solutions in \cite{AlbRuss} is that of a random Colombeau generalised function and it is not
clear at all whether this generalised function represents an actual distribution. 
In particular, the construction given there is completely
impervious to the presence of the Kosterlitz-Thouless transition and the collapse of 
multipoles which clearly transpire in our analysis.
\end{remark}

\subsection{Structure of the article}

The rest of this article is organised as follows. 
In Section~\ref{sec:method}, we give an overview of the proof of our main result.
In particular, we reduce it to the proof of convergence of a finite number of
processes $\Psi_\eps^k$, $\Psi_\eps^{kl}$, and  $\Psi_\eps^{k\bar l}$ (see \eref{e:defPsi}, \eref{e:Psi-ab}, \eref{e:renConst_kl}, \eref{e:Psipm2} and Remark~\ref{rem:init-2ndorder} below) 
to a limit in a suitable topology.
In Section~\ref{sec:linear},
we then prove Theorem~\ref{theo:convBasic}, which 
gives bounds on arbitrary moments of  the first order processes $\Psi_\eps^k$.
When combined with a simple second moment estimate, these 
bounds imply suitable convergence of the first order processes $\Psi_\eps^k$,
so that Theorem~\ref{theo:conv} is established, which in particular
implies Theorem~\ref{theo:main} for $\beta^2 < 4\pi$.
The main ingredient in proving Theorem~\ref{theo:convBasic}
is an inductive procedure, resulting in the bounds in Proposition~\ref{prop:hierarchical}, which greatly simplifies the expressions of the moments.

The last two sections of the article are devoted to the proof of 
Theorem~\ref{theo:second-order}, which gives bounds on arbitrary moments of 
second order processes $\Psi_\eps^{kl}$ and $\Psi_\eps^{k\bar l}$, as well as
their convergence.
It turns out that the proofs for the special case $k = l$ are quite
different from the proofs for $k \neq l$. Section~\ref{sec:second-order} 
only treats the former 
case, which is already sufficient to obtain Theorem~\ref{theo:main} 
in the particular case when $Z = 1$ in \eqref{e:f_beta}. The case $k \neq l$ is
then finally covered in Section~\ref{sec:kneql}.
In particular, among these second order processes, only $\Psi_\eps^{k\bar k}$
requires some renormalisations terms.
The convergence proof for these processes relies again on the 
procedure of Section~\ref{sec:linear},
but we have to incorporate into it additional cancellations created by 
the renormalisation constants.

\subsection{Some notations}
\label{sec:notations}

Throughout the article, we choose the scaling for our space-time
$\mathbf{R}^{3}$ to be the parabolic scaling $\s=(2,1,1)$, and thus the scaling dimension
of space-time is $|\s|=4$.
(See the conventions in \cite{Regularity}.)
This scaling defines a distance
$\left\Vert x-y\right\Vert _{\s}$ on $\mathbf{R}^{3}$ by
\begin{equ}
  \left\Vert x \right\Vert_{\s}^4 \eqdef |x_0|^2 + |x_1|^4 + |x_2|^4 \;.
\end{equ}
Recall from  \cite[Def.~3.7]{Regularity}
 that for $\alpha<0$, $r=-\left\lfloor \alpha\right\rfloor $,
$\mathfrak{D}\subseteq\mathbf{R}^{3}$, we say that a distribution
$\xi\in\mathcal{S}^{\prime}(\mathfrak{D})$ belongs to $\mathcal{C}_{\s}^{\alpha}(\mathfrak{D})$
if it belongs to the dual of $\mathcal{C}^{r}(\mathfrak{D})$, and
for every compact set $\mathfrak{R}\subseteq\mathfrak{D}$, there
exists a constant $C$ such that $\left\langle \xi,\mathcal{S}_{\s,x}^{\delta}\eta\right\rangle \leq C\delta^{\alpha}$
holds for all $\delta\leq1$, all $x\in\mathfrak{R}$, and all $\eta\in\mathcal{C}^{r}$
with $\left\Vert \eta\right\Vert _{\mathcal{C}^{r}}\leq1$ and
supported on the unit $d_{\s}$-ball centred at the origin. Here, the rescaled test function is given by
$
  \mathcal{S}_{\s,x}^{\delta}\eta(y)
  =\delta^{-|\s|}\eta(\delta^{-\s_{0}}(y_{0}-x_{0}),\ldots,\delta^{-\s_{2}}(y_{2}-x_{2}))
$.

\subsection*{Acknowledgements}

{\small
The authors benefitted from discussions with J\"urg Fr\"ohlich, 
Tom Spencer, Vincent Vargas, and R\'emi Rhodes.
MH's research was funded by the Philip Leverhulme trust through a leadership award, 
by the Royal Society through a research merit award, and by an ERC consolidator grant.
}

\section{Method of proof}
\label{sec:method}

Let $K\colon \R \times \R^2 \to \R$ be a compactly supported function which 
agrees with the heat kernel $\exp(-|x|^2/2t)/(2\pi t)$ in a ball of radius $1$ around the origin, is smooth
everywhere except at the origin, satisfies $K(t,x) = 0$ for $t < 0$, 
and has the property that $\int K(t,x) Q(t,x)\,dt\,dx = 0$
for every polynomial $Q$ of degree $2$. We then define
\begin{equ}[e:defPhi]
\Phi_\eps = K * \xi_\eps\;,
\end{equ}
where ``$*$'' denotes space-time convolution, so that 
$R_\eps \eqdef \d_t \Phi_\eps - \frac{1}{2}\Delta \Phi_\eps -  \xi_\eps$
is a smooth function that converges as $\eps \to 0$ to a smooth limit $R$.
The main reason for considering convolution with $K$ instead of the actual 
heat kernel is that we avoid problems of convergence at infinity. It also
allows us to fit more easily into the framework of \cite[Sec.~5]{Regularity}.

Let now $\Psi_\eps^k$ be defined by
\begin{equ}[e:defPsi]
\Psi_\eps^k = C_\rho \eps^{-\beta^2/4\pi} \exp(i k \beta \Phi_\eps)\;,
\end{equ}
and write $\Psi_\eps^k = \Psi_\eps^{c,k} + i \Psi_\eps^{s,k}$ for its real and imaginary parts.
Since the case $k=1$ is special, we furthermore use the convention that $\Psi_\eps = \Psi_\eps^1$
and similarly for $\Psi_\eps^c$ and  $\Psi_\eps^s$.
Using the same trick as in \cite{MR1941997,MR2016604}, we set $u_\eps = v_\eps +  \Phi_\eps$, 
so that
\begin{equ}
\d_t v_\eps = \frac{1}{2}\Delta v_\eps 
+ \sum_{k=1}^Z 
\zeta_k  \bigl(\sin (k \beta v_\eps + \theta_k)\, \Psi_\eps^{c,k} 
+ \cos(k \beta v_\eps + \theta_k)\, \Psi_\eps^{s,k} \bigr) + R_\eps\;.
\end{equ}
At this stage, we note that the PDE
\begin{equ}[e:general]
\d_t v = \frac{1}{2}\Delta v + f_c(v)\, \Psi^c + f_s(v)\, \Psi^s + R\;,
\end{equ}
is locally well-posed for any continuous space-time function $R$, any continuous initial condition,
any smooth functions $f_c$ and $f_s$,
and any $\Psi^c, \Psi^s \in \CC_\s^{-\gamma}(\R_+ \times \T^2)$
provided that $\gamma < 1$. Furthermore, in this case, the solution $v$ belongs to $\CC_\s^{2-\gamma}$
and it is stable with respect to perturbations of $\Psi^c$, $\Psi^s$ and $R$.
The only potential problem are the products $f_c(v)\, \Psi^c$ and $f_s(v)\, \Psi^s$, but the 
product map turns out to be continuous from $\CC_\s^{2-\gamma}$ times $\CC_\s^{-\gamma}$ 
into $\CC_\s^{-\gamma}$ as soon as $\gamma < 1$.
(See for example \cite[Sec.~2]{Triebel} or \cite[Thm~2.52]{BookChemin}.)

With this in mind, our first result is as follows:

\begin{theorem}\label{theo:conv}
Assume that $\beta^2 \in (0,8\pi)$. Let $\Psi_\eps$ be as in \eref{e:defPsi} and let $\gamma > \beta^2 / (4\pi)$. Then, there exists a 
constant $C_\rho$ and a $\CC_\s^{-\gamma}(\R_+ \times \T^2,\C)$-valued 
random variable 
$\Psi$ independent of $\rho$ such that, for every 
$T> 0$, one has $\Psi_\eps \to \Psi$ and $\Psi_\eps^k \to 0$ for all $k\geq 2$
in probability in $\CC_\s^{-\gamma}([0,T] \times \T^2, \C)$.
\end{theorem}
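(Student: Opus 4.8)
The plan is to control arbitrary moments of $\Psi_\eps^k$ tested against rescaled test functions, in the spirit of the Kolmogorov-type criterion for negative Hölder spaces (see \cite[Thm~10.7]{Regularity}). Fix $p \in 2\N$. Since $\Psi_\eps^k = C_\rho \eps^{-\beta^2/4\pi} e^{ik\beta\Phi_\eps}$ and $\Phi_\eps$ is a centred Gaussian field, the $p$-th moment of $\langle \Psi_\eps^k, \mathcal{S}^\delta_{\s,x}\eta\rangle$ is a $p$-fold space-time integral of a product of factors $C_\rho^p \eps^{-p\beta^2/4\pi}\, \E\big[\exp\!\big(i k\beta \sum_{j} \epsilon_j \Phi_\eps(z_j)\big)\big]$ with signs $\epsilon_j = \pm 1$ coming from complex conjugation, which by Gaussianity equals $\exp\!\big(-\tfrac{k^2\beta^2}{2}\sum_{j,j'}\epsilon_j\epsilon_{j'} C_\eps(z_j - z_{j'})\big)$, where $C_\eps$ is the covariance of $\Phi_\eps$. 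The key input, which we would establish first, is the short-distance behaviour $C_\eps(z) = -\tfrac{1}{2\pi}\log(\|z\|_\s \vee \eps) + O(1)$: this follows from the fact that $K$ agrees with the heat kernel near the origin, so that the covariance has a logarithmic singularity with precisely the constant $\tfrac{1}{2\pi}$, and the mollification at scale $\eps$ simply cuts this off. The constant $C_\rho$ is then chosen so that $C_\rho^2 \eps^{-\beta^2/4\pi} e^{-k^2\beta^2 C_\eps(0)/2}$ (the would-be ``diagonal'' contribution with $k=1$) stays bounded; concretely $C_\rho \sim \eps^{\beta^2/8\pi} e^{\beta^2 C_\eps(0)/4}$, which is $O(1)$ by the choice of normalisation.

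With this in hand, the diagonal self-interaction terms $j = j'$ produce exactly the factor $\eps^{-p\beta^2/4\pi} e^{-pk^2\beta^2 C_\eps(0)/2}$ which, after inserting $C_\eps(0) = -\tfrac{1}{2\pi}\log\eps + O(1)$, cancels against $C_\rho^p$ up to $\eps^{p\beta^2(k^2-1)/8\pi}$ — harmless for $k \ge 1$, and giving the extra decay $\eps^{p\beta^2(k^2-1)/8\pi} \to 0$ responsible for $\Psi_\eps^k \to 0$ when $k \ge 2$. The off-diagonal factors $e^{+k^2\beta^2 C_\eps(z_j-z_{j'})/2} \sim \|z_j - z_{j'}\|_\s^{-k^2\beta^2/4\pi}$ for $\epsilon_j\epsilon_{j'} = -1$ are the dangerous ones: they are singular but integrable against the remaining measure precisely when $k^2\beta^2/4\pi < |\s| = 4$, i.e. $k^2\beta^2 < 16\pi$. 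For $k = 1$ this is exactly the range $\beta^2 < 16\pi$, comfortably covering $\beta^2 < 8\pi$; for $k \ge 2$ it is even better. However, one must also check that the \emph{positive} factors $e^{-k^2\beta^2 C_\eps/2} \le C$ coming from $\epsilon_j\epsilon_{j'} = +1$ pairs do not help but at least do no harm (they are bounded), and — crucially — that the competition between the number of singular factors and the number of integration variables works out. This is where the real work lies: a naive bound pairing every point with one neighbour loses, and one must organise the $p$ points into a structure (a pairing, or more generally a tree/forest as in the hierarchical bound of Proposition~\ref{prop:hierarchical}) that tracks the cancellations coming from the alternation of signs $\epsilon_j$. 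This combinatorial bookkeeping — reducing the $2^p$-term sum over sign patterns to a manageable expression by exploiting that blocks of like signs contribute only bounded factors while the singular contributions come with the ``dipole'' cancellation — is the heart of the matter and is precisely what Theorem~\ref{theo:convBasic} and its proof via the inductive/hierarchical procedure of Section~\ref{sec:linear} provide.

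Granting the resulting bound $\E|\langle \Psi_\eps^k, \mathcal{S}^\delta_{\s,x}\eta\rangle|^p \lesssim \delta^{(-\gamma + \kappa)p}$ uniformly in $\eps$ for $\gamma > \beta^2/4\pi$ (any $\gamma$ slightly above the scaling threshold, with a small $\kappa>0$ to spare), an application of the Kolmogorov criterion in $\CC_\s^{-\gamma}$ gives tightness of $\Psi_\eps^k$ and a uniform moment bound; the same computation applied to the \emph{difference} $\Psi_\eps^k - \Psi_{\eps'}^k$, together with a second-moment estimate showing the covariance structure converges as $\eps \to 0$, upgrades this to convergence in probability to a limit $\Psi$ (for $k=1$) and to $0$ (for $k \ge 2$) in $\CC_\s^{-\gamma}([0,T]\times\T^2,\C)$. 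Independence of the limit from $\rho$ follows because two mollifiers produce covariances differing by a bounded continuous function, so the leading logarithmic singularity — the only thing that survives in the limit — is the same. The main obstacle is thus not the Gaussian computation or the choice of $C_\rho$, both of which are routine, but the combinatorial control of the alternating sum over $2^p$ sign configurations so as to extract the dipole cancellations and keep the moment bounds uniform in $\eps$ up to $\beta^2 = 8\pi$; this is exactly the content deferred to Section~\ref{sec:linear}.
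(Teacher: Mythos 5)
Your proposed route matches the paper's proof almost exactly: the paper likewise derives Theorem~\ref{theo:conv} from the moment bounds of Theorem~\ref{theo:convBasic} by a Kolmogorov-type argument via the wavelet characterisation of $\CC_\s^{-\gamma}$, with the Gaussian exponential moment computation, the logarithmic covariance asymptotic of Lemma~\ref{lem:reg_kernel_bounds}, the absorption of the Wick constant into $C_\rho$, the $L^2$-Cauchy estimate for $\Psi_\eps - \Psi_{\bar\eps}$, and the comparison of two mollifiers all appearing in Section~\ref{sec:linear}. The combinatorial heart is indeed deferred to Proposition~\ref{prop:hierarchical}, as you say.

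Two points, however, should be corrected, and the first one is substantive. Your off-diagonal exponent is off by a factor of $2$: expanding $\exp\bigl(-\tfrac12\sum_{j,j'}a_ja_{j'}C_\eps(z_j-z_{j'})\bigr)$ with $a_j = k\beta\epsilon_j$, the unordered pair $\{j,j'\}$, $j\neq j'$, contributes $e^{-k^2\beta^2\epsilon_j\epsilon_{j'} C_\eps(z_j-z_{j'})}$ (the prefactor $1/2$ is eaten by the two orderings), so a $+/-$ pair gives $\sim(\|z_j-z_{j'}\|_\s+\eps)^{-k^2\beta^2/2\pi}$, not $-k^2\beta^2/4\pi$. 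This is precisely $\CJ_\eps^{-1}$ in the paper's notation. Consequently the local integrability threshold is $k^2\beta^2/2\pi < |\s| = 4$, i.e.\ $k^2\beta^2 < 8\pi$: far from being ``comfortably covered'', $\beta^2 = 8\pi$ is exactly where the single dipole factor $\CJ_\eps^{-1}$ stops being uniformly integrable, which is why the theorem's hypothesis stops there and why this value corresponds to the Kosterlitz--Thouless point. Secondly, there is no ``$2^p$-term sum over sign patterns'': in $\E|\langle\phi,\Psi_\eps\rangle|^{2N}$ the signs are determined, with exactly $N$ positive and $N$ negative charges, and the difficulty lies in a \emph{single} integrand $\prod_{\ell<m}\CJ_\eps\,\prod_{n<o}\CJ_\eps\,/\,\prod_{i,j}\CJ_\eps$ with $N^2$ singular factors in the denominator and $N(N-1)$ bounded ones in the numerator. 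The entire content of Proposition~\ref{prop:hierarchical} and Corollary~\ref{cor:pointBound} is a pointwise bound showing that these collapse, for an arbitrary configuration, to a product of only $N$ dipole factors $\CJ_\eps^{-1}$ indexed by a well-chosen pairing, so that the $2N$-th moment behaves like the $N$-th power of the second moment.
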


\begin{remark}
This is essentially a consequence of \cite[Thm~3.1]{RhodesVargas} in the special case 
$\gamma = 0$. (Which is actually the simplest of the cases treated there.) 
Due to a difference in normalisation (compare 
\cite[Eq.~1.2]{RhodesVargas} to 
\eqref{e:decompQ} below) our values of $\beta^2$ differ by a factor $2\pi$, so that 
the boundary $\beta^2 = 8\pi$ appearing here corresponds to $\beta^2 = 4$
in the notations of \cite{RhodesVargas}. This is consistent with the fact 
that parabolic space-time with two space dimensions actually has 
Hausdorff dimension $4$. We will provide a full proof of Theorem~\ref{theo:conv} in 
Section~\ref{sec:linear} for a number of reasons. First, we require a much 
stronger notion of
convergence than that given in \cite{RhodesVargas} and our sequence of approximations
is different than the one given there (in particular it has no martingale 
structure in $\eps$). We also require optimal 
bounds in the parabolic scaling which are not given by that article. Finally,
several ingredients of our proof are reused in later parts of the article. 
\end{remark}

Given the above discussion, Theorem~\ref{theo:main} is an immediate consequence of Theorem~\ref{theo:conv}
for the range $\beta^2 \in (0,4\pi)$,
if the initial data $u^{(0)}$ is equal to $\Phi(0)$ plus a continuous function.
The proof of Theorem~\ref{theo:main} for general initial data $u^{(0)}\in\CC^\eta(\mathbf{T}^2)$ with $\eta\in(-\frac{1}{3},0)$ can be obtained in a way similar
to that of Theorem~\ref{theo:v-equ} below.
At $\beta^2 = 4\pi$ however, this appears to break down completely. Indeed,
it is a fact that the solutions to \eref{e:general} are \textit{unstable} with respect to perturbations
of $\Psi^s$ and $\Psi^c$ in $\CC_\s^{-1}$.
However, it turns out that if we keep track of suitable \textit{higher order information}, then
continuity is restored. More precisely, for each $1\le k \le Z$, 
let $\Psi_\eps^{s,k}$ and $\Psi_\eps^{c,k}$ be two sequences of
continuous space-time functions and let $C_\eps^{(k)}$ be a sequence of real numbers.
Then, for any two space-time points $z = (t,x)$ and $\bar z = (\bar t,\bar x)$
and any two indices $a,b \in \{s,c\}$,
we consider the functions
\begin{equ}[e:Psi-ab]
\Psi_\eps^{ab,kl}(z,\bar z) 
= \Psi_\eps^{a,k}(\bar z) \Bigl(\bigl(K * \Psi_\eps^{b,l})(\bar z) 
	- \bigl(K * \Psi_\eps^{b,l})(z)\bigr)\Bigr) 
- \frac{1}{2}C_\eps^{(k)} \delta_{a,b} \delta_{k,l} \;.
\end{equ}
In the sequel, we consider $\Psi_\eps^{ab,kl}$ as functions of their first argument, taking
values in the space of space-time distributions, corresponding to their second argument. We also use the convention that $\Psi_\eps^{ab}\eqdef\Psi_\eps^{ab,11}$ for simplicity.

 Given a test function $\phi\colon \R \times \R^2 \to \R$, a
point $z$ as before and a value $\lambda > 0$, we write
\begin{equ}[e:scaling]
\phi_z^\lambda(\bar z) = \lambda^{-4} \phi \bigl(\lambda^{-2}(\bar t - t), \lambda^{-1} (\bar x - x)\bigr)\;.
\end{equ}
We then impose the following assumption, which will later be justified in
Theorem~\ref{theo:convModel}.

\begin{assumption} \label{assumption:A}
We assume that there exist distributions $\Psi^a$ and distribution-valued functions
$\Psi^{ab}(z,\cdot)$ such that 
$\Psi_\eps^a \to \Psi^a$ and $\Psi_\eps^{ab} \to \Psi^{ab}$,
as well as
$\Psi_\eps^{a,k} \to 0$ for $k\geq 2$ and $\Psi_\eps^{ab,kl} \to 0$ for $(k,l)\neq (1,1)$,
in the following sense.
For some $\gamma \in (1, {4\over 3})$, one has 
\minilab{e:convModel}
\begin{equs}[2]
\lambda^{\gamma}|\Psi^a(\phi_z^\lambda)| &\lesssim 1 \;,\quad&
\lambda^{\gamma}|\bigl(\Psi_\eps^a - \Psi^a\bigr)(\phi_z^\lambda)| &\to 0 \;,\qquad \label{e:convModel1}\\
\lambda^{2\gamma-2}|\Psi^{ab}(z,\phi_z^\lambda)| &\lesssim 1 \;,\quad&
\lambda^{2\gamma-2}|\bigl(\Psi_\eps^{ab} - \Psi^{ab}\bigr)(z,\phi_z^\lambda)| &\to 0 \;,\label{e:convModel2}\\
\lambda^{\gamma}|\Psi^{a,k}(\phi_z^\lambda)| &\to 0 \;,\quad&
\lambda^{2\gamma-2}|\Psi_\eps^{ab,kl} (z,\phi_z^\lambda)| &\to 0 \;,\label{e:convModel3}
\end{equs}
for all $k\geq 2$ on the left and all $(k,l)\neq (1,1)$ on the right, where the limits on the right (as $\eps \to 0$) and the bounds on the left are 
both assumed to be \textit{uniform} over all
$\lambda \in (0,1]$, all smooth test functions $\phi$ that are supported in the centred ball
of radius $1$ and with their $\CC^2$ norm bounded by $1$, as well as all $z \in [-T,T] \times \T^2$
for any fixed $T> 0$.
\end{assumption}

\begin{remark}\label{rem:topConv}
The structure of \eref{e:general} is essentially the same as that of (PAMg) in 
\cite[Secs~1.5 and 10.4]{Regularity}. To make the link between the bounds~\eref{e:convModel} in Assumption~\ref{assumption:A} and 
\cite[Sec.~10.4]{Regularity} more precise, one could have used the notations of 
\cite[Sec.~8]{Regularity} and introduced $2Z$ abstract symbols $\Xi_c^k$ and $\Xi_s^k$ 
of homogeneity $-\gamma$, as well as an abstract integration operator $\CI$.
One then has the following correspondence with \cite[Sec.~8]{Regularity}:
\begin{equ}[e:model-PAMg]
\Pi_z \Xi_a^k = \Psi^{a,k}\;,\quad \Pi_z \Xi_a^k \CI(\Xi_b^l) = \Psi^{ab,kl}(z,\cdot)\;.
\end{equ}
The fact that the notion of convergence given in 
Assumption~\ref{assumption:A} is equivalent
to the convergence of admissible models of \cite[Sec.~2.3]{Regularity} is an immediate consequence
of \cite[Thm~5.14]{Regularity}, see also  \cite[Thm~10.7]{Regularity}.
\end{remark}

\begin{remark}\label{rem:Kolmogorov}
There exists an analogue to Kolmogorov's continuity test in this context, see 
\cite[Thm~10.7]{Regularity}. In our notations, it states that if there exists $\bar \gamma < \gamma$
such that, for every $p \ge 1$, the bounds
\begin{equ}
\E \lambda^{p\bar \gamma}|\Psi^a(\phi_z^\lambda)|^p \lesssim 1 \;,\quad
\E \lambda^{p\bar \gamma}|\bigl(\Psi_\eps^a - \Psi^a\bigr)(\phi_z^\lambda)|^p \to 0 \;, \quad
\E \lambda^{p\bar \gamma}|\Psi^{a,k\geq 2}(\phi_z^\lambda)|^p \to 0 \;,
\end{equ}
hold uniformly over $\lambda$, $\phi$ and $z$ as before, and similarly for $\Psi_\eps^{ab,kl}$, $\Psi^{ab,kl}$,
then 
Assumption~\ref{assumption:A} holds in probability.
\end{remark}


One then has the following result.
Note that the functions in $v_\eps$ that are multiplied with $\Psi_\eps^{s,k}$, $\Psi_\eps^{c,k}$ can be more general functions, as in the case of (PAMg);
in the statement of the theorem below we allow them to be trigonometric polynomials.
We call a function a trigonometric polynomial if it is a finite linear combination
of $\sin(\nu_k \cdot + \theta_k)$ for some constants $\nu_k$ and $\theta_k$.

\begin{theorem} \label{theo:v-equ}
Assume that the space-time functions $\Psi_\eps^{a,k}$
and $\Psi_\eps^{ab,kl}$ with $a,b\in\{s,c\}$, $1\le k,l\le Z$, 
 are related by \eref{e:Psi-ab}
 and that 
Assumption~\ref{assumption:A} holds for some $\gamma \in (1,{4\over 3})$.
Let $v_\eps$ be the solution to
\begin{equs}
 \d_t v_\eps 
   = \Delta v_\eps 
  &+ \sum_{k=1}^Z  \Big(
	f_{c,k}(v_\eps)\, \Psi^{c,k}_\eps + f_{s,k}(v_\eps)\, \Psi^{s,k}_\eps \Big) \\
  & - \sum_{k=1}^Z 
  	C_\eps^{(k)} \bigl(f_{c,k}(v_\eps) f_{c,k}'(v_\eps) + f_{s,k}(v_\eps) f_{s,k}'(v_\eps)\bigr) + R_\eps\;,\\
v_\eps(0,\cdot) &= v^{(0)}\;, \label{e:renormv}
\end{equs}
where $R_\eps$ is a sequence of continuous functions converging locally uniformly to a limit $R$
and $v^{(0)} \in \CC^\eta(\T^2)$ for some $\eta > -\frac{1}{3}$. 
Assume furthermore that for every $k$, the functions $f_{c,k}$ and $f_{s,k}$ are 
trigonometric polynomials.
Then
the sequence
$v_{\eps}$ converges in probability and locally uniformly as $\eps\to 0$ to a 
limiting process $v$.

More precisely, there exists a stopping time $\tau > 0$ and a random variable 
$v \in \CD'(\R_+\times \T^2)$
such that, for every $\eta\in(-\frac{1}{3},0)$ and every $T' > T > 0$, the natural restriction of $v$ to $\CD'((0,T)\times \T^2)$
belongs to $\CX_{T,\eta} = \CC([0,T], \CC^\eta(\T^2)) \cap \CC((0,T]\times\T^2)$ on the set $\{\tau \ge T'\}$.
Furthermore, on the same set, one has $v_\eps \to v$ in probability in the 
topology of $\CX_{T,\eta}$.
Finally, one has $\lim_{t \to \tau} \left\Vert v(t,\cdot)\right\Vert _{\mathcal{C}^{\eta}(\mathbf{T}^{2})}=\infty$
on the set $\{\tau < \infty\}$. 
The limiting process $v$ depends on $f_{s,1},f_{c,1}$ and $\beta$, but it
depends neither on the choice of mollifier $\rho$, nor on 
the functions $f_{s,k}$, $f_{c,k}$ for $k\geq 2$.
\end{theorem}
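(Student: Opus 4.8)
The plan is to reduce Theorem~\ref{theo:v-equ} to an application of the abstract solution theory of \cite{Regularity}, following closely the treatment of (PAMg) in \cite[Secs~1.5, 9 and 10.4]{Regularity}. First I would set up the regularity structure: introduce the abstract noise symbols $\Xi_a^k$ ($a\in\{s,c\}$, $1\le k\le Z$) of homogeneity $-\gamma$ together with the abstract integration map $\CI$ of order $2$, and build the minimal structure containing the symbols needed to formulate \eref{e:general} as a fixed point problem. Since $\gamma\in(1,\tfrac43)$, the relevant products are $f_{c,k}(v)\,\Xi_c^k$ and $f_{s,k}(v)\,\Xi_s^k$ where $v$ is modelled on a function-like sector of regularity $2-\gamma\in(\tfrac23,1)$; the only symbols of negative homogeneity appearing are $\Xi_a^k$ (homogeneity $-\gamma$) and $\CI(\Xi_b^l)$ times $\Xi_a^k$ (homogeneity $2-2\gamma\in(-\tfrac23,0)$), which is exactly why the list in Assumption~\ref{assumption:A} of objects whose convergence must be controlled is finite. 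Because $f_{c,k},f_{s,k}$ are trigonometric polynomials, the composition operators act smoothly on the relevant modelled distribution spaces (one expands $\sin(\nu v+\theta)$ and $\cos(\nu v+\theta)$ and uses that $v$ lives in a function-like sector), so the right-hand side is a locally Lipschitz map on $\CD^{\gamma,\eta}$ and the abstract fixed point theorem \cite[Thm~7.8]{Regularity} applies, giving a local solution $V_\eps$ with a strictly positive (lower-semicontinuous in the model) existence time.

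Next I would identify the renormalised reconstruction. By Remark~\ref{rem:topConv} and \cite[Thm~5.14, Thm~10.7]{Regularity}, the bounds \eref{e:convModel} of Assumption~\ref{assumption:A} are equivalent to the convergence, in the space of admissible models, of the canonical models $\hat Z_\eps$ built from the smooth functions $\Psi_\eps^{a,k}$ together with the constants $C_\eps^{(k)}$ entering \eref{e:Psi-ab} — the subtraction of $\tfrac12 C_\eps^{(k)}\delta_{a,b}\delta_{k,l}$ there is precisely the Wick-type renormalisation of the symbol $\Xi_a^k\CI(\Xi_b^l)$. Exactly as in \cite[Sec.~9.3]{Regularity}, passing from the bare canonical model to this renormalised model modifies the equation by the counterterm $\sum_k C_\eps^{(k)}(f_{c,k}f_{c,k}'+f_{s,k}f_{s,k}')(v_\eps)$, which is why it appears in \eref{e:renormv}; thus the reconstruction $\CR V_\eps$ of the abstract solution driven by the renormalised model solves precisely \eref{e:renormv}. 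Since the $\Xi_a^k$ and $\Xi_a^k\CI(\Xi_b^l)$ for $k\ge2$ (resp.\ $(k,l)\ne(1,1)$) converge to $0$ by Assumption~\ref{assumption:A}, in the limit only $f_{s,1},f_{c,1}$ and $\beta$ survive in the data of the limiting abstract equation, giving the stated independence.

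Then I would invoke the continuity of the solution map. The abstract solution and the reconstruction operator depend continuously on the model \cite[Thms~7.8 and 3.10]{Regularity}, and the model $\hat Z_\eps$ converges in probability to a limiting admissible model $\hat Z$ by Assumption~\ref{assumption:A}; hence $v_\eps=\CR V_\eps$ converges in probability, in $\CX_{T,\eta}$ on the event that the (model-dependent, lower-semicontinuous) blow-up time exceeds $T'$, to $v=\CR V$ where $V$ solves the limiting fixed point problem. The definition of the stopping time $\tau$ and the blow-up statement $\|v(t,\cdot)\|_{\CC^\eta}\to\infty$ as $t\to\tau$ on $\{\tau<\infty\}$ follow by the standard gluing of local solutions and a maximality argument, using that the local existence time is bounded below by a decreasing function of the $\CC^\eta$-norm of the initial datum and of the model norm on a fixed time interval; the extra regularity $\CX_{T,\eta}=\CC([0,T],\CC^\eta)\cap\CC((0,T]\times\T^2)$ comes from the parabolic smoothing encoded in the weight $\eta$ of $\CD^{\gamma,\eta}$ together with $\CR\CD^{\gamma,\eta}\subset\CC^{2-\gamma}$ away from $t=0$.

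The main obstacle is not in any single step above — each is a now-standard application of \cite{Regularity} — but in checking that the black boxes genuinely apply in this setting: that the algebraic renormalisation group for this (small) regularity structure is one-dimensional per pair $(k)$ and acts on the equation exactly through the displayed counterterm; that the composition with trigonometric polynomials is admissible on $\CD^{\gamma,\eta}$ for the relevant $\eta>-\tfrac13$ (this forces $\gamma<\tfrac43$, matching the hypothesis, and is where the constraint $\eta\in(-\tfrac13,0)$ is used, via $2-2\gamma>\eta$-type conditions in the multiplication theorem for modelled distributions); and, for the general initial data part, reproducing the short-time analysis of \cite[Sec.~9.4]{Regularity} with a singular initial condition in $\CC^\eta$. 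I would treat the trigonometric-polynomial composition and the singular-data fixed point carefully and refer to \cite{Regularity} for the remainder, noting that the genuinely hard analytic input — the convergence of the models, i.e.\ Assumption~\ref{assumption:A} — is deferred to Theorem~\ref{theo:convModel}.
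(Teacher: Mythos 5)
Your high-level plan — build the regularity structure with symbols $\Xi_a^k$ of homogeneity $-\gamma$, check local Lipschitz continuity, invoke \cite[Thm~7.8]{Regularity}, and absorb the renormalisation via \eref{e:Psi-ab} into the model — is exactly the route the paper takes, and the paper indeed treats the positive-regularity case as an immediate consequence of \cite[Thm~7.8]{Regularity} and Remark~\ref{rem:topConv}. Your discussion of the renormalisation constants and the vanishing of the $k\geq 2$ (and $(k,l)\neq(1,1)$) contributions is also correct.

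The gap is in the singular-initial-data step, which is the actual content of the proof and which you defer with ``I would treat the singular-data fixed point carefully and refer to \cite{Regularity}''. This will not go through as stated: if one attempts a direct fixed-point argument for $v_\eps$ in a weighted space $\CD^{\mu,\eta}$ with $\eta<0$, the composition $W\mapsto\sin(\tilde\nu W+\tilde\theta)$ does \emph{not} map $\CD^{\mu,\eta}$ into itself in a way compatible with the multiplication theorem, because compositions with generic smooth functions do not behave well on negative-weight sectors (the required composition result, \cite[Prop.~6.13]{Regularity}, is formulated for $\CD^{\mu,0}$, not for $\CD^{\mu,\eta}$ with $\eta<0$). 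The paper resolves this with a specific device which your proposal does not mention and which is the real point where the trigonometric-polynomial hypothesis is used: write $v_\eps = G v^{(0)} + w_\eps$ where $G v^{(0)}$ solves the heat equation with the singular initial datum, and use the angle-addition formulas to factorise $\sin(k\beta v_\eps + \theta_k)$ into finite linear combinations of products $\sin(\nu\,Gv^{(0)}+\theta)\,\sin(\tilde\nu w_\eps+\tilde\theta)$ as in \eref{e:RHSPAM}. The explicit function $g=\sin(\nu\,Gv^{(0)}+\theta)$ is then shown, by direct bounds on $\d_x g$, $\d_x^2 g$, $\d_t g$, to lie in $\CD^{\mu,2\eta}$, while the composition acting on $w_\eps$ is now over $\CD^{\mu,0}$ (since $w_\eps(0,\cdot)=0$), where \cite[Prop.~6.13]{Regularity} does apply. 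Only then does \cite[Prop.~6.12]{Regularity} give the Lipschitz map into $\CD^{\mu-\gamma,2\eta-\gamma}$, and the exponent check $2\eta-\gamma+2>0$ (which is where $\eta>-\tfrac13$ and $\gamma<\tfrac43$ are both used, and where the factor $2$ in $2\eta$ matters because of the $|\d_x G(v^{(0)})|^2$ term) closes the argument. Without this shift-and-factorise step there is no way to make sense of the nonlinearity on negative-weight spaces, so this is a genuine missing idea rather than a detail you could ``refer to \cite{Regularity}'' for.
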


\begin{proof}
The theorem would be a straightforward 
consequence of \cite[Thm~7.8]{Regularity} and Remark~\ref{rem:topConv}
if we allowed $v^{(0)}$ to have positive regularity. However, we would like to allow
for negative regularity of the initial condition in order to be able to deduce
Theorem~\ref{theo:main} from this result. The reason why negative regularity of the initial 
condition is a natural requirement in the context of Theorem~\ref{theo:main} 
is that solutions at positive times 
necessarily have negative regularity, whatever the initial condition. 

In order to allow initial
data of negative regularity, we perform the transformation
$v_\eps=Gv^{(0)}+w_\eps$, where $Gv^{(0)}$ denotes the solution to the heat
equation with initial condition $v^{(0)}$. As a consequence of trigonometric 
identities, $w_\eps$ then solves
\begin{equ}[e:equweps]
\d_t w_\eps  =  \Delta w_\eps 
 + \sum_{a,k}   \Big(
	 g_{a,k,v^{(0)}}(w_\eps) \,  \Psi^{a,k}_\eps   
 -   C_\eps^{(k)} \,
g_{a,k,v^{(0)}}(w_\eps)  \, g_{a,k,v^{(0)}}'(w_\eps) \Big)
+ R_\eps \;,
\end{equ}
with initial condition $w_\eps(0,\cdot) = 0$.
Here, the functions $g_{a,k,v^{(0)}}(w_\eps)$ are 
finite linear combinations of terms of the type
\begin{equ}[e:RHSPAM]
\sin(\nu\,Gv^{(0)} + \theta)  \sin(\tilde \nu w_\eps + \tilde \theta)\;,
\end{equ}
for some $\nu,\tilde\nu, \theta, \tilde\theta\in \R$, and $g_{a,k,v^{(0)}}'$
denotes the derivative of $g_{a,k,v^{(0)}}$ with respect to its argument $w_\eps$.

In order to show that $w_\eps \to w$ as $\eps \to 0$, we make use of the 
theory developed in \cite{Regularity}. (We could also equivalently have 
used the theory developed in \cite{PAMPreprint} which requires
very similar assumptions.) We note that \eqref{e:equweps} is of the same
type as the class of equations treated in \cite[Secs~9.1, 9.3]{Regularity},
one difference being that the single noise $\xi_\eps$ is replaced by a collection
of noises $\Psi^{a,k}_\eps$. As a consequence, the relevant algebraic structure in
our context is built in exactly the same way as in \cite[Sec.~8]{Regularity},
but with the single abstract symbol $\Xi$ replaced by a collection of symbols
$\Xi_a^{k}$ representing $\Psi^{a,k}_\eps$, each of them of homogeneity $-\gamma$
with $\gamma$ as in Assumption~\ref{assumption:A}. 
If we denote by $\CP$ the integration operator corresponding to convolution with 
the heat kernel (see \cite[Sec.~5]{Regularity}), \eqref{e:equweps} can be
described by the following fixed point problem:
\begin{equ}[e:FPW]
W = \CP \one_{t > 0} \Bigl(R_\eps + \sum_{a,k} g_{a,k,v^{(0)}}(W_\eps) \,  \Xi_a^{k}\Bigr)\;.
\end{equ}

Indeed, as already noted in Remark~\ref{rem:topConv}, the condition $\gamma < 4/3$ guarantees 
that any model $(\Pi,\Gamma)$ for the corresponding regularity structure
is uniquely determined by the action of $\Pi$ onto the symbols $\Xi_a^k$ 
and $\Xi_a^k \CI(\Xi_b^l)$, and Assumption~\ref{assumption:A} 
precisely states that sequence of models $(\Pi_\eps,\Gamma_\eps)$ given by \eqref{e:model-PAMg} but with $\Psi$ replaced by $\Psi_\eps$ converges to a limiting
model $(\Pi,\Gamma)$. Furthermore, it follows in exactly the same way as
\cite[Prop.~9.4]{Regularity} that if $W$ solves \eqref{e:FPW} for the model
given by \eqref{e:model-PAMg}, but with $\Psi$ replaced by $\Psi_\eps$ and satisfying
the relation \eqref{e:Psi-ab}, then $\CR W$ (where $\CR$ denotes the corresponding
reconstruction operator, which in this case simply discards the higher order
information encoded in $W$) solves \eqref{e:equweps}.

It therefore remains to show that \eqref{e:FPW} admits a unique (local) 
solution for every admissible model, and that this solution depends continuously
on the model in question. In view of \cite[Thm~7.8]{Regularity}, this
is the case if we can show that the map 
\begin{equ}[e:RHS]
W \mapsto \sin(\nu\,Gv^{(0)} + \theta)  \sin(\tilde \nu w_\eps + \tilde \theta) \Xi_a^{k}\;,
\end{equ}
is locally Lipschitz from $\CD^{\mu,0}$ into $\CD^{\mu-\alpha,-\alpha}$
for some $\mu > \alpha$ and some $\alpha \in (0,2)$. (See \cite[Def.~6.2]{Regularity}
for the definition of the spaces $\CD^{\gamma,\eta}$.)

At this stage, we claim that as long as $\mu \in (0, 2]$ and 
$v^{(0)} \in \CC^\eta(\T^2)$ for some $\eta \in (-1/3,0)$, then
$g = \sin(\nu\,Gv^{(0)}+\theta)$ can be interpreted as an element in $\CD^{\mu,2\eta}(\bar T)$ where $\bar T$ is the space of abstract Taylor polynomials (if only
Taylor polynomials are involved, these are just suitably weighted H\"older spaces).
Indeed,
we have the bounds
\begin{equs}
|g(t,x)| & \lesssim 1 
	\lesssim (t \wedge 1)^{\eta/2} \lesssim (t \wedge 1)^{\eta} \;,\\
|\d_x g(t,x)| & \lesssim |\d_x G(v^{(0)})| 
	\lesssim (t \wedge 1)^{(\eta-1)/2}  \lesssim (t \wedge 1)^{\eta-1/2}  \;, \\
|\d_x^2 g(t,x)| + |\d_t g(t,x)| & \lesssim |\d_x G(v^{(0)})|^2 + |\d_x^2G(v^{(0)})|
+ |\d_t G(v^{(0)})| \\
	&\lesssim (t \wedge 1)^{\eta-1} + (t \wedge 1)^{(\eta-2)/2}  
	 \lesssim (t \wedge 1)^{\eta-1}   \;,
\end{equs}
from which the fact that $g \in \CD^{\mu,2\eta}$
follows similarly to \cite[Lemma~7.5]{Regularity}.

It furthermore follows from \cite[Prop.~6.13]{Regularity} that the map $W \mapsto \sin(\tilde \nu W + \tilde \theta)$ is locally Lipschitz from $\CD^{\mu,0}$ into itself.
Combining this with the fact that $g \in \CD^{\mu,2\eta}$ as mentioned above and
that $\Xi_a^{k}$ is of homogeneity $-\gamma$, we conclude from 
\cite[Prop.~6.12]{Regularity} that the map \eqref{e:RHS} is indeed 
locally Lipschitz continuous from $\CD^{\mu,0}$ into
$\CD^{\mu - \gamma,2\eta- \gamma}$.
Since $2\eta-\gamma+2 > -\frac{2}{3}-\frac{4}{3}+2 =0$ and $\mu-\gamma +2 >\mu$,
these exponents do satisfy the required inequalities, thus concluding the proof.
\end{proof}

\begin{remark} \label{rem:regularity}
In fact, the limiting process $v$ belongs to $\CC \left( (0,T],C^{(2-\gamma) \vee 0}(\mathbf{T}^2) \right)$ for every $\gamma>\beta^2/ 4\pi$, as soon as the random variable $\tau$ is strictly greater than $T$.  Since the Gaussian process $\Phi$ belongs to $\CC \left( (0,T],\CC^{-\delta}(\mathbf{T}^2) \right)$ for every $\delta>0$, the solution to the original equation \eref{e:model} is continuous in time for positive times, with values in $\CC^{-\delta}(\mathbf{T}^2)$ for every $\delta>0$.
\end{remark}

\begin{remark}
The condition $\gamma < {4\over 3}$ (corresponding to $\beta^2 < {16\pi \over 3}$ via the
correspondence $\gamma > \beta^2 /4\pi$ which we have seen in Theorem~\ref{theo:conv})
comes from the fact that we restrict ourselves to second-order processes in  Assumption~\ref{assumption:A}.
If we were to consider suitable additional third-order processes as well, this threshold would
increase to $\beta^2 < 6\pi$. In principle, by obtaining convergence of 
the corresponding (suitably renormalised) processes of arbitrarily high order, 
the threshold could be increased all the way up to $\beta^2 < 8\pi$, but this is highly 
non-trivial. At $\beta^2 = 8\pi$, one loses local 
subcriticality in the sense of \cite[Assumption~8.3]{Regularity}
and the theory breaks down.
\end{remark}

At this stage, we note that 
in our specific situation 
$f_{c,k}(v) = \zeta_k \sin(k\beta v + \theta_k)$
and $f_{s,k}(v) = \zeta_k \cos(k\beta v + \theta_k )$, so that one has the identity 
$f_{c,k} f_{c,k}' + f_{s,k} f_{s,k}' = 0$ for each $k$. As a consequence,
the ``renormalised'' equation \eref{e:renormv} is identical to the ``original'' equation \eref{e:general}!
It is now clear that Theorem~\ref{theo:main} follows from the following result, which is the main
technical result of this article.

\begin{theorem}\label{theo:convModel}
Assume that $\beta^2 \in [4\pi, \frac{16\pi}{3})$.
Let $\Psi_\eps^k = \Psi_\eps^{c,k} + i \Psi_\eps^{s,k}$ be defined as in \eref{e:defPsi},
and $\Psi_\eps^{ab,kl}$ for $a,b \in \{s,c\}$
be defined as in \eref{e:Psi-ab}.
Then there exist choices of constants $C_\eps^{(k)}$
depending only on $\beta$ and the mollifier $\rho$, and distributions $\Psi^a$ and distribution-valued functions
$\Psi^{ab}(z,\cdot)$ where $a,b \in \{s,c\}$, which are independent of the mollifier $\rho$,
such that Assumption~\ref{assumption:A} holds.
\end{theorem}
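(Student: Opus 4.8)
The plan is to reduce the statement to quantitative moment bounds on the first‑ and second‑order processes and then to verify these by an explicit Gaussian (Wick) computation followed by a multi‑scale ``Coulomb‑gas'' analysis. By the Kolmogorov‑type criterion of Remark~\ref{rem:Kolmogorov}, it suffices to exhibit deterministic constants $C_\eps^{(k)}$ (depending only on $\beta$ and $\rho$) and limiting objects $\Psi^a$, $\Psi^{ab}(z,\cdot)$ for which, for every $p\ge 1$ and some $\bar\gamma$ with $\beta^2/4\pi<\bar\gamma<\gamma$, one has
\[
\E\,\big|\Psi_\eps^{a}(\phi_z^\lambda)\big|^p\lesssim\lambda^{-p\bar\gamma}\;,\qquad
\E\,\big|\Psi_\eps^{ab}(z,\phi_z^\lambda)\big|^p\lesssim\lambda^{-p(2\bar\gamma-2)}\;,
\]
uniformly over $\eps,\lambda\in(0,1]$, over admissible test functions $\phi$ and over $z$ in a compact set, together with the vanishing as $\eps\to0$ of the corresponding quantities for $(k,l)\neq(1,1)$ and of the analogous moments of the increments $\Psi_\eps-\Psi_{\eps'}$; the limits $\Psi^a$, $\Psi^{ab}$ are then recovered by $L^p$‑completeness. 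A choice of $\gamma\in(1,\tfrac43)$ with $\gamma>\beta^2/4\pi$ exists precisely because $\beta^2<\tfrac{16\pi}3$. The bounds for the first‑order processes $\Psi_\eps^{a,k}$ are exactly Theorem~\ref{theo:convBasic} (together with the elementary second‑moment estimate underlying Theorem~\ref{theo:conv}), so the genuinely new content is the analysis of the second‑order processes $\Psi_\eps^{ab,kl}$ and the identification of $C_\eps^{(k)}$.

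Since $\Phi_\eps=K*\xi_\eps$ is Gaussian, writing each real process $\Psi_\eps^{a,k}$ as a linear combination of $e^{\pm ik\beta\Phi_\eps}$ and expanding turns $\E|\Psi_\eps^{ab,kl}(z,\phi_z^\lambda)|^{2n}$ into a finite sum of integrals over at most $4n$ space‑time points --- the $2n$ outer evaluation points (each carrying a charge $\pm k$) and the $2n$ integration variables produced by the factors $K*\Psi_\eps^{b,l}$ in \eqref{e:Psi-ab} (each carrying a charge $\pm l$) --- whose integrand is, by Wick's theorem, $\exp\!\big(-\beta^2\sum_{i<j}q_iq_j\,\widetilde C_\eps(x_i,x_j)\big)$ with $\widetilde C_\eps$ the renormalised covariance of $\Phi_\eps$ and $q_i$ the charges, multiplied by $2n$ heat‑kernel factors (one from each $K*\Psi_\eps^{b,l}$) and by the differences produced by the recentering $(K*\Psi_\eps^{b,l})(\bar z)-(K*\Psi_\eps^{b,l})(z)$. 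The normalisation $C_\rho\eps^{-\beta^2/4\pi}$ in \eqref{e:defPsi} is dictated by requiring the diagonal self‑energies of the unit charges to cancel exactly against the powers of $\eps$ so produced; what remains is a factor $\eps^{\frac{\beta^2 n}{2\pi}(k^2+l^2-2)}$, so that all contributions with $(k,l)\neq(1,1)$ vanish polynomially in $\eps$, while for $k=l=1$ one has $\widetilde C_\eps(x_i,x_j)\to\tfrac1{2\pi}\log\|x_i-x_j\|_\s+O(1)$ off the diagonal and the surviving integrand behaves like $\prod_{i<j}\|x_i-x_j\|_\s^{\beta^2 q_iq_j/2\pi}$. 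The constant $C_\eps^{(k)}$ is taken to be (the $\eps$‑divergent part of) $2\,\E[\Psi_\eps^{a,k}(z)\,(K*\Psi_\eps^{a,k})(z)]$, a deterministic quantity which by stationarity does not depend on $z$ and, up to a convergent remainder, not on $a\in\{s,c\}$; its role in the moment expansion is exactly to cancel the pairing of an outer point with ``its own'' integration variable, whose heat‑kernel factor renders that pairing non‑integrable precisely once $\beta^2\ge 4\pi$.

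The core of the argument --- and the step I expect to be the main obstacle --- is the bound on these integrals. The singular pairs are those joining opposite charges, contributing $\|x_i-x_j\|_\s^{-\beta^2/2\pi}$; an isolated such ``dipole'' is integrable for $\beta^2<8\pi$, but tight clusters of many points can only be controlled by exploiting charge neutrality. This is carried out by the inductive, scale‑by‑scale procedure of Section~\ref{sec:linear} --- the ``hierarchical'' bound of Proposition~\ref{prop:hierarchical} --- in which the points are organised into a tree of dyadic clusters and each neutral cluster is shown to contribute only a ``multipole'' factor, hence a milder effective singularity (one order of cancellation gives a dipole, two give a quadrupole); the recentering differences supply the extra $\|\bar z-z\|_\s^{2-\gamma}$ of decay that gives $\CI(\Xi_b^l)$ its positive homogeneity $2-\gamma$, and subtracting $C_\eps^{(k)}$ removes the residual logarithmic divergence in the diagonal case $a=b$, $k=l$. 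It is here that the hypothesis $\beta^2<\tfrac{16\pi}3$, equivalently $\gamma<\tfrac43$ and $2-2\gamma>-\tfrac23$, is essential: truncating at quadrupoles just suffices to close the induction, but --- as noted in Remark~\ref{rem:thresholds} --- even the bound on the simplest second‑order object degrades already at $\beta^2=6\pi$, and keeping track simultaneously of the charges, the scales, the kernel factors and the renormalisation cancellations is the delicate part. These estimates --- for $k=l$ in Section~\ref{sec:second-order}, for $k\neq l$ in Section~\ref{sec:kneql} --- constitute Theorem~\ref{theo:second-order}.

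Finally, convergence and the independence properties follow by running the same computation for the increments: since $\widetilde C_\eps\to\widetilde C$ off the diagonal and the multi‑scale bounds are uniform in $\eps$, a dominated‑convergence argument shows that all the relevant moments of $\Psi_\eps-\Psi_{\eps'}$ tend to $0$, so that the $\Psi_\eps$ are Cauchy in every $L^p$ and define $\Psi^a$, $\Psi^{ab}$. The limiting covariance $\widetilde C$ has the universal logarithmic singularity $\tfrac1{2\pi}\log\|\cdot\|_\s$ with a bounded part depending on $\rho$; choosing $C_\rho$ so as to normalise this bounded part makes $\Psi^a$ and $\Psi^{ab}$ independent of $\rho$, and since only the $(k,l)=(1,1)$ terms survive in the limit, $\Psi^{ab}$ depends on $F_\beta$ only through $\beta$ (and, after unwinding the real/imaginary decomposition, $\zeta_1$ and $\theta_1$). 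Invoking Remark~\ref{rem:Kolmogorov} then yields Assumption~\ref{assumption:A} in probability, which completes the proof.
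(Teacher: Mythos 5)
Your outline follows the paper's overall strategy: reduce Assumption~\ref{assumption:A} to moment estimates via the Kolmogorov criterion of Remark~\ref{rem:Kolmogorov}, dispatch the first-order processes by Theorem~\ref{theo:convBasic}, identify $C_\eps^{(k)}$ as (a variant of) $\E\bigl[\Psi_\eps^k(K*\bar\Psi_\eps^k)\bigr]$, and reduce the second-order processes to the estimates of Theorem~\ref{theo:second-order}. The Coulomb-gas expansion of the moments and the recognition that only $(k,l)=(1,1)$ survives are also correct.

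There is, however, a genuine gap in the way you describe the crucial second-order estimate. You assert that the required cancellations for $\Psi_\eps^{k\bar k}$ are ``carried out by the inductive, scale-by-scale procedure of Section~\ref{sec:linear}---the hierarchical bound of Proposition~\ref{prop:hierarchical}''. Proposition~\ref{prop:hierarchical} only bounds the purely multiplicative expression $\prod_{i\neq j}\CJ^{\sigma_i\sigma_j}(x_i-x_j)$; it says nothing about how to exploit the \emph{alternating sum} $\CH=\sum_{P\subset\CR}(-1)^{|P|}\CH_P$ that arises once the renormalisation constant is subtracted. The core of the paper's argument for $\Psi^{k\bar k}_\eps$ is the entirely new combinatorial identity of Proposition~\ref{prop:cancellations}: for each ordering of the renormalised pairs $\CR$, one telescopes $\CH$ into a sum over pairs $(A,\CB)$ of a factor $\Delta_A^\CB(\CJ)$---a product of the ``quadrupole'' quantities $\Delta_e^f$ from \eqref{e:Delta-ef}---times a residual $\CH(A,\CB)$. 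It is only because $\Delta_e^f$ is small when one of the two dipoles is tight (Lemma~\ref{lem:Delta-by-A}) that the renormalisation subtraction actually tames the divergence of $K(e)\CJ^-_e$; the hierarchical bound is then used on the residual factors $\CH(A,\CB)$ after this telescoping, not before. Without this algebraic step the subtraction of $C_\eps^{(k)}$ in \eqref{e:Psi-ab} cannot be propagated through a $2N$-th moment, and the argument does not close. Likewise, the passage from the auxiliary objects $\Psi_\eps^{k\bar l}$ of \eqref{e:Psipm2} back to the $\Psi_\eps^{ab,kl}$ of \eqref{e:Psi-ab}---needed to justify your formula for $C_\eps^{(k)}$---requires the estimates on $F_\eps^{(k)}=\CT K*\CJ_\eps^{-k^2}$ in Lemma~\ref{lem:Feps}, which your outline leaves implicit.
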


Theorem~\ref{theo:convModel} is proved in Section~\ref{sec:second-order}.
As discussed in Remark~\ref{rem:init-2ndorder},
this theorem is an immediate consequence of Lemma~\ref{lem:Feps}
and Theorem~\ref{theo:second-order}.

At this stage one might wonder if, in view of \cite[Sec.~10]{Regularity} and \cite{KPZJeremy},
there is anything non-trivial left to prove at all. The reason why 
Theorem~\ref{theo:convModel} is not covered by these results is that, in view of \eref{e:defPsi},
the stochastic processes $\Psi_\eps^{a,k}$ are obviously not Gaussian. 
Worse, they do not belong to any Wiener chaos of fixed order. As a consequence, 
we have no automatic way of obtaining equivalence of moments and Wick's formula
does not hold, which is the source of considerable complication.

\section{Convergence of the first-order process}
\label{sec:linear}

In this section, we prove Theorem~\ref{theo:conv} and we will retain the notations
from the introduction. This time however, we define $\Psi_\eps$ somewhat more indirectly by
\begin{equ}[e:defPsi2]
	\Psi_\eps = \Wick{e^{i \beta \Phi_\eps}} 
		\eqdef e^{i \beta \Phi_\eps + {\beta^2 \over 2} \CQ_\eps(0)} \;,
	\qquad
	\Psi_\eps^k 
		\eqdef e^{i k \beta \Phi_\eps + {\beta^2 \over 2} \CQ_\eps(0)} 
	\quad (k\geq 2)\;,
\end{equ}
where $\CQ_\eps$ denotes the covariance function of the Gaussian process $\Phi_\eps$.
Using the definition \eref{e:defPhi}, one has the identity
\begin{equ}[e:defQeps]
\CQ_\eps = (K * \rho_\eps) * \CT(K * \rho_\eps)\;,
\end{equ}
where $\CT$ denotes the reflection operator given by $(\CT F)(z) = F(-z)$.
The link between this definition and \eref{e:defPsi} is given by the following result,
the proof of which is postponed to the end of this section.

\begin{lemma}\label{lem:const}
There exists a constant $\hat C_\rho$ depending only on the mollifier $\rho$ and such that
\begin{equ}
\CQ_\eps(0) = -  {1\over 2\pi} \log \eps  + \hat C_\rho + \CO(\eps^2)\;.
\end{equ}
In particular, if Theorem~\ref{theo:conv} holds for $\Psi_\eps$ defined as in \eref{e:defPsi2},
then it also holds for $\Psi_\eps$ defined as in \eref{e:defPsi}.
\end{lemma}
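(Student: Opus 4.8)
\emph{Proof plan.} The idea is to pass to the Fourier side, localise the logarithmic divergence by a parabolic rescaling, and compute the resulting constant explicitly. By \eref{e:defQeps}, $\CQ_\eps(0)=\bigl((K*\rho_\eps)*\CT(K*\rho_\eps)\bigr)(0)=\|K*\rho_\eps\|_{L^2}^2$; and since $\CT(f*g)=\CT f*\CT g$, this equals $\bigl((K*\CT K)*(\rho_\eps*\CT\rho_\eps)\bigr)(0)=\langle\CQ_0,\bar\rho_\eps\rangle$, where $\CQ_0:=K*\CT K\in L^1\cap L^2$ and $\bar\rho_\eps:=\rho_\eps*\CT\rho_\eps$ are both even, the latter being a mean-one compactly supported function concentrated at the parabolic scale $\eps$. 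With the convention $\hat f(\zeta)=\int f(z)e^{-i\zeta\cdot z}\,dz$ and writing $\zeta_\eps:=(\eps^2\zeta_0,\eps\zeta_1,\eps\zeta_2)$, one has $\widehat{\bar\rho_\eps}(\zeta)=|\hat\rho(\zeta_\eps)|^2$, so Plancherel gives $\CQ_\eps(0)=(2\pi)^{-3}\int_{\R^3}|\hat K(\zeta)|^2\,|\hat\rho(\zeta_\eps)|^2\,d\zeta$. Recalling that $g:=\partial_t K-\tfrac12\Delta K-\delta_0$ is smooth and compactly supported (this is exactly the smoothness of $R_\eps$ noted earlier), so that $\hat g$ is Schwartz with $\hat g(0)=\int g=-1$, we have $\hat K=(1+\hat g)/m$ with heat symbol $m(\zeta):=i\zeta_0+\tfrac12(\zeta_1^2+\zeta_2^2)$; in particular $|\hat K|^2=|1+\hat g|^2/|m|^2$ is bounded, and $|\hat K(\zeta)|^2\lesssim\|\zeta\|_\s^{-4}$ for large $\|\zeta\|_\s$.

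Next I would substitute $\eta:=\zeta_\eps$ (so $m$ acquires a factor $\eps^{-2}$ and $d\zeta=\eps^{-4}d\eta$), obtaining $\CQ_\eps(0)=(2\pi)^{-3}\int_{\R^3}|1+\hat g(\eta_{1/\eps})|^2\,|m(\eta)|^{-2}\,|\hat\rho(\eta)|^2\,d\eta$ with $\eta_{1/\eps}:=(\eps^{-2}\eta_0,\eps^{-1}\eta_1,\eps^{-1}\eta_2)$, and split the integral over $\{\|\eta\|_\s<\eps\}$, $\{\eps\le\|\eta\|_\s<1\}$ and $\{\|\eta\|_\s\ge1\}$. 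On the first set, rescaling back to $\{\|\cdot\|_\s<1\}$ and using that $|\hat K|^2$ is bounded while $|\hat\rho|^2=1+\CO(|\cdot|^2)$ near $0$ (vanishing gradient there, as $\rho$ is real) gives a constant plus $\CO(\eps^2)$. On the third set $\|\eta_{1/\eps}\|_\s=\eps^{-1}\|\eta\|_\s\ge1$, so the Schwartz decay of $\hat g$ yields $|1+\hat g(\eta_{1/\eps})|^2=1+\CO_N(\eps^N\|\eta\|_\s^{-N})$ for every $N$, and together with $|m(\eta)|^{-2}\lesssim\|\eta\|_\s^{-4}$ and the rapid decay of $\hat\rho$ one again gets a constant plus $\CO(\eps^2)$. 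On the middle set the same two expansions reduce everything to $\int_{\eps\le\|\eta\|_\s<1}d\eta/|m(\eta)|^2$, all other contributions being constants plus $\CO(\eps^2)$ (the relevant tails, such as $\int_{\|\eta\|_\s<\eps}\|\eta\|_\s^2/|m(\eta)|^2\,d\eta$, being $\CO(\eps^2)$). Since $1/|m|^2$ is parabolically homogeneous of degree $-|\s|=-4$, this integral equals $c_\s\log(1/\eps)$ exactly, with $c_\s:=\int_{1\le\|\eta\|_\s<e}d\eta/|m(\eta)|^2$.

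The crux is then to show $c_\s=4\pi^2$ (together with the sharp $\CO(\eps^2)$ remainder). First, $\int_{1\le N<e}d\eta/|m(\eta)|^2$ is the same for every continuous positive degree-one homogeneous norm $N$ on $\R^3$: for two such norms the symmetric difference of the annuli $\{r\le N_i<1\}$ sits inside a set whose $|m|^{-2}\,d\eta$-measure is, by homogeneity, independent of the inner radius $r$, which is incompatible with the two integrals differing (their difference would be a fixed nonzero multiple of $\log(1/r)$ as $r\to0$). Taking $N=|m|^{1/2}$, so that $\{1\le N<e\}=\{1\le|m|<e^2\}$, the change of variables $(\zeta_1,\zeta_2)\mapsto b:=\tfrac12(\zeta_1^2+\zeta_2^2)$ followed by polar coordinates in the half-plane $\{(\zeta_0,b):b\ge0\}$ gives $\int_{s_1<|m(\zeta)|<s_2}d\zeta/|m(\zeta)|^2=2\pi^2\log(s_2/s_1)$, whence $c_\s=4\pi^2$. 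Thus the coefficient of $\log\eps$ in $\CQ_\eps(0)$ is $-c_\s/(2\pi)^3=-1/(2\pi)$, and collecting the bounded terms into a single $\eps$-independent constant $\hat C_\rho$ proves the first assertion. I expect precisely this step to be the main obstacle; note that the $\CO(\eps^2)$ (rather than $\CO(\eps)$) rate relies on the evenness of $\bar\rho_\eps$, equivalently the vanishing gradient of $|\hat\rho|^2$ at the origin.

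Finally, the second assertion is immediate: granting $\CQ_\eps(0)=-\tfrac1{2\pi}\log\eps+\hat C_\rho+\CO(\eps^2)$, one has $\exp(\tfrac{\beta^2}2\CQ_\eps(0))=e^{\beta^2\hat C_\rho/2}\,\eps^{-\beta^2/4\pi}(1+\CO(\eps^2))$, so with $C_\rho:=e^{\beta^2\hat C_\rho/2}$ the random variables $\Psi_\eps^k$ defined by \eref{e:defPsi} and \eref{e:defPsi2} differ only by the deterministic scalar $1+\CO(\eps^2)\to1$; hence convergence in probability in $\CC_\s^{-\gamma}$ of one version is equivalent to that of the other, with the same limit.
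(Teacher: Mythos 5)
Your argument is correct, but it follows a genuinely different route from the paper's. The paper's proof is a short computation in physical space: it plugs the decomposition of Lemma~\ref{lem:C_log}, namely $\CQ(z)=-\tfrac{1}{2\pi}\log\|z\|_\s+\hat\CR\bigl(t/\|z\|_\s^2,x/\|z\|_\s\bigr)+R(z)$, into $\CQ_\eps(0)=(\CQ*\bar\rho_\eps)(0)$; the parabolic homogeneity of $\log\|\cdot\|_\s$ and of $\hat\CR$ immediately produces the $-\tfrac{1}{2\pi}\log\eps$ term and a scale-independent constant after substituting $z\mapsto(\eps^2 t,\eps x)$, and the $\CO(\eps^2)$ remainder comes from the evenness of $\bar\rho$ annihilating the linear part of $R$. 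You instead pass to the Fourier side via Plancherel, split dyadically, and compute the logarithmic coefficient as $(2\pi)^{-3}\int_{1\le\|\eta\|_\s<e}|m(\eta)|^{-2}\,d\eta$, for which you need both the norm-independence observation and the explicit evaluation giving $4\pi^2$. Your route has the advantage of being self-contained --- it does not presuppose Lemma~\ref{lem:C_log} --- and it makes the provenance of the constant $1/(2\pi)$ very transparent; the cost is that you must carry out the homogeneous-symbol computation and justify the norm-independence argument, which the paper sidesteps by having already done the analogous work in Lemmas~\ref{lem:formCQ} and~\ref{lem:C_log}. Your treatment of the $\CO(\eps^2)$ rate and of the reduction from \eref{e:defPsi2} to \eref{e:defPsi} agrees with the paper. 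One small remark: the relevant fact is that $\bar\rho=\rho*\CT\rho$ is even (hence $\widehat{\bar\rho}=|\hat\rho|^2$ has vanishing gradient at $0$), which is the parallel of the paper's observation that $\bar\rho$ annihilates linear functions; you attribute it to $\rho$ being real, which is also sufficient, but the evenness is the more robust reason.
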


Our first main result is then the following:

\begin{theorem}\label{theo:convBasic}
Let $0<\beta^2<8\pi$. There exists a stationary random complex 
distribution-valued process $\Psi$ 
independent of the mollifier $\rho$
such that
$\Psi_\eps \to \Psi$ in probability. Furthermore, 
for every $\kappa>0$ sufficiently small, one has
\begin{equ}[e:boundFirstLevel]
	\E |\scal{\phi_x^\lambda, \Psi_\eps}|^p \lesssim  \lambda^{- {\beta^2 p \over 4\pi}}\;,
	\qquad
	\E |\scal{\phi_x^\lambda, \Psi_\eps - \Psi}|^p
		\lesssim \eps^{\kappa} \lambda^{- {\beta^2 p \over 4\pi} - \kappa}\;,
\end{equ}
\begin{equ}[e:bound-kthmode]
\E |\scal{\phi_x^\lambda, \Psi_\eps^k}|^p \lesssim
\eps^{p \kappa}  \lambda^{- {\beta^2 p \over 4\pi} - p \kappa}
\qquad (k\geq 2)\;,
\end{equ}
uniformly over all test functions $\phi$ supported in the unit ball and bounded by $1$,
all $\lambda \in (0,1]$, and locally uniformly over space-time points $x$.
\end{theorem}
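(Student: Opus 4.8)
The plan is to estimate $\E |\scal{\phi_x^\lambda, \Psi_\eps^k}|^p$ directly by expanding the $p$-th moment as a $2p$-fold space-time integral and exploiting the explicit Gaussian structure of $\Phi_\eps$. Writing $\Psi_\eps^k = e^{ik\beta\Phi_\eps + \frac{\beta^2}{2}\CQ_\eps(0)}$ and taking $p$ factors of $\Psi_\eps^k$ and $p$ factors of $\overline{\Psi_\eps^k}$, the key point is that for jointly Gaussian variables one has the identity
\begin{equ}
\E \exp\Bigl(i\sum_{j} \sigma_j k\beta \Phi_\eps(z_j)\Bigr)
= \exp\Bigl(-\frac{k^2\beta^2}{2}\sum_{j,j'}\sigma_j\sigma_{j'}\CQ_\eps(z_j - z_{j'})\Bigr)\;,
\end{equ}
where the $\sigma_j \in \{+1,-1\}$ encode whether a factor is holomorphic or antiholomorphic. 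Combining this with the normalisation constant $e^{p k^2 \beta^2 \CQ_\eps(0)/2 \cdot \text{(number of factors)}}$ — more precisely the $\CQ_\eps(0)$ terms cancel the diagonal $j = j'$ contributions — one is left with an integrand of the form
\begin{equ}
\prod_{j < j'} \bigl|\,e^{\pm k^2\beta^2 \CQ_\eps(z_j - z_{j'})}\,\bigr|\;,
\end{equ}
integrated against $\prod_j \phi_x^\lambda(z_j)$. Since by Lemma~\ref{lem:const} one has $\CQ_\eps(z) \approx -\frac{1}{2\pi}\log(\|z\|_\s \vee \eps)$ up to bounded corrections, each factor behaves like $(\|z_j - z_{j'}\|_\s \vee \eps)^{\pm k^2\beta^2/2\pi}$, so that pairs with opposite signs $\sigma_j\sigma_{j'}$ produce integrable singularities while pairs with equal signs produce harmless positive powers.

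The core of the argument is then the combinatorial/analytic bookkeeping of these competing singularities, which I expect to be organised via the inductive ``hierarchical'' bound advertised as Proposition~\ref{prop:hierarchical}: one groups the $2p$ points into a nested hierarchy of clusters according to their mutual distances, and shows that at each scale the accumulated exponent is controlled. The scaling by $\phi_x^\lambda$ forces all points into a ball of radius $\lambda$, so a crude power-counting gives a factor $\lambda^{4p}$ from the volume (in the parabolic scaling $|\s| = 4$) against $\lambda^{-4p}$ from the $p$ factors of $\lambda^{-4}$ in $\phi_x^\lambda$, times the scaling of the singular kernels; tracking this carefully yields the claimed power $\lambda^{-\beta^2 p/4\pi}$ for $k = 1$. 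The gain of the extra factor $\eps^{p\kappa}\lambda^{-p\kappa}$ for $k \geq 2$ comes from the fact that the relevant exponent is governed by $k^2\beta^2/2\pi$ rather than $\beta^2/2\pi$: the ``critical'' pairing that would produce the borderline divergence now over-integrates, leaving room to extract a positive power of $\eps$ from the cutoff at scale $\eps$ in $\CQ_\eps$. Concretely, one splits off a small power of each short-distance factor $(\|z_j - z_{j'}\|_\s \vee \eps)^{\delta}$, bounds the remaining product as in the $k=1$ case, and notes that the split-off factors are bounded by $\eps^{\delta}$ times something controlled.

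The main obstacle will be the uniformity of all these bounds: one needs the constants to be independent of $\lambda$, of the test function $\phi$ (within the stated class), and locally uniform in $x$, and — crucially for the second bound in \eref{e:boundFirstLevel} and for \eref{e:bound-kthmode} — one needs the estimates to hold \emph{up to} and slightly beyond the natural threshold, i.e. for all $\beta^2 < 8\pi$ with only a $\kappa$-loss. This forces the inductive scheme to be essentially optimal at every scale, with no slack to waste, which is precisely why a naive term-by-term bound fails and the hierarchical reorganisation of Proposition~\ref{prop:hierarchical} is needed. For the convergence statement $\Psi_\eps \to \Psi$, one runs the same machinery on the difference $\Psi_\eps - \Psi_{\eps'}$ (or $\Psi_\eps - \Psi_\delta$ against a diagonal subsequence), using that $\CQ_\eps \to \CQ_0$ with quantitative rate $\CO(\eps^\kappa)$ in the relevant norm, to obtain a Cauchy estimate in $L^p$ of the pairings; together with the a priori bound and Kolmogorov's continuity criterion in the parabolic Besov scale (cf. Remark~\ref{rem:Kolmogorov}) this yields convergence in probability in $\CC_\s^{-\gamma}$ for any $\gamma > \beta^2/4\pi$, and stationarity is inherited from the stationarity of $\Phi_\eps$. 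Independence of $\rho$ follows since the limiting covariance $\CQ_0$ depends on $\rho$ only through the bounded constant $\hat C_\rho$, which is absorbed into $C_\rho$.
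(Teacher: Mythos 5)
Your proposal follows the paper's route for the $k=1$ bounds and for convergence: expand the $p$-th moment via the Gaussian exponential identity so that the integrand is a product of $\J_\eps^{\pm1}$ factors, bound it pointwise by a product of $N$ pairing kernels using the hierarchical Proposition~\ref{prop:hierarchical} (via Corollary~\ref{cor:pointBound}), integrate over the ball of radius $\lambda$ to pick up the scaling, and obtain the limit from an $L^2$ Cauchy estimate on $\scal{\phi_x^\lambda,\Psi_\eps-\Psi_{\bar\eps}}$ together with Cauchy--Schwarz. This is indeed what the paper does.

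Where your argument, as written, does not actually deliver the claim is the factor $\eps^{p\kappa}$ in \eref{e:bound-kthmode} for $k\ge 2$. You attribute this to ``extracting a positive power of $\eps$ from the cutoff at scale $\eps$'', proposing to split off factors $(\|z_j-z_{j'}\|_\s\vee\eps)^\delta$; but such a factor is only controlled by $\lambda^\delta$ (all integration variables lie within distance $\lambda$), not by $\eps^\delta$, and the leftover kernel with exponent $-k^2\beta^2/2\pi-\delta$ is \emph{more} singular than the $k=1$ case, not comparable to it. The actual source of the gain is that $\Psi_\eps^k$ is deliberately \emph{under-normalised}: the definition \eref{e:defPsi2} uses the same constant $e^{\beta^2\CQ_\eps(0)/2}$ as for $k=1$, whereas the diagonal terms in your Gaussian identity contribute $e^{-k^2\beta^2\CQ_\eps(0)/2}$ per factor. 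These cancel only when $k=1$; for $k\ge 2$ there is a residual $e^{-N(k^2-1)\beta^2\CQ_\eps(0)}\asymp\eps^{N(k^2-1)\beta^2/2\pi}$ by Lemma~\ref{lem:const}, and it is precisely this prefactor which, distributed as $\eps^{(k^2-1)\beta^2/2\pi}$ onto each of the $N$ pairing kernels $(\eps+\|z\|_\s)^{-k^2\beta^2/2\pi}$, allows the bound $\eps^{2\kappa}\|z\|_\s^{-\beta^2/2\pi-2\kappa}$ and hence \eref{e:bound-kthmode}. Your expression for the combined normalisation constant with a $k^2$ in the exponent suggests an exact cancellation for all $k$, which would drop this prefactor and make the $\eps$-gain unattainable; the step needs to be reworked with the correct book-keeping.
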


\begin{remark}
Throughout this paper we write 
$\scal{\phi_x^\lambda, \Psi} \eqdef \int_{\R^3} \phi_x^\lambda(\bar x) \Psi(\bar x) \,d\bar x$
when $\Psi$ is function of one space-time variable.
In the following, we will also write
$\scal{\phi_x^\lambda, \Psi} \eqdef \int_{\R^3} \phi_x^\lambda(\bar x) \Psi(x, \bar x)\, d\bar x$ if $\Psi$  is function of two space-time variables such as the functions defined in \eref{e:Psi-ab}.
\end{remark}

\begin{proof}
We first obtain the a priori bound stated in the theorem for finite values of $\eps$. 
Denote 
\begin{equ}[e:defJeps]
\J_\eps(z) = \exp(-\beta^2 \CQ_\eps(z))\;,
\end{equ}
where $\CQ_\eps$ was defined in \eref{e:defQeps}. 
We note that, as a consequence of the commutativity of convolution and the fact 
that $\CT(f*g) = \CT f * \CT g$, 
\eref{e:defQeps} can be rewritten as
\begin{equ}
\CQ_\eps = \CQ * (\rho_\eps * \CT \rho_\eps)\;,\qquad \CQ = K * \CT K\;.
\end{equ}
In particular, setting $\bar \rho = \rho * \CT \rho$, one has
\begin{equ}[e:CQeps]
\CQ_\eps = \CQ * \bar \rho_\eps\;,
\end{equ}
and this is the expression that we are going to make use of here.

Then, by Corollary \ref{cor:Jeps} below,
we have the bounds
\begin{equ}[e:divergenceKeps]
 (\|z\|_\s + \eps)^{\beta^2\over 2\pi} \lesssim \J_\eps(z) \lesssim (\|z\|_\s + \eps)^{\beta^2\over 2\pi}\;,
 \qquad \text{for $0 \le \|z\|_\s \le 1$,}
\end{equ}
where the notation $\lesssim$ hides proportionality constants independent of $\eps$.
We will also use the notation $ \J^{-}_\eps(z) = \J^{-1}_\eps(z) = 1/ \J_\eps(z)$.
With this notation at hand, one verifies that
\begin{equ}
\E |\scal{\phi_x^\lambda, \Psi_\eps}|^{2N} = \iint {\prod_{i,j} \phi_x^\lambda(z_i)\phi_x^\lambda(y_j)\J_\eps^{-}(z_i - y_j) \over \prod_{\ell < m} \J_\eps^{-}(z_\ell-z_m)
\prod_{n < o} \J_\eps^{-}(y_n-y_o) } dz\,dy\;,
\end{equ}
where both integrations are performed over $(\R^3)^N$ and each $z_i$ and $y_i$ 
is an element of $\R^3$ (space-time). In a very similar context, a quantity of this
type was already bounded in \cite[Thm~3.4]{Jurg}. However, the proof given there relies on
an exact identity which does not seem to have an obvious analogue in our context.
Furthermore, the construction given in this section will then also be useful when 
bounding the second order processes.

By translation invariance, the above quantity is independent
of $x$. Furthermore, the function $\J_\eps$ is positive, so we can bound this integral
by the ``worst case scenario'' where $\phi$ is the indicator function of the unit
ball. This yields the bound
\begin{equ}[e:easybound]
\E |\scal{\phi_x^\lambda, \Psi_\eps}|^{2N} 
\lesssim \lambda^{-8N} \int_{\Lambda^{2N}} {\prod_{\ell < m} \J_\eps(z_\ell-z_m)
\prod_{n < o} \J_\eps(y_n-y_o) \over \prod_{i,j} \J_\eps(z_i - y_j)}\, dz\,dy\;,
\end{equ}
where $\Lambda$ denotes the parabolic ball of radius $\lambda$.
At this stage, we remark that the integrand of this expression consists of $N(N-1)$
factors in the numerator and $N^2$ factors in the denominator. One would hope that
some cancellations take place, allowing this to be bounded by a similar expression, but
with only $N$ terms, all in the denominator.

This is precisely the case and is the content of Corollary~\ref{cor:pointBound} below with $\J$ chosen to be our function $\J_\eps$ defined in \eref{e:defJeps},
which allows us to obtain the bound
\begin{equ}[e:first-order-bound]
\E |\scal{\phi_x^\lambda, \Psi_\eps}|^{2N}
\lesssim \lambda^{-8N} \Bigl|\int_{\Lambda^2} \J_\eps^{-}(x-y)\,dx\,dy\Bigr|^N\;.
\end{equ}
Taking such a bound for granted for the moment, we see that 
as a consequence of \eref{e:divergenceKeps}, one has the bound
\begin{equ}
\Bigl|\int_{\Lambda^2} \J_\eps^{-}(x-y)\,dx\,dy\Bigr| \le \lambda^{8- {\beta^2 \over 2\pi}}\;,
\end{equ}
uniformly in $\eps \in (0,1]$, provided that $\beta^2 < 8\pi$. (Otherwise 
$\J_\eps^{-}$ is no longer uniformly integrable as $\eps \to 0$.)
It follows immediately that
\begin{equ}
\E |\scal{\phi_x^\lambda, \Psi_\eps}|^{2N} \lesssim \lambda^{- {\beta^2 N \over 2\pi}}\;,
\end{equ}
which is the first of the two claimed bounds in \eref{e:boundFirstLevel}. 

For $k\geq 2$ we proceed analogously.
Indeed, it is straightforward to check that
$\E |\scal{\phi_x^\lambda, \Psi_\eps^k}|^{2N}$
is bounded by the right hand side of \eref{e:easybound} 
multiplied by $\eps^{(k^2-1)\beta^2 N \over 2\pi}$,
and with $\J_\eps$ replaced by $\J_\eps^{k^2}$.
Since $\J_\eps^{k^2}$ still satisfies~\eref{e:boundKeps}, 
Corollary~\ref{cor:pointBound} still applies. 
Therefore, together with~\eref{e:divergenceKeps}, one has
\begin{equ}
\E |\scal{\phi_x^\lambda, \Psi_\eps^k}|^{2N}
\lesssim \lambda^{-8N} \Bigl|\int_{\Lambda^2} 
\eps^{\frac{(k^2-1)\beta^2}{2\pi}} \Big(\eps+\|x-y\|_\s\Big)^{-\frac{k^2\beta^2}{2\pi}}
\,dx\,dy\Bigr|^N\;.
\end{equ}
Since $\beta^2<8\pi$ and $k\geq 2$, one can choose $\kappa>0$ sufficiently small
so that
\begin{equ}
(\eps+\|x-y\|_\s)^{-\frac{k^2\beta}{2\pi}} \lesssim 
\|x-y\|_\s^{-\frac{\beta^2}{2\pi}-2\kappa} \eps^{-\frac{(k^2-1)\beta^2}{2\pi}+2\kappa}\;,
\end{equ}
which is still integrable at short scales. Therefore,
\begin{equ}
\E |\scal{\phi_x^\lambda, \Psi_\eps^k}|^{2N}
\lesssim \eps^{2 \kappa N}\lambda^{-(\frac{\beta^2}{2\pi}+2\kappa)N} \;,
\end{equ}
which is precisely the bound \eref{e:bound-kthmode}.

In order to obtain the second bound of~\eref{e:boundFirstLevel}, we first show that the sequence $\scal{\phi_x^\lambda, \Psi_\eps}$
is Cauchy in $L^2(\Omega)$ for every sufficiently regular test function $\phi$ and every space-time
point $x$. For this, we will also need a notation for
\begin{equ}[e:Qepseps]
\CQ_{\eps,\bar \eps}(z) 
\eqdef \E \Phi_\eps(0) \Phi_{\bar \eps}(z) 
= \bigl(\CQ * (\rho_\eps * \CT \rho_{\bar \eps})\bigr)(z)\;,
\end{equ}
and we set analogously to \eref{e:defJeps} $\CJ_{\eps,\bar \eps} = \exp(-\beta^2 \CQ_{\eps,\bar \eps})$.
Note that 
$\CQ_{\eps,\bar \eps} = \CQ_{\bar \eps,\eps}$.
With this notation, a straightforward calculation yields
\begin{equs}
\E |\scal{\phi_x^\lambda, \Psi_\eps - \Psi_{\bar \eps}}|^2 &= \iint \phi_x^\lambda(y)\phi_x^\lambda(y+z)
\bigl(\CJ_\eps^-(z) + \CJ_{\bar \eps}^-(z) - 2 \CJ_{\eps,\bar \eps}^-(z)\bigr)\,dy\,dz\\
&\lesssim \lambda^{-4}\int_\Lambda \bigl|\CJ_\eps^-(z) + \CJ_{\bar \eps}^-(z) - 2 \CJ_{\eps,\bar \eps}^-(z)\bigr|\,dz\;, \label{e:goodBound}
\end{equs}
where $\Lambda$ now denotes a parabolic ball of radius $2\lambda$ centred around the origin. 
At this stage we note that, thanks to \eref{e:Qepseps}, the function $\CQ_{\eps,\bar \eps}$ also falls within the 
scope of Lemma~\ref{lem:reg_kernel_bounds} since one can write
\begin{equ}
\rho_\eps * \CT \rho_{\bar \eps} = \hat \rho_{\eps \vee \bar \eps}\;,
\end{equ}
for a function $\hat \rho$ which in general depends on $\eps$ and $\bar \eps$ but is bounded
(and has bounded support) independently of them.
As a consequence of Lemma~\ref{lem:reg_kernel_bounds}, we can thus write $\CJ_\eps^- = \CJ^-\, \exp(\CM_\eps)$
and $\CJ_{\eps,\bar \eps}^- = \CJ^-\, \exp(\CM_{\eps,\bar \eps})$ where, assuming without loss of generality that
$\bar \eps \le \eps$, the functions $\CM_\eps$
and $\CM_{\eps,\bar \eps}$ are bounded by
\begin{equ}
|\CM_\eps(z)|  + |\CM_{\eps,\bar \eps}(z)| \lesssim {\eps \over \|z\|_\s}
\end{equ}
for all space-time points $z$ with $\|z\|_\s \ge \eps$. 
Since, as a consequence of the first part of Lemma~\ref{lem:reg_kernel_bounds}, one can furthermore bound 
$\CJ_{\bar \eps}^-(z)$ and $\CJ_{\eps,\bar \eps}^-(z)$ by a suitable multiple of $\|z\|_\s^{-{\beta^2 / 2\pi}}$,
it follows immediately that one has the global bound
\begin{equ}
\bigl|\CJ_\eps^-(z) + \CJ_{\bar \eps}^-(z) - 2 \CJ_{\eps,\bar \eps}^-(z)\bigr| \lesssim \|z\|_\s^{-{\beta^2 \over 2\pi}}\Bigl( {\eps \over \|z\|_\s} \wedge 1\Bigr)\;.
\end{equ}
Inserting this bound into \eref{e:goodBound} and integrating over $\Lambda$ eventually yields
\begin{equ}
\E |\scal{\phi_x^\lambda, \Psi_\eps - \Psi_{\bar \eps}}|^2 \lesssim 
\left\{\begin{array}{cl}
	(\eps \wedge \lambda)^{4-{\beta^2 \over 2\pi}}\lambda^{-4} & \text{for $\beta^2 \in (6\pi,8\pi)$,} \\[.5em]
	(\eps\wedge \lambda)\lambda^{-4}\bigl(1 \vee \log (\lambda/\eps)\bigr)  & \text{for $\beta^2 = 6\pi$,} \\
	(\eps\wedge\lambda) \lambda^{-1-{\beta^2 \over 2\pi}} & \text{for $\beta^2 \in (0,6\pi)$.}
\end{array}\right.
\end{equ}
Since the bound $\bigl(1 \vee \log (\lambda/\eps)\bigr) \lesssim \lambda^\alpha (\eps\wedge \lambda)^{-\alpha}$
holds for every $\alpha > 0$, one can summarise these bounds by
\begin{equ}[e:finalBoundPsi]
\E |\scal{\phi_x^\lambda, \Psi_\eps - \Psi_{\bar \eps}}|^2 \lesssim \eps^{2\kappa} \lambda^{-2\kappa-{\beta^2 \over 2\pi}}\;,
\end{equ}
for some (sufficiently small depending on $\beta$) value of $\kappa$. Note that these bounds are independent of $\bar \eps$ 
as long as $\bar \eps \le \eps$. The existence of limiting random variables $\scal{\phi_x^\lambda,\Psi}$ follows immediately.
The second bound in \eref{e:boundFirstLevel} is then a consequence of the first by combining it with \eref{e:finalBoundPsi}
and using the Cauchy-Schwarz inequality. 

Let $\Psi_\eps^\rho,\Psi_{\eps}^{\bar\rho}$ be the processes defined via two mollifiers $\rho,\bar\rho$ respectively, then one has
\begin{equ}
\E |\scal{\phi_x^\lambda, \Psi_\eps^\rho - \Psi_{\eps}^{\bar\rho}}|^2 
= \iint \phi_x^\lambda(y)\phi_x^\lambda(y+z)
\bigl(\CJ_{\eps,\rho}^-(z) + \CJ_{\eps,\bar\rho}^-(z) - 2 \CJ_{\eps,\rho,\bar\rho}^-(z)\bigr)\,dy\,dz 
\end{equ}
where $\CJ_{\eps,\rho,\bar\rho} = \exp(-\beta^2 \CQ_{\eps,\rho,\bar\rho})$ and
$\CQ_{\eps,\rho,\bar\rho}(z) = \bigl(\CQ * \hat\rho_\eps \bigr)(z)$
with $\hat\rho_\eps = (\rho_\eps * \CT \bar\rho_{\eps})$. 
Then it follows in the same way as above that all the moments of 
$\scal{\phi_x^\lambda, \Psi_\eps^\rho - \Psi_{\eps}^{\bar\rho}}$
converge to zero as $\eps\to 0$.
Therefore the limit process $\Psi$ is independent of the mollifier $\rho$ as claimed.
\end{proof}

As an almost immediate corollary we obtain the

\begin{proof}[of Theorem~\ref{theo:conv}]
It only remains to show that the bounds of Theorem~\ref{theo:convBasic} do imply convergence in
probability in $\CC_\s^{-\gamma}$ for every $\gamma > {\beta^2 \over 4\pi}$. This is an easy consequence of 
the characterisation of the space $\CC_\s^{-\gamma}$ in terms of wavelet coefficients 
(see \cite{MR1228209} in the Euclidean case and 
\cite[Prop.~3.20]{Regularity} for the parabolic case considered here),
combined with the same argument as in the standard proofs of Kolmogorov's continuity
test \cite{RevYor}, see also the proof of \cite[Thm~10.7]{Regularity}.
\end{proof}

The proof of Theorem \ref{theo:convBasic} is completed once we show
that \eref{e:divergenceKeps} holds and that \eref{e:easybound} does indeed imply the bound \eref{e:first-order-bound}.
For this, we consider the following general situation. 
We are given $N$ points $x_i \in \R^d$ as well as corresponding signs $\sigma_i \in \{\pm 1\}$,
so that each point can be interpreted as a ``charge'' (either positive or negative).
We are furthermore given a
``potential'' function $\J \colon \R^d \to \R_+$ with the following property.
For every positive constant $c>0$ there exists
a constant $C>0$ such that the implication
\begin{equ}[e:boundKeps]
\|x\|_\s \le c \|\bar x\|_\s \quad \Rightarrow \quad \J(x) \le C \J(\bar x)\;,
\end{equ}
holds for all $x, \bar x$. Here, the scaling $\s$ of $\R^d$ is fixed throughout. In our
case, one has $d=3$ (space-time) and the scaling is the usual parabolic scaling. As before,
we use the notation $\J^-(x) = 1/\J(x)$. Note that if there exists one point such that $\J(x) \neq 0$
(which is something we will always assume), then one necessarily has $\J(x) \neq 0$ for every $x \neq 0$
as a consequence of \eqref{e:boundKeps}.

We then aim at bounding integrals of the type
\begin{equ}
\CI = \int_\Lambda \cdots \int_\Lambda \prod_{i \neq j=1}^N \J^{\sigma_i \sigma_j}(x_i-x_j)\,dx_1\cdots dx_N\;,
\end{equ}
for some fixed domain $\Lambda \subset \R^d$. This is exactly the situation of the right hand side in
\eref{e:easybound} by taking for the $x_i$ the union of the $y_i$ and the $z_i$ and assigning
one sign to the $y_i$ and the opposite sign to the $z_i$.
Assuming that there are $k$ indices with $\sigma_i = 1$ (and therefore
$N-k$ indices with $\sigma_i = -1$) and assuming without loss of generality that $k \le N/2$ (so that $k \le N-k$), we claim that 
\begin{equ}[e:mainBound]
|\CI| \lesssim \Bigl|\int_{\Lambda^2} \J^{-1}(x-y)\,dx\,dy\Bigr|^{k} \bar \J^{\binom{N-2k}{2}} |\Lambda|^{N-2k}\;,
\end{equ}
where $\bar \J = \sup_{\|x\|_\s \le \diam \Lambda} \J(x)$, $|\Lambda|$ denotes the volume of $\Lambda$, 
and the symbol $\lesssim$ hides a proportionality constant depending only on $N$ and on the constants appearing in \eref{e:boundKeps}. As a matter of fact, we will obtain a stronger pointwise bound on the
integrand of \eref{e:boundKeps} from which \eref{e:mainBound} then follows trivially.
Let us first give a brief reality check of \eref{e:mainBound}. In the case 
when $\CJ = 1$ (which does indeed satisfy \eref{e:boundKeps}), both 
$\CI$ and \eref{e:mainBound} are equal to $|\Lambda|^N$. Furthermore, if we multiply $\CJ$ by
an arbitrary constant $K$ (which does not change the bound \eref{e:boundKeps}), then both $\CI$
and \eref{e:mainBound} are multiplied by $K^q$ with $q = 2k(k-N) + N(N-1)/2$.

In order to obtain the pointwise bound mentioned above, we consider 
any configuration of $N$ distinct points $\{x_1,\ldots,x_N\}$ and 
we associate to it a decreasing sequence $\{\CA_n\}_{n \in \Z}$ of 
partitions of $\{1,\ldots,N\}$ in the following way.\footnote{The sequence is decreasing in the sense that $\CA_{n+1}$ is a coarsening (or equal to) $\CA_n$. For $n$ small enough, $\CA_n$ consists of all singletons and is therefore as fine as possible, while for $n$ large enough $\CA_n$ is the coarsest possible partition consisting only of the whole set.} For $n$ small enough
so that $2^n < \min_{i\neq j} \|x_i - x_j\|_\s$, we
take for $\CA_n$ the partition consisting only of singletons, namely
\begin{equ}[e:defAn]
\CA_n = \bigl\{\{1\},\{2\},\ldots,\{N\}\bigr\}\;.
\end{equ}
For every $n$, we furthermore introduce pairings $\CS_n\colon \CA_n \to \power{\CP(N)}$, where $\CP(N)$ denotes the set
of (unordered) pairs of $N$ elements (we interpret this as the set of subsets of $\{1,\ldots,N\}$ of cardinality $2$) 
and $\power E$ is the power set of $E$, in such a way that 
\begin{claim}
\item If $\{i,j\} \in \CS_n(A)$, then $\{i,j\} \subset A$. In other words, for every $A \in \CA_n$,
$\CS_n(A)$ is a subset of the possible pairs constructed by using only elements contained in $A$.
\item If $\{i,j\} \in \CS_n(A)$, then $\sigma_i \neq \sigma_j$, so only pairings of points with opposite signs occur.
\item If $p_1, p_2 \in \CS_n(A)$, then either $p_1 = p_2$ or $p_1 \cap p_2 = \emptyset$, so only disjoint pairings occur.
\item For any $A \in \CA_n$, if $\{i,j\} \subset A \setminus \bigcup \CS_n(A)$, then $\sigma_i = \sigma_j$. In other words, indices
of $A$ that do not belong to any pairing all correspond to the same sign.
The number of such indices will play an important role in the sequel, so we introduce the notation
$T_n(A) = \bigl|A \setminus \bigcup \CS_n(A)\bigr|$. We furthermore denote by $\Sigma_n(A) \in \{\pm 1\}$ the sign
of those indices in $A$ that do not belong to any pairing. (If all indices belong to some pairing, we can use the
irrelevant convention $\Sigma_n(A) = 1$.)
\end{claim}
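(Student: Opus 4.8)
The plan is to build the sequence $\{\CA_n\}$ by dyadic single-linkage clustering and then to construct the pairings $\CS_n$ by \emph{upward} induction on $n$, introducing new pairs only at the scales where clusters merge. First I would define $\CA_n$ for arbitrary $n\in\Z$ by declaring $i\approx_n j$ whenever there is a chain $i=i_0,i_1,\dots,i_m=j$ with $\|x_{i_{\ell-1}}-x_{i_\ell}\|_\s\le 2^n$ for every $\ell$, and letting $\CA_n$ be the partition into the resulting equivalence classes. Enlarging $2^n$ only adds edges to this chain relation, so $\CA_{n+1}$ is a coarsening of $\CA_n$; for $2^n<\min_{i\neq j}\|x_i-x_j\|_\s$ one recovers exactly \eref{e:defAn}, and for $2^n\ge\diam\{x_1,\dots,x_N\}$ one gets the trivial partition $\{\{1,\dots,N\}\}$. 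In particular every block $A\in\CA_n$ has $\diam A\le (N-1)\,2^n$, so any pair of indices lying in a common block of $\CA_n$ is at $\s$-distance $\le(N-1)2^n$; this is the geometric content that will later feed the multiscale estimate, and it is worth recording even though it is not one of the four listed properties.

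Next I would construct the $\CS_n$. Pick $n_0$ small enough that $\CA_{n_0}$ consists of singletons and set $\CS_{n_0}(\{i\})=\emptyset$; then properties 1--4 hold vacuously and $T_{n_0}(\{i\})=1$. Assuming $\CS_{n-1}$ has been constructed, note that each $A\in\CA_n$ decomposes as a disjoint union $A=A_1\sqcup\dots\sqcup A_m$ of blocks of $\CA_{n-1}$ with $m\ge1$. If $m=1$ put $\CS_n(A)=\CS_{n-1}(A_1)$. If $m\ge2$, form the \emph{leftover} sets $L_r=A_r\setminus\bigcup\CS_{n-1}(A_r)$, each of which is monochromatic of sign $\Sigma_{n-1}(A_r)$ by the inductive property 4; choose any maximal family $\CN$ of pairwise disjoint opposite-sign pairs drawn from the multiset $\bigsqcup_r L_r$, and set $\CS_n(A)=\bigcup_{r=1}^m\CS_{n-1}(A_r)\cup\CN$. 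Since $\CA_n$ stabilises for $n$ large, so does this construction.

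It then remains to check properties 1--4 for $\CS_n$ by induction, all steps being routine. Property 1 holds because $\CS_{n-1}(A_r)\subset A_r\subset A$ and $\CN\subset\bigsqcup_r L_r\subset A$. Property 2 holds for the $\CS_{n-1}(A_r)$ by induction and for $\CN$ by construction. Property 3 follows since the $\CS_{n-1}(A_r)$ are internally disjoint (induction) and mutually disjoint (the $A_r$ are disjoint), $\CN$ is internally disjoint by maximality, and $\CN$ only uses elements of the $L_r$, which are disjoint from $\bigcup\CS_{n-1}(A_r)$ by definition. Property 4: the set $A\setminus\bigcup\CS_n(A)=\bigl(\bigsqcup_r L_r\bigr)\setminus\bigcup\CN$ consists of the vertices left uncovered by a maximal opposite-sign matching of a $\pm1$-labelled multiset, hence is monochromatic of the majority sign; this simultaneously identifies $\Sigma_n(A)$ and yields $T_n(A)=\bigl|\sum_r \Sigma_{n-1}(A_r)\,T_{n-1}(A_r)\bigr|$.

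There is essentially no obstacle here: the construction is explicit and the verification elementary. The only point that deserves (a little) care is the combinatorial fact underpinning property 4 --- that the uncovered vertices of any maximal opposite-sign matching of a signed multiset all carry the majority sign --- since this is precisely what guarantees that the monochromatic leftover structure, and with it the quantities $T_n(A)$ and $\Sigma_n(A)$, propagate coherently from each dyadic scale to the next.
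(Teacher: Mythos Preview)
Your proposal is correct and follows essentially the same approach as the paper: an inductive construction of $\CA_n$ by dyadic clustering and of $\CS_n$ by keeping the old pairings and adding a maximal opposite-sign matching of the leftover vertices at each merger. The paper states this construction more tersely (``keep the pairings of $\CA_{n-1}$ and \ldots\ pair up as many of them as possible in an arbitrary way'') and leaves the verification of properties 1--4 implicit, whereas you spell out both the construction and the verification in full; your direct chain definition of $\CA_n$ is equivalent to the paper's recursive one.
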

For values of $n$ sufficiently small so that $\CA_n$ is given by \eref{e:defAn}, we have no choice
but to set $\CS_n(A) = \emptyset$ for every $A \in \CA_n$.

For larger values of $n$, we then define $\CA_n$ and $\CS_n$ inductively in the following way. Given $\CA_{n-1}$, we define an equivalence relation $\sim_n$ between elements of $\CA_{n-1}$ to be the smallest equivalence relation such that if $A, \bar A \in \CA_{n-1}$ are such that
there exist $x \in A$ and $\bar x \in \bar A$ with $\|x-\bar x\|_\s \le 2^{n}$, then $A \sim_n \bar A$.
The partition $\CA_n$ is then defined by merging all $\sim_n$-equivalence classes of $\CA_{n-1}$. 
In other words, $\CA_n$ is the smallest partition with the property that for any $A, \bar A \in \CA_{n-1}$ with
$A \sim_n \bar A$, there exists $B \in \CA_n$ with $A \cup \bar A \subset B$.

The pairing $\CS_n$ is then defined to be \textit{any} pairing satisfying the above properties that is
furthermore compatible with $\CS_{n-1}$ in the sense that if $A \in \CA_{n-1}$ and $\bar A \in \CA_n$
are such that $A \subset \bar A$, then $\CS_{n-1}(A) \subset \CS_n(\bar A)$. Loosely speaking,
we keep the pairings of $\CA_{n-1}$ and, in any situation where a merger creates a set in our partition
containing both positive and negative indices, we pair up as many of them as possible in an arbitrary way.

\begin{remark}
Our construction is such that there exists $n_0$ such that for $n \ge n_0$ the 
partition $\CA_n$ necessarily consists of a single set. At this stage, the only 
information of the construction that we will actually use is the pairing
$\CS_{n_0}$.
\end{remark}

With this construction in mind, our main result is then the following, recalling that $T_n(A) = \bigl|A \setminus \bigcup \CS_n(A)\bigr|$.

\begin{proposition}\label{prop:hierarchical}
Let $\J$ be as above, let $N>0$, let $\{x_1,\ldots,x_N\}$ be an arbitrary collection of distinct points in $\R^d$
and let $\{\sigma_1,\ldots,\sigma_N\}$ be a collection of signs. Let $\CA_n$ and $\CS_n$ be defined
as above. Then, there exists a constant $C$ depending only on $N$ and on the constants appearing in \eref{e:boundKeps}
such that, for every $n \in \Z$ and every $A \in \CA_n$, one has the pointwise bound
\begin{equ}[e:bound]
\Bigl| \prod_{i\neq j \in A} \J^{\sigma_i\sigma_j}(x_i-x_j)\Bigr|
\le C \Bigl(\prod_{\{i,j\} \in \CS_n(A)} \J^{-1}(x_i-x_j)\Bigr) \bar \J_n^{D_n(A)}\;,
\end{equ}
where we have set $D_n(A) = \binom{T_n(A)}{2}$ 
and $\bar \J_n = \sup_{\|x\|_\s \le 2^{n}} \J(x)$.
\end{proposition}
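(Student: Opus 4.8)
\textbf{Proof proposal for Proposition~\ref{prop:hierarchical}.}

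The plan is to prove the pointwise bound \eref{e:bound} by induction on $n$. The base case is when $n$ is small enough that $\CA_n$ is the trivial partition into singletons \eref{e:defAn}: then every $A \in \CA_n$ is a singleton, the product on the left-hand side of \eref{e:bound} is empty (hence equal to $1$), $\CS_n(A) = \emptyset$, and $T_n(A) = 1$ so that $D_n(A) = \binom{1}{2} = 0$; the bound holds trivially with $C = 1$. For the inductive step, suppose \eref{e:bound} holds at level $n-1$ for all blocks of $\CA_{n-1}$, and fix $B \in \CA_n$. By construction $B$ is the union of a $\sim_n$-equivalence class of blocks $A_1, \dots, A_r \in \CA_{n-1}$, and since these were merged there is a chain connecting any two of them through pairs of points at $\s$-distance at most $2^n$. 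The product over $i \neq j \in B$ factorises as the product of the ``intra-block'' contributions $\prod_{i \neq j \in A_m} \J^{\sigma_i \sigma_j}(x_i - x_j)$ — each of which is controlled by the inductive hypothesis at level $n-1$ — times the ``inter-block'' contributions $\prod_{m \neq m'} \prod_{i \in A_m, j \in A_{m'}} \J^{\sigma_i \sigma_j}(x_i - x_j)$.

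The heart of the argument is to control the inter-block factors and to reconcile the already-paired indices from level $n-1$ with the new pairing $\CS_n(B)$. First I would observe that whenever $i$ and $j$ lie in two distinct blocks $A_m, A_{m'}$ of the merged class, the chain property together with the fact that each $A_m$ had diameter at most a fixed multiple of $2^{n-1}$ (again by the inductive structure and \eref{e:boundKeps}) gives $\|x_i - x_j\|_\s \lesssim 2^n$; hence, by \eref{e:boundKeps}, each inter-block factor $\J^{\pm 1}(x_i - x_j)$ is bounded above by a constant multiple of $\bar\J_n^{\pm 1}$, and more usefully each factor with a $+1$ exponent is bounded by $C\bar\J_n$ while each with a $-1$ exponent is bounded by $C$ (we simply throw the latter away, since $\J^{-1} \le 1$ up to constants after normalisation — or, more carefully, we keep $\J^{-1}(x_i-x_j) \lesssim \bar\J_{n}^{-1}$ is NOT available, so instead we bound $\J^{-1} \le C \bar\J_{n_{\min}}^{-1}$... ). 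Let me restate: the clean way is to bound every inter-block factor, whatever its sign, by $C \bar\J_n$ when the exponent is $+1$ and by $C$ when the exponent is $-1$, using only the upper bound in \eref{e:boundKeps} and positivity of $\J$. Then the number of surviving $\bar\J_n$ powers must be matched against $D_n(B) - \sum_m D_{n-1}(A_m)$. This is a purely combinatorial identity: writing $t_m = T_{n-1}(A_m)$ and recalling that the unpaired indices of each $A_m$ all share the sign $\Sigma_{n-1}(A_m)$, the new pairing $\CS_n(B)$ augments $\bigcup_m \CS_{n-1}(A_m)$ by pairing as many of the leftover opposite-sign indices as possible across the blocks; if $s$ is the number of new pairs formed, then $T_n(B) = \sum_m t_m - 2s$, and one checks that $\binom{\sum t_m - 2s}{2}$ plus the number of freshly-used inverse factors $s$ is at most $\sum_m \binom{t_m}{2}$ plus the number of inter-block $+1$-factors one is allowed to absorb into $\bar\J_n^{D_n(B)}$. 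The inequality one needs is $\sum_{m} \binom{t_m}{2} + (\text{number of } (+1)\text{-inter-block pairs among the } t_m \text{ leftover indices}) \ge \binom{\sum t_m - 2s}{2} + s$, which follows from elementary counting of same-sign versus opposite-sign pairs among $\sum_m t_m$ indices of which, after the merge and new pairing, $T_n(B)$ remain unpaired and same-signed.

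I expect the main obstacle to be precisely this bookkeeping step: one must verify that the power of $\bar\J_n$ produced by bounding all the inter-block factors, plus the powers inherited as $\bar\J_{n-1}^{D_{n-1}(A_m)} \le \bar\J_n^{D_{n-1}(A_m)}$ (using monotonicity $\bar\J_{n-1} \le C\bar\J_n$, or rather that $\bar\J$ is increasing up to constants, which again follows from \eref{e:boundKeps}), collectively does not exceed $\bar\J_n^{D_n(B)}$, while simultaneously the inverse factors $\prod_{\{i,j\}\in\CS_{n-1}(A_m)} \J^{-1}(x_i-x_j)$ are exactly the sub-collection of $\prod_{\{i,j\}\in\CS_n(B)} \J^{-1}(x_i-x_j)$ coming from old pairs, and the new pairs in $\CS_n(B) \setminus \bigcup_m \CS_{n-1}(A_m)$ each contribute a factor $\J^{-1}(x_i-x_j)$ which, since $\|x_i - x_j\|_\s \lesssim 2^n$, we are free to introduce at the cost of a constant (because $\J^{-1}(x_i-x_j) \le C \bar\J_n^{-1} \le C$ after absorbing into the constant, or more honestly: introducing an extra $\J^{-1}$ factor that is $\ge$ some positive quantity is harmless since we only claim an \emph{upper} bound and $\J^{-1} > 0$). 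The delicate point is that the new inverse factors are \emph{demanded} by the right-hand side of \eref{e:bound} but not present on the left — so we must \emph{manufacture} them, which is only legitimate because each such $\J^{-1}(x_i - x_j)$ with $\|x_i-x_j\|_\s \lesssim 2^n$ is bounded below by $c\,\bar\J_n^{-1}$ and we have a matching surplus of $\bar\J_n$ factors from the inter-block same-sign pairs to compensate. Carrying out this balancing carefully, using the sign constraints (ii)–(iv) in the construction of $\CS_n$ to guarantee that enough same-sign inter-block pairs are available, completes the induction; once \eref{e:bound} is established at the level $n_0$ where $\CA_{n_0}$ is the single block $\{1,\dots,N\}$, the desired integral estimate \eref{e:mainBound}, and in particular \eref{e:first-order-bound}, follows by integrating the pointwise bound over $\Lambda^N$.
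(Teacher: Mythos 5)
Your overall plan — induction over scales $n$, isolating the ``inter-block'' factors created by a merger, and matching exponents of $\bar\J_n$ against $D_n$ — is the right shape of argument, but there is a concrete and fatal error in the step where you bound the inter-block factors. You propose to bound every inter-block factor with exponent $-1$ by a constant $C$, i.e.\ $\J^{-1}(x_i-x_j)\le C$. This is false: for the kernels $\J$ to which the proposition is applied (e.g.\ $\J_\eps$ of \eref{e:defJeps}, with $\J_\eps(z)\approx(\|z\|_\s+\eps)^{\beta^2/2\pi}$), $\J^{-1}$ blows up near the origin, and no constant — let alone one uniform in $\eps$ and $n$ — can work. Your parenthetical worry that ``$\J^{-1}(x_i-x_j)\lesssim\bar\J_n^{-1}$ is NOT available'' has it backwards. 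For $i,j$ in two distinct blocks that merge at level $n$, one has simultaneously $\|x_i-x_j\|_\s\ge 2^{n-1}$ (else they would already share a block) and $\|x_i-x_j\|_\s\lesssim 2^{n}$ (by the merger criterion and the bounded chain length within one step), so \eref{e:boundKeps} applies in \emph{both} directions and yields the two-sided estimate $\bar C^{-1}\bar\J_n\le\J(x_i-x_j)\le\bar C\,\bar\J_n$, which is precisely \eref{e:equivK}. Hence $\J^{-1}(x_i-x_j)\lesssim\bar\J_n^{-1}$ \emph{is} available, and it is essential: the negative powers of $\bar\J_n$ produced by opposite-sign inter-block pairs must be kept in the ledger, not discarded.

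Once this is corrected, the count you set up does close. Writing $p_\pm,q_\pm$ for the number of positive/negative charges in the two merging blocks, the inter-block product is $\approx\bar\J_n^{(p_+q_+ + p_-q_-) - (p_+q_- + p_-q_+)}=\bar\J_n^{(p_+-p_-)(q_+-q_-)}$, and $(p_+-p_-)(q_+-q_-)=\pm a\bar a$ (sign according to whether $\Sigma_n(A)=\Sigma_n(\bar A)$), which is exactly the exponent you need. The paper reaches the same identity through a slightly different route, the ``dipole cancellation'' \eref{e:boundprodK}: for any fixed $i$ and any pair $\{j,j'\}\in\CS_n(\bar A)$, both $\J^{\sigma_i\sigma_j}(x_i-x_j)$ and $\J^{\sigma_i\sigma_{j'}}(x_i-x_{j'})$ are $\approx\bar\J_{n+1}^{\pm1}$ with opposite exponents and hence cancel in pairs, reducing the inter-block product to one over unpaired charges only, which all carry the same sign within each block. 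Both approaches rest on the same two-sided estimate; the one you actually wrote down, with $\J^{-1}\le C$, does not, and gives a false intermediate bound on the left-hand side of \eref{e:bound}.
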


\begin{proof}
The proof goes by induction on $n$. For $n$ sufficiently small so that \eref{e:defAn} holds, both sides
are empty products so the bound holds trivially. Note first that as a consequence of \eref{e:boundKeps} 
$\bar \J_n$ is essentially increasing in $n$ (in the sense that $\J_m \le C \J_n$ for $m \ge n$, where $C$ is independent
of both $m$ and $n$), so that as long as no merger event takes place, the bound \eref{e:bound}
gets weaker with increasing $n$.
It therefore remains to show that the bound still
holds if two (or more) sets merge when going from some level $n$ to level $n+1$. 
Without loss of generality, we assume that only
two sets $A$ and $\bar A$ merge. We also note that losing optimality by a multiplicative factor possibly depending
on $N$ is harmless since there can altogether be only at most 
a fixed number $N-1$ of  merger events.

Using the inductive hypothesis, we then obtain immediately the bound
\begin{equs}[e:merger]
\Bigl| \prod_{i\neq j \in A \cup \bar A} \J^{\sigma_i\sigma_j}(x_i-x_j)\Bigr|
&\lesssim \Bigl(\prod_{\{i,j\} \in \CS_{n}(A) \cup \CS_{n}(\bar A)} \J^{-1}(x_i-x_j)\Bigr) \bar \J_{n}^{D_{n}(A) + D_{n}(\bar A)} \\
&\qquad \times \prod_{i\in A, j \in \bar A} \J^{\sigma_i\sigma_j}(x_i-x_j)\;.
\end{equs}
At this stage we note that since $A$ and $\bar A$ are distinct sets in $\CA_n$, we necessarily have
$\|x_i - x_j\|_s \ge 2^n$ for $i\in A$ and $j \in \bar A$. On the other hand, since the two sets merged at
level $n+1$, there exists a constant $C$ (possibly depending on $N$) such that one has
$\|x_i - x_j\|_\s \le C 2^n$ for any $i,j \in A \cup \bar A$. As a consequence of this and of \eref{e:boundKeps},
there exists a constant $\bar C$ such that for any $i\in A$ and $j \in \bar A$, one has
\begin{equ}[e:equivK]
\bar C^{-1} \bar \J_{n+1} \le \J(x_i - x_j) \le \bar C \bar \J_{n+1}\;.
\end{equ}
As a consequence, denote $A^p = A \setminus \bigcup \CS_n(A)$ and similarly for $\bar A^p$. Then it follows
from \eref{e:equivK} that
\begin{equ}[e:boundprodK]
\prod_{i\in A, j \in \bar A} \J^{\sigma_i\sigma_j}(x_i-x_j) \lesssim \prod_{i\in A^p, j \in \bar A^p} \J^{\sigma_i\sigma_j}(x_i-x_j)\;,
\end{equ}
where the proportionality constant depends on $\bar C$ and $N$ in general. This is because if $i \in A$
and $j$ belongs to some pair in $\CS_n(\bar A)$, then the two factors coming from the two possible values
of $j$ cancel each other out.
More precisely, if $i \in A$
and $\{j,j'\} \in \CS_n(\bar A)$, then by the triangle inequality,
\begin{equ}
\| x_i - x_j \|_\s 
	\lesssim \| x_i - x_{j'} \|_\s + \| x_{j'} - x_j \|_\s 
	\lesssim \| x_i - x_{j'} \|_\s  \;,
\end{equ}
where the last inequality holds because $\| x_{j'} - x_j \|_\s \lesssim 2^n$ 
and $\| x_i - x_{j'} \|_\s \gtrsim 2^{n+1}$. The same bound holds with $j$ and $j'$ interchanged, thus by \eref{e:boundKeps} and $ \sigma_i\sigma_j = - \sigma_i\sigma_{j'} $, one has the cancellation
\begin{equ}
\J^{\sigma_i\sigma_j}(x_i-x_j) \J^{\sigma_i\sigma_{j'}}(x_i-x_{j'}) \lesssim 1 \;.
\end{equ}
(See Figure \ref{fig:merger} for an illustration about the procedure we are following here.)
\begin{figure}[h] 
\centering
\begin{tikzpicture}
\draw (3,0) circle (1.8cm);
\draw (8,0) circle (1.8cm);
\draw (3,2.2) node {$A$};
\draw (8,2.2) node {$\bar A$};
\node at (3,1) [charge] (1) {$-$};  \node at (3.3,1.3) {a};
\node at (2,0.5) [charge] (2) {$+$}; \node at (1.7,0.2) {b};
\node at (3.5,-0.5) [charge] (3) {$+$}; \node at (3.2,-0.7) {c};
\node at (9,0.5) [charge] (4) {$-$}; \node at (9.3,0.3) {d};
\node at (8,1) [charge] (5) {$+$}; \node at (7.6,1) {e};
\node at (7.5,-1) [charge] (6) {$-$}; \node at (7.9,-1) {f};
\node at (7,0) [charge] (7) {$-$}; \node at (7.4,0) {g};
\draw [kerAlg] (1) to node [above] {$\J^-$} (2);
\draw [kerAlg] (4) to node [below] {$\J^-$} (5);
\draw [kerAlg2] (7) to node [homo,above] {$\J$} (1);
\draw [kerAlg2] (7) to node [homo,below] {$\J^-$} (2);
\draw [kerAlg] (6) to node [below] {$\J^-$} (3);
\end{tikzpicture}
\caption{This illustrates a situation where $A,\bar A\in \CA_n$ 
are merged into $A\cup \bar A \in \CA_{n+1}$, with
$\Sigma_n(A) \neq \Sigma_n(\bar A)$. 
In this case, $\CS_n(A)=\{\{a,b\}\}$ and $\CS_n(\bar A)=\{\{d,e\}\}$.
The factors $\J^-$  drawn on $\{a,b\}$ and $\{d,e\}$ correspond to
the factors $\J^-$  in the first line of the right hand side of \eref{e:merger}.
The two dashed lines correspond to two of the factors in the second line of  \eref{e:merger}, and they ``almost cancel" each other out since $g$
is far away from $\{a,b\}$. There are many other such cancellations which we didn't draw.
The pair $\{c,f\} \in \CS_{n+1}$ is a new pair formed at this step but we could just
as well have chosen to form $\{c,g\}$ instead.
As for the factors $\bar \J$, we have $D_n(A)=0$, $D_n(\bar A)=1$,
and $D_{n+1}(A\cup \bar A)=0$, which is less than $D_n(A)+D_n(\bar A)$
by $1$ due to the newly formed factor $\J^-(x_c - x_f)$.}
\label{fig:merger}
\end{figure}
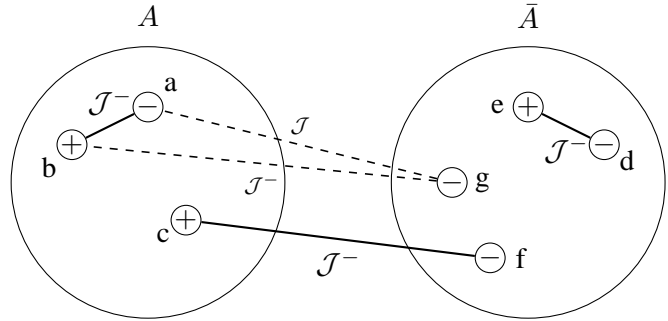

There are now two cases: either one has $\Sigma_n(A) = \Sigma_n(\bar A)$, or 
one has $\Sigma_n(A) \neq \Sigma_n(\bar A)$. We first consider the case $\Sigma_n(A) = \Sigma_n(\bar A)$.
In this case, one necessarily has $\CS_{n+1}(A\cup \bar A) = \CS_n(A)$, so that in view of \eref{e:boundprodK}
we only need to show that
\begin{equ}[e:wanted1]
\prod_{i\in A^p, j \in \bar A^p} \J^{\sigma_i\sigma_j}(x_i-x_j) \lesssim \bar \J_{n}^{D_{n+1}(A\cup \bar A) - D_{n}(A) - D_{n}(\bar A)}\;.
\end{equ}
Since each factor in the product on the left is bounded by some multiple of $\bar \J_n$, this follows at once from the fact that
the number of terms on the left is equal to
\begin{equ}
|A^p| \,|\bar A^p| = |T_n(A)|\, |T_n(\bar A)|\;.
\end{equ}
Writing $a = |T_n(A)|$ and $\bar a = |T_n(\bar A)|$ as a shorthand, the exponent on the right hand side of
\eref{e:wanted1} is equal to
\begin{equ}
{(a+\bar a)(a+\bar a - 1) - a(a-1) - \bar a (\bar a-1)\over 2}  = {2 a \bar a \over 2}\;.
\end{equ}
Since both exponents are the same, the claim follows at once.

We now deal with the case $\Sigma_n(A) \neq \Sigma_n(\bar A)$. Using the same shorthands $a$, $\bar a$ as above,
we note that this time $\CS_n(A \cup \bar A)$ is given by $\CS_{n-1}(A) \cup \CS_{n-1}(\bar A)$, plus $a\wedge \bar a$
of the $a \bar a$ pairs appearing in \eref{e:boundprodK}. Assuming without loss of generality that $a \le \bar a$,
the number of remaining factors is given by $a(\bar a -1)$. This time furthermore each factor contributes one
\textit{negative} power of $\J_n$, so it remains to show that
\begin{equ}
D_{n+1}(A\cup \bar A) - D_{n}(A) - D_{n}(\bar A) = - a(\bar a -1)\;.
\end{equ}
Since this time around
\begin{equ}
D_{n+1}(A\cup \bar A) = {(\bar a - a)(\bar a - a - 1)\over 2}\;,
\end{equ}
this identity follows at once, thus concluding the proof.
\end{proof}

\begin{corollary}\label{cor:pointBound}
The bound \eref{e:mainBound} holds. 
\end{corollary}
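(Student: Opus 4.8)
The strategy is to apply Proposition~\ref{prop:hierarchical} once, at a scale $n_0$ large enough that the partition $\CA_{n_0}$ has already collapsed to the single set $A=\{1,\dots,N\}$, and then integrate the resulting pointwise bound over $\Lambda^N$. Concretely, let $n_0$ be the smallest integer with $2^{n_0}\ge \diam\Lambda$. Since all the points $x_1,\dots,x_N$ lie in $\Lambda$, every pair satisfies $\|x_i-x_j\|_\s\le \diam\Lambda\le 2^{n_0}$, so all clusters merge when passing from $\CA_{n_0-1}$ to $\CA_{n_0}$ and hence $\CA_{n_0}=\bigl\{\{1,\dots,N\}\bigr\}$, as in the remark preceding the proposition. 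Moreover $2^{n_0}<2\diam\Lambda$, so \eref{e:boundKeps} (applied with $c=2$) gives $\bar\J_{n_0}=\sup_{\|x\|_\s\le 2^{n_0}}\J(x)\lesssim \bar\J$. The fact that Proposition~\ref{prop:hierarchical} is stated only for configurations of distinct points is harmless, since coincident configurations form a Lebesgue-null subset of $\Lambda^N$.

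It then remains to determine $\CS_{n_0}(A)$ and $T_{n_0}(A)$ purely from the defining properties of the pairings. By construction $\CS_{n_0}(A)$ consists of, say, $m$ disjoint pairs, each joining a positive to a negative charge, and the remaining $T_{n_0}(A)=N-2m$ indices all carry a common sign. If that sign were $+$, counting the positive charges would force $k=m+(N-2m)$, i.e.\ $m=N-k$, which together with $k\le N/2$ leaves $T_{n_0}(A)=2k-N\le 0$ and hence $m=k=N-k$; if that sign is $-$, counting directly gives $m=k$ and $T_{n_0}(A)=N-2k$. Either way $|\CS_{n_0}(A)|=k$ and $D_{n_0}(A)=\binom{T_{n_0}(A)}{2}=\binom{N-2k}{2}$.

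Feeding $n=n_0$ into \eref{e:bound} now produces, for all distinct $x_1,\dots,x_N\in\Lambda$,
\begin{equ}
\Bigl|\prod_{i\neq j=1}^N \J^{\sigma_i\sigma_j}(x_i-x_j)\Bigr|
\lesssim \Bigl(\prod_{\{i,j\}\in\CS_{n_0}(A)}\J^{-1}(x_i-x_j)\Bigr)\,\bar\J^{\binom{N-2k}{2}}\;,
\end{equ}
where I have used $\bar\J_{n_0}\lesssim\bar\J$ and the convention that an empty product equals $1$ (relevant when $N-2k<2$). Integrating this bound over $\Lambda^N$, the constant $\bar\J^{\binom{N-2k}{2}}$ pulls out, the $N-2k$ variables appearing in no pair contribute a factor $|\Lambda|^{N-2k}$, and, since the $k$ pairs in $\CS_{n_0}(A)$ are pairwise disjoint, the integral over the remaining $2k$ variables factorises into $k$ copies of $\int_{\Lambda^2}\J^{-1}(x-y)\,dx\,dy$; if this right-hand side is infinite there is nothing to prove. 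This is precisely \eref{e:mainBound}, with a proportionality constant depending only on $N$ and the constants in \eref{e:boundKeps}. The only genuinely non-routine point is the combinatorial bookkeeping of the second paragraph; the rest is a direct application of the proposition followed by Fubini.
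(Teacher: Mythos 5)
Your route is the same as the paper's, and your combinatorial bookkeeping — the argument identifying $|\CS_{n_0}(A)|=k$ and $T_{n_0}(A)=N-2k$ regardless of the sign carried by the unpaired indices, and the observation that $\bar\J_{n_0}\lesssim\bar\J$ via \eref{e:boundKeps} — is correct and more explicit than what the paper writes out. However, there is a genuine gap at the integration step. You claim that the integral of the pointwise bound over $\Lambda^N$ ``factorises into $k$ copies of $\int_{\Lambda^2}\J^{-1}(x-y)\,dx\,dy$'' times $|\Lambda|^{N-2k}\bar\J^{\binom{N-2k}{2}}$. That factorisation would require the pairing $\CS_{n_0}(A)$ to be one and the same set of pairs for \emph{every} configuration $x\in\Lambda^N$, but the pairing selected by the procedure of Section~\ref{sec:linear} is a function of the configuration: different points of $\Lambda^N$ generally produce different pairings, so the integrand is not a fixed product and the integral does not split as you assert.

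The missing step — and the one the paper makes explicit — is to observe that only finitely many pairings of the $k$ positive indices with $k$ of the $N-k$ negative ones can possibly arise, their number depending only on $N$ and $k$, so the pointwise bound from Proposition~\ref{prop:hierarchical} is dominated by
\begin{equ}
C \sum_{\CS}\Bigl(\prod_{\{i,j\}\in\CS}\J^{-1}(x_i-x_j)\Bigr)\,\bar\J^{\binom{N-2k}{2}}\;,
\end{equ}
with the sum ranging over \emph{all} such pairings. Each summand now involves a fixed pairing, so its integral over $\Lambda^N$ does factorise exactly as you describe, and summing the finitely many resulting bounds (at a cost of a constant depending only on $N$) yields \eref{e:mainBound}. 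With that one-line insertion, your argument closes.
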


\begin{proof}
It suffices to note that as soon as $2^n > \diam \Lambda$, one has $\CA_n = \{\{1,\ldots,N\}\}$ and therefore \eref{e:bound}
implies 
\begin{equ}
\prod_{i\neq j} \J^{\sigma_i\sigma_j}(x_i-x_j)
\le C \sum_\CS \Bigl(\prod_{\{i,j\} \in \CS} \J^{-1}(x_i-x_j)\Bigr) \bar \J^{\binom{N-2k}{2}}\;,
\end{equ}
where the sum runs over \textit{all} possible ways $\CS$ 
of pairing the $k$ indices corresponding to a 
positive sign with $k$ of the indices corresponding to a negative sign. Since there are only finitely many
such pairings, the claim follows by integrating both sides of the inequality.
\end{proof}

We still have to prove that \eref{e:boundKeps} actually holds for our $\J_{\eps}$
defined in \eref{e:defJeps}. 
In order to study the behaviour of such kernels we introduce the
following notation. For continuous function $f,\CQ$ on $\R^{d}\setminus\{0\}$
we write 
\begin{equ}[e:defsim]
\CQ(z) \sim f(z)
\quad \mbox{if} \quad
f(z) + c_{1} \leq \CQ(z) \leq f(z) + c_{2}\;,
\end{equ}
for some constants $c_{1},c_{2}$ and for all $z\in\R^{d}\setminus\{0\}$. We will also
sometimes specify that $\CQ \sim f$ on some domain, in which case it is understood 
that \eref{e:defsim} is only required to hold there.
Given $\CQ$, we define $\CQ_\eps$ as in \eref{e:CQeps} by
\begin{equ}
\CQ_\eps = \CQ * \bar \rho_\eps\;,
\end{equ}
where $\bar \rho$ is a mollifier supported in the ball of radius $1$ and $\eps \in (0,1]$.
The following lemma shows that if 
$\CQ(z) \sim -\frac{1}{2\pi} \log \left\Vert z \right\Vert_{\s}$
then its regularization 
$\CQ_{\eps}$
also satisfies suitable upper and lower bounds.
Note that if $\rho$ integrates to $1$, then so does $\bar\rho = \rho * \CT \rho$.

\begin{lemma}
\label{lem:reg_kernel_bounds}
Assume that $\CQ$ is compactly supported, smooth away from $0$, and such that 
$\CQ(z)\sim-\frac{1}{2\pi}\log\left\Vert z\right\Vert _{\s}$. Assume furthermore that $\bar \rho$ is
any continuous function supported on the unit ball around the origin integrating to $1$,
and that $\CQ_\eps$ is as in \eref{e:CQeps}.
Then, for $\|z\|_\s \le 1$, one has the two-sided bound
\begin{equ}[e:Ceps-bounds]
\CQ_{\eps}(z)\sim-\frac{1}{2\pi}\log(\left\Vert z\right\Vert _{\s}+\eps) \;.
\end{equ}
If furthermore $\CQ$ is of class $\CC^1$ and there 
exists a constant $C$ such that $|\d_i \CQ(z)| \le C / \|z\|_\s^{\s_i}$, 
then there exists a constant $\bar C$ such that
\begin{equ}[e:Ceps-bound2]
|\CQ_{\eps}(z) - \CQ(z)| \le \bar C \Bigl({\eps \over \|z\|_\s} \wedge \Bigl(1 + \Bigl|\log {\eps \over \|z\|_\s}\Bigr|\Bigr)\Bigr) \;,
\end{equ}
for all space-time points $z$.
\end{lemma}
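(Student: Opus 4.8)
The plan is to establish the two displayed bounds separately, in each case splitting the analysis according to whether $\|z\|_\s$ is above or below a fixed multiple of $\eps$. The elementary facts I would use throughout are: the parabolic ``norm'' is a quasi-norm, $\|a+b\|_\s \le C_*(\|a\|_\s + \|b\|_\s)$, and it satisfies $|a_i| \le \|a\|_\s^{\s_i}$; the mollifier satisfies $\int \bar\rho_\eps = 1$ while $\int |\bar\rho_\eps| = \int|\bar\rho|$ is a fixed finite constant, so that in particular $\CQ_\eps(z) - \CQ(z) = \int \bigl(\CQ(z-w) - \CQ(z)\bigr)\bar\rho_\eps(w)\,dw$; and $z \mapsto \log\|z\|_\s$ is locally integrable for the parabolic scaling, the ball $\{\|z\|_\s \le r\}$ having Lebesgue volume $\asymp r^{|\s|}$. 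Since $\CQ$ is compactly supported and continuous away from the origin, the hypothesis $\CQ(z) \sim -\frac{1}{2\pi}\log\|z\|_\s$ enters only through the fact that $r(z) \eqdef \CQ(z) + \frac{1}{2\pi}\log\|z\|_\s$ is bounded on any fixed parabolic ball; in particular $|\CQ(z)| \lesssim 1 + |\log\|z\|_\s|$ on such a ball.

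For \eref{e:Ceps-bounds}: if $\|z\|_\s \ge 2 C_* \eps$, the support of $\bar\rho_\eps$ forces $\|z-w\|_\s \asymp \|z\|_\s$ for every $w$ occurring in the integration, hence $\CQ(z-w) = -\frac{1}{2\pi}\log\|z\|_\s + \CO(1)$ uniformly in such $w$; integrating against $\bar\rho_\eps$ and using $\int\bar\rho_\eps = 1$ together with $\int|\bar\rho_\eps| < \infty$ gives $\CQ_\eps(z) = -\frac{1}{2\pi}\log\|z\|_\s + \CO(1) = -\frac{1}{2\pi}\log(\|z\|_\s + \eps) + \CO(1)$, the last equality because $\|z\|_\s \asymp \|z\|_\s + \eps$ in this range. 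If instead $\|z\|_\s \le 2 C_* \eps$, write $\CQ(z-w) = -\frac{1}{2\pi}\log\|z-w\|_\s + r(z-w)$; the $r$-term integrates to $\CO(1)$, while for the logarithmic term I rescale parabolically, $w = \eps \cdot \tilde w$ and $z = \eps \cdot \tilde z$, so that $\bar\rho_\eps(w)\,dw$ becomes $\bar\rho(\tilde w)\,d\tilde w$, $\log\|z-w\|_\s = \log\eps + \log\|\tilde z - \tilde w\|_\s$ with $\|\tilde z\|_\s \le 2 C_*$, and the remaining integral $\int \log\|\tilde z - \tilde w\|_\s\,\bar\rho(\tilde w)\,d\tilde w$ is $\CO(1)$ by local integrability of the logarithm; this yields $\CQ_\eps(z) = -\frac{1}{2\pi}\log\eps + \CO(1) = -\frac{1}{2\pi}\log(\|z\|_\s + \eps) + \CO(1)$. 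The two ranges together give \eref{e:Ceps-bounds}.

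For \eref{e:Ceps-bound2}: if $\|z\|_\s \ge 2 C_* \eps$, then $\|z - tw\|_\s \asymp \|z\|_\s$ along the segment $t \in [0,1]$, so the fundamental theorem of calculus, the bound $|\partial_i \CQ(\zeta)| \le C \|\zeta\|_\s^{-\s_i}$ and the inequality $|w_i| \le \|w\|_\s^{\s_i} \le \eps^{\s_i}$ give $|\CQ(z-w) - \CQ(z)| \lesssim \sum_i (\eps/\|z\|_\s)^{\s_i} \lesssim \eps/\|z\|_\s$, the last step because $\eps/\|z\|_\s \le 1$ so the terms with $\s_i = 1$ dominate; integration against $\bar\rho_\eps$ preserves this. (For $\|z\|_\s > 1$ the estimate is elementary: outside a fixed ball both $\CQ$ and $\CQ_\eps$ vanish, and on the remaining compact annulus the same gradient argument applies since all relevant quantities are bounded there.) If instead $\|z\|_\s \le 2 C_* \eps$, I decompose $\CQ_\eps(z) - \CQ(z)$ as $\bigl(\CQ_\eps(z) + \frac{1}{2\pi}\log(\|z\|_\s + \eps)\bigr) - \bigl(\CQ(z) + \frac{1}{2\pi}\log\|z\|_\s\bigr) + \frac{1}{2\pi}\log\frac{\|z\|_\s}{\|z\|_\s + \eps}$: the first bracket is $\CO(1)$ by \eref{e:Ceps-bounds} just established, the second is $r(z) = \CO(1)$, and the third equals $-\frac{1}{2\pi}\log(1 + \eps/\|z\|_\s)$, which is $\le \frac{1}{2\pi}\bigl(\log 2 + |\log(\eps/\|z\|_\s)|\bigr)$ whenever $\eps/\|z\|_\s \ge 1$; hence $|\CQ_\eps(z) - \CQ(z)| \lesssim 1 + |\log(\eps/\|z\|_\s)|$. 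Since $(\eps/\|z\|_\s) \wedge (1 + |\log(\eps/\|z\|_\s)|)$ equals $\eps/\|z\|_\s$ exactly when that ratio is $\le 1$ and equals $1 + |\log(\eps/\|z\|_\s)|$ when it is $\ge 1$ — and the boundary region $\eps/\|z\|_\s \asymp 1$ is covered by either argument — the two cases splice together to \eref{e:Ceps-bound2}.

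\textbf{Main obstacle.} The only genuinely delicate point I anticipate is the regime $\|z\|_\s \lesssim \eps$ in both parts: one must isolate the common logarithmic divergence $-\frac{1}{2\pi}\log\eps$ and show that the leftover is uniformly bounded even though $\bar\rho$ need not be sign-definite, so that averaging it against $\bar\rho_\eps$ cannot be done by positivity. The parabolic rescaling $w = \eps \cdot \tilde w$, which turns the $\eps$-dependent convolution into a fixed one and reduces the whole matter to the local integrability of $\log\|\cdot\|_\s$, is precisely the step that handles this (and the one place where the parabolic geometry is genuinely used rather than a soft estimate); everything else is bookkeeping with the quasi-triangle inequality.
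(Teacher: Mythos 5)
The paper omits its own proof of this lemma (``We omit the proof since it is a rather straightforward calculation''), so there is no argument to compare against; your proposal is a complete and correct substitute, and it is of the type the authors evidently had in mind. The case split at $\|z\|_\s \asymp \eps$, the use of $\int\bar\rho_\eps = 1$ to write $\CQ_\eps - \CQ$ as the average of increments of $\CQ$, the gradient estimate along a segment that stays at comparable parabolic distance from the origin, and the parabolic rescaling $w = \eps\cdot\tilde w$ to reduce the near-origin regime to the local integrability of $\log\|\cdot\|_\s$ are all sound, and your remark that the hypothesis $\CQ\sim -\frac{1}{2\pi}\log\|\cdot\|_\s$ only enters through the local boundedness of $r = \CQ + \frac{1}{2\pi}\log\|\cdot\|_\s$ is the right reading of a definition of $\sim$ that, taken literally on all of $\R^d\setminus\{0\}$, would be incompatible with compact support. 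The only loose thread is cosmetic: in the proof of \eref{e:Ceps-bound2} you invoke \eref{e:Ceps-bounds} in the regime $\|z\|_\s \le 2C_*\eps$, which can put $\|z\|_\s$ slightly above $1$ when $\eps$ is close to $1$; but there $\eps$ is bounded below, so $\CQ_\eps$, $\CQ$, and the log factors are all simply bounded on the relevant compact set and the claim is trivial, as you essentially already observed in the parenthetical for the complementary regime.
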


\begin{proof}
We omit the proof since it is a rather straightforward calculation.
\end{proof}

It remains to show that the function $\CQ$ does indeed enjoy the properties we took for
granted in Lemma~\ref{lem:reg_kernel_bounds}. Since these properties are invariant under the addition of a smooth 
compactly supported function (as a matter of fact, it only needs to be $\CC^1$), we will 
use the symbol $R$ to denote a generic such function which can possibly change from
one line to the next. Recall that in a distributional sense one has the identity
\begin{equ}[e:propK]
\d_t K - \frac{1}{2}\Delta K = \delta + R\;,
\end{equ}
and that $\CQ$ is given by
\begin{equ}
\CQ(z) = \int K(z + \bar z) K(\bar z)\,d\bar z\;,
\end{equ}
where $z= (t,x)$ and $\bar z = (\bar t, \bar x)$ are space-time points in $\R^3$.
As a consequence of \eref{e:propK}, we then have the distributional identity
\begin{equ}
{1\over 2}\Delta \CQ(z)  = {1\over 2} \int (\Delta K)(z + \bar z) K(\bar z)\,d\bar z = \int (\d_t K)(z + \bar z) K(\bar z)\,d\bar z - K(-z) + R\;,
\end{equ}
where we used the fact that the convolution of $R$ with $K$ is a new function $R$ with the 
same properties. On the other hand, making the substitution $\bar z \mapsto \bar z - z$ we can write
\begin{equ}
\CQ(z) = \int K(\bar z - z) K(\bar z)\,d\bar z\;,
\end{equ}
so that
\begin{equs}
{1\over 2}\Delta \CQ(z)  &= {1\over 2} \int (\Delta K)(\bar z - z) K(\bar z)\,d\bar z = \int (\d_t K)(\bar z - z) K(\bar z)\,d\bar z - K(z) + R\\
&= \int (\d_t K)(\bar z) K(\bar z + z)\,d\bar z - K(z) + R\;.
\end{equs}
At this stage, we note that
\begin{equ}
(\d_t K)(z + \bar z) K(\bar z) + (\d_t K)(\bar z) K(\bar z + z) = \d_{\bar t} \bigl(K(z+\bar z) K(\bar z)\bigr)\;,
\end{equ}
which integrates to zero. Therefore, summing these two expressions yields the identity
\begin{equ}[e:DeltaQ]
\Delta \CQ(z) = K(z) + K(-z) + R\;,
\end{equ}
for some smooth and compactly supported function $R$.
Let now
\begin{equ}
\hat K(z) = K(z) + K(-z)\;,\qquad G(x) = -{1\over 2\pi} \log |x|\;,
\end{equ}
for $z \in \R^3$ and $x \in \R^2$. Then, one has

\begin{lemma}\label{lem:formCQ}
One has the identity
\begin{equ}[e:formCQ]
\CQ(t,x) = \bigl(\hat K(t,\cdot) * G\bigr)(x) + R\;,
\end{equ}
for some smooth function $R$.
\end{lemma}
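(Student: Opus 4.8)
The plan is to solve the two-dimensional Poisson equation in the $x$-variable, treating $t$ as a parameter. We have just established in \eref{e:DeltaQ} that $\Delta_x \CQ(t,x) = \hat K(t,x) + R(t,x)$ for a smooth compactly supported $R$, where $\Delta$ here is the full Laplacian in $\R^3$; however, since $\CQ$ is compactly supported and $\hat K$ vanishes for $t<0$ (up to a smooth correction), the contribution of $\d_t^2 \CQ$ to the singular behaviour can be absorbed into the remainder, so that effectively $\Delta_x \CQ(t,\cdot) = \hat K(t,\cdot) + R(t,\cdot)$ with $R$ smooth. Since $G(x) = -\frac{1}{2\pi}\log|x|$ is the Green's function for $-\Delta_x$ on $\R^2$ (i.e. $\Delta_x G = -\delta_0$, with the sign convention matching $-\frac{1}{2\pi}\log|x|$), convolving in $x$ gives that $\CQ(t,\cdot)$ and $-\bigl(\hat K(t,\cdot) * G\bigr)$ differ by a function whose $x$-Laplacian is smooth, hence differ by a harmonic (in $x$) function plus something smooth; boundedness / compact support then forces this difference to be smooth, which is exactly \eref{e:formCQ} (possibly up to a sign that one fixes by checking the convention for $G$).

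Concretely, first I would make rigorous the reduction from the $\R^3$-Laplacian in \eref{e:DeltaQ} to the two-dimensional $x$-Laplacian: write $\Delta \CQ = \Delta_x \CQ + \d_t^2 \CQ$, and argue that $\d_t^2 \CQ$ is a $\CC^1$ (in fact better) function away from the origin with at most logarithmic growth, hence can be incorporated into $R$ after one more convolution with $G$. This uses the explicit form $\CQ(z) = \int K(\bar z - z)K(\bar z)\,d\bar z$ together with the parabolic scaling: differentiating twice in $t$ costs the same as differentiating four times in $x$ by the scaling $\s = (2,1,1)$, and since $K$ is the (truncated) heat kernel, $\d_t^2 K$ is integrable against $K$ away from $z = 0$ with a controlled singularity. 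Second, I would convolve the identity $\Delta_x \CQ(t,\cdot) = \hat K(t,\cdot) + \tilde R(t,\cdot)$ with $G$ in the $x$-variable, using $\Delta_x\bigl(G * f\bigr) = -f$ for suitable $f$, to obtain $\CQ(t,x) = -\bigl(\hat K(t,\cdot)*G\bigr)(x) + h(t,x)$ where $\Delta_x h(t,\cdot) = -\tilde R(t,\cdot)$ is smooth. Finally, since both $\CQ$ and the convolution term are compactly supported in $x$ (the former by assumption on $K$, the latter because $\hat K(t,\cdot)$ has compact support and $G*f$ with $f$ compactly supported and mean-zero decays), $h$ is a function that is harmonic in $x$ up to a smooth term and bounded, hence smooth by elliptic regularity and Liouville-type reasoning applied fibrewise; joint smoothness in $(t,x)$ away from the relevant set follows from smoothness of all the data.

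The main obstacle I anticipate is bookkeeping the remainder $R$ carefully through the reduction of the $\R^3$-Laplacian to $\Delta_x$: one must check that $\d_t^2\CQ$ really is benign, i.e. that it does not itself produce a $\log$-type or worse singularity that fails to be killed by convolution with $G$. This is where the parabolic scaling $\|z\|_\s^4 = |t|^2 + |x_1|^4 + |x_2|^4$ is essential — a $\d_t$ derivative has scaling weight $2$, so $\d_t^2\CQ$ scales like $\|z\|_\s^{-4}$ times a log, which after integrating against $G(x-\cdot)$ over $x$ (a gain of two $x$-derivatives' worth of regularity, weight $2$) lands back in a $\CC^1$ class. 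A secondary technical point is that $\hat K(t,\cdot)$ does not have mean zero in $x$ in general, so $\hat K(t,\cdot)*G$ may grow like $\log|x|$ at spatial infinity; but $K$ is compactly supported in space-time, so in fact $\hat K(t,x) = 0$ for $|x|$ large, and one only needs the statement for $\CQ$ which is itself compactly supported, so the difference of the two compactly supported objects is compactly supported and the growth issue does not arise. These are precisely the kinds of routine-but-delicate estimates that the authors (reasonably) choose to suppress.
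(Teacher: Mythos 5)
Your core idea—invert $\Delta_x$ via the $2$-dimensional Green's function $G$—is exactly the paper's strategy, but you have misread \eref{e:DeltaQ}: the operator $\Delta$ appearing there is already the \emph{spatial} Laplacian in $\R^2$, not the full Laplacian on $\R^3$. This is forced by \eref{e:propK}, where $\Delta$ is the Laplacian in the parabolic operator $\d_t - \frac12\Delta$ acting on the heat kernel $K$, and the derivation of \eref{e:DeltaQ} differentiates $\CQ$ only in $x$, substitutes $\frac12\Delta K = \d_t K - \delta - R$, and then symmetrises. Consequently the entire first half of your argument—absorbing $\d_t^2\CQ$ into the remainder, the scaling count $\d_t^2\CQ \sim \|z\|_\s^{-4}$, and the claim that convolution with $G$ in $x$ ``gains two derivatives''—addresses a problem that does not exist, and as you yourself half-notice in the ``main obstacle'' paragraph, the argument you sketch there would not actually close: $\d_t^2\CQ$ has a genuinely non-integrable singularity at the origin (in $\R^2$, for $t$ near $0$), and it is not absorbed into a smooth remainder. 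With $\Delta$ read correctly as $\Delta_x$, the whole difficulty evaporates: for each fixed $t\neq 0$ the function $\CQ(t,\cdot)$ is smooth and compactly supported on $\R^2$, so the standard Newtonian-potential representation applies directly and immediately gives $\CQ(t,\cdot)$ as $G$ convolved (in $x$) with $\Delta_x\CQ(t,\cdot) = \hat K(t,\cdot) + R(t,\cdot)$, whence the claim. That is literally the paper's two-line proof.

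Two secondary remarks. First, on the sign: with the stated convention $G = -\frac1{2\pi}\log|\cdot|$ one has $\Delta G = -\delta$, so the representation formula is $\CQ = -\,\Delta\CQ * G$; on the other hand, a careful tally of signs in the symmetrisation step actually yields $\Delta\CQ = -\hat K + R$, and the two minus signs cancel to give the stated $\CQ = \hat K * G + R$. You arrive at the opposite sign because you took the displayed form of \eref{e:DeltaQ} at face value while being careful about the Green's function convention; the discrepancy is not a flaw in your reasoning so much as an artefact of two compensating typos in the source, and waving it away as ``a convention for $G$'' is not quite an honest resolution. Second, your worry about $\hat K(t,\cdot)*G$ growing logarithmically is legitimate, but the fix you offer (``the difference of two compactly supported objects is compactly supported'') does not apply—$\hat K*G$ is not compactly supported—and is also unnecessary: the lemma only asserts that $R$ is smooth, with no claim of boundedness or compact support, so logarithmic growth of $R$ at spatial infinity is perfectly acceptable.
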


\begin{proof}
As an immediate consequence of the definition of $\CQ$, the properties of $K$ 
and, for example, \cite[Lemma~10.14]{Regularity}, we know that, for any
$t \neq 0$, $\CQ(t,\cdot)$ is a smooth compactly supported function. 
This immediately implies that one has the identity
\begin{equ}[e:decompQ]
\CQ(t,x) = \bigl(\Delta \CQ(t,\cdot) * G\bigr)(x)\;,
\end{equ}
and the claim follows at once from \eref{e:DeltaQ}.
\end{proof}

\begin{lemma}\label{lem:C_log}
The kernel $\CQ$ can be decomposed as
\begin{equ}
\CQ(z) = -{1\over 2\pi} \log \|z\|_\s 
	+  \hat \CR\Big({t \over \|z\|_{\s}^2},{x \over \|z\|_{\s}}\Big) + R(z)\;,
\end{equ}
where both $R$ and $\hat \CR$ are smooth functions of $\R^3$ and $z = (t,x)$ as before.
In particular, it satisfies the assumptions of both parts of Lemma~\ref{lem:reg_kernel_bounds}.
\end{lemma}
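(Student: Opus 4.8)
The plan is to exploit the explicit representation of Lemma~\ref{lem:formCQ}, namely $\CQ(t,x) = (\hat K(t,\cdot) * G)(x) + R(x)$ with $G(x) = -\frac{1}{2\pi}\log|x|$, and to extract the logarithmic singularity by a scaling argument. First I would recall that $\hat K(t,\cdot) = K(t,\cdot) + K(-t,\cdot)$ agrees with the (symmetrised) heat kernel $(2\pi t)^{-1}\exp(-|x|^2/2t)$ for $t$ in a neighbourhood of $0$ and $|x|$ small, is smooth away from the origin, and (up to a smooth remainder $R$ we are free to discard) is compactly supported. In particular $\int \hat K(t,x)\,dx = 1 + O(t)$ near $t=0$ after subtracting a smooth piece, or more precisely we may write $\hat K = \hat K_0 + R$ where $\hat K_0$ is a genuine self-similar heat-kernel-type object homogeneous under the parabolic scaling. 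The key point is that $\hat K$ is homogeneous of degree $-2$ in the parabolic scaling: $\hat K_0(\lambda^2 t, \lambda x) = \lambda^{-2}\hat K_0(t,x)$, so that $(\hat K_0(t,\cdot)*G)(x)$ inherits a clean scaling behaviour.

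The second step is the actual computation of $(\hat K_0(t,\cdot)*G)(x)$. Writing $\|z\|_\s = (|t|^2 + |x|^4)^{1/4}$ and performing the substitution $x \mapsto \|z\|_\s\, \bar x$, $t \mapsto \|z\|_\s^2\, \bar t$ (which is exactly the parabolic rescaling to the unit sphere), one finds from the degree-$(-2)$ homogeneity of $\hat K_0$ and the fact that $G(\|z\|_\s \bar x) = G(\bar x) - \frac{1}{2\pi}\log\|z\|_\s$ that
\begin{equ}
(\hat K_0(t,\cdot)*G)(x) = -\frac{1}{2\pi}\log\|z\|_\s \cdot \Bigl(\int \hat K_0(\bar t, \bar x)\,d\bar x\Bigr) + \hat\CR\Bigl(\frac{t}{\|z\|_\s^2}, \frac{x}{\|z\|_\s}\Bigr)\;,
\end{equ}
where $\hat\CR(\bar t, \bar x) = (\hat K_0(\bar t, \cdot)*G)(\bar x)$ evaluated on the parabolic unit sphere, together with the normalisation $\int \hat K_0(\bar t, \bar x)\,d\bar x = 1$ coming from the heat-kernel mass. (Here one has to be a little careful that the $t$-integral of $\hat K_0$ over its support contributes the constant $1$; this is where the normalisation $\int K Q = 0$ for quadratic $Q$ and the heat-kernel matching near the origin get used, as in \cite[Lemma~10.14]{Regularity}.) Smoothness of $\hat\CR$ away from issues at $\bar x = 0$ follows because $G$ is smooth away from the origin and $\hat K_0$ is a nice kernel; the potential singularity of $\hat\CR$ at $\bar x = 0$ with $\bar t \neq 0$ is harmless since there $G * \hat K_0$ is smooth (convolution with a Schwartz-like heat kernel), and the parabolic sphere excludes the origin, so $\hat\CR$ is genuinely smooth on the relevant set; the difference between $\hat K$ and $\hat K_0$, as well as the boundedness-of-support truncation, all get absorbed into $R$.

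Finally I would check the two claims at the end of the statement: that $\CQ(z) \sim -\frac{1}{2\pi}\log\|z\|_\s$ and that $|\d_i \CQ(z)| \lesssim \|z\|_\s^{-\s_i}$. The first is immediate since $\hat\CR$ is bounded on the compact parabolic sphere and $R$ is smooth hence bounded on bounded sets. For the derivative bound, differentiating the decomposition: $\d_t$ of $-\frac{1}{2\pi}\log\|z\|_\s$ is of order $\|z\|_\s^{-2} = \|z\|_\s^{-\s_0}$, $\d_{x_i}$ is of order $\|z\|_\s^{-1} = \|z\|_\s^{-\s_i}$, and the chain rule applied to $\hat\CR(t/\|z\|_\s^2, x/\|z\|_\s)$ produces exactly the same powers of $\|z\|_\s$ times bounded functions (the arguments stay on the compact sphere, where $\nabla\hat\CR$ is bounded), while $R$ contributes bounded derivatives. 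Hence both hypotheses of Lemma~\ref{lem:reg_kernel_bounds} hold. The main obstacle is the bookkeeping in the second step: verifying that the mass $\int \hat K_0(\bar t,\bar x)\,d\bar x\,d\bar t$ really collapses to the single constant $1$ (so that the coefficient of $\log\|z\|_\s$ is exactly $-\frac{1}{2\pi}$ and not some other constant), and that no spurious singularity in $\hat\CR$ appears on the parabolic sphere — both of which require care with the interplay between the compact-support truncation of $K$, the moment condition $\int K Q = 0$, and the heat-kernel normalisation.
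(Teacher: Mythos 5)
Your approach is essentially the same as the paper's: starting from Lemma~\ref{lem:formCQ}, you replace $\hat K$ by a genuine (symmetrised) heat kernel $H$ that is exactly parabolically self-similar, absorb the smooth difference into $R$, and then use $H(\lambda^2 t, \lambda x) = \lambda^{-2}H(t,x)$ together with $G(\lambda x) = G(x) - \tfrac{1}{2\pi}\log\lambda$ to extract the $\log\|z\|_\s$ term. Your central rescaling computation is correct, and identifying $\hat\CR$ with the spatial convolution evaluated on the parabolic unit sphere is exactly what the paper does (the paper phrases it as: $\hat\CQ(\lambda^2t,\lambda x) = \hat\CQ(t,x) - \tfrac{1}{2\pi}\log\lambda$, hence $\hat\CR := \hat\CQ + \tfrac{1}{2\pi}\log\|\cdot\|_\s$ is homogeneous of degree zero).

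That said, the ``main obstacle'' you flag at the end is imaginary, and two of your side-remarks are simply wrong and should be dropped. The convolution $\hat K(t,\cdot)*G$ is in the spatial variable only, so the normalisation factor is $\int H(\bar t, \bar x)\,d\bar x$ with $\bar t$ held fixed; this equals $1$ for every $\bar t$ simply because the heat kernel integrates to one in space at each time. There is no $d\bar t$ integration (your concluding sentence writes $\int \hat K_0(\bar t,\bar x)\,d\bar x\,d\bar t$), so the coefficient of $\log\|z\|_\s$ comes out as exactly $-\tfrac{1}{2\pi}$ without any bookkeeping. Likewise, the moment condition $\int K\, Q = 0$ for quadratic $Q$ plays no role whatsoever here — it serves to control $K$ at large scales — and neither does \cite[Lemma~10.14]{Regularity}; invoking them as hidden ingredients is a red herring. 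With those two remarks removed, your plan is the paper's proof.
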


\begin{proof}
Let $H$ be the heat kernel
$H(t,x) = (4\pi|t|)^{-1} \exp\bigl(-|x|^{2} / (4|t|)\bigr)$,
and set $\hat \CQ(t,x) = \bigl(H(t,\cdot) * G\bigr)(x)$.
Then, as a consequence of Lemma~\ref{lem:formCQ} and the fact that $H$ and $\hat K$ differ 
by a smooth function by definition, $\hat \CQ$ and $\CQ$ only differ by a smooth function, so it is sufficient
to show the result for $\hat \CQ$. For this, note that as a consequence of the scaling relation 
$H(\lambda^2 t, \lambda x) = \lambda^{-2}\, H(t,x)$, one has the identity
\begin{equs}
\hat \CQ(\lambda^2 t, \lambda x) &= -{1\over 2\pi} \int_{\R^2} H(\lambda^2 t, \lambda x - y) \log |y|\,dy \\
&= -{\lambda^2 \over 2\pi} \int_{\R^2} H(\lambda^2 t, \lambda x - \lambda y) \log |\lambda y|\,dy \\
&= -{1 \over 2\pi} \int_{\R^2} H( t, x - y) \,\bigl(\log |y| + \log \lambda\bigr)\,dy 
= \hat \CQ(t,x) -{1 \over 2\pi} \log \lambda \;.
\end{equs}
Here we also used the fact that $H(t,\cdot)$ integrates to $1$ for any fixed $t$.
It follows immediately that if we set
\begin{equ}
\hat \CR(z) = \hat \CQ(z) + {1\over 2\pi} \log \|z\|_\s\;, 
\end{equ}
then $\hat \CR$ is smooth outside the origin and homogeneous of order
$0$ in the sense that $\hat \CR(\lambda^2 t, \lambda x) = \hat \CR(t,x)$.
The claim then follows at once.
\end{proof}

Lemma~\ref{lem:const} is now an immediate corollary of this fact.

\begin{proof}[of Lemma~\ref{lem:const}]
Using the decomposition of Lemma~\ref{lem:C_log}, the identity \eref{e:CQeps}, and the fact that $\bar \rho$ integrates to $1$,
a straightforward calculation shows that we have the identity
\begin{equ}
\CQ_\eps(0) = -{1\over 2\pi} \log \eps + \hat C_\rho + \int_{\R^3} \bar \rho_\eps(z) \bigl(R(z) - R(0)\bigr)\,dz\;,
\end{equ}
with
\begin{equ}
\hat C_\rho \eqdef \int_{\R^3} \bar \rho(z) \Bigl(\hat \CR(z) - {1\over 2\pi} \log \|z\|_\s\Bigr)\,dz + R(0) \;.
\end{equ}
Since $\bar \rho$ is necessarily symmetric under $z \mapsto -z$, it annihilates linear functions so that 
$\int_{\R^3} \bar \rho_\eps(z) \bigl(R(z) - R(0)\bigr)\,dz = \CO(\eps^2)$ as claimed.
\end{proof}

\begin{corollary}\label{cor:Jeps}
The estimates \eref{e:divergenceKeps} and \eref{e:boundKeps} for $\J_{\eps}$ hold
for all $\eps>0$ with proportional constants independent of $\eps$.
\end{corollary}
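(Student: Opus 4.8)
The plan is to deduce both \eref{e:divergenceKeps} and \eref{e:boundKeps} directly from the already-established properties of the non-smoothened kernel $\CQ$. By Lemma~\ref{lem:C_log}, $\CQ$ is compactly supported, smooth away from the origin, satisfies $\CQ(z)\sim-\frac1{2\pi}\log\|z\|_\s$, and meets the hypotheses of both parts of Lemma~\ref{lem:reg_kernel_bounds}. Hence \eref{e:Ceps-bounds} applies: there are constants $c_1\le c_2$, independent of $\eps\in(0,1]$, with
$-\frac1{2\pi}\log(\|z\|_\s+\eps)+c_1\le\CQ_\eps(z)\le-\frac1{2\pi}\log(\|z\|_\s+\eps)+c_2$ for all $\|z\|_\s\le 1$. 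Since $\J_\eps=\exp(-\beta^2\CQ_\eps)$, multiplying this two-sided bound by $-\beta^2<0$ and exponentiating gives $e^{-\beta^2 c_2}(\|z\|_\s+\eps)^{\beta^2/2\pi}\le\J_\eps(z)\le e^{-\beta^2 c_1}(\|z\|_\s+\eps)^{\beta^2/2\pi}$ on $\{\|z\|_\s\le 1\}$, which is exactly \eref{e:divergenceKeps} with $\eps$-independent constants.

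Next I would record the complementary large-scale fact: $\J_\eps$ is bounded above and below by strictly positive constants on $\{\|z\|_\s\ge 1\}$, uniformly in $\eps\in(0,1]$. This follows because $\bar\rho_\eps$ is supported in a ball of radius $\CO(\eps)$ and has $L^1$-norm $\|\bar\rho\|_{L^1}$ independent of $\eps$, while $\CQ$ is compactly supported, bounded away from a neighbourhood of the origin, and only logarithmically (hence locally integrably) singular there; consequently $\CQ_\eps=\CQ*\bar\rho_\eps$ is bounded on $\{\|z\|_\s\ge 1\}$ uniformly in $\eps$ and vanishes outside a fixed compact set, so $\J_\eps$ lies in $[a,b]$ there for some $0<a\le b$.

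Then \eref{e:boundKeps} for $\J_\eps$ follows by an elementary case analysis. Fix $c>0$; without loss of generality $c\ge 1$, and suppose $\|x\|_\s\le c\|\bar x\|_\s$. If $\|\bar x\|_\s\ge 1$ then $\J_\eps(\bar x)\ge a$, while $\J_\eps(x)$ is bounded above by a fixed constant (via \eref{e:divergenceKeps} if $\|x\|_\s\le 1$, via the large-scale bound otherwise). If $\|x\|_\s\le 1$ and $\|\bar x\|_\s<1$, both points are covered by \eref{e:divergenceKeps} and, using $c\ge 1$, $\J_\eps(x)\lesssim(\|x\|_\s+\eps)^{\beta^2/2\pi}\le(c\|\bar x\|_\s+c\eps)^{\beta^2/2\pi}=c^{\beta^2/2\pi}(\|\bar x\|_\s+\eps)^{\beta^2/2\pi}\lesssim\J_\eps(\bar x)$. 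Finally, if $\|x\|_\s>1$ then $\|\bar x\|_\s\ge 1/c$, so $\J_\eps(\bar x)\gtrsim_c 1$ (again by \eref{e:divergenceKeps} or the large-scale bound) whereas $\J_\eps(x)\le b$. In every case $\J_\eps(x)\le C\J_\eps(\bar x)$ for a constant $C$ depending only on $c$, $\beta$ and $\rho$, which is \eref{e:boundKeps}.

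There is no genuine difficulty here: all the substance is contained in Lemmas~\ref{lem:reg_kernel_bounds} and~\ref{lem:C_log}, and the corollary is essentially their transcription into multiplicative form. The only point requiring a little care is that \eref{e:Ceps-bounds} is stated only on the unit ball, whereas \eref{e:boundKeps} concerns arbitrary pairs $x,\bar x$; so the small-scale bound must be glued to the (uniform in $\eps$) two-sided bound on $\J_\eps$ away from the origin, which is the bookkeeping carried out above.
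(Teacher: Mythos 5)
Your proof is correct and follows essentially the same approach as the paper's: use Lemma~\ref{lem:C_log} to verify that $\CQ$ satisfies the hypotheses of Lemma~\ref{lem:reg_kernel_bounds}, read off~\eref{e:divergenceKeps} by exponentiating the two-sided bound~\eref{e:Ceps-bounds}, and then obtain~\eref{e:boundKeps} by comparing $(\|x\|_\s+\eps)^{\beta^2/2\pi}$ with $(\|\bar x\|_\s+\eps)^{\beta^2/2\pi}$ under $\|x\|_\s\le c\|\bar x\|_\s$. You additionally supply the bookkeeping that the paper's two-line chain of inequalities elides: since~\eref{e:Ceps-bounds} is only stated for $\|z\|_\s\le 1$, you patch it with the $\eps$-uniform two-sided boundedness of $\J_\eps$ away from the origin and handle the large-scale cases by an explicit case split, which is a correct and slightly more complete treatment of the same argument.
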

\begin{proof}
By Lemmas~\ref{lem:reg_kernel_bounds} and \ref{lem:C_log}, 
if $\left\Vert z\right\Vert _{\s}\leq c\left\Vert \bar{z}\right\Vert _{\s}$,
we obtain the bound
\begin{equ}
\J_{\eps}(z) 
  = e^{-\beta^2 \CQ_{\eps}(x,t)}
  \lesssim e^{\frac{\beta^2}{2\pi}
  \log \left( \left\Vert z\right\Vert _{\s} + \eps \right)}
    \lesssim e^{\frac{\beta^2}{2\pi} \log \left( \left\Vert \bar{z}\right\Vert _{\s} + \eps \right)}   \lesssim \J_{\eps}(\bar{z})\;,
\end{equ}
thus concluding the proof of \eref{e:boundKeps}.
The estimate \eref{e:divergenceKeps} is just a rewriting of the first conclusion of Lemma~\ref{lem:reg_kernel_bounds}.
\end{proof}

\section{Second-order process bounds for $k=l$} \label{sec:second-order}

In order to provide a solution theory for \eref{e:model} at or beyond $\beta^2=4\pi$,
we have seen in the introduction that one should construct suitable ``second order'' 
objects \eref{e:Psi-ab}. In this section we consider a closely related second order object
\begin{equ}
\tilde \Psi^{\pm}_\eps (z,\bar z) = \Psi_\eps (\bar z) 
\big( (K*\bar \Psi_\eps) (\bar z)-(K*\bar \Psi_\eps) (z) \big)
 -  \E \bigl(\Psi_\eps (K* \bar \Psi_\eps)\bigr)\;.
\end{equ}
Generally,  we define for $1\leq k,l \leq Z$
\begin{equ}[e:def-tildePsiKbarL]
\tilde \Psi^{k\bar l}_\eps (z,\bar z) \eqdef
 \Psi_\eps^k (\bar z) 
\big( 
	(K*\bar \Psi_\eps^l) (\bar z)
	- (K*\bar\Psi_\eps^l) (z) 
\big)
 - \delta_{k,l} \E \bigl(\Psi_\eps^k (K* \bar\Psi_\eps^l)\bigr)\;,
\end{equ}
where $\delta_{k,l} = 1$ if $k=l $ and equals $0$ otherwise (see \eref{e:Psipm2} below about the definition of a variation of the above objects, written as $\Psi^{k\bar l}_\eps$). We also define
\begin{equ}[e:def-PsiKL]
\Psi^{kl}_\eps (z,\bar z) \eqdef
 \Psi_\eps^k (\bar z) 
\big( (K*\Psi_\eps^l) (\bar z)-(K*\Psi_\eps^l) (z) \big) \;.
\end{equ}
The objects $\tilde \Psi^{k\bar k}_\eps$ are the hardest ones to bound,
so we will first obtain bounds for them. The corresponding bounds on
$\tilde \Psi^{k\bar l}_\eps$ with $k\neq l$ and
on $\Psi^{kl}_\eps$ will then be shown in the very end of this section.


The last  term of \eref{e:def-tildePsiKbarL} is a renormalisation constant which,
 for the case $\tilde \Psi^{\pm}_\eps$, can also be expressed as
\begin{equ}
\E \bigl(\Psi_\eps (K* \bar \Psi_\eps)\bigr) = \int K(x) \J_\eps^-(x)\,dx\;.
\end{equ}
As a consequence of \eref{e:divergenceKeps} and the behaviour
of the heat kernel, this diverges as $\eps \to 0$ as soon as $\beta^2 \ge 4\pi$.
When $\beta^2 = 4\pi$, this divergence is logarithmic, and it behaves like
$\eps^{2-{\beta^2 / 2\pi}}$ for $\beta^2 \in (4\pi, 8\pi)$. 
For general  $\tilde \Psi^{k\bar l}_\eps$ with $(k,l)\neq (1,1)$,
one can verify that
\begin{equs}[e:renConst_kl]
\E \bigl(\Psi_\eps^k (K* \bar \Psi_\eps^l)\bigr) 
	& = e^{-\beta^2 \left(\frac{k^2 + l^2}{2} - 1 \right)Q_\eps(0)} 
		\int K(x) \J_\eps(x)^{-kl}\,dx \\
	& \lesssim \eps^{ \left(\frac{k^2 + l^2}{2} - 1 \right) \frac{\beta^2}{2\pi}}
		\int K(x) \left(\|x\|_\s+\eps \right)^\frac{-kl\beta^2}{2\pi}\,dx \\
	 & \lesssim \eps^\kappa  
	 	\int K(x) \|x\|_\s^{\left(-kl+\frac{k^2 + l^2}{2} -1\right)\frac{\beta^2}{2\pi}-\kappa} \,dx
\end{equs}
for sufficiently small $\kappa>0$, where we used the fact 
\begin{equ}
\left(\|x\|_\s+\eps \right)^\frac{-kl\beta^2}{2\pi} 
 \lesssim  \|x\|_\s^{\left(-kl+\frac{k^2 + l^2}{2} -1\right)\frac{\beta^2}{2\pi}-\kappa}
 	\eps^{ - \left(\frac{k^2 + l^2}{2} - 1 \right) \frac{\beta^2}{2\pi} +\kappa}
\end{equ}
for $(k,l)\neq(1,1)$.  Now we note that if $k\neq l$, this integral is finite for all $\beta^2<8\pi$ as long as $\kappa>0$ is sufficiently small, so that the above expectation converges to zero as $\eps\to 0$. 
On the other hand, if $k=l$, 
	it is easy to check 
	(by the first line of~\eref{e:renConst_kl} 
	and dividing the integration into $\|x\|_\s \le \eps$ and $\|x\|_\s > \eps$) 
	that $\E \bigl(\Psi_\eps^k (K* \bar\Psi_\eps^k)\bigr)$ diverges when $\eps \to 0$ for $\beta^2 \ge 4\pi$, with the same rates as in the case $(k,l)=(1,1)$.
This motivates \eqref{e:def-tildePsiKbarL}, namely that there is only renormalisation 
in the second order object $\tilde\Psi^{k\bar l}$ when  $k=l$. 
For the case of $\Psi^{k l}$, it will be clear in the end of this section that one does not 
need any renormalisation.

Instead of considering $\tilde \Psi^{k\bar l}_\eps$, 
it turns out to be more convenient to consider
	the process $\Psi^{k\bar l}_\eps$ given by
\begin{equs}[e:Psipm2]
\Psi^{k\bar l}_\eps (z,\bar z)
= & \int(K(\bar z-w) - K(z-w)) \\
  & \qquad \times 
    \Bigl(\Psi^k_{\eps}(\bar z)  \bar \Psi^l_{\eps}(w) 
	- \delta_{k,l} \E \big( \Psi^k_{\eps}(\bar z)  \bar\Psi^l_{\eps}(w) \big)
\Bigr)\,dw\;,
\end{equs}
where 
\begin{equ}
\E \left( \Psi^k_{\eps}(\bar z)\bar{\Psi}^k_{\eps}(w) \right)
= e^{-\beta^2 \left(k^2 - 1 \right)
		Q_\eps(0)}\J_\eps(\bar z-w)^{-k^2} \;,
\end{equ}
which is simply equal to $\J_\eps^-(\bar z-w)$ when $k= 1$.
With this notation, one has the identity
\begin{equ}
\tilde\Psi_\eps^{k\bar l} (z,\bar z) = \Psi_\eps^{k \bar l} (z,\bar z) 
- \delta_{k,l} F_\eps^{(k)}(\bar z - z)\;,
\end{equ}
where $F_\eps^{(k)}$ is given by
\begin{equ}
F_\eps^{(k)} \eqdef 
	e^{-\beta^2 \left(k^2 - 1 \right)Q_\eps(0)}  \CT K * \CJ_\eps^{-k^2}\;.
\end{equ}
For $k= 1$, we simply write $F_\eps \eqdef  F_\eps^{(1)} = \CT K * \CJ_\eps^-$.
Regarding the functions $F_\eps^{(k)}$, we have the following lemma.

\begin{lemma} \label{lem:Feps}
Let $\beta^2 \in [4\pi, 8\pi)$ and let $F_\eps^{(k)}$ be defined
as above. 
Then, for every sufficiently small $\kappa > 0$, the bounds
\begin{equ}
|F_\eps(z)| \lesssim \|z\|_\s^{2-{\beta^2 \over 2\pi}-\kappa}\;,\qquad 
|F_\eps(z) - F_{\bar \eps}(z)| \lesssim \eps^\kappa\,\|z\|_\s^{2-{\beta^2 \over 2\pi}-\kappa}\;,
\end{equ}
and, for $k \ge 2$,
\begin{equ}
|F_\eps^{(k)}(z)| \lesssim \eps^\kappa  \|z\|_\s^{2-{\beta^2 \over 2\pi}-\kappa}\;,
\end{equ}
hold uniformly over $z$ and over $0 < \bar \eps < \eps < 1$.
\end{lemma}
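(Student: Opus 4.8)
The plan is to work directly with the representation $F_\eps^{(k)} = e^{-\beta^2(k^2-1)\CQ_\eps(0)}\,\CT K * \CJ_\eps^{-k^2}$ and to reduce all three estimates to a single classical convolution bound for kernels with an algebraic singularity at the origin. Since $K$ integrates to zero (it annihilates polynomials of degree at most $2$) and $\CJ_\eps^{-k^2} - 1$ is compactly supported, $F_\eps^{(k)}$ is itself compactly supported, so the bounds are trivial for $\|z\|_\s \gtrsim 1$ and it suffices to treat $\|z\|_\s \le 1$. I would use three inputs: (i) the pointwise heat-kernel estimate $|K(w)| \lesssim \|w\|_\s^{-2}$ together with the compact support of $K$; (ii) \eref{e:divergenceKeps} (equivalently Corollary~\ref{cor:Jeps}), which gives, uniformly over $\eps \in (0,1]$, both $\CJ_\eps^-(w) \lesssim \|w\|_\s^{-\beta^2/2\pi}$ and, more generally, $\CJ_\eps^{-k^2}(w) \lesssim (\|w\|_\s + \eps)^{-k^2\beta^2/2\pi}$; and (iii) Lemma~\ref{lem:const}, which gives $e^{-\beta^2(k^2-1)\CQ_\eps(0)} \asymp \eps^{(k^2-1)\beta^2/2\pi}$, again uniformly in $\eps$. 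The one analytic fact needed is the classical estimate $\int_{\|w\|_\s \le 1}\|w-z\|_\s^{-a}\|w\|_\s^{-b}\,dw \lesssim \|z\|_\s^{|\s|-a-b}$, valid whenever $0 < a, b < |\s| = 4$ and $a + b \ge |\s|$, with an additional factor $1 + \log(1/\|z\|_\s)$ in the borderline case $a + b = |\s|$ (cf. \cite{Regularity}).

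For the first bound I take $k = 1$, so $F_\eps = \CT K * \CJ_\eps^-$, and write $|F_\eps(z)| \lesssim \int |K(w-z)|\,\CJ_\eps^-(w)\,dw \lesssim \int_{\|w\|_\s \lesssim 1}\|w-z\|_\s^{-2}\|w\|_\s^{-\beta^2/2\pi}\,dw$; applying the convolution estimate with $a = 2$ and $b = \beta^2/2\pi \in [2,4)$ produces $\|z\|_\s^{2-\beta^2/2\pi}$, up to a logarithm at $\beta^2 = 4\pi$ which is absorbed into the $\kappa$-loss in the exponent. For the difference bound I would use — exactly as in the proof of Theorem~\ref{theo:convBasic}, where one writes $\CJ_\eps^- = \CJ^-\,\exp(\CM_\eps)$ via Lemma~\ref{lem:reg_kernel_bounds} — the estimate $|\CJ_\eps^-(w) - \CJ_{\bar\eps}^-(w)| \lesssim \|w\|_\s^{-\beta^2/2\pi}(\eps/\|w\|_\s \wedge 1) \le \eps^\kappa\,\|w\|_\s^{-\beta^2/2\pi-\kappa}$, valid for $\bar\eps \le \eps$; convolving with $|\CT K|$ and applying the convolution estimate with $b = \beta^2/2\pi + \kappa < 4$ then gives $\eps^\kappa\,\|z\|_\s^{2-\beta^2/2\pi-\kappa}$.

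For $k \ge 2$ the divergent prefactor compensates the stronger singularity of $\CJ_\eps^{-k^2}$ with a power of $\eps$ to spare. Writing $\gamma = \beta^2/2\pi$, the elementary inequality $(\|w\|_\s + \eps)^{-k^2\gamma} \le \|w\|_\s^{-\gamma-\kappa}\,\eps^{-(k^2-1)\gamma+\kappa}$ holds for all small $\kappa > 0$: split the exponent as $-(\gamma+\kappa) - ((k^2-1)\gamma - \kappa)$ and bound the first factor using $\|w\|_\s + \eps \ge \|w\|_\s$ and the second using $\|w\|_\s + \eps \ge \eps$, both exponents being positive since $\gamma \ge 2$ and $k \ge 2$. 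Multiplying by $e^{-\beta^2(k^2-1)\CQ_\eps(0)} \asymp \eps^{(k^2-1)\gamma}$ collapses the $\eps$-powers to $\eps^\kappa$ and leaves the spatial weight $\|w\|_\s^{-\gamma-\kappa}$, so that $|F_\eps^{(k)}(z)| \lesssim \eps^\kappa\int \|w-z\|_\s^{-2}\|w\|_\s^{-\gamma-\kappa}\,dw \lesssim \eps^\kappa\,\|z\|_\s^{2-\gamma-\kappa}$ by the convolution estimate (here $2 + \gamma + \kappa > 4$, so one stays away from the borderline). The main obstacle is thus not structural but one of uniform bookkeeping at the endpoint $\beta^2 = 4\pi$, where $2 - \beta^2/2\pi = 0$ and the convolution $\|\cdot\|_\s^{-2} * \|\cdot\|_\s^{-2}$ is genuinely log-divergent as $z \to 0$: one must verify that the slack $\kappa$ in the exponents absorbs this logarithm while, simultaneously, all constants inherited from Lemma~\ref{lem:const}, Corollary~\ref{cor:Jeps} and Lemma~\ref{lem:reg_kernel_bounds} remain uniform over $\eps \in (0,1)$ and over $\bar\eps < \eps$ — which is precisely why the estimates are stated with a loss.
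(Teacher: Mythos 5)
Your proposal is correct and follows essentially the same route as the paper: you use Corollary~\ref{cor:Jeps} (equivalently Lemma~\ref{lem:reg_kernel_bounds}) for the two-sided bound on $\CJ_\eps^{-k^2}$, the estimate $|\CJ_\eps^- - \CJ_{\bar\eps}^-| \lesssim \|w\|_\s^{-\beta^2/2\pi}(\eps/\|w\|_\s \wedge 1)$ together with $(\eps/\|w\|_\s)\wedge 1 \le \eps^\kappa\|w\|_\s^{-\kappa}$, and the elementary inequality $(\|w\|_\s+\eps)^{-k^2\gamma} \le \eps^{-(k^2-1)\gamma+\kappa}\|w\|_\s^{-\gamma-\kappa}$ to cancel the prefactor from Lemma~\ref{lem:const}, and in all three cases you close with the convolution estimate for singular kernels that the paper cites as \cite[Lemma~10.14]{Regularity}. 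The only difference is that the paper states these steps in a compressed form, while you make the invocations of the convolution lemma and the $\kappa$-bookkeeping (including the logarithmic borderline at $\beta^2=4\pi$) explicit.
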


\begin{proof}
In view of Corollary \ref{cor:Jeps}, the first bound is an immediate corollary of \cite[Lemma~10.14]{Regularity}. For the second bound, 
as in the proof of Theorem \ref{theo:convBasic}, one has
\begin{equ}
|\J_\eps^- - \J_{\bar \eps}^-| \lesssim \|z\|_\s ^{-{\beta^2 \over 2\pi}}
\Big({\eps \over \|z\|_\s} \wedge 1\Big)\;.
\end{equ}
Since ${\eps \over \|z\|_\s} \wedge 1 \le \eps^\kappa\|z\|^{-\kappa}$
for every sufficiently small $\kappa>0$,  the second bound
follows again by \cite [Lemma~10.14]{Regularity}.

For the cases $k\geq 2$, one has the bound
\begin{equ}
\CJ_\eps(z)^{-k^2} \lesssim 
	\eps^{ -\left(k^2 - 1 \right)\frac{\beta^2}{2\pi} +\kappa}  
	\|z\|_\s^{-\frac{\beta^2}{2\pi}-\kappa}\;,
\end{equ}
and the bound for $F_\eps^{(k)}$ follows immediately again from \cite [Lemma~10.14]{Regularity}.
\end{proof}

\begin{remark} \label{rem:init-2ndorder}
As an immediate corollary, we conclude that if the bounds 
	\eref{e:convModel2} hold for $\Psi_\eps^{k\bar l}$ defined in \eref{e:Psipm2} 
	and for $\Psi_\eps^{k l}$ defined in \eref{e:def-PsiKL},
then they also hold for $\Psi_\eps^{ab,kl}$ defined in \eref{e:Psi-ab},
with 
\begin{equ}
C_\eps^{(k)} = \int K(z) 
\E \left( \Psi^k_{\eps}(0)\bar{\Psi}^k_{\eps}(z) \right) dz \;.
\end{equ}
\end{remark}

The main technical result of this article is as follows, where
we write $\Psi^{\pm}$ as a shorthand for $\Psi^{1\bar 1}$
and $\Psi^{\oplus}$ as a shorthand for $\Psi^{11}$.

\begin{theorem}\label{theo:second-order}
Assume that $\beta^2 \in [4\pi, 6\pi)$.
There exist stationary random complex 
distribution-valued processes $\Psi^\pm$ and $\Psi^\oplus$,
such that
\begin{equ}
	\Psi^\pm_\eps \to \Psi^\pm \;,  \qquad
	\Psi^\oplus_\eps \to \Psi^\oplus  \;,  \qquad
	\Psi^{k\bar l}_\eps \to 0  \;,  \qquad
	\Psi^{kl}_\eps \to 0
\end{equ}
 for all $(k,l)\neq (1,1)$  in probability. Furthermore, 
for every $\delta, \kappa>0$ sufficiently small and $m\in\Z_+$, one has
\begin{equ}
	\E |\scal{\phi_z^\lambda, \Psi^{a}_\eps}|^m
		\lesssim  \lambda^{(2-\frac{\beta^2}{2\pi}-\delta)m}\;,
	\qquad
	\E |\scal{\phi_z^\lambda, \Psi^{a}_\eps - \Psi^{a}}|^m
		\lesssim \eps^{\kappa} 
		\lambda^{(2-\frac{\beta^2}{2\pi}-\delta)m-\kappa}\;,
\end{equ}
where $a\in \{\pm,\oplus\}$, and
\begin{equ} [e:boundkbark]
\E |\scal{\phi_z^\lambda, \Psi^{k\bar l}_\eps}|^m
		\lesssim \eps^{\kappa}  \lambda^{(2-\frac{\beta^2}{2\pi}-\delta)m-\kappa}\;,
\quad
\E |\scal{\phi_z^\lambda, \Psi^{k l}_\eps}|^m
		\lesssim \eps^{\kappa}  \lambda^{(2-\frac{\beta^2}{2\pi}-\delta)m-\kappa}\;,
\end{equ}
for $(k,l)\neq (1,1)$,
uniformly over all test functions $\phi$ supported in the unit ball and bounded by $1$, all $\lambda\in(0,1]$, and all space-time points $z$.
\end{theorem}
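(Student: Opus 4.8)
The plan is to follow the same strategy as in the proof of Theorem~\ref{theo:convBasic}: first establish the a priori moment bounds for fixed $\eps$, then prove an $L^2$ Cauchy estimate in $\eps$ (and in the mollifier), and finally invoke the wavelet characterisation of $\CC_\s^{-\gamma}$ together with a Kolmogorov-type argument to upgrade to convergence in probability in the right topology. The key new difficulty compared to the first-order case is that each second-order object is a \emph{product} of a chaos-like factor $\Psi_\eps^k(\bar z)$ with an integrated increment of $K*\bar\Psi_\eps^l$, so after expanding $\E|\scal{\phi_z^\lambda,\Psi_\eps^{k\bar l}}|^m$ one obtains an integral over $2m$ ``outer'' points $\bar z_i$, each carrying a charge, \emph{and} $2m$ ``inner'' points $w_i$ integrated against $K(\bar z_i - w_i) - K(z - w_i)$, each also carrying a charge of the opposite sign. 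The integrand is again a product of powers $\J_\eps^{\pm}$ over all pairs among these $4m$ points, so Proposition~\ref{prop:hierarchical} (and its Corollary~\ref{cor:pointBound}) applies verbatim: the total charge is neutral within each ``copy'' $(\bar z_i, w_i)$, so we can choose the pairing $\CS_{n_0}$ to pair each $\bar z_i$ with its own $w_i$, producing exactly $m$ factors $\J_\eps^-(\bar z_i - w_i)$ and no leftover $\bar\J$ factors.

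Concretely, I would first record that, for $k=l=1$,
\begin{equ}
\E |\scal{\phi_z^\lambda,\Psi^\pm_\eps}|^{2N}
  \lesssim \lambda^{-8N}\int_{(\Lambda\times\R^3)^{2N}}
  \prod_{i} |\hat K_{z,\bar z_i}(w_i)|\,
  \frac{\prod_{\ell<m}\J_\eps(\cdot)\cdots}{\prod_{i,j}\J_\eps(\cdot)\cdots}\,dw\,d\bar z\;,
\end{equ}
where $\hat K_{z,\bar z}(w) = K(\bar z - w) - K(z - w)$ and the ratio of $\J_\eps$'s runs over all pairs of the $4N$ points with the sign rule ``outer points positive, inner points negative'' (one must be slightly careful because the renormalisation subtraction in \eref{e:Psipm2} removes precisely the diagonal $i$-with-$i$ contribution; this is where the constant $C_\eps^{(k)}$ does its job, exactly as the Wick ordering did for $\Psi_\eps$). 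Applying the pointwise bound of Proposition~\ref{prop:hierarchical} collapses the numerator, leaving
\begin{equ}
\E |\scal{\phi_z^\lambda,\Psi^\pm_\eps}|^{2N}
  \lesssim \lambda^{-8N}\Bigl|\int_{\Lambda\times\R^3}
  |\hat K_{z,\bar z}(w)|\,\J_\eps^-(\bar z - w)\,dw\,d\bar z\Bigr|^N\;.
\end{equ}
The inner integral over $w$ is then estimated using the increment bound on $K$ (its derivative being integrable against $\|\cdot\|_\s^{-\beta^2/2\pi}$ precisely because $\beta^2<6\pi$, which is the origin of that threshold) together with \eref{e:divergenceKeps}; this produces a factor $\|\bar z - z\|_\s^{2-\beta^2/2\pi}$ up to an $\eps$-independent constant, and integrating $\bar z$ over $\Lambda$ yields $\lambda^{8 - \beta^2/2\pi}$ (with a harmless $\lambda^{-\delta}$ loss from the $\kappa$-room needed for integrability). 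This gives the claimed first bound. The case $\Psi_\eps^\oplus = \Psi_\eps^{11}$ is handled identically but with the sign of the inner charges flipped (no renormalisation needed, as already observed), and one checks the pairing argument still goes through because neutrality holds copy-by-copy. The cases $(k,l)\neq(1,1)$ follow the same template: the extra prefactors $e^{-\beta^2(\frac{k^2+l^2}{2}-1)\CQ_\eps(0)} \sim \eps^{(\frac{k^2+l^2}{2}-1)\beta^2/2\pi}$ combined with the replacement of $\J_\eps$ by $\J_\eps^{kl}$ produce the extra gain $\eps^\kappa$, exactly as in \eref{e:renConst_kl} and in the $k\ge2$ part of Theorem~\ref{theo:convBasic}, using $(\eps+\|x\|_\s)^{-kl\beta^2/2\pi}\lesssim\|x\|_\s^{(\cdots)}\eps^{(\cdots)}$.

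For the difference bounds I would compute $\E|\scal{\phi_z^\lambda,\Psi^a_\eps - \Psi^a_{\bar\eps}}|^2$ directly. Expanding, one gets a sum of terms each of which is an integral of $\hat K\hat K$ against $\CJ_\eps^- + \CJ_{\bar\eps}^- - 2\CJ_{\eps,\bar\eps}^-$ (for the outer-outer pair) or against $\CJ$'s of the inner-inner and outer-inner pairs; using the telescoping decomposition $\CJ_\eps^- = \CJ^- e^{\CM_\eps}$ from Lemma~\ref{lem:reg_kernel_bounds} exactly as in the proof of Theorem~\ref{theo:convBasic}, each such term is bounded by $\|\cdot\|_\s^{-\beta^2/2\pi}(\eps/\|\cdot\|_\s \wedge 1)$ on the relevant pair, which after integration against the $\hat K$'s over $\Lambda$ gives $\eps^\kappa\lambda^{2(2-\beta^2/2\pi)-\kappa}$ for $\kappa$ small (the restriction $\beta^2<6\pi$ is again what makes the worst exponent integrable). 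Mollifier-independence follows by the same computation with $\rho,\bar\rho$ in place of $\eps,\bar\eps$, as at the end of the proof of Theorem~\ref{theo:convBasic}. Finally, the moment bounds plus the difference bounds plus the wavelet characterisation of $\CC_\s^{\gamma-2}$ (note the homogeneity $2\gamma-2$ in \eref{e:convModel2}) give convergence in probability via the parabolic Kolmogorov argument of \cite[Thm~10.7]{Regularity}, and for $k\geq2$ or $k\neq l$ the extra $\eps^\kappa$ forces the limit to be $0$.

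\textbf{Main obstacle.} I expect the real work to be in the bookkeeping of the charge structure after expanding the $m$-th moment: one must verify that the renormalisation subtractions in \eref{e:Psipm2} exactly cancel the ``self-pairing'' singularities $\J_\eps^-(\bar z_i - w_i)$ that would otherwise be non-integrable in the $w_i$ variable, and that for the \emph{off-diagonal} pairings contributing to the cross terms of the $m$-th moment one still lands on an integrable configuration — this is where, for $\beta^2 \ge 6\pi$, the argument genuinely breaks (two inner points can be forced close to the same outer point, producing $\J_\eps^-$ to a power that is no longer integrable in two space-time variables), which is precisely the limitation flagged in Remark~\ref{rem:thresholds}. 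A secondary technical point is that $\Psi_\eps^{k\bar l}$ is a function of \emph{two} space-time variables, so the analogue of translation invariance used to drop the $x$-dependence in Theorem~\ref{theo:convBasic} must be replaced by a uniformity statement in $z$; this is routine but needs to be tracked through every estimate.
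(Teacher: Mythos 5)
The central step of your argument --- using Proposition~\ref{prop:hierarchical} to ``collapse the numerator'' and arrive at
\[
\E|\scal{\phi_z^\lambda,\Psi_\eps^\pm}|^{2N} \lesssim \lambda^{-8N}\Bigl|\int_{\Lambda\times\R^3} |\hat K_{z,\bar z}(w)|\,\J_\eps^-(\bar z - w)\,dw\,d\bar z\Bigr|^N
\]
--- is false: the inner integral diverges as $\eps\to0$ throughout the regime $\beta^2\ge 4\pi$ the theorem is about. Near $w=\bar z$ one has $K(\bar z-w)-K(z-w)\sim \|\bar z-w\|_\s^{-2}$ (the subtracted piece $K(z-w)$ stays bounded there), so the integrand is of order $\|\bar z-w\|_\s^{-2}(\|\bar z-w\|_\s+\eps)^{-\beta^2/2\pi}$, whose $w$-integral behaves like $\eps^{2-\beta^2/2\pi}$ (logarithmically at $\beta^2=4\pi$). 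The subtraction $K(z-w)$ only tames the $w\to\infty$ tail, not the local singularity at $w=\bar z$; the latter can only be removed by the renormalisation subtraction $\E\bigl(\Psi_\eps^k(\bar z)\bar\Psi_\eps^l(w)\bigr)$ in \eref{e:Psipm2}, which you drop the moment you write this bound --- so the claimed $\eps$-uniform estimate is already contradicted here.

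The deeper problem is that Proposition~\ref{prop:hierarchical} is a pointwise estimate on an \emph{unsigned product} of $\J^{\pm1}$'s, and so cannot register cancellations. Expanding the $2N$-th moment of the renormalised object \eref{e:Psipm2} produces the \emph{alternating} sum $\CH(x;\J_\eps)=\sum_{P\subset\CR}(-1)^{|P|}\CH_P(x;\J_\eps)$ of \eqref{eq:calH}--\eqref{e:mth-moment}, and each individual $\CH_P$ is non-integrable against $\prod_e K(e)$ once $\beta^2\ge 4\pi$; only the signed sum is. Exploiting that cancellation is the real content of Section~\ref{sec:second-order}: Proposition~\ref{prop:cancellations} rewrites $\CH$ to exhibit explicit quadrupole factors $\Delta_e^f$; Lemma~\ref{lem:Delta-by-A} bounds $|\Delta_e^f|\lesssim\CA_{ef}$ via a two-variable Taylor expansion gaining one power of $\|e^+-e^-\|_\s$ --- and it is this, not integrability of $\nabla K\cdot \J^-$, that produces the threshold $\beta^2<6\pi$ (the combined singularity becomes $-1-\beta^2/2\pi$, integrable iff $\beta^2<6\pi$; cf.~Remark~\ref{rem:not-sharp}); Propositions~\ref{prop:bound--H1}--\ref{prop:boundH8} then package the result into a sum over ``admissible graphs'' whose integrals are estimated in Lemmas~\ref{lem:int-3}--\ref{lem:int-test}. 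You flag the renormalisation bookkeeping as the ``main obstacle'' at the end, but then treat it as a verification to be postponed; it is in fact the entire proof, and your collapsed bound rules it out before one even begins. Two smaller points: the pairing $\CS$ in Proposition~\ref{prop:hierarchical} is forced by the geometric merging procedure, not chosen --- Lemma~\ref{lem:max} is what guarantees that the selected pairing is essentially maximal --- and the cases $k\neq l$ do not follow ``by the same template'': since the charges carry different indices, the cancellation mechanism underlying \eref{e:boundprodK} fails and the genuinely different graph-contraction argument of Section~\ref{sec:kneql} is needed.
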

We remark that the complex conjugates of these processes of course also have the corresponding bounds and convergence results. The remainder of this article
is devoted to the proof of Theorem~\ref{theo:second-order}. We will treat separately
the cases $kk$, $k\bar k$, $k l$, and $k\bar l$ for $k \neq l$. The first two
cases are all that is required for the treatment of \eqref{e:model}, and these form the
remainder of this section. The last section is then devoted to the proof of
the above bounds for the last two cases.

\begin{remark}
We actually expect that the above bounds hold for all $\beta^2 \in [4\pi,8\pi)$.
The second order object could in principle be constructed below $8\pi$,
and $6\pi$ would just be a threshold where it becomes necessary 
to construct even higher order objects in order to study our equation.
The reason that we assume $\beta^2 < 6\pi$ here is because
the analysis in the following will be not as sharp as possible, see Remark 
\ref{rem:not-sharp} below.
We choose to do so for simplicity since we are here only interested in
solving the equation for $\beta^2 < 16\pi/3 < 6\pi$ anyway.
\end{remark}

As a corollary (see Remark \ref{rem:Kolmogorov}), the bounds 
\eref{e:convModel2} hold for $\Psi_\eps^\pm$, $\Psi_\eps^\oplus$,
and the bounds \eref{e:convModel3} hold for $\Psi_\eps^{k\bar l}$, $\Psi_\eps^{k l}$ 
for $(k,l)\neq (1,1)$, therefore Assumption~\ref{assumption:A} is justified.
The rest of this section devoted to the proof of Theorem \ref{theo:second-order}. 
By translation invariance, we only need to show the above bounds for $z=0$.

\subsection{Moments of $\Psi_\eps^{k\bar k}$: renormalisations}
\label{subsec:PsiKBK-ren}

Let us start from the most important case: the moments for $\Psi_\eps^{k\bar l}$ with $k=l$.
By definition in \eref{e:Psipm2}, the $m$-th moment with $m = 2N$ an even integer
can be expressed as
\begin{equs}  \label{e:first-formula}
\E |\scal{\phi_0^\lambda, \Psi^{k\bar k}_\eps}|^m
& =
\E \,\Big[ \Big| \iint \phi_0^\lambda(x) \,  (K(x-y)-K(-y)) \\
	&\qquad
	\times \Big( \Psi^k_\eps(x) \bar\Psi^k_\eps(y)
	- \E\big[\Psi^k_\eps(x) \bar\Psi^k_\eps(y)\big] 
		\Big) dx dy  \Big|^{2N}  \Big]  \;.
\end{equs}
We will rewrite this expression as 
an integral over $4N$ variables. Observe that half of these $4N$ variables
will be arguments of $\Psi_\eps^k$, and the other half will be arguments of $\bar\Psi_\eps^k$.
Also, these $4N$ variables appear as arguments of $K(x-y)-K(-y)$ by pairs, in such a way that the
roles played by $x$ and $y$ are {\it not} symmetric. Based on these simple observations we introduce 
the following terminologies and notations.
 
\begin{claim}
\item
Fix two integers $1\le k,l \le Z$. We will say that we are given 
	a finite number $2m = 4N$ of charges (where $N\in\Z$),
by which we mean points in $\R^3$
	endowed with a sign, as well as an index $h\in\{k,l\}$. 
We impose that exactly $2N$ of these charges have a positive sign
(corresponding to the arguments of $\Psi_\eps^k$),
and the other $2N$ charges have a negative sign
(corresponding to the arguments of $\bar\Psi_\eps^k$).
\item
We denote by $M$ a set of labels with cardinality $2m$ and, 
given $j \in M$, we write $x_j \in \R^3$ for the location
of the corresponding charge, $\sigma_j$ for its sign,
and $h_j$ for its index.
In this section, we will only consider the case $k=l$, 
namely all the charges have the same index $k$. 
We therefore do not make any reference to this index anymore until Section~\ref{sec:kneql}.
\item
These $4N$ charges are furthermore partitioned into $2N$ 
distinct \textit{oriented} pairs with each pair consisting of one positive and one negative charge.
Here, an \textit{oriented} pair consists of two charges, 
with one of them called the outgoing point and the other one called the incoming point.
Given two charges $i$ and $j$, we write $i\to j$ for the oriented pair with outgoing point $i$ and incoming point $j$.
We denote this set of oriented pairs by $\CR$ and we impose that $\CR$ is such that exactly $N$ pairs
are oriented from the positive to the negative charge and $N$ pairs are oriented the other way
around. \footnote{This is a reflection of the fact that, in \eref{e:first-formula}, half of the factors involve the complex conjugate.}
\item
Generally, given an arbitrary oriented pair of charges $e$, we say that it is \textit{renormalised} if $e \in \CR$.
Given a pair $e \in \CR$, we write 
$e^+$ (resp.\ $e^-$) for the point in $e$ with positive (resp.\ negative) charge,
and $e_\uparrow$ (reps.\ $e_\downarrow)$ for the outgoing point (resp.\ incoming point) of $e$, in other words $e = e_\uparrow \to e_\downarrow$.
\end{claim}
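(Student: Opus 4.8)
The displayed \emph{claim} is a collection of definitions and conventions (charges, the label set $M$, signs $\sigma_j$, indices $h_j$, the oriented pairing $\CR$, renormalised pairs, and the notations $e^+,e^-,e_\uparrow,e_\downarrow$) rather than a proposition, so there is nothing to \emph{prove} in the usual sense. What does require justification — and what I take the intended content to be — is the \emph{representation} of $\E|\scal{\phi_0^\lambda,\Psi^{k\bar k}_\eps}|^{2N}$ underlying it: that this moment can be written as a finite sum of integrals over $4N$ space-time positions carrying exactly the combinatorial data listed in the four items. The plan is to obtain this representation by a direct expansion of \eref{e:first-formula}, thereby making precise why charges come in $2N$ signed pairs, why those pairs are naturally oriented with $N$ of each orientation, and why a subset $\CR$ of them — the ``renormalised'' ones — is singled out.

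First I would write $|\,\cdot\,|^{2N}=(\,\cdot\,)^N\,\overline{(\,\cdot\,)}^N$ and expand the resulting $2N$-fold product of the double integral $\iint \phi_0^\lambda(x)\,(K(x-y)-K(-y))\,(\Psi^k_\eps(x)\bar\Psi^k_\eps(y)-\E[\Psi^k_\eps(x)\bar\Psi^k_\eps(y)])\,dx\,dy$, using one independent pair of integration variables per factor. Each of the $2N$ factors thus contributes two charges: its $x$-variable (the argument of $\phi_0^\lambda$ and of the first slot of $K(x-y)$) and its $y$-variable (the argument of the second slot and of $K(-y)$); we orient the corresponding pair from the $x$-charge to the $y$-charge. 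Since $\phi_0^\lambda$ and $K$ are real, in the $N$ unconjugated factors the $x$-charge carries $\Psi^k_\eps$ and the $y$-charge carries $\bar\Psi^k_\eps$, so the pair is oriented positive$\to$negative, while in the $N$ conjugated factors the roles are swapped and the pair is oriented negative$\to$positive. This is precisely items (i)--(iii): $4N$ charges, $2N$ positive and $2N$ negative, grouped into $2N$ oriented pairs with $N$ of each orientation.

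Next, expanding the product $\prod(\Psi^k_\eps(x)\bar\Psi^k_\eps(y)-\E[\Psi^k_\eps(x)\bar\Psi^k_\eps(y)])$ over the $2N$ factors yields a sum, over all subsets of factors that are replaced by their expectation, of terms in which the chosen factors are ``renormalised'': these are exactly the pairs forming the set $\CR$ of item (iv), their expectation being pulled out of the integral as the explicit constant recorded earlier (a product of $\J_\eps$ evaluated along the pair, times an $\eps$-power), while the remaining factors retain the product form $\Psi^k_\eps(x)\bar\Psi^k_\eps(y)$. Collecting everything, the moment becomes a finite sum over such choices of integrals over the $4N$ positions of $\prod(K(x_{e_\uparrow}-x_{e_\downarrow})-K(-x_{e_\downarrow}))$ against $\E\prod_j e^{i\sigma_j k\beta\Phi_\eps(x_j)}$ for the surviving charges; the Gaussian identity $\E\prod_j e^{i\sigma_j k\beta\Phi_\eps(x_j)}=\text{const}\cdot\prod_{i<j}\J_\eps(x_i-x_j)^{-\sigma_i\sigma_j k^2}$ then turns this into the $\J_\eps$-product form analysed in the rest of the section, and the notations $e^+,e^-,e_\uparrow,e_\downarrow$ are simply the bookkeeping that records which of $K(x-y)$ or $K(-y)$ and which sign attaches to each endpoint of a pair in $\CR$.

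The only point here that is not purely formal is that a pairing $\CR$ meeting the orientation constraint (exactly $N$ pairs each way) always exists and is the canonical one — but that is immediate from the computation above, since each unconjugated (resp.\ conjugated) factor of \eref{e:first-formula} supplies one pair oriented positive$\to$negative (resp.\ negative$\to$positive). There is no analytic obstacle at this stage: the representation is an algebraic identity. The genuine difficulty — extracting enough cancellation from $\prod(K(x_{e_\uparrow}-x_{e_\downarrow})-K(-x_{e_\downarrow}))$ and from the renormalised pairs to control the singular product $\prod_{i<j}\J_\eps(x_i-x_j)^{-\sigma_i\sigma_j k^2}$, via the hierarchical-pairing machinery of Proposition~\ref{prop:hierarchical} suitably enhanced by the cancellations carried by $\CR$ — is what the following subsections address, and is where the restriction $\beta^2<6\pi$ enters.
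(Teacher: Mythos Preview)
You are right that the displayed claim is a list of definitions rather than a proposition, and your account of the underlying representation is essentially what the paper does immediately after these definitions (leading to \eref{e:over-4N} and then to the function $\CH$). Two corrections, however.

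First, your orientation convention is the reverse of the paper's. Comparing $\phi_0^\lambda(x)\bigl(K(x-y)-K(-y)\bigr)$ with the paper's $\phi_0^\lambda(e_\downarrow)\,K(e)$, where $K(e) = K(e_\downarrow - e_\uparrow) - K(-e_\uparrow)$, one sees that $e_\downarrow = x$ (the incoming point, carrying the test function) and $e_\uparrow = y$ (the outgoing point). So the pair is $y \to x$, not $x \to y$ as you write; correspondingly your kernel product should read $\prod\bigl(K(x_{e_\downarrow}-x_{e_\uparrow})-K(-x_{e_\uparrow})\bigr)$.

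Second, and more substantively, you have misidentified $\CR$. In the paper $\CR$ is the \emph{fixed} set of all $2N$ oriented pairs arising from the $2N$ factors of the product; every one of these pairs is ``renormalised'' in the sense of item~(iv), because each carries the subtraction $\Psi^k_\eps\bar\Psi^k_\eps - \E[\Psi^k_\eps\bar\Psi^k_\eps]$. The variable subset you describe --- the factors for which one selects the $-\E[\cdot]$ term when expanding the product --- is what the paper calls $P \subset \CR$ (see the definition of $\CH_P$ in \eref{eq:calH}), and the full integrand is the alternating sum $\CH = \sum_{P\subset\CR}(-1)^{|P|}\CH_P$. So $\CR$ is not a summation variable; it is the entire pairing, and ``renormalised'' means simply ``belongs to $\CR$'', not ``replaced by its expectation in a given summand''.
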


\begin{remark}
In order to shorten our expressions, we will sometimes identify a charge $i$ with
the corresponding coordinate $x_i \in \R^3$. For example, if $\J$ is a function defined on $\R^3$
and we write $\J(e^+ - e^-)$, this is a shorthand for $\J(x_{e^+} - x_{e^-})$.
\end{remark}

For any oriented pair $e \in \CR$, we use the shorthand notation
\begin{equs}
	  K(e)  & \eqdef   K(e_\downarrow-e_\uparrow) - K(-e_\uparrow) \;.
\end{equs} 
Then, using the notations introduced above,  one can rewrite the right hand side of \eref{e:first-formula} as
\begin{equ}  [e:over-4N]
\int_{(\R^3)^M} 
\E \,\Big[  \prod_{e\in\CR}
	\Big( \Psi^k_\eps(e^+) \bar\Psi^k_\eps(e^-)
	- \E\big[\Psi^k_\eps(e^+) \bar\Psi^k_\eps(e^-)\big] 
		\Big) \Big]
\prod_{e \in \CR} \Big(
	\phi_0^\lambda(e_\downarrow) K(e)
\Big)
\,dx\;.
\end{equ}

We will now expand the first product over $e\in \CR$, which amounts to assignment of a subset $P\subset \CR$ to the first term and $\CR\backslash P$ to the second term. This motivates us to further introduce the following notations.
For any subset $P \subset \CR$, we write $P' = \bigcup P$ for the set of
all charges appearing in the pairs in $P$.
Given subsets $A \subset M$, we write $\CE(A)$ for the set of all pairs 
$\{i,j\}$ with $i,j \in A$. Here, the pairs are not oriented, and the two charges
in any such pair are not necessarily of opposite signs.
Finally, for any pair $e = \{i,j\}$ and a symmetric function $\J:\R^3\to \R_+$, we write 
\begin{equ}[e:J_e-kbark]
\J_e \eqdef
		\J (x_i - x_j) \;,\qquad 
\hat\J_e \eqdef
		\J (x_i - x_j)^{\sigma_i \sigma_j} \;.
\end{equ}
Note that for this particular notation it does not matter whether an orientation 
is specified for $e$ since $\J$ is symmetric.

Given again a function $\J$ as above and any subset $P \subset \CR$, we then define the quantity
\begin{equ} [eq:calH]
\CH_P(x;\J)
= \Bigl(\prod_{e\in P} \hat\J_{e}\Bigr)
\Bigl(\prod_{f\in \CE(M \setminus P')}\hat\J_{f}\Bigr)\;,
\end{equ}
where $x\in(\R^3)^M$. 
Note that in the first product above, every $e$ is a pair of opposite charges
$\{e_+,e_-\}$, so all the factors $\J(e_+ - e_-)$  are powered by $-1$; in the second product, the factors $\J(x_i - x_j)$ for $f=\{i,j\}$ could appear in either the numerator or the denominator, depending on whether $i,j$ having the same sign or not.
We also write
\begin{equ} 
\CH(x;\J)
= \sum_{P \subset \CR} (-1)^{|P|} \CH_P(x;\J)\;,
\end{equ}

With all these notations at hand,  the first product over $e\in\CR$
in the expression \eref{e:over-4N} is then written as
\begin{equs} 
& \sum_{P \subset \CR} (-1)^{|P|}  
\Big( \prod_{e\in P}
\E\big[\Psi^k_\eps(e_+) \bar\Psi^k_\eps(e_-)\big] \Big)  \,
\E \Big[ \prod_{f\in\CE(M \setminus P')}
	\!\!\!\! \Psi^k_\eps(f_+) \bar\Psi^k_\eps(f_-)
		 \Big]  \\
& = e^{ - \beta^2 m \left(k^2 - 1 \right)Q_\eps(0)}     \,
	\CH \big(x;    \J_\eps^{k^2} \big)		  \,.
\end{equs}
Therefore, for $m=2N$
and the function $\J_\eps$ defined in~\eref{e:defJeps}, we have
the identity
\begin{equ} [e:mth-moment]
\E |\scal{\phi_0^\lambda,  \Psi^{k\bar k}_\eps}|^{m}
=  e^{- \beta^2 m \left(k^2 - 1 \right)Q_\eps(0)}
\int_{(\R^3)^M}       
	\CH \big(x;   \J_\eps^{k^2} \big) 
\Big(
\prod_{e \in \CR}
	\phi_0^\lambda(e_\downarrow) K(e)
\Big)
\,dx\;.
\end{equ}

Similarly to before, we aim to obtain suitable bounds on the function $\CH$ that are uniform
over $\eps \in (0,1]$ and such that we can bound the small-$\lambda$
behaviour of this integral.
The most important case would be $k=1$ for which $\CH = \CH (x;\J_\eps)$,
since only $\Psi^{1\bar 1}_\eps$ converges to a nontrivial limit.

Given $\J$ as above, we define for any two ``dipoles'' $e\neq f \in \CR$ the quantity  
\begin{equ}[e:Delta-ef]
	\Delta_{e}^{f} (\J)
	=\frac{\J_{e^{+}f^{+}}\J_{e^{-}f^{-}}}{\J_{e^{+}f^{-}}\J_{e^{-}f^{+}}}-1\;.
\end{equ}
This quantity plays an important role in this section because it is small if
either $e^+ \approx e^-$ or $f^+ \approx f^-$. 
As a consequence, being able
to extract sufficiently many factors of this type from $\CH(x;\J_\eps)$ will
enable us to compensate enough of the divergence of the kernel $K$ in the 
expression for $\E |\scal{\phi_0^\lambda,  \Psi^{k\bar k}_\eps}|^{m}$.

We also define for $A\subset\CR$ and $e\notin A$ the quantity
\begin{equ}
	\Delta_{e}^{A} (\J) =\prod_{f\in A}\Delta_e^f (\J) \;.
\end{equ}
Finally, 
suppose that we are given a subset $A\subset\CR$
as well as a map $\mathcal{B} \colon A \to \power \mathcal R\setminus\{\emptyset\}$
\footnote{Given a set $E$, we write $\power E$ for its power set.}
associating to each pair $e \in A$ a non-empty subset $\CB_e$ of $\CR$.
Then, provided that $A\neq \emptyset$, we define the quantity
\begin{equ}
	\Delta_{A}^{\mathcal{B}} (\J) = \prod_{e\in A}\Delta_e^{\CB_e} (\J) \;.
\end{equ}
In the special case $A=\emptyset$ so that the above product is empty, 
we use the usual convention that $\Delta_{A}^{\mathcal{B}} (\J) =1$.
This definition also makes sense if $\CB$ is defined on a larger set containing $A$.
We also write
\begin{equ}
	U_{A}^{\mathcal{B}}=A\cup \bigcup_{e\in A} \CB_e\;.
\end{equ}
The following identity, which can easily be proved by induction, will be used:
\begin{equ}
\label{eq:identity}
\Bigl(\prod_{i=1}^n a_i\Bigr) -1
= \sum_{\emptyset \neq P \subseteq \{1,\ldots,n\}} 
\prod_{i\in P} (a_i -1)\;.
\end{equ}

In order to rewrite $\CH$ in a way that makes some of the cancellations more explicit, we
will make use of the following notations. Assume that we are given an ordering of $\CR$ 
so that $\CR = \{e_{1},\ldots,e_{m}\}$, as well as a subset $A \subset \CR$. 
We set $\CR_0 = \emptyset$ and $\CR_\ell \eqdef \{e_1,\ldots,e_\ell\}$ for $0 < \ell \leq m$, as well as $A_\ell=A\cap \CR_\ell$.
For any $\ell \in \{0,\ldots,m\}$ and $A \subset \CR_\ell$, 
we then write $\CM_\ell(A)$ for the set of all maps
$\CB \colon A \to \power \mathcal R\setminus\{\emptyset\}$ which furthermore satisfy the following
two properties:
\begin{claim}
\item If $e_k \in A$, then $\CB_{e_k} \subset \CR \setminus A_k$.
\item For every $k \le \ell$, one has $e_k \in A$ if and only if $e_k \not\in U_{A_{k-1}}^\CB$.
\end{claim}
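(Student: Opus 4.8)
The plan is to prove the claim by induction on $\ell$, reading the dipoles $e_1,\dots,e_m$ in their fixed order and checking that the two conditions defining $\CM_\ell(A)$ are precisely the invariants preserved when one expands $\CH(x;\J)=\sum_{P\subseteq\CR}(-1)^{|P|}\CH_P(x;\J)$ dipole by dipole. The base case $\ell=0$ is trivial: $\CR_0=\emptyset$, $A=\emptyset$, and the only admissible $\CB$ is the empty map. For the inductive step one passes from $\CR_{\ell-1}$ to $\CR_\ell$ by deciding the status of $e_\ell$, and the key observation is that $U_{A_{\ell-1}}^{\CB}$ is exactly the set of dipoles already ``spoken for'' by the earlier choices: either they have been declared active (put in $A_{\ell-1}$) or they have been recorded as a partner $f\in\CB_e$ of some active $e\in A_{\ell-1}$, and hence a factor $\Delta_e^f(\J)$ has already been extracted for the pair $\{e,f\}$.

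The case split is then forced. If $e_\ell\in U_{A_{\ell-1}}^{\CB}$, then $e_\ell$ has already been resolved, so it cannot be made active and condition (ii) demands $e_\ell\notin A$; nothing new is chosen and the bookkeeping passes through unchanged. If instead $e_\ell\notin U_{A_{\ell-1}}^{\CB}$, then $e_\ell$ is ``fresh'': condition (ii) demands $e_\ell\in A$, and at this stage the relevant partial factor in the expansion has, by the definition of $\CH$ together with the inductive hypothesis, the shape $\bigl(\prod_f a_f\bigr)-1$ with $f$ ranging over the not-yet-resolved dipoles and $a_f-1=\Delta_{e_\ell}^{f}(\J)$. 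Applying identity~\eref{eq:identity} turns this into $\sum_{\emptyset\neq\CB_{e_\ell}}\prod_{f\in\CB_{e_\ell}}\Delta_{e_\ell}^{f}(\J)$, and the requirement in (i) that $\CB_{e_\ell}$ be a non-empty subset of $\CR\setminus A_\ell$ records exactly the range over which the identity is being applied (partners must be chosen among dipoles that have not themselves been declared active). Thus (i)--(ii) hold by construction, and conversely every pair $(A,\CB)$ satisfying them arises from a unique sequence of such choices, which yields the claimed reorganisation of $\CH$.

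The step I expect to be the main obstacle is purely combinatorial: one must check that this correspondence is a genuine \emph{bijection}, so that no term is double-counted and the signs recombine correctly. Concretely, this amounts to verifying that the recursion in (ii) determines $A$ uniquely from $\CB$ — so that the families $\CM_\ell(A)$ for distinct $A$ are pairwise disjoint and their union over $A\subseteq\CR_\ell$ exhausts the admissible maps — and that the original signs $(-1)^{|P|}$ attached to subsets $P\subseteq\CR$ recombine into the sign carried by $(A,\CB)$ once the partial sums over the ``untouched'' dipoles in $\CR\setminus U_A^{\CB}$ have been performed; these dipoles keep their original role and simply contribute the factors $\hat\J_f$ coming from $\CE(\,\cdot\,)$. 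No analytic input enters at this point: the estimates on the $\Delta_e^f(\J)$ and on the resulting integrals are deferred to the subsequent subsections.
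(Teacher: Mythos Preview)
The two bulleted items you are asked to ``prove'' are not a theorem at all: they are the \emph{definition} of the set $\CM_\ell(A)$ of admissible maps $\CB$. There is nothing to prove here in isolation. What you have actually sketched is the proof of Proposition~\ref{prop:cancellations}, the identity
\[
\CH(x;\J)=\sum_{A\subseteq\CR_\ell}\sum_{\CB\in\CM_\ell(A)}\Delta_A^{\CB}(\J)\,\CH(A,\CB;x;\J)\;,
\]
which is where these defining conditions acquire their meaning. Read that way, your sketch is correct and follows the same approach as the paper: induction on $\ell$, splitting on whether $e_\ell\in U_{A_{\ell-1}}^{\CB}$, and in the ``fresh'' case applying identity~\eref{eq:identity} to extract the factors $\Delta_{e_\ell}^f$, which forces $\CB_{e_\ell}$ to range over non-empty subsets of $\CR\setminus A_\ell$.

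Regarding the combinatorial obstacle you flag: the paper isolates this as Lemma~\ref{lem:charMl}, a one-line recursive characterisation of $\CM_\ell(A)$ in terms of $\CM_{\ell-1}(A_{\ell-1})$ plus the status of $e_\ell$. This lemma is exactly the statement that the correspondence is bijective, and its proof is immediate from the definitions. The signs take care of themselves because one is only regrouping the original alternating sum $\sum_P(-1)^{|P|}\CH_P$; no additional sign bookkeeping is needed beyond what is already carried by $\CH(A,\CB;x;\J)$.
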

We have the following very useful recursive characterisation of the
sets $\CM_\ell(A)$:

\begin{lemma}\label{lem:charMl}
Let $\ell \ge 1$ and $A \subset \CR_\ell$. Then $\CB \in \CM_\ell(A)$
if and only if $\CB$ restricted to $A_{\ell-1}$ belongs to $\CM_{\ell-1}(A_{\ell-1})$ 
and exactly one of the following two statements holds:
\begin{claim}
\item One has $e_\ell \in A$, $e_\ell \not\in U_{A_{\ell-1}}^\CB$, and $\CB_{e_\ell}\subset \CR \setminus A$.
\item One has $e_\ell \not\in A$ and $e_\ell \in U_{A_{\ell-1}}^\CB$.
\end{claim}
\end{lemma}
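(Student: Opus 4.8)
The plan is to unravel the definition of $\CM_\ell(A)$ and check that its list of constraints splits precisely into the constraints defining $\CM_{\ell-1}(A_{\ell-1})$ together with one of the two alternatives of the lemma.

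First I would record the elementary observation that, for any map $\CB$ with domain $A$, the set $U^\CB_{A_{k-1}} = A_{k-1}\cup\bigcup_{e\in A_{k-1}}\CB_e$ depends on $\CB$ only through its restriction to $A_{k-1}\subseteq\CR_{k-1}$; hence every index-$k$ constraint appearing in the definition of $\CM_\ell(A)$ involves only the values of $\CB$ on $A_k$. Moreover, whenever $k\le\ell-1$ we have $\CR_k\subseteq\CR_{\ell-1}$, so $A_k = A\cap\CR_k = A_{\ell-1}\cap\CR_k = (A_{\ell-1})_k$, and since $e_k\in\CR_{\ell-1}$ also $e_k\in A\iff e_k\in A_{\ell-1}$. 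Feeding these identities into the two conditions imposed at index $k$ --- namely ``$\CB_{e_k}\subset\CR\setminus A_k$ whenever $e_k\in A$'' and ``$e_k\in A\iff e_k\notin U^\CB_{A_{k-1}}$'' --- shows that the conjunction over $k\le\ell-1$ of the $\CM_\ell(A)$-constraints is, literally, the statement that $\CB|_{A_{\ell-1}}\in\CM_{\ell-1}(A_{\ell-1})$. This reduces the lemma to analysing the two remaining $\CM_\ell(A)$-constraints that refer to the last pair $e_\ell$: the condition ``$\CB_{e_\ell}\subset\CR\setminus A_\ell$ if $e_\ell\in A$'' coming from property (i), and the equivalence ``$e_\ell\in A\iff e_\ell\notin U^\CB_{A_{\ell-1}}$'' coming from property (ii) with $k=\ell$.

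Then I would split into the two complementary cases. If $e_\ell\in A$, then $A_{\ell-1}=A\setminus\{e_\ell\}$ and $A_\ell=A$, so the two remaining constraints become $\CB_{e_\ell}\subset\CR\setminus A$ and $e_\ell\notin U^\CB_{A_{\ell-1}}$ --- together with $e_\ell\in A$, this is exactly statement (i) of the lemma, and statement (ii) cannot hold since it requires $e_\ell\notin A$. If $e_\ell\notin A$, then $A_{\ell-1}=A$ and $\CB=\CB|_{A_{\ell-1}}$; the first remaining constraint is vacuous and the second reduces to $e_\ell\in U^\CB_{A_{\ell-1}}$, i.e. exactly statement (ii), while statement (i) is impossible. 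As the two cases are mutually exclusive and exhaustive, ``exactly one of the two statements'' is automatic, and in each case the remaining $\CM_\ell(A)$-constraints are equivalent to the corresponding statement; combining this with the reduction of the previous paragraph gives both directions of the claimed equivalence. (The requirement $\CB_{e_\ell}\ne\emptyset$ is not an extra hypothesis: it is built into what it means for $\CB$ to be a map into $\power\CR\setminus\{\emptyset\}$.)

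I do not anticipate any genuine obstacle --- the argument is pure bookkeeping. The only place where I would be careful is in checking that the constraint $\CB_{e_k}\subset\CR\setminus A_k$ is genuinely the \emph{same} constraint in $\CM_\ell(A)$ and in $\CM_{\ell-1}(A_{\ell-1})$, which is exactly the identity $A_k=(A_{\ell-1})_k$ for $k\le\ell-1$; and, relatedly, in making sure the universal quantifier ``for every $k\le\ell$'' in property (ii) is split cleanly into ``$k\le\ell-1$'' (absorbed into $\CM_{\ell-1}(A_{\ell-1})$) and ``$k=\ell$'' (giving statements (i)/(ii)). Both are immediate from the inclusion $\CR_k\subseteq\CR_{\ell-1}$.
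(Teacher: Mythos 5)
Your proof is correct and is essentially the paper's proof: the paper dispatches the lemma with the single sentence ``This follows immediately from the definitions,'' and what you have written out is precisely the careful bookkeeping that sentence compresses. In particular you correctly identify the two key identities ($A_k=(A_{\ell-1})_k$ and $e_k\in A\iff e_k\in A_{\ell-1}$ for $k\le\ell-1$) and that $U^\CB_{A_{k-1}}$ depends only on $\CB|_{A_{k-1}}$, which together make the index-$k$ constraints of $\CM_\ell(A)$ for $k\le\ell-1$ coincide literally with those of $\CM_{\ell-1}(A_{\ell-1})$, leaving only the $k=\ell$ constraints to furnish the dichotomy.
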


\begin{proof}
This follows immediately from the definitions.
\end{proof}

\begin{remark}
For $\ell > 0$, the second of these properties cannot be satisfied 
unless $e_1 \in A$. In particular,
this shows that $\CM_\ell(\emptyset) = \emptyset$.
For $\ell = 0$ however, both properties are empty so that $\CM_0(\emptyset)$
consists of one element, which is the trivial map.
\end{remark}

\begin{proposition}
\label{prop:cancellations}
Fix an arbitrary ordering of $\CR$ as above.
Then, for any given function $\J$, 
and for every $0\leq \ell \leq m$, one has the identity
\begin{equ}[e:bound-atscale]
 \CH(x;\J)
 =
 \sum_{A\subseteq \CR_\ell}
  \sum_{\mathcal{B}\in \CM_\ell(A)}
  \Delta_{A}^{\mathcal{B}}(\J) \,\CH(A,\mathcal{B};x;\J)
\end{equ}
where we made use of the notation
\begin{equ} [e:HAB]
\CH(A,\mathcal{B};x;\J)
\eqdef 
\sum_{P\subseteq\mathcal{R}\setminus U_{A}^{\mathcal{B}}}
	(-1)^{|P|}   \CH_{A\cup P}(x;\J)\;,
\end{equ}
for any set $A\subseteq \mathcal R$. Here, $\CH_{A\cup P}(x;\J)$
is as in \eqref{eq:calH}.
\end{proposition}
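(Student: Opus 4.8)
The plan is to argue by induction on $\ell$. For $\ell = 0$ the set $\CR_0$ is empty, so the only admissible choice is $A = \emptyset$ together with the trivial map; then $\Delta_A^\CB(\J) = 1$, $U_A^\CB = \emptyset$, and the right-hand side of \eref{e:bound-atscale} collapses to $\sum_{P\subseteq\CR}(-1)^{|P|}\CH_P(x;\J) = \CH(x;\J)$, which is the definition. For the inductive step, assume \eref{e:bound-atscale} holds at level $\ell-1$ and fix the pair $e_\ell$. Since a set $A\subseteq\CR_{\ell-1}$ never contains $e_\ell$, each term of the level-$(\ell-1)$ sum falls into exactly one of two cases, according to whether $e_\ell\in U_A^\CB$ or not; I will match these, respectively, against the terms of the level-$\ell$ sum with $e_\ell\notin A$ and with $e_\ell\in A$, using Lemma~\ref{lem:charMl}.

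By the second alternative of Lemma~\ref{lem:charMl}, a level-$(\ell-1)$ pair $(A,\CB)$ with $e_\ell\in U_A^\CB$ satisfies $\CB\in\CM_\ell(A)$ (with $A$ now regarded inside $\CR_\ell$), and conversely every level-$\ell$ pair with $e_\ell\notin A$ arises this way; since neither $\Delta_A^\CB(\J)$ nor $\CH(A,\CB;x;\J)$ involves $\ell$, these terms match verbatim. For the other case, given a level-$(\ell-1)$ pair $(A,\CB)$ with $e_\ell\notin U_A^\CB$ and a non-empty $C\subseteq\CR\setminus(A\cup\{e_\ell\})$, let $\CB'$ denote the extension of $\CB$ to $A\cup\{e_\ell\}$ with $\CB'_{e_\ell} = C$; by the first alternative of Lemma~\ref{lem:charMl} the pairs $(A\cup\{e_\ell\},\CB')$ so obtained are exactly the level-$\ell$ pairs with $e_\ell\in A$, and $\Delta_{A\cup\{e_\ell\}}^{\CB'}(\J) = \Delta_A^\CB(\J)\,\Delta_{e_\ell}^C(\J)$. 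Comparing the two sums term by term, the induction is therefore complete once we establish
\[
\CH(A,\CB;x;\J) = \sum_{\emptyset\neq C\subseteq\CR\setminus(A\cup\{e_\ell\})}\Delta_{e_\ell}^C(\J)\,\CH(A\cup\{e_\ell\},\CB';x;\J)
\]
for every level-$(\ell-1)$ pair $(A,\CB)$ with $e_\ell\notin U_A^\CB$, $\CB'$ as above.

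To prove this, first note the factorisation
\[
\CH_Q(x;\J) = \CH_{Q\cup\{e_\ell\}}(x;\J)\prod_{f\in\CR\setminus(Q\cup\{e_\ell\})}\bigl(1+\Delta_{e_\ell}^f(\J)\bigr)\;,
\]
valid for any $Q\subseteq\CR$ with $e_\ell\notin Q$. Indeed, adjoining $e_\ell$ to $Q$ in \eref{eq:calH} multiplies the first product by $\hat\J_{e_\ell} = \J_{e_\ell}^{-1}$ and deletes from the second product exactly those pairs meeting $\{e_\ell^+,e_\ell^-\}$; since the pairs in $\CR$ are disjoint, the charges lying outside $\bigcup Q\cup\{e_\ell^+,e_\ell^-\}$ are precisely those of the pairs $f\in\CR\setminus(Q\cup\{e_\ell\})$, the deleted factor $\hat\J_{\{e_\ell^+,e_\ell^-\}}$ cancels $\hat\J_{e_\ell}$, and for each such $f$ the four deleted factors combine, once signs are accounted for, into $\J_{e_\ell^+ f^+}\J_{e_\ell^- f^-}/(\J_{e_\ell^+ f^-}\J_{e_\ell^- f^+}) = 1+\Delta_{e_\ell}^f(\J)$. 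Now set $S = \CR\setminus(U_A^\CB\cup\{e_\ell\})$ and split the sum over $P\subseteq\CR\setminus U_A^\CB = S\cup\{e_\ell\}$ according to whether $e_\ell\in P$, which gives $\CH(A,\CB;x;\J) = \sum_{P\subseteq S}(-1)^{|P|}\bigl(\CH_{A\cup P} - \CH_{A\cup P\cup\{e_\ell\}}\bigr)$. Applying the factorisation to $\CH_{A\cup P}$, expanding $\prod_f(1+\Delta_{e_\ell}^f) - 1 = \sum_{\emptyset\neq C}\Delta_{e_\ell}^C$ via \eref{eq:identity}, and exchanging the $P$- and $C$-summations, one finds that for fixed non-empty $C\subseteq\CR\setminus(A\cup\{e_\ell\})$ the surviving $P$'s are exactly those contained in $S\setminus C = \CR\setminus U_{A\cup\{e_\ell\}}^{\CB'}$, so the inner sum is $\CH(A\cup\{e_\ell\},\CB';x;\J)$ and the displayed identity follows. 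Summing over all level-$(\ell-1)$ pairs and invoking the inductive hypothesis then yields \eref{e:bound-atscale} at level $\ell$.

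The main obstacle is the bookkeeping inside the factorisation identity and the exchange of summations: one has to track carefully which charges and which pairs survive when $e_\ell$ is adjoined to $Q$ — using repeatedly that the pairs in $\CR$ are disjoint — and to check that the index set $\CR\setminus U_{A\cup\{e_\ell\}}^{\CB'}$ over which the residual $P$-sum runs genuinely coincides with $S\setminus C$, so that the two sides match term by term. The combinatorial bijection furnished by Lemma~\ref{lem:charMl} is conceptually simple but must still be spelled out with care, in particular verifying that the two dichotomies ($e_\ell\in U_A^\CB$ at level $\ell-1$ versus $e_\ell\in A$ at level $\ell$) are exhaustive and mutually exclusive.
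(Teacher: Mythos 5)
Your proof is correct and follows essentially the same path as the paper's: induction on $\ell$, splitting the level-$(\ell-1)$ sum according to whether $e_\ell\in U_A^\CB$, using Lemma~\ref{lem:charMl} to identify the contributions, and using the product identity \eref{eq:identity} together with an exchange of summations to handle the case $e_\ell\notin U_A^\CB$. The only cosmetic difference is that you isolate the factorisation $\CH_Q = \CH_{Q\cup\{e_\ell\}}\prod_f(1+\Delta_{e_\ell}^f)$ as a standalone step with a detailed charge-bookkeeping argument, whereas the paper writes the equivalent ratio directly when passing from $\CH_{A\cup P}-\CH_{\bar A\cup P}$ to the $\Delta$-expansion; the substance is identical.
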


\begin{remark}
The factor $\Delta_{A}^{\mathcal{B}}$ appearing in this
expression does of course also depend on the specific configuration 
$x$ of the charges. We drop this 
dependence in the notations in order not to overburden them.
Figure~\ref{fig:picture} provides a pictorial representation of an example of term
$\CH(A,\CB)$ appearing in the statement,
the reader is encouraged to read the proof with this example in mind.
\end{remark}

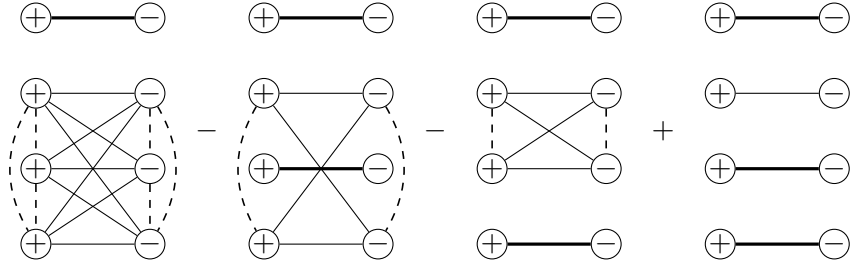
\begin{figure}
\centering
\begin{tikzpicture}
\node at (0,2) [charge] (1) {$+$};  \node at (1.5,2) [charge] (2) {$-$};
\node at (0,1) [charge] (3) {$+$};  \node at (1.5,1) [charge] (4) {$-$};
\node at (0,0) [charge] (5) {$+$};  \node at (1.5,0) [charge] (6) {$-$};
\node at (0,-1) [charge] (7) {$+$};  \node at (1.5,-1) [charge] (8) {$-$};
\draw [kerP] (1) to (2);  
\draw (3) to (4); 
\draw (3) to (6); \draw (3) to (8);
\draw (4) to (5); \draw (4) to (7);
\draw  (5) to (6); 
\draw (5) to (8);
\draw  (6) to (7);
\draw  (7) to (8);
\draw [kerAlg2] (3) to (5);   \draw [kerAlg2] (5) to (7);
\draw [kerAlg2] (3) to [bend right=30] (7); 
\draw [kerAlg2] (4) to (6);  \draw [kerAlg2] (6) to (8);
 \draw [kerAlg2] (4) to [bend left=30] (8); 
\node at (2.25,0.5) {$-$};
\node at (3,2) [charge] (1) {$+$};  \node at (4.5,2) [charge] (2) {$-$};
\node at (3,1) [charge] (3) {$+$};  \node at (4.5,1) [charge] (4) {$-$};
\node at (3,0) [charge] (5) {$+$};  \node at (4.5,0) [charge] (6) {$-$};
\node at (3,-1) [charge] (7) {$+$};  \node at (4.5,-1) [charge] (8) {$-$};
\draw [kerP] (1) to (2);  
\draw (3) to (4); 
\draw  (3) to (8);
\draw (4) to (7);
\draw [kerP] (5) to (6); 
\draw (7) to (8);
\draw [kerAlg2] (3) to [bend right=30] (7); 
 \draw [kerAlg2] (4) to [bend left=30] (8); 
\node at (5.25,0.5) {$-$};
\node at (6,2) [charge] (1) {$+$};  \node at (7.5,2) [charge] (2) {$-$};
\node at (6,1) [charge] (3) {$+$};  \node at (7.5,1) [charge] (4) {$-$};
\node at (6,0) [charge] (5) {$+$};  \node at (7.5,0) [charge] (6) {$-$};
\node at (6,-1) [charge] (7) {$+$};  \node at (7.5,-1) [charge] (8) {$-$};
\draw [kerP] (1) to (2);  
\draw (3) to (4); 
\draw (3) to (6); 
\draw (4) to (5); 
\draw  (5) to (6); 
\draw [kerP]  (7) to (8);
\draw [kerAlg2] (3) to (5);   
\draw [kerAlg2] (4) to (6);  
\node at (8.25,0.5) {$+$};
\node at (9,2) [charge] (1) {$+$};  \node at (10.5,2) [charge] (2) {$-$};
\node at (9,1) [charge] (3) {$+$};  \node at (10.5,1) [charge] (4) {$-$};
\node at (9,0) [charge] (5) {$+$};  \node at (10.5,0) [charge] (6) {$-$};
\node at (9,-1) [charge] (7) {$+$};  \node at (10.5,-1) [charge] (8) {$-$};
\draw [kerP] (1) to (2);  
\draw  (3) to (4); 
\draw [kerP] (5) to (6); 
\draw [kerP]  (7) to (8);
\end{tikzpicture}
\caption{Pictorial representation of $\CH(A,\CB)$ in the case
$A=\{e_1\} \subseteq \CR_1$ and $\CB_{e_1}=\{e_2\}$ with $m=4$.
The $4$ horizontal pairs are, from top to bottom, $e_1,e_2,e_3$ and $e_4$.
By definition, $U_A^\CB = \{e_1,e_2\}$, and therefore $P$ in \eref{e:HAB} runs over all subsets
of $\{e_3,e_4\}$: the $4$ terms above correspond to $\CH_{A\cup P}$  with $P$ being $\emptyset$,
$\{e_3\}$, $\{e_4\}$, and $\{e_3,e_4\}$, respectively.
Each $\CH_{A\cup P}$ is a product of $\J$'s (drawn in dashed lines) 
or $\J^-$'s (drawn in solid lines).
Pairs in $A\cup P$ are a bit thicker since they stand for the $\J^-$'s 
in the first product of \eref{eq:calH}.
}\label{fig:picture}
\end{figure}
\begin{proof}
In this proof, we hide the argument $\J$ in all the $\CH$ functions  for simplicity of notations.
The proof of the result now goes by induction over $\ell$. 
For $\ell=0$, $A=\emptyset$, $\CM_0(\emptyset)$ consists of one element which is the trivial 
map for which, by convention, $\Delta_{A}^{\mathcal{B}}=1$
so that the statement is precisely the definition of $\CH$. 
Assuming
that the statement holds for $\ell-1$, we now show that it also
holds for $\ell$.
We rewrite the induction hypothesis as
\begin{equ}
[e:oneplusone]
  \CH
  =   \sum_{A\subseteq \CR_{\ell-1}} \sum_{\mathcal{B}\in\CM_{\ell-1}(A)}
   \bigl(\one_{e_\ell \in U_{A}^{\mathcal{B}}} + \one_{e_\ell\notin U_{A}^{\mathcal{B}}}\bigr) 
  \Delta_{A}^{\mathcal{B}}\,\mathcal{H}(A,\mathcal{B})\;,
\end{equ}
and we consider the resulting two terms separately.

Consider first the case $e_\ell\notin U_{A}^{\mathcal{B}}$.
Writing $\bar{A}=A\cup\{e_\ell\}$, one can then rewrite $\CH(A,\CB)$ as
\begin{equs}
	\mathcal{H}(A,\mathcal{B}) 
		& =\sum_{P\subseteq\mathcal{R}\setminus(U_{A}^{\mathcal{B}}\cup\{e_\ell\})}(-1)^{|P|}
		\Big(\mathcal H_{A\cup P}
		-\mathcal H_{\bar{A}\cup P} \Big)\\
 	& =\sum_{P\subseteq\mathcal{R}\setminus(U_{A}^{\mathcal{B}}\cup\{e_\ell\})}(-1)^{|P|}
	 	\mathcal H_{\bar{A}\cup P}
 		\bigg(\prod_{f\in\mathcal{R}\setminus(\bar{A}\cup P)}
		\frac{\J_{f^{+}e_{\ell}^{+}}\J_{f^{-}e_{\ell}^{-}}}
		{\J_{f^{+}e_{\ell}^{-}}\J_{f^{-}e_{\ell}^{+}}}-1\bigg) \;.
\end{equs}
Using the identity \eref{eq:identity}, this can be rewritten as
\begin{equs}
	\mathcal{H}(A,\mathcal{B}) 
 		& =\sum_{P\subseteq\mathcal{R}\setminus(U_{A}^{\mathcal{B}}\cup\{e_\ell\})}(-1)^{|P|}
		 \mathcal H_{\bar{A}\cup P}
 		\sum_{\emptyset\neq Q\subseteq\mathcal{R}\setminus(\bar{A}\cup P)}\Delta_{e_\ell}^{Q}\\
 	& =\sum_{\emptyset\neq Q\subseteq\mathcal{R}\setminus\bar{A}}\Delta_{e_\ell}^{Q}
		\sum_{P\subseteq\mathcal{R}\setminus(U_{A}^{\mathcal{B}}\cup Q\cup\{e_\ell\})}(-1)^{|P|}
 		\mathcal H_{\bar{A}\cup P}\;.
\end{equs}
Given $\mathcal B \in \CM_{\ell-1}(A)$ and a non-empty set $Q \subset \CR\setminus \bar A$ as above,
we then define a map  $\bar{\mathcal B} \in \CM_\ell(\bar A)$
by $\bar{\mathcal B} (e)=\mathcal B (e)$ for all $e\in A$
and $\bar{\mathcal B} (e_\ell)=Q$. 
As a consequence of Lemma~\ref{lem:charMl}, we see that 
all maps in $\CM_\ell(\bar A)$ arise in this way.
One then has 
$U_{A}^{\mathcal{B}}\cup Q\cup\{e_l\}=U_{\bar{A}}^{\bar{\mathcal{B}}}$
and thus
\begin{equ}
	\sum_{P\subseteq\mathcal{R}\setminus(U_{A}^{\mathcal{B}}\cup Q\cup\{e_\ell\})}(-1)^{|P|}
 	\mathcal H_{\bar{A}\cup P}
	=\mathcal{H}(\bar A,\bar{\mathcal{B}})\;.
\end{equ}
Making use of the identity
$\Delta_{A}^{\mathcal{B}} \Delta_{e_\ell}^Q= \Delta_{\bar A}^{\bar{\mathcal{B}}}$,
we conclude that
\begin{equ}
  \sum_{A\subseteq \CR_{\ell-1}} 
   \sum_{\mathcal{B}\in \CM_{\ell-1}(A)}
  \one_{e_\ell\notin U_{A}^{\mathcal{B}}}
  \Delta_{A}^{\mathcal{B}}\mathcal{H}(A,\mathcal{B}) 
=  \sum_{\bar A\subseteq \CR_\ell \atop e_\ell\in \bar A} 
 \sum_{\bar{\mathcal{B}}\in \CM_\ell(\bar A)}
  \Delta_{\bar A}^{\bar{\mathcal{B}}}
  \mathcal{H}(\bar A,\bar{\mathcal{B}})\;.
\end{equ}

Concerning the term in \eref{e:oneplusone} with $e_\ell \in U_A^{\mathcal B}$, we use the fact
that, again by Lemma~\ref{lem:charMl}, if $e_\ell \not\in A$ but $e_\ell \in U_A^{\mathcal B}$, then $\CM_{\ell-1}(A) = \CM_{\ell}(A)$, so that
\begin{equ}
  \sum_{A\subseteq \CR_{\ell-1}} 
 \sum_{\mathcal{B}\in \CM_{\ell-1}(A)}
  \one_{e_\ell \in U_{A}^{\mathcal{B}}}
  \Delta_{A}^{\mathcal{B}}\mathcal{H}(A,\mathcal{B})
  = \sum_{\bar A\subseteq \CR_\ell \atop e_\ell\notin \bar A} 
  \sum_{\bar{\mathcal{B}}\in \CM_\ell(\bar A)}
  \Delta_{\bar A}^{\bar{\mathcal{B}}}
  \mathcal{H}(\bar A,\bar{\mathcal{B}})\;.
\end{equ}
Adding both identities concludes the proof 
of \eref{e:bound-atscale}.
\end{proof}


The most important point of Proposition~\ref{prop:cancellations} is that
at an arbitrary step $\ell$, all the renormalised pairs in $\CR_\ell$
are contained in the set $U_{A}^{\mathcal{B}}$, 
as imposed by the definition of $\CM_\ell(A)$.
The quantity $\Delta_{A}^{\mathcal{B}}$
then generates factors of {\it positive} homogeneities for the pairs 
in $U_{A}^{\mathcal{B}}$ (see the remarks after \eref{e:Delta-ef}).
In order to make this statement more precise, it is convenient to introduce the quantity
\begin{equ}[e:defAij]
\mathcal{A}_{ef}  \eqdef
\Vert x_{e_+}-x_{e_-}\Vert_{\s}\Vert x_{f_+}-x_{f_-}\Vert_{\s}\Vert x_{e_+}-x_{f_-}\Vert_{\s}^{-1}\Vert x_{f_+}-x_{e_-}\Vert_{\s}^{-1}\;,
\end{equ}
for any two dipoles $e$ and $f$.

\begin{lemma} \label{lem:smaller-product}
Suppose that $e^+,e^-,f^+,f^- \in\R^3$ are four distinct points,
and that
\begin{equ}[e:conditionAsmall]
\Vert e^+ - e^-\Vert_{\s} \wedge
\Vert f^+ - f^-\Vert_{\s}
\le
\Vert e^+ - f^-\Vert_{\s} \wedge
\Vert e^- - f^+ \Vert_{\s}\;,
\end{equ}
Then, one has the inequality
\begin{equ} [eq:Aboundedby1]
\Vert e^+ - e^-\Vert_{\s}
\Vert f^+ - f^-\Vert_{\s} 
\lesssim
\Vert e^+ - f^-\Vert_{\s}
\Vert e^- - f^+ \Vert_{\s}\;.
\end{equ}
In terms of the quantity $\mathcal{A}_{ef}$ defined in \eref{e:defAij},
we can also write this more succinctly as $\mathcal{A}_{ef} \lesssim 1$.
\end{lemma}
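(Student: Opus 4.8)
The plan is to deduce \eref{eq:Aboundedby1} from a single application of the triangle inequality for the distance $\|\cdot\|_\s$, routed through the shorter of the two ``internal'' edges $e^+-e^-$ and $f^+-f^-$.

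First I would exploit the symmetry of the statement under the exchange $e \leftrightarrow f$: this swaps $\|e^+-e^-\|_\s$ with $\|f^+-f^-\|_\s$ and $\|e^+-f^-\|_\s$ with $\|e^--f^+\|_\s$, hence leaves both the hypothesis \eref{e:conditionAsmall} and the conclusion \eref{eq:Aboundedby1} (equivalently, the quantity $\mathcal{A}_{ef}$ of \eref{e:defAij}) invariant. So I may assume without loss of generality that $\|e^+-e^-\|_\s \le \|f^+-f^-\|_\s$, and then \eref{e:conditionAsmall} reads simply $\|e^+-e^-\|_\s \le \|e^+-f^-\|_\s \wedge \|e^--f^+\|_\s$.

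Next I would bound the long internal edge by going along the path $f^+ \to e^- \to e^+ \to f^-$:
$$\|f^+-f^-\|_\s \lesssim \|e^--f^+\|_\s + \|e^+-e^-\|_\s + \|e^+-f^-\|_\s \lesssim \|e^+-f^-\|_\s \vee \|e^--f^+\|_\s\;,$$
where in the last step I use that, by the normalised hypothesis, $\|e^+-e^-\|_\s$ is dominated by (say) $\|e^+-f^-\|_\s$, so that each of the three summands is at most $\|e^+-f^-\|_\s \vee \|e^--f^+\|_\s$. (The triangle inequality is invoked here up to a multiplicative constant, which is harmless since we only seek a $\lesssim$ bound.) Finally I would multiply this display by $\|e^+-e^-\|_\s$ and use the hypothesis once more, this time as $\|e^+-e^-\|_\s \le \|e^+-f^-\|_\s \wedge \|e^--f^+\|_\s$, to obtain
$$\|e^+-e^-\|_\s\,\|f^+-f^-\|_\s \lesssim \bigl(\|e^+-f^-\|_\s \wedge \|e^--f^+\|_\s\bigr)\bigl(\|e^+-f^-\|_\s \vee \|e^--f^+\|_\s\bigr) = \|e^+-f^-\|_\s\,\|e^--f^+\|_\s\;,$$
which is \eref{eq:Aboundedby1}; dividing by $\|e^+-f^-\|_\s\,\|e^--f^+\|_\s$ (these are nonzero since the four points are distinct) gives $\mathcal{A}_{ef} \lesssim 1$.

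I do not expect a genuine obstacle here; the only point requiring a little care is the choice of path in the triangle inequality. One must route through the short edge $e^+-e^-$: a more naive estimate such as $\|f^+-f^-\|_\s \le \|f^+-e^+\|_\s + \|e^+-f^-\|_\s$ (or the analogue through $e^-$) involves a cross-distance that is not controlled by \eref{e:conditionAsmall}, and would fail to close the argument.
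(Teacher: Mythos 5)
Your proof is correct and is essentially identical to the paper's own argument: the same symmetry reduction to the case where $\|e^+-e^-\|_\s$ is the shortest distance, followed by the same triangle-inequality estimate along the path $f^+ \to e^- \to e^+ \to f^-$ to bound $\|f^+-f^-\|_\s$ by the larger cross-distance. The only difference is that you are slightly more explicit about why the statement is invariant under $e \leftrightarrow f$.
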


\begin{proof}
Since the statement is symmetric under $e \leftrightarrow f$, we can assume without loss of generality that one has
\begin{equ}
\Vert e^+ - e^-\Vert_{\s}
\lesssim
\Vert e^+ - f^-\Vert_{\s} \wedge
\Vert e^- - f^+ \Vert_{\s}\;.
\end{equ}
Then, by the triangular
inequality,
\begin{equs}
  \Vert f^{+}- f^{-}\Vert_{\s}
  &  \lesssim
  \Vert e^+ - e^-\Vert_{\s}+
\Vert e^+ - f^-\Vert_{\s}+
\Vert e^- - f^+ \Vert_{\s} \\
 & \lesssim
\Vert e^+ - f^-\Vert_{\s} \vee
\Vert e^- - f^+ \Vert_{\s} \;.
\end{equs}
The bound \eref{eq:Aboundedby1} follows by combining these two 
inequalities.
\end{proof}

The following lemma will be used in the proof of Proposition~\ref{prop:bound--H1} below.

\begin{lemma} \label{lem:max}
The final pairing $\mathcal{S}$ selected by the procedure in Section \ref{sec:linear} maximizes
(up to a multiplicative constant depending only on $m$ but not on the specific configuration of points $x$) the quantity
$\Pi_{\mathcal{S}} \eqdef \prod_{\{i,j\}\in\mathcal{S}}\J_{ij}^{-}$.
\end{lemma}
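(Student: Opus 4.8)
The plan is to revisit the hierarchical construction of Section~\ref{sec:linear} and show, step by step along the increasing scales $2^n$, that the pairing $\CS_n$ it selects is (up to a factor depending only on $m$) the maximiser of $\prod_{\{i,j\}\in\CS}\J^-_{ij}$ over all perfect matchings $\CS$ between the positive and negative charges. The key observation is that maximising $\prod \J^-_{ij}$ over matchings is, because of the monotonicity property~\eref{e:boundKeps}, essentially equivalent to \emph{minimising the total ``scale'' of the matched pairs}: pairing $i$ with $j$ contributes a factor $\J^-(x_i-x_j)$, which by~\eref{e:boundKeps} is comparable to $\bar\J_{n}^{-1}$ whenever $\|x_i-x_j\|_\s \asymp 2^n$, and $\bar\J_n$ is (essentially) increasing in $n$; hence a matching is near-optimal precisely when it matches charges that became ``close'' as early as possible in the filtration $\{\CA_n\}$.

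The induction would run as follows. Fix an arbitrary perfect matching $\CS$ and compare it to the selected $\CS_{n_0}$. First I would observe that both matchings respect the partition in the weak sense that any pair in either matching lies inside a single block of $\CA_n$ for $n$ large enough; more useful is the converse bookkeeping: for each $n$, count how many pairs of $\CS$ (resp.\ $\CS_{n_0}$) are ``resolved at scale $\le n$'', meaning both endpoints lie in a common block of $\CA_n$. By the greedy nature of the construction, whenever a merger at level $n+1$ creates a block containing both positive and negative unmatched charges, $\CS_{n}\to\CS_{n+1}$ pairs up as many of them as possible; this is exactly the operation that maximises the number of pairs resolved at the lowest available scale. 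A Hall-type / exchange argument then shows that for \emph{every} $n$, the number of pairs of $\CS_{n_0}$ resolved at scale $\le n$ is at least the corresponding number for $\CS$. Translating this count back through~\eref{e:boundKeps} — each pair of $\CS$ resolved only at scale $n$ contributes at most a constant multiple of $\bar\J_n^{-1}$, while the matching $\CS_{n_0}$ ``uses up'' its pairs at scales that are termwise no larger — yields
\begin{equ}
\prod_{\{i,j\}\in\CS}\J^-_{ij} \;\lesssim\; \prod_{\{i,j\}\in\CS_{n_0}}\J^-_{ij}\;,
\end{equ}
with a proportionality constant depending only on $m$ (through the number of merger events, which is at most $m-1$, and the constants in~\eref{e:boundKeps}), as required.

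The main obstacle I expect is making the exchange/Hall argument fully rigorous: one must check that whenever $\CS$ matches a pair $\{i,j\}$ that the greedy procedure did \emph{not} match until a strictly later scale, one can reroute $\CS$ along an alternating path (built from edges of $\CS$ and $\CS_{n_0}$) to obtain a matching that agrees with $\CS_{n_0}$ on one more pair without decreasing $\prod\J^-$ by more than a bounded factor. The fact that all such rerouting stays within blocks of bounded diameter at the relevant scale — so that each substituted factor changes only by a constant from~\eref{e:boundKeps} — is what keeps the accumulated constant depending only on $m$; this is the same mechanism that produces the ``almost cancellations'' in the proof of Proposition~\ref{prop:hierarchical}, so I would model the estimate on the argument around~\eref{e:equivK}–\eref{e:boundprodK} there.
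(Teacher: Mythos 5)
Your route---comparing the distribution of ``resolution scales'' of the two matchings along the filtration $\{\CA_n\}$ and then transferring the resulting majorization through \eref{e:boundKeps}---is correct, but differs from the paper's. The paper performs an iterative two-element swap: pick the smallest $n$ for which some greedy pair $e=\{e_+,e_-\}\in\CS_n(A)$ is missing from the competitor $\bar\CS$; observe that $\bar\CS$ must then match $e_+$ and $e_-$ across distinct blocks of $\CA_{n-1}$, so both cross-distances are $\gtrsim 2^n$ while $\|x_{e_+}-x_{e_-}\|_\s\lesssim 2^n$; invoke Lemma~\ref{lem:smaller-product} to see that replacing $\{e_+,f_-\},\{e_-,f_+\}$ by $\{e_+,e_-\},\{f_+,f_-\}$ can only increase $\Pi$ up to a constant; and iterate, at most $m$ times. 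That argument needs no global bookkeeping and rests on the pointwise quadrupole inequality of Lemma~\ref{lem:smaller-product}, which your plan never invokes.

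Two remarks on your version. First, the step you flag as ``the main obstacle'' is not one: no Hall-type theorem or alternating-path rerouting is needed. Within any block $A\in\CA_n$, \emph{any} matching contains at most $\min(|A_+|,|A_-|)$ pairs lying entirely in $A$ (each pair joins opposite signs), and the greedy construction attains this bound by design, leaving exactly $T_n(A)=\bigl||A_+|-|A_-|\bigr|$ charges unpaired in each block. Summing over blocks gives the claimed majorization simultaneously for every $n$. Second, to convert the majorization into $\Pi_{\bar\CS}\lesssim\Pi_{\CS}$ you should justify that a pair resolved at scale exactly $n$ has parabolic length comparable to $2^n$: the lower bound $>2^{n-1}$ holds because the endpoints sit in distinct blocks of $\CA_{n-1}$, and the upper bound follows because a block of $\CA_n$ has diameter $\lesssim m\,2^n$ (any two of its points are joined by a chain of at most $2m-1$ jumps of size $\le 2^n$). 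With those two points filled in, your approach is complete and has the conceptual merit of exhibiting the greedy pairing as a majoriser of every competitor; the paper's local swap is the more elementary alternative.
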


\begin{proof}
Let $\mathcal{S}$ be the pairing selected by the procedure in Section \ref{sec:linear} 
and let $\bar{\mathcal{S}} \neq \mathcal{S}$ be a different pairing.
Without loss of generality, we assume that in the procedure to construct
$\mathcal{A}_{n}$ and $\mathcal{S}_{n}$ in Section \ref{sec:linear}, at each step
$n$ only two sets merge together. Let $n$ be the smallest number
such that there exist $A\in\mathcal{A}_{n}$ with $e\in\mathcal{S}_{n}(A)$
but $e\notin\bar{\mathcal{S}}$. Then, there exist a set $B\in\mathcal{A}_{n-1}$
containing $e_{+}$ and $\bar{B}\in\mathcal{A}_{n-1}$ containing
$e_{-}$ and $B\neq\bar{B}$. 

Suppose that $\{e_{+},f_{-}\},\{e_{-},f_{+}\}\in\bar{\mathcal{S}}$.
One has $f_{-}\notin B$ (otherwise there would be already a pair
with both charges in $B$ which belongs to $\mathcal{S}$ but not
$\bar{\mathcal{S}}$, thus contradicting the minimality assumption on $n$), 
and $f_{+}\notin\bar{B}$, so
\begin{equ}
\Vert x_{e_{+}}-x_{f_{-}}\Vert_{\s}\gtrsim2^{n}\;,
\qquad\Vert x_{e_{-}}-x_{f_{+}}\Vert_{\s}\gtrsim2^{n}\;.
\end{equ}
By $\Vert x_{e_{+}}-x_{e_{-}}\Vert_{\s}\lesssim2^{n}$ 
and Lemma \ref{lem:smaller-product}, one has
\begin{equ}
  \Vert x_{e_{+}}-x_{e_{-}}\Vert_{\s}\Vert x_{f_{+}}-x_{f_{-}}\Vert_{\s}
  \lesssim\Vert x_{e_{+}}-x_{f-}\Vert_{\s}\Vert x_{f_{+}}-x_{e_{-}}\Vert_{\s}\;.
\end{equ}
If we define $\tilde{\mathcal{S}}$ by keeping all the parings in
$\bar{\mathcal{S}}$ except that $\{e_{+},f_{-}\},\{e_{-},f_{+}\}$
are replaced by $\{e_{+},e_{-}\},\{f_{+},f_{-}\}$, we have $\Pi_{\bar{\mathcal{S}}}\lesssim\Pi_{\tilde{\mathcal{S}}}$.
Note that $e\in\tilde{\mathcal{S}}$. We can then iterate the above
procedure to consequently increase $\Pi$ until we obtain the pairing
$\mathcal{S}$. 
\end{proof}


\begin{lemma}
\label{lem:Delta-by-A}
Assume that we are given a function $\CQ(z)\sim-\frac{1}{2\pi}\log\left\Vert z\right\Vert _{\s}$, that $\bar \rho$ is
any continuous function supported on the unit ball around the origin integrating to $1$,
and that $\CQ_\eps$ is as in \eref{e:CQeps}.
Given $\alpha \geq 2$, let $\J(z) = e^{-2\pi\alpha \CQ(z)}$ and $\J_\eps(z) = e^{-2\pi\alpha \CQ_\eps(z)}$.
Assume furthermore that $\CQ$ is of class $\CC^2$ and there 
exists a constant $C$ such that $|\nabla^k \CQ(z)| \le C / \|z\|_\s^{|k|_\s}$ for $|k|\leq 2$.
Let  $e^+,e^-,f^+,f^- \in\R^3$ be four distinct points such that
\begin{equ}[e:condef]
\Vert e^+ - e^-\Vert_{\s} \wedge
\Vert f^+ - f^-\Vert_{\s}
\le
\Vert e^+ - f^-\Vert_{\s} \wedge
\Vert e^- - f^+ \Vert_{\s}\;,
\end{equ}
and write $e=\{e^+,e^-\},f=\{f^+,f^-\}$.
One then has the bound
\begin{equ} [e:Delta-by-A]
\bigg|\frac{\J_\eps(e^+-f^+)\J_\eps(e^--f^-)}{\J_\eps(e^+-f^-)\J_\eps(e^--f^+)} -1 \bigg|
\lesssim 
\mathcal{A}_{ef}\;,
\end{equ}
uniformly for all $\eps\geq0$, where $\mathcal{A}_{ef}$ is as in \eqref{e:defAij}.

In particular, for $k\geq 1$, the function $\Delta_{e}^f (\J_\eps^{k^2})$ 
with $\CJ_\eps$ defined in \eref{e:defJeps} satisfies \eref{e:Delta-by-A}
on the set \eqref{e:condef} by choosing $\alpha = k^2 \beta^2/(2\pi)$.
\end{lemma}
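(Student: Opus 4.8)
The plan is as follows. Writing $\J_\eps = e^{-2\pi\alpha \CQ_\eps}$, the left hand side of \eqref{e:Delta-by-A} equals $|e^{-2\pi\alpha D_\eps}-1|$, where
\begin{equ}
 D_\eps \eqdef \CQ_\eps(e^+-f^+) + \CQ_\eps(e^--f^-) - \CQ_\eps(e^+-f^-) - \CQ_\eps(e^--f^+)
\end{equ}
is the mixed second difference of $\CQ_\eps$. Since Lemma~\ref{lem:smaller-product} gives $\mathcal{A}_{ef}\lesssim 1$ under \eqref{e:condef}, and $t\mapsto e^{-2\pi\alpha t}$ is Lipschitz on bounded intervals, it suffices to show that $|D_\eps|\lesssim 1$ always, together with $|D_\eps|\lesssim\mathcal{A}_{ef}$ when $\mathcal{A}_{ef}$ is small. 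First I would normalise the configuration using two symmetries of the statement: $\mathcal{A}_{ef}$, the left hand side (using that $\CQ_\eps$, hence $\J_\eps$, is even), and hypothesis \eqref{e:condef} are all invariant under exchanging $e$ and $f$, and under the simultaneous relabelling $e^+\leftrightarrow e^-$, $f^+\leftrightarrow f^-$. We may therefore assume $\|e^+-e^-\|_\s\le\|f^+-f^-\|_\s$ and $\|e^+-f^-\|_\s\le\|e^--f^+\|_\s$; setting $\rho=\|e^+-f^-\|_\s$ and $\rho'=\|e^--f^+\|_\s$, the hypothesis \eqref{e:condef} then reads simply $\|e^+-e^-\|_\s\le\rho\le\rho'$.

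The analytic input I would record is that, uniformly over $\eps\ge 0$, one has $|\nabla^k\CQ_\eps(z)|\lesssim(\|z\|_\s+\eps)^{-|k|_\s}$ for all $|k|\le 2$. For $\eps=0$ this is part of the hypotheses. For $\eps>0$ it follows from $\CQ_\eps=\CQ*\bar\rho_\eps$ by differentiating under the convolution when $\|z\|_\s\ge 2\eps$ (using that $\bar\rho_\eps$ is supported in $\{\|w\|_\s\le\eps\}$), and by transferring the derivatives onto $\bar\rho_\eps$ when $\|z\|_\s\le 2\eps$, exploiting the evenness of $\bar\rho=\rho*\CT\rho$ (which annihilates affine functions) together with a second-order Taylor expansion of $\CQ$ and the hypothesis $|\nabla^k\CQ|\lesssim\|\cdot\|_\s^{-|k|_\s}$; this is essentially the computation underlying Lemma~\ref{lem:reg_kernel_bounds}. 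Combined with $\CQ_\eps\sim-\tfrac{1}{2\pi}\log(\|z\|_\s+\eps)$, these bounds give $\J_\eps(z)\asymp(\|z\|_\s+\eps)^\alpha$ and a mean-value estimate $|\CQ_\eps(z)-\CQ_\eps(\bar z)|\lesssim\sum_i|z_i-\bar z_i|\,(\|z\|_\s+\eps)^{-\s_i}$ valid for $\|z-\bar z\|_\s\lesssim\|z\|_\s$.

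Next I would treat the case $\|e^+-e^-\|_\s\gtrsim\rho$, so that $\|e^+-e^-\|_\s\asymp\rho$. Here it is enough to show that the left hand side is $\lesssim 1$ and that $\mathcal{A}_{ef}\gtrsim 1$. Both follow from a short chain of parabolic triangle inequalities whose only nontrivial ingredient is $\|e^+-e^-\|_\s\le\rho\le\rho'$: from $\|e^+-f^+\|_\s\lesssim\|e^+-e^-\|_\s+\|e^--f^+\|_\s\lesssim\rho'$, $\|e^--f^-\|_\s\lesssim\|e^+-e^-\|_\s+\|e^+-f^-\|_\s\lesssim\rho$, and their $\eps$-regularised analogues, one gets $(\|e^+-f^+\|_\s+\eps)(\|e^--f^-\|_\s+\eps)\lesssim(\|e^+-f^-\|_\s+\eps)(\|e^--f^+\|_\s+\eps)$, hence the ratio is $\lesssim 1$; and combining the same inequalities with Lemma~\ref{lem:smaller-product} shows $\mathcal{A}_{ef}\asymp 1$.

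It remains to bound $|D_\eps|$ by $\mathcal{A}_{ef}$ when $\|e^+-e^-\|_\s\ll\rho$. Using the exact identity $D_\eps=h(e^+)-h(e^-)$ with $h(u)=\CQ_\eps(u-f^+)-\CQ_\eps(u-f^-)$, the mean value theorem along $[e^-,e^+]$ gives $|D_\eps|\lesssim\sum_i\|e^+-e^-\|_\s^{\s_i}\sup_{a\in[e^-,e^+]}|\partial_ih(a)|$. Since $\|e^+-e^-\|_\s\ll\rho\le\rho'$, a triangle inequality forces $\|a-f^+\|_\s\gtrsim\rho$ and $\|a-f^-\|_\s\gtrsim\rho$ for every $a\in[e^-,e^+]$, so $|\partial_ih(a)|\le|\partial_i\CQ_\eps(a-f^+)|+|\partial_i\CQ_\eps(a-f^-)|\lesssim\rho^{-\s_i}$ and hence $|D_\eps|\lesssim\|e^+-e^-\|_\s/\rho$. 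If $\|f^+-f^-\|_\s\gtrsim\rho$, then $\rho'\lesssim\rho+\|f^+-f^-\|_\s\lesssim\|f^+-f^-\|_\s$, so $\|e^+-e^-\|_\s/\rho\lesssim\|e^+-e^-\|_\s\|f^+-f^-\|_\s/(\rho\rho')=\mathcal{A}_{ef}$ and we are done. If instead $\|f^+-f^-\|_\s\ll\rho$, then $\rho'\lesssim\rho+\|f^+-f^-\|_\s\lesssim\rho$, so $\rho'\asymp\rho$, and since $f^-$ lies at parabolic distance $\gtrsim\rho$ from $[e^-,e^+]$ while $[f^-,f^+]$ has length $\ll\rho$, the whole segment $[f^-,f^+]$ stays at parabolic distance $\gtrsim\rho$ from $[e^-,e^+]$; one may then expand $\partial_ih(a)=\partial_i\CQ_\eps(a-f^+)-\partial_i\CQ_\eps(a-f^-)$ as a difference along $[f^-,f^+]$ and use the second-derivative bound to obtain $|\partial_ih(a)|\lesssim\sum_j\|f^+-f^-\|_\s^{\s_j}\rho^{-\s_i-\s_j}$, whence $|D_\eps|\lesssim\|e^+-e^-\|_\s\|f^+-f^-\|_\s/\rho^2\asymp\mathcal{A}_{ef}$. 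The last assertion of the lemma is then immediate on taking $\alpha=k^2\beta^2/(2\pi)\ge 2$ and invoking Lemma~\ref{lem:C_log} to see that $\CQ_\eps$ has the required form. The step I expect to be the main obstacle is precisely this geometric bookkeeping with parabolic triangle inequalities: one must verify, in every sub-configuration where $\mathcal{A}_{ef}$ is small, that the straight segments entering the finite-difference representation of $D_\eps$ never approach the origin, where $\nabla^2\CQ_\eps$ has size $\eps^{-2}$ rather than $\|z\|_\s^{-2}$; relatedly, establishing the uniform-in-$\eps$ second-derivative bound on $\CQ_\eps$ at scales $\le\eps$ genuinely uses the evenness of $\bar\rho$.
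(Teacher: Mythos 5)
Your proposal is correct and reaches the same conclusion, but the route is genuinely different from the paper's. The paper works \emph{multiplicatively}: it introduces the quantity $F(z_e,z_f) = \J_\eps(e^+-f^+)\J_\eps(z_e-z_f) - \J_\eps(e^+-z_f)\J_\eps(f^+-z_e)$, which vanishes on $\{z_e=e^+\}\cup\{z_f=f^+\}$, and then applies a double gradient theorem to $F$ along carefully chosen piecewise-linear paths $\gamma_e$, $\gamma_f$ aligned with the coordinate axes, kept apart by an ``exclusion zone.'' That argument needs $\alpha\ge 2$ in an essential way, because the derivative bounds on $\J_\eps$ produce exponents $\alpha-\s_i$ that must be nonnegative for the final inequalities $\|e^+-z_f\|_{\s,\eps}^{\alpha-\s_j}\lesssim\|e^+-f^-\|_{\s,\eps}^{\alpha-\s_j}$ to go in the right direction. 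Your proof works \emph{additively}: you pass to logarithms and bound the mixed second difference $D_\eps$ of $\CQ_\eps$, which is independent of $\alpha$, so that $\alpha$ only appears as a Lipschitz constant for $t\mapsto e^{-2\pi\alpha t}$ at the very end. This is cleaner, it lets you use plain straight-line segments (the observation $|(e^+-e^-)_i|\le\|e^+-e^-\|_\s^{\s_i}$ together with the quasi-triangle inequality does all the geometric work), and it shows the hypothesis $\alpha\ge 2$ is an artifact of the paper's specific method rather than a genuine requirement. Your case split and the distance estimates keeping the segments away from the singularity are sound.

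One small bookkeeping point: at the outset you announce that it suffices to show $|D_\eps|\lesssim 1$ always and $|D_\eps|\lesssim\CA_{ef}$ when $\CA_{ef}$ is small, but in the easy regime $\|e^+-e^-\|_\s\gtrsim\rho$ what you actually establish is only the one-sided bound $D_\eps\gtrsim -1$ (equivalently, the ratio is $\lesssim 1$). This is harmless — together with $\CA_{ef}\gtrsim 1$ it already yields $|e^{-2\pi\alpha D_\eps}-1|\lesssim 1\lesssim\CA_{ef}$ — but the stated reduction is slightly stronger than, and not identical to, what the two cases deliver, so the phrasing could be tightened.
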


\begin{proof}
As a consequence of the symmetries $e \leftrightarrow f$ and $(e^+,f^+) \leftrightarrow (e^-,f^-)$
we can assume without loss of generality that
\minilab{e:WLG-smaller}
\begin{equs}
\Vert e^+ - e^-\Vert_{\s} & \le \Vert f^+ - f^- \Vert_{\s} \;, \label{e:WLG-smaller1}
\\
\Vert f^+ - e^-\Vert_{\s} & \le \Vert e^+ - f^- \Vert_{\s}  \label{e:WLG-smaller2}
\;.
\end{equs}

First of all, we consider the ``easier" case, that is
\begin{equ}
	\Vert e^+ - f^+\Vert_{\s}  \le 5\Vert e^+ - e^- \Vert_{\s} \;.
\end{equ}
In this case, by the triangular inequality one has 
$\Vert f^+ - e^-\Vert_{\s}  \le 6\Vert e^+ - e^- \Vert_{\s}$
and, using the triangle inequality together with \eref{e:WLG-smaller1}, one also has
$\Vert e^+ - f^-\Vert_{\s}  \le 6\Vert f^+ - f^- \Vert_{\s}$,
so that $\CA_{ef} \ge 1/36$.
Furthermore, by the triangle inequality, \eref{e:condef},
and \eref{e:WLG-smaller1}, one has
$\Vert e^+ - f^+\Vert_{\s}  \lesssim \Vert f^+ - e^- \Vert_{\s}$
and $\Vert e^- - f^-\Vert_{\s}  \lesssim \Vert e^+ - f^- \Vert_{\s}$.
Therefore the left hand side of \eref{e:Delta-by-A} is bounded by
some constant independent of $\eps$, and \eref{e:Delta-by-A} follows.

If
$\Vert e^- - f^-\Vert_{\s}  \le 5\Vert e^+ - e^- \Vert_{\s} $
then the bound \eref{e:Delta-by-A} holds in a similar way.
Therefore it remains to consider the situation where
\begin{equ}
	\Vert e^- - f^-\Vert_{\s}  \wedge
	\Vert e^+ - f^+\Vert_{\s}
	 \geq 5\Vert e^+ - e^- \Vert_{\s}  \;,
\end{equ}
which in particular implies that
\begin{equ}[e:additional-assum]
4\Vert e^+ - e^- \Vert_{\s} \le	\Vert e^\pm - f^\pm\Vert_{\s} \;,
\end{equ}
for any of the four choices of signs that can appear on the right hand side.
Define a function $F$ depending on $e^+,f^+$ and $\eps$ by
\begin{equ}
F(z_{e},z_{f}) 
\eqdef \J_\eps(e^+ - f^+) \, \J_\eps(z_{e}-z_{f}) 
 -\J_\eps(e^+ - z_f) \, \J_\eps(f^+ - z_e) \;.
\end{equ}
Since $\CJ_\eps$ is assumed to be symmetric, one has 
$F(z_{e},z_{f})= 0$ whenever $z_{e}=e^+$ or $z_{f}=f^+$. In
particular $\partial_{z_{e}}^{k}F(z_{e},f^+)=\partial_{z_{f}}^{k}F(e^+,z_{f})=0$
for all $k\geq0$. We will show that 
under the assumptions of the lemma, one has the bound
\begin{equ}[e:quadrupole]
|F(e^-,f^-)|\lesssim
\J_\eps(e^+ - f^-) \, \J_\eps(f^+ - e^-) \, \mathcal{A}_{ef}\;,
\end{equ}
which then immediately concludes the proof of the lemma.

To show \eref{e:quadrupole}, let 
$\gamma_{e}:[0,1]\to \R^3 $ 
be the  piecewise linear path from $e^+$ to $e^-$ which is made up from three pieces,
each of them parallel to one of the coordinate axes.
Then, since $F(e^+,f^-)=0$, one has
\begin{equ}[e:first-line]
| F (e^-,f^-) |
 \lesssim 
 \sum_{i=0}^2    |\gamma_e^{(i)}| \sup_{z_e \in\gamma_e} 
 	|\nabla_{z_{e}}^{(i)} F(z_e,f^-)| \;,
\end{equ}
where $\nabla_z^{(i)}$ denotes the derivative with respect to the $i$th component
of the variable $z$ and $|\gamma_e^{(i)}|$ denotes the total (Euclidean!) 
length of the pieces of
the path $\gamma_e$ that are parallel to the $i$th coordinate axis.
Note that one has $|\gamma_e^{(i)}| \le \|e^+ - e^-\|_\s^{\s_i}$.

Similarly, let $\gamma_{f}: [0,1]\to \R^3 $ 
be a piecewise linear path from $f^+$ to $f^-$, again with each piece parallel to
one of the coordinate axes, but this time with possibly more than three pieces.
We claim that one can furthermore choose $\gamma_{f}$ in such a way that each of its 
pieces has parabolic length at most $\|f^+ - f^-\|_\s$ and such that the bounds
\begin{equ}[e:choice-of-curve]
\|f^+ - e^-\|_\s
\lesssim
\| z_e - z_f\|_\s
\lesssim
\|e^+ - f^-\|_\s \;,
\end{equ}
hold uniformly over $ z_e \in \gamma_e$ and $ z_f \in \gamma_f$. 
Here, the upper bound is a simple consequence
of the triangle inequality and the fact that \eqref{e:WLG-smaller} and 
\eqref{e:additional-assum} imply that 
$\|e^+  - f^+\|_\s + \|e^+  - e^-\|_\s + \|f^-  - f^+\|_\s \lesssim \|e^+  - f^-\|_\s$.

In order to enforce the lower bound, more care has to be taken. 
Define an ``exclusion zone'' $Z \subset \R^3$ as the convex hull of
$\{z\,:\, \|z-\bar z\|_\s \ge \|f^+ - e^-\|_\s/4\;\forall \bar z \in \gamma_e\}$.
It then follows from \eqref{e:additional-assum} that both $f^\pm$ are located 
outside of $Z$, so it suffices to choose $\gamma_f$ in such a way that it does not 
intersect $Z$. A typical situation with $Z$ draw in light grey is depicted 
in Figure~\ref{fig:paths}.
\begin{figure}
\centering
\begin{tikzpicture} [scale=0.8]
\fill [black!10] (-0.5,-1.6) rectangle (2,1);

\node [dot] at (0.5,0) (e+) [label=above:{$e^+$}] {};  
\node [dot] at (1,-0.6) (e-)[label=right:{$e^-$}] {};
 \node [dot] at (-2,-2) (f+) [label=left:{$f^+$}] {};
 \node [dot] at (5,2) (f-) [label=right:{$f^-$}] {};
 
\draw (e+) -- (1,0) -- (e-);  
\draw (f+) -- (-2,2) -- (f-);
\node at (1.3,0) {$\gamma_e$};
\node at (-2.4,1.3) {$\gamma_f$};
  

\end{tikzpicture}
\caption[Construction of the $\gamma_i$.]{Construction of the paths $\gamma_e$ and $\gamma_f$. The ``exclusion zone'' $Z$ is shaded in light grey.}
\label{fig:paths}
\end{figure}
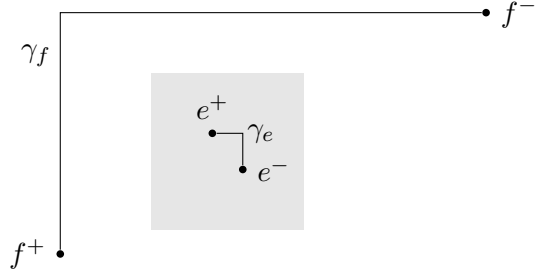

Since  $F(z_e,f^+) = 0$ for every $z_e$ and thus
$\nabla_{z_e}  F(z_e,f^+)=0$, we
can apply the gradient theorem to 
$|\nabla_{z_{e}}^{(i)} F(z_e,f^-)|$, yielding
\begin{equ}[e:second-line]
| F (e^-,f^-) |
 \lesssim 
 \sum_{i,j=0}^2  
 |\gamma_e^{(i)}| |\gamma_{f}^{(j)}| 
 \sup_{z_e \in\gamma_e} 
 \sup_{z_f\in\gamma_f}
 	|\nabla_{z_{e}}^{(i)} 
	\nabla_{z_f}^{(j)}
	F(z_e,z_f)|  \;.
\end{equ}

Write now $\|z\|_{\s,\eps} \eqdef \|z\|_\s + \eps$ so that $\CJ_\eps(z)$ is bounded from
above and below by some fixed multiple of $\|z\|_{\s,\eps}^{\alpha}$ and
note that $|\nabla^k\CQ_\eps(z)| \lesssim \|z\|_{\s,\eps}^{-|k|_\s}$ as a consequence of
our assumptions and of  \cite[Lemma~10.17]{Regularity}.
As a consequence, one has the bound 
$|\nabla^k \CJ_\eps (z)| \lesssim \|z\|_{\s,\eps}^{\alpha -|k|_\s}$
so that one has 
\begin{equs}  [e:bound-DDF]
 |\nabla_{z_e}^{(i)}  \nabla_{z_f}^{(j)}  F (z_e, z_f)| 
& \lesssim 
\|e^+ - f^+\|_{\s,\eps}^{\alpha}  \cdot
\| z_e - z_f \|_{\s,\eps}^{\, \alpha-\s_i-\s_j} \\
& \qquad +
\|e^+ - z_f\|_{\s,\eps}^{\, \alpha-\s_j} \cdot
\|f^+ - z_e \|_{\s,\eps}^{\, \alpha-\s_i} \;.
\end{equs}
Combining the  triangle inequality with \eref{e:condef}, \eref{e:WLG-smaller1}, 
and \eref{e:choice-of-curve}, the factors appearing in the right hand side 
of \eref{e:bound-DDF} are bounded
as follows (here we use the fact that $\alpha \geq 2$ so that 
$\alpha - \s_j$ is guaranteed to be positive):
\begin{equs}
	\|e^+ - z_f\|_{\s,\eps}^{\alpha-\s_j} 
	&\lesssim
	\|e^+ - f^-\|_{\s,\eps}^{\alpha-\s_j}\;, \\
	\|f^+ - z_e \|_{\s,\eps}^{\alpha-\s_i} 
	&\lesssim
	\|f^+ - e^- \|_{\s,\eps}^{\alpha-\s_i} \;,\\
	\| z_e - z_f \|_{\s,\eps}^{\alpha- s_i - \s_j} 
	&\lesssim
	\|e^+ - f^- \|_{\s,\eps}^{\alpha-\s_j }  
	\|f^+ - e^-\|_{\s,\eps}^{-\s_i } \;,\\
	\|e^+ - f^+\|_{\s,\eps}^{\alpha}
	&\lesssim
	\|f^+ - e^-\|_{\s,\eps}^{\alpha -\s_i} \|f^+ - e^-\|_{\s,\eps}^{\s_i}  \;.
\end{equs}
Inserting these bounds into \eqref{e:bound-DDF}, we conclude that
\begin{equ}
 |\nabla_{z_e}^{(i)}  \nabla_{z_f}^{(j)}  F (z_e, z_f)| 
\lesssim \J_\eps(e^+ - f^-) \, \J_\eps(f^+ - e^-) \|e^+ - f^- \|_{\s,\eps}^{-\s_j}\|f^+ - e^- \|_{\s,\eps}^{-\s_i}\;,
\end{equ}
uniformly for $z_{e,f} \in \gamma_{e,f}$.
Finally, we observe that
\begin{equ}
|w_1^{(i)}|  \lesssim \|e^+ - e^-\|_\s \, \|f^+ - e^-\|_\s^{\s_i -1} \;,
\qquad
|w_2^{(j)}|  \lesssim \|f^+ - f^-\|_\s \, \|e^+ - f^- \|_\s^{\s_j -1} \;,
\end{equ}
so that the claim \eref{e:quadrupole} follows from \eqref{e:second-line}.
\end{proof}

\begin{remark} \label{rem:not-sharp}
The analysis we are following here is not as sharp as possible. 
In the above proof, we essentially performed a first order Taylor expansion
of $\Delta_e^f$ (viewed as a function of $e^-$) around $e^+$,
which allowed us to gain a factor $\|e^+ - e^-\|_\s$. 
This factor, when multiplied by $K(e) \J_e^-$, is integrable 
as long as $\beta^2 < 6\pi$. 
However, the linear term $e^- - e^+$ is an odd function, while all other functions
($K,\J$ etc.) are even in their spatial coordinates, so that 
the integration over $e^-$ in a neighborhood of $e^+$
essentially vanishes. As a consequence, we believe that it should be possible to 
gain a factor $\|e^+ - e^-\|_\s^2$, thus allowing to control the second-order objects 
for all $\beta^2 < 8\pi$.  This would however require us to change our strategy, which
is to obtain bounds on the absolute value of $\CH$ that are sufficiently sharp to guarantee
that \eqref{e:mth-moment} has the correct order of magnitude.
\end{remark}

Before we proceed, we introduce the following definition, where $\CS$ is assumed to be 
a pairing constructed as in Section~\ref{sec:linear}, while $\CR$ is a fixed set of renormalised pairs
as before.

\begin{definition}
We say that $e\in \CR\cap\CS$ is a {\it bad pair} if there exists
an $f\in \CR$ such that the condition
\eref{e:conditionAsmall} of Lemma~\ref{lem:smaller-product}
is not satisfied, namely
\begin{equ}[e:bad]
\Vert e^+ - f^-\Vert_{\s} \wedge
\Vert e^- - f^+ \Vert_{\s}
\geq
\Vert e^+ - e^-\Vert_{\s} \wedge
\Vert f^+ - f^-\Vert_{\s}\;.
\end{equ}
If such an $f$ exists, one must have $f\in\CR\setminus\CS$ since the construction of $\CS$
guarantees that any two pairs
$e,f \in \CS$ do satisfy \eqref{e:conditionAsmall}.
We say that $e\in \CR\cap\CS$ is a {\it good pair} if it is not a bad pair.
We denote by $\CD\subseteq\CR\cap\CS$ the set of good pairs and by 
$\CD^c\eqdef (\CR\cap\CS)\setminus \CD$ the set of bad pairs.
\end{definition}


If we were to do the expansion \eref{eq:identity} for a ``bad pair'', 
the conditions of Lemma~\ref{lem:Delta-by-A} and Lemma~\ref{lem:smaller-product}
would be violated. 
Therefore we will only do the expansion for ``good pairs''.
The next important proposition shows that
one can bound $\CH(x;\J)$ by $\prod_{\{i,j\}\in \mathcal S} \J_{ij}^-$
with a paring $\CS$, multiplied by some factors $\CA_{ef}$, in such a way
that there appear additional factors $\|e^+-e^-\|_\s$ 
taming the non-integrable divergence of the function $K(e)\J^-(e)$,
for every $e\in\CD$. Furthermore, these factors $\CA_{ef}$ are all bounded, so that we will have freedom to erase some of them for convenience of the integrations over all the space-time points in Subsection~\ref{subsec:PsiKBK-int}.
One may worry that $K(e)\J^-(e)$ would still be non-integrable for a bad pair $e$,
but in fact we will see later that one can just insert a factor $\CA_{ef}$ for 
these pairs ``for free'' (see the proof of Proposition~\ref{prop:boundH8} below).

\begin{proposition}
\label{prop:bound--H1}
Assume that $\J\in\{\J_\eps^{k^2}, \J^{k^2}\}$.
Let $\mathcal{S}$ be the pairing selected by the procedure in Section~\ref{sec:linear} and
let $\CD$ be the set of good pairs. One has the bound
\begin{equ}
 \CH(x;\J)
 \lesssim
 \Big(\prod_{\{i,j\}\in \mathcal S} \J_{ij}^-\Big)
 {\sum_{\mathbf{P}\subset \CR^2}}^{\!\!\prime}\prod_{(e,f)\in\mathbf{P}}\mathcal{A}_{ef} \;,
\label{eq:Bound--H}
\end{equ}
where the sum $\sum'$
is restricted to those sets $\mathbf{P}$ such that
\begin{claim}
\item for every $e\in \CD$, 
there exists at least one $f \in \CR, f\neq e$ with $(e,f) \in \mathbf{P}$,
\item for every $(e,f) \in \mathbf{P}$, one has $e\in\CD$ or $f\in\CD$,
\item  for every $(e,f) \in \mathbf{P}$, one has $\CA_{ef}\lesssim 1$.
\end{claim}
\end{proposition}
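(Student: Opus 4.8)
The plan is to combine the algebraic expansion from Proposition~\ref{prop:cancellations}, applied at the final step $\ell = m$, with the pointwise bound on $\CH(A,\CB;x;\J)$ coming from the charge-cancellation machinery of Section~\ref{sec:linear} (Proposition~\ref{prop:hierarchical} and Corollary~\ref{cor:pointBound}), and to control each factor $\Delta_A^\CB(\J)$ via Lemma~\ref{lem:Delta-by-A}. Taking $\ell = m$ in \eref{e:bound-atscale} gives
\begin{equ}
\CH(x;\J) = \sum_{A \subseteq \CR} \sum_{\CB \in \CM_m(A)} \Delta_A^\CB(\J)\, \CH(A,\CB;x;\J)\;,
\end{equ}
and the key structural fact recorded after the proposition is that every renormalised pair is forced into $U_A^\CB$, so that in particular \emph{every} $e \in \CR$ either lies in $A$ or in some $\CB_f$. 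First I would deal with the subtlety that Lemma~\ref{lem:Delta-by-A} only controls $\Delta_e^f$ on the set where \eref{e:condef} holds, i.e.\ where $e$ or $f$ is a ``good'' configuration; for pairs where the hypothesis fails one cannot expand. So I would not use the expansion of Proposition~\ref{prop:cancellations} in full generality, but only expand along good pairs: choose the ordering of $\CR$ so that $\CD$ comes first, run the recursion only up to $\ell = |\CD|$, and observe that $\CM_{|\CD|}(A)$ for $A \subseteq \CD$ produces maps $\CB$ whose images $\CB_e$ may still hit bad pairs, but crucially each $\Delta_e^f$ appearing has $e \in \CD$, so by the definition of a good pair and Lemma~\ref{lem:smaller-product} the relevant instance of \eref{e:condef} is satisfied (possibly after the $e \leftrightarrow f$ symmetry), hence $\CA_{ef} \lesssim 1$ and $|\Delta_e^f(\J)| \lesssim \CA_{ef}$ by Lemma~\ref{lem:Delta-by-A}. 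This is what will furnish properties (ii) and (iii) in the statement.

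Next I would bound the remaining factor $\CH(A,\CB;x;\J)$. By its definition \eref{e:HAB} it is an alternating sum over $P \subseteq \CR \setminus U_A^\CB$ of the products $\CH_{A\cup P}(x;\J)$, each of which is a product of factors $\J^{\pm 1}$ over pairs — exactly the kind of ``gas of charges'' quantity treated by Proposition~\ref{prop:hierarchical}. Applying Corollary~\ref{cor:pointBound} (with the charges being all $2m$ points $x_i$, positive charges being arguments of $\Psi$ and negative ones of $\bar\Psi$) yields a pointwise bound of $\CH(A,\CB;x;\J)$ by $\sum_\CS \prod_{\{i,j\}\in\CS} \J^{-1}_{ij}$ times harmless powers of $\bar\J$; since $2m$ is even and balanced there is no leftover $\bar\J$ factor, and by Lemma~\ref{lem:max} the pairing $\CS$ selected by the procedure of Section~\ref{sec:linear} dominates, up to a constant, every such $\prod \J^{-1}_{ij}$. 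This produces the prefactor $\prod_{\{i,j\}\in\CS}\J^-_{ij}$ in \eref{eq:Bound--H}. Then I would collect the $\Delta_A^\CB$ bounds: writing $|\Delta_A^\CB(\J)| = \prod_{e\in A}\prod_{f\in\CB_e}|\Delta_e^f(\J)| \lesssim \prod_{(e,f)} \CA_{ef}$, and defining $\mathbf P = \{(e,f) : e\in A,\ f\in\CB_e\}$, one sees $\mathbf P \subset \CR^2$ and, because every $e\in\CD$ ends up in $A$ (it was expanded), there is at least one $f$ with $(e,f)\in\mathbf P$: this is property (i). Summing the finitely many terms over $A$, $\CB$ and $\CS$ — all bounded uniformly in $\eps$ and in the configuration by constants depending only on $m$ — gives exactly \eref{eq:Bound--H}.

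The main obstacle I anticipate is the bookkeeping that simultaneously: (a) restricts the expansion to good pairs so that Lemma~\ref{lem:Delta-by-A}'s hypothesis \eref{e:condef} is genuinely met each time a $\Delta_e^f$ is produced; (b) verifies that $U_A^\CB \supseteq \CR$ forces each remaining $\CH_{A\cup P}$ to still be an admissible ``charge gas'' (i.e.\ the exponents and sign structure are unchanged so Proposition~\ref{prop:hierarchical} applies verbatim to $\J^{k^2}$, which it does since $\J^{k^2}_\eps$ satisfies \eref{e:boundKeps}); and (c) keeps track of which $\CS$ dominates, using Lemma~\ref{lem:max}. A secondary point needing care is that the map $\CB$ may send a good pair $e$ to a \emph{bad} pair $f$; one must check this is fine because Lemma~\ref{lem:Delta-by-A} is invoked with $e$ (not $f$) being the ``small dipole'', so the relevant ordering in \eref{e:condef} is $\|e^+-e^-\|_\s \wedge \|f^+-f^-\|_\s \le \dots$, which holds for $e\in\CD$ by definition of a good pair. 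Once these combinatorial checks are in place the analytic content is entirely contained in the already-proved lemmas, so the proof is essentially a careful assembly.
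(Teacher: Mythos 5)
Your strategy mirrors the paper's proof essentially step for step: order $\CR$ so that $\CD$ comes first, run Proposition~\ref{prop:cancellations} only up to $\ell = |\CD|$ so that every $A$ encountered is a subset of $\CD$, bound each $|\Delta_e^f|$ by $\CA_{ef}$ via Lemma~\ref{lem:Delta-by-A} (whose hypothesis \eqref{e:condef} is met because $e\in\CD$), control $|\CH(A,\CB;x;\J)|$ by $\prod_{\{i,j\}\in\CS}\J_{ij}^-$ using Lemma~\ref{lem:max}, and read off (i)--(iii) from $\CD\subseteq U_A^\CB$, $A\subseteq\CD$, and the definition of good pairs.

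The one step that does not work as literally written is the claim that Corollary~\ref{cor:pointBound}, applied ``with the charges being all $2m$ points,'' yields a pointwise bound on $\CH(A,\CB;x;\J)$. Corollary~\ref{cor:pointBound} and Proposition~\ref{prop:hierarchical} bound the \emph{full} gas $\prod_{i\neq j\in M}\J^{\sigma_i\sigma_j}(x_i-x_j)=\CH_\emptyset(x;\J)$, which is a genuinely different quantity from $\CH_{A\cup P}(x;\J)$: by \eqref{eq:calH}, every $e\in A\cup P$ contributes only its single factor $\hat\J_e=\J_e^-$, and all cross-factors between points of $(A\cup P)'$ and the remaining points (as well as between distinct pairs of $A\cup P$) are absent. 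The fix --- and the paper's actual argument, recorded as \eqref{eq:bound-each} --- is to apply Proposition~\ref{prop:hierarchical} only to the reduced collection $M\setminus(A\cup P)'$, producing a pairing $\CS_0$ of those points (which is complete because removing $(A\cup P)'$ removes equal numbers of positive and negative charges). Taking $\bar\CS=\CS_0\cup(A\cup P)$ gives a full pairing of $M$ (each $e\in A\cup P$ has opposite-signed endpoints, hence contributes $\J_e^-$) with $\CH_{A\cup P}\lesssim\prod_{\{i,j\}\in\bar\CS}\J_{ij}^-$, and only then does Lemma~\ref{lem:max} replace $\bar\CS$ by $\CS$. Once this is patched, the rest of your assembly goes through exactly as in the paper.
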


\begin{proof}
The following bound will turn out to be useful for our calculation.
For any set $A \subset \CR$, one has 
\begin{equ}
\Big( \prod_{e\in A}\hat\J_{e} \Big)\Big(
 \prod_{f\in\mathcal{E}(M\setminus A^{\prime})} \hat\J_{f}\Big)
 \lesssim
 \prod_{\{i,j\}\in\mathcal{S}}\J_{ij}^{-}\;.
\label{eq:bound-each}
\end{equ}
Recall here that $A'$ denotes the set of all charges covered by $A$, i.e.\ $A' = \bigcup A$.
In order to show this, we apply Proposition \ref{prop:hierarchical} to the 
collection of points $M\setminus A^{\prime}$,
which allows us to bound the left hand side of \eref{eq:bound-each} 
by the expression $\prod_{\{i,j\}\in\bar{\mathcal{S}}}\J_{ij}^{-}$
for some pairing $\bar{\mathcal{S}}$. By Lemma \ref{lem:max}, the latter is bounded
by the same expression with $\bar{\mathcal{S}}$ replaced by $\mathcal{S}$.

Combining this bound with Proposition~\ref{prop:cancellations}, 
where we choose the ordering of $\CR$ in such a way that $\CD = \{e_1,\ldots,e_\ell\}$
for some $\ell \ge 0$,
we obtain
\begin{equs}  [e:boundCH]
\CH(x;\J)
& \le
 \sum_{A\subseteq \CD}
\sum_{\CB \in \CM_\ell(A)}
|\Delta_{A}^\CB (\J)| 
\; \CH(A,\CB; x; \J)\\
&\lesssim \Big( \sum_{A\subseteq \CD}
\sum_{\CB \in \CM_\ell(A)}
|\Delta_{A}^\CB(\J)| \Big) 
\Big(
\prod_{\{i,j\}\in\CS}\J_{ij}^{-}\Big)\;, 
\end{equs}
where we used the fact that $\CH(A,\CB)$ is nothing but an alternating
sum of terms of the type appearing in the left hand side of \eref{eq:bound-each}, but with
different choices of $A$. We recall that $\Delta_{A}^{\mathcal{B}}$ is, by
definition, given by
\begin{equ}[e:defDeltaAB]
\Delta_{A}^{\mathcal{B}} = \prod_{e \in A} \Bigl(\prod_{f \in \CB_e} \Delta_{e}^f\Bigr)\;.
\end{equ}

At this stage, we observe that
since $A\subseteq\CD$, every $e\in A$ is a good pair and therefore
 for each of the quadrupoles $(e,f)$ appearing in \eref{e:defDeltaAB}, the shortest distance between 
$\|x_{e_+} - x_{e_-}\|_\s$, $\|x_{f_+} - x_{f_-}\|_\s$, $\|x_{f_+} - x_{e_-}\|_\s$
and $\|x_{e_+} - x_{f_-}\|_\s$ is always one of the first two, at least
up to a constant multiple depending only on $m$.
Then, we can apply Lemma \ref{lem:Delta-by-A} to obtain the following bound
\begin{equ} [e:applied-DeltabyA]
\CH(x;\J)
\lesssim \Big( \sum_{A\subseteq \CD}
\sum_{\mathcal{B} \in \CM_\ell(A)}
\prod_{e \in A} \prod_{f \in \CB_e}
\CA_{ef} \Big) 
\Big(
\prod_{\{i,j\}\in\mathcal{S}}\J_{ij}^{-}\Big)\;.
\end{equ}
By the definition of $\CM_\ell(A)$, for every $A,\CB$ in the above summation, one has $\CD\subseteq U_A^{\CB}$, in other words
for every $e\in \CD$, 
there exists at least one $f\neq e$ such that the factor $\CA_{ef}$ (or possibly $\CA_{fe}$, but
these are identical) appears 
in \eref{e:applied-DeltabyA}. Since $A \subset \CD$ it is also the case that for 
every factor $\CA_{ef}$ appearing
in \eref{e:applied-DeltabyA}, one has either $e\in\CD$ or $f\in\CD$.
Finally, the definition of ``good pairs'' implies that for every factor $\CA_{ef}$ appearing
in \eref{e:applied-DeltabyA} the bound \eqref{e:conditionAsmall} holds and thus $\CA_{ef}\lesssim 1$.
Therefore we can indeed bound the right hand side of \eref{e:applied-DeltabyA} by
a multiple of
\begin{equ}
 \Big(\prod_{\{i,j\}\in \mathcal S} \J_{ij}^-\Big)
 {\sum_{\mathbf{P}\subset \CR^2}}^{\!\!\prime}
 \prod_{(e,f)\in\mathbf{P}}\mathcal{A}_{ef} \;,
\end{equ}
where the sum $\sum'$
is restricted to those sets $\mathbf{P}$ satisfying all the conditions described in the statement.
\end{proof}



\subsection{Moments of $\Psi_\eps^{k\bar k}$: integrations} \label{subsec:PsiKBK-int}

The bound given in Proposition~\ref{prop:bound--H1} turns out not to be 
very convenient to use
when one tries to actually perform the final integration over the positions of the charges,
so we will first derive a slightly weaker bound which has a ``nicer'' form.
We start with some definitions.
Suppose that we are given a graph $\CG = (\CV,\CE)$ with vertices $\CV$
and edges $\CE$. For a subset of vertices $\CV' \subseteq \CV$, we then define 
a subgraph $\CG_{\CV'} = (\CV', \CE')$,
with $\CE'$ consisting of the edges in $\CE$ with both ends in $\CV'$.

We can also define
a graph $\CG^{\CV'}$ 
by identifying all the vertices in $\CV'$
as one vertex called $v$, so that the set of vertices of $\CG^{\CV'}$ is 
given by $(\CV\setminus\CV') \sqcup \{v\}$.
Regarding the edges of $\CG^{\CV'}$, we postulate that 
$(x,y)$ is an edge of $\CG^{\CV'}$ if and only if
either $v\notin \{x,y\}$ and $(x,y)\in\CE$, or
 $x=v$, $y\neq v$, and there exists $z\in \CV'$ such that $(z,y)\in\CE$.
The set of edges of $\CG^{\CV'}$
can be identified canonically with the set $\CE \setminus \{\mbox{edges with both ends in }\CV'\}$.
If the original graph $\CG$ is directed, both $\CG^{\CV'}$ and $\CG_{\CV'}$ inherit
its direction in the obvious way.

Given a vertex set $\CV$, 
we define the set of \textit{admissible graphs} $\mathbf{G}_\CV$
to be the set of all directed graphs over $\CV$
such that every vertex has degree at least  $1$ and
there exists a partitioning $\CV = \CV_L \sqcup \CV_T$ of $\CV$ with the following
properties:
\begin{claim}
\item Each connected component of $\CG^{\CV_L}$ is a tree. The 
connected component containing the distinguished vertex $v$ is considered as a rooted tree
with root $v$ and all other connected components should contain at least two vertices.
\item  Each connected component of $\CG_{\CV_L}$ is a directed loop.
\end{claim}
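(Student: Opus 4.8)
The assertion to be proved is that the directed graph $\CG$ attached to a configuration of renormalised pairs, the pairing $\CS$ produced by the procedure of Section~\ref{sec:linear}, and a family $\mathbf{P}\subset\CR^2$ as in Proposition~\ref{prop:bound--H1}, is admissible, i.e. that one can choose a partition $\CV=\CV_L\sqcup\CV_T$ of its vertex set $\CV=\CR$ so that each connected component of $\CG_{\CV_L}$ is a directed loop and each connected component of $\CG^{\CV_L}$ is a tree (the one containing the collapsed vertex $v$ rooted at $v$, the rest with at least two vertices). Here the loop edges of $\CG$ record the factors $\J_{ij}^-$ with $\{i,j\}\in\CS$ joining two \emph{distinct} renormalised pairs, a diagonal pair $\{e^+,e^-\}\in\CS$ being recorded as a vertex decoration $\J_e^-$ rather than an edge; the tree edges record the factors $\CA_{ef}$ with $(e,f)\in\mathbf{P}$, oriented from the endpoint lying in $\CD$ (possible since $\CA_{ef}=\CA_{fe}$ and, by the second condition of Proposition~\ref{prop:bound--H1}, one of $e,f$ lies in $\CD$). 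The plan is to write down $\CV_L,\CV_T$ explicitly and to check the two properties in turn, exploiting throughout that every factor $\CA_{ef}$ may be dropped — it is $\lesssim 1$ by the third condition of Proposition~\ref{prop:bound--H1} — so that it suffices to exhibit a \emph{sub}collection of the tree edges with the right shape.

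\textbf{The loop structure.} Both $\CR$ and $\CS$ are perfect matchings of the $4N$ charges, each pairing a positive charge to a negative one — for $\CS$ this uses that exactly half of the charges are positive, so the construction of Section~\ref{sec:linear} produces a full pairing and no $\bar\J$ factors survive. Hence the multigraph $\CR\cup\CS$ on the charges is a vertex-disjoint union of even cycles, with $\CR$- and $\CS$-edges alternating and signs alternating around each cycle. Collapsing every $\CR$-edge turns a cycle carried by a single renormalised pair $e$ — equivalently $e\in\CR\cap\CS$ — into the isolated vertex $e$ with no loop edge, and turns a cycle through $t\ge 2$ renormalised pairs into a loop on those $t$ vertices; the alternation of signs together with the orientation conventions on $\CR$ (recall $\CR$ contains $N$ pairs of each orientation) makes this loop consistently \emph{directed}. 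I would therefore set
\begin{equ}
\CV_L \eqdef \CR\setminus\CS \;,\qquad \CV_T \eqdef \CR\cap\CS \;,
\end{equ}
so that $\CG_{\CV_L}$ is precisely this disjoint union of directed loops. The choice $\CV_T=\CR\cap\CS$ is forced by integrability: the vertex decoration $K(e)\,\J_e^-$ is non-integrable near the diagonal $e^+\approx e^-$ exactly when $\{e^+,e^-\}\in\CS$, whereas a vertex of $\CR\setminus\CS$ carries no diagonal singularity and may safely be kept inside a loop.

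\textbf{The tree structure.} Every tree edge is sourced at a vertex of $\CD\subseteq\CR\cap\CS=\CV_T$ while every loop edge is internal to $\CV_L$; hence, after collapsing $\CV_L$ to $v$, the edges of $\CG^{\CV_L}$ are exactly the tree edges with $\CV_L$-targets redirected to $v$, and each is still sourced in $\CV_T$. Since every vertex of $\CV_T$ carries at least one tree edge — good pairs by the first condition of Proposition~\ref{prop:bound--H1}, bad pairs by the ``free'' insertion used in the proof of Proposition~\ref{prop:boundH8} — the tree-edge graph on $\CV_T$ has no isolated vertex, so each of its connected components has at least two vertices. Because every $\CA_{ef}$ may be dropped, I can replace the tree edges inside each component by those of a spanning tree — rooted at $v$ for the component containing $v$ — and what remains is a forest with exactly the required root/size structure; any vertex of $\CG$ still has degree at least $1$ (cycle vertices lie on a loop, diagonal vertices retain a tree edge). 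Together with the previous paragraph this shows $\CG\in\mathbf{G}_\CR$. (If $\CV_L=\emptyset$, the collapsed vertex $v$ simply does not occur and every component has $\ge 2$ vertices.) Alternatively one need not pass to an arbitrary spanning tree: since $\mathbf{P}$ arises, by the proof of Proposition~\ref{prop:bound--H1} via Proposition~\ref{prop:cancellations}, as $\{(e,f):e\in A,\ f\in\CB_e\}$ with $A\subseteq\CD$ and $\CB\in\CM_\ell(A)$, the constraint $\CB_{e_k}\subset\CR\setminus A_k$ already makes the tree-edge digraph acyclic relative to a fixed ordering of $\CR$, so it is a forest once one keeps one parent per vertex.

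\textbf{Expected main difficulty.} I expect the genuinely delicate point to be the bad pairs $\CD^c=(\CR\cap\CS)\setminus\CD$: they are diagonal vertices, hence need a compensating factor $\CA_{ef}$ for both integrability and the degree condition, yet Proposition~\ref{prop:bound--H1} supplies none for them. One must insert these factors ``by hand'' (as in the proof of Proposition~\ref{prop:boundH8}, using a witness $f\in\CR$ furnished by the failure of \eqref{e:bad}), and then make sure the resulting tree edges $e\to f$ with $e\in\CD^c$ do not create a directed cycle together with the $\CM_\ell$-edges; the clean way is to fix a global ordering of $\CR$ with the cycle vertices last, then $\CD^c$, then $\CD$ ordered as in Proposition~\ref{prop:cancellations}, and to choose for every bad pair a witness of strictly smaller index, which keeps the whole tree-edge digraph acyclic. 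A secondary, purely bookkeeping matter is to check that collapsing an $\CR\cup\CS$-cycle really yields a \emph{single} directed loop rather than two opposing arcs; this is exactly what the $+\!\to\!-$ / $-\!\to\!+$ balance built into the definition of $\CR$ guarantees.
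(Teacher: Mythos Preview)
The extracted ``statement'' is not a theorem: it is the \emph{definition} of an admissible graph (the two bulleted conditions that a directed graph $\CG$ over $\CV$ must satisfy in order to belong to $\mathbf{G}_\CV$). The paper offers no proof of it because there is nothing to prove. What you have written is a proof of a different assertion, namely that the particular graph built in the proof of Proposition~\ref{prop:boundH8} from $(\CS,\mathbf{P})$ is admissible. That is a legitimate thing to verify, and your argument is broadly the same as the paper's, but you should be aware that you have changed the target.

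Evaluated as a proof of that part of Proposition~\ref{prop:boundH8}, your loop argument reaches the right conclusion but the justification is muddled: you attribute the consistent orientation of the loops to ``the orientation conventions on $\CR$ (recall $\CR$ contains $N$ pairs of each orientation)'', and again at the end to ``the $+\!\to\!-$ / $-\!\to\!+$ balance built into the definition of $\CR$''. Neither is relevant. The pair orientation $e_\uparrow,e_\downarrow$ plays no role whatsoever in the loop edges; the paper defines $\CE_L=\{(e,f):\{e^+,f^-\}\in\CS\setminus\CR\}$, so the direction is determined purely by the \emph{sign} structure. Each $e\in\CR\setminus\CS$ then has exactly one outgoing edge (to the $f$ with $\{e^+,f^-\}\in\CS$) and one incoming edge (from the $g$ with $\{g^+,e^-\}\in\CS$), which is what makes the components directed loops.

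Your tree argument is also close to the paper's (spanning forest of the $\mathbf{P}$-graph after collapsing $\CV_L$, then add one edge per bad pair), but your ``expected main difficulty'' is a non-issue. You worry about the bad-pair edges creating directed cycles with the $\CM_\ell$-edges and propose an ordering trick to avoid this. The paper sidesteps the whole question: it states explicitly that the orientation of tree edges is irrelevant, so one simply takes an \emph{undirected} spanning forest $\CE_T^{(1)}$ of $\CG_\P^{\CV_L}$ and then adjoins the bad-pair edges $\CE_T^{(0)}$. Since each bad-pair edge has one endpoint in $\CR^{(0)}\subset\CV_T$ that is isolated in $\CG_\P$, adjoining it cannot create a cycle, and acyclicity of the forest is automatic. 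No ordering argument is needed.
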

Here, a  {\it directed loop} is a connected graph with at least two vertices such that  every 
vertex is of degree $2$ and has exactly one directed edge going into it and the other directed edge going out of it.
A {\it tree} is a non-empty connected graph without loops.
Given an admissible graph $\CG$, we furthermore write $\CE_L$ for the edges
connecting two vertices in $\CV_L$ and $\CE_T$ for the remaining edges.
We also remark that if $\CG$ is admissible, then the decomposition $\CV = \CV_L \sqcup \CV_T$ is
unique. See Figure~\ref{fig:adm-graph} for a generic admissible graph.

\begin{figure}
\centering
\begin{tikzpicture}[scale=0.5]
\node at (0,0) [loopnode] (1) {};
\node at (0,2) [loopnode] (2) {};
\node at (1,3) [loopnode] (3) {};
\node at (2,2) [loopnode] (4) {};
\node at (2,0) [loopnode] (5) {};
\node at (1,4) [treenode] (6) {};
\draw[loopline]  (1) to (2);
\draw[loopline] (2) to (3);
\draw[loopline] (3) to (4);
\draw[loopline] (4) to (5);
\draw[loopline] (5) to (1);
\draw[treeline] (3) to (6);
\node at (4,2) [loopnode] (7) {};
\node at (6,2) [loopnode] (8) {};
\node at (5,0) [loopnode] (9) {};
\draw[loopline] (7) to (8);
\draw[loopline] (8) to (9);
\draw[loopline] (9) to (7);
\node at (4,3) [treenode] (10) {};
\node at (4.8, 3.5) [treenode] (11) {};
\node at (4,4) [treenode] (12) {};
\node at (3,4) [treenode] (13) {};
\node at (3,3) [treenode] (14) {};
\node at (2.2, 4.7) [treenode] (15) {};
\node at (3.3, 4.9) [treenode] (16) {};
\node at (7, 3) [treenode] (17) {};
\draw[treeline] (7) to (10);
\draw[treeline] (10) to (11);
\draw[treeline] (10) to (12);
\draw[treeline] (10) to (13);
\draw[treeline] (10) to (14);
\draw[treeline] (13) to (15);
\draw[treeline] (13) to (16);
\draw[treeline] (8) to (17);
\node at (2, -3) [treenode] (18) {};
\node at (3, -3) [treenode] (19) {};
\node at (2, -1.5) [treenode] (20) {};
\node at (1, -2) [treenode] (21) {};
\draw[treeline] (18) to (19);
\draw[treeline] (18) to (20);
\draw[treeline] (18) to (21);
\node at (5, -3) [treenode] (22) {};
\node at (6, -1.6) [treenode] (23) {};
\draw[treeline] (22) to (23);
\node at (10,1)  {$\Longrightarrow$};
\node at (16,1) [loopnode] (31) {};
\node at (14,1) [treenode] (32) {};
\draw[treeline] (31) to (32);
\node at (14,3) [treenode] (100) {};
\node at (14.8, 3.5) [treenode] (110) {};
\node at (14,4) [treenode] (120) {};
\node at (13,4) [treenode] (130) {};
\node at (13,3) [treenode] (140) {};
\node at (12.2, 4.7) [treenode] (150) {};
\node at (13.3, 4.9) [treenode] (160) {};
\node at (16.8, 2.8) [treenode] (170) {};
\draw[treeline] (31) to (100);
\draw[treeline] (100) to (110);
\draw[treeline] (100) to (120);
\draw[treeline] (100) to (130);
\draw[treeline] (100) to (140);
\draw[treeline] (130) to (150);
\draw[treeline] (130) to (160);
\draw[treeline] (31) to (170);
\node at (14, -3) [treenode] (180) {};
\node at (15, -3) [treenode] (190) {};
\node at (14, -1.5) [treenode] (200) {};
\node at (13, -2) [treenode] (210) {};
\draw[treeline] (180) to (190);
\draw[treeline] (180) to (200);
\draw[treeline] (180) to (210);
\node at (17, -3) [treenode] (220) {};
\node at (18, -1.6) [treenode] (230) {};
\draw[treeline] (220) to (230);
\end{tikzpicture}
\caption[An admissible graph $\CG$ (left) and the corresponding graph $\CG^{\CV_L}$ (right).]{An admissible graph $\CG$ (left) and the corresponding graph $\CG^{\CV_L}$ (right).
The admissible graph consists of four connected components.
Elements of $\CV_L$ are shown as little circles \tikz[baseline=-3, auto] {
\node at (0,0) [loopnode] {}; },
 elements of $\CV_T$ are shown as black dots \tikz[baseline=-3, auto] {
\node at (0,0) [treenode] {}; },
 edges in $\CE_L$ are shown as solid lines with arrows,
  while edges in $\CE_T$ are shown as dashed lines.
}
\label{fig:adm-graph}
\end{figure}

Now let $\CR$ be the set of renormalised pairs as above, which is going to be
our vertex set. 
To every $\CG \in \mathbf{G}_\CR$,
we associate a pairing $\CS_\CG$ of the $2m$ charges
as follows.
If $e\in\CV_T$, then $\{e^+,e^-\}\in\CS_\CG$.
If $e\in\CV_L$, and therefore there exist $f,g \in \CV_L$ such that $(f,e)$ is an edge pointing 
from $f$ to $e$ and $(e,g)$ is an edge pointing 
from $e$ to $g$, then $\{f^+,e^-\}\in\CS_\CG$ and $\{e^+,g^-\}\in\CS_\CG$. In particular,
we only care about the orientation of the edges connecting vertices in $\CV_L$.
With this notation at hand, we can reformulate our bound on $\CH(x,\CJ)$ as follows.

\begin{proposition}
\label{prop:boundH8}
Assume that $\J\in\{\J_\eps^{k^2}, \J^{k^2}\}$.
One has the bound
\begin{equ}[e:boundH8]
 \mathcal{H}(x;\J)
 \lesssim
 \sum_{\CG\in\mathbf{G}_\CR}
 \Big(\prod_{\{i,j\}\in \CS_\CG} \J_{ij}^-\Big)
 \Big(\prod_{(e,f)\in \CE_T}\CA_{ef}\Big) \;.
\end{equ}
\end{proposition}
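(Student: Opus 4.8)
The plan is to start from Proposition~\ref{prop:bound--H1}, which already bounds $\CH(x;\J)$ by $\bigl(\prod_{\{i,j\}\in\CS}\J_{ij}^-\bigr)$ times a restricted sum $\sum'_{\mathbf{P}\subset\CR^2}\prod_{(e,f)\in\mathbf{P}}\CA_{ef}$, and to convert each summand into a term of the form $\bigl(\prod_{\{i,j\}\in\CS_\CG}\J_{ij}^-\bigr)\bigl(\prod_{(e,f)\in\CE_T}\CA_{ef}\bigr)$ attached to a genuine admissible graph $\CG\in\mathbf{G}_\CR$, losing only a constant depending on $m$. The point is that $\CS$ itself encodes the loop structure of $\CG$, while $\mathbf{P}$ (suitably completed and pruned) encodes the tree structure. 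Since there are only finitely many (boundedly many in $m$) admissible $\mathbf{P}$ and correspondingly many graphs produced, summing over $\mathbf{P}$ then gives \eref{e:boundH8}.

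\textbf{Reading off the loops from $\CS$.} First I would introduce on the vertex set $\CR$ the auxiliary directed graph obtained by drawing $e\to g$ whenever $\CS$ pairs the positive charge of $e$ with the negative charge of $g$. Because $\CS$ is a perfect matching of the $2m$ charges, every vertex has in-degree and out-degree $1$, so this graph is a disjoint union of directed cycles. I then set $\CV_T\eqdef\CR\cap\CS$ (the self-loops), let $\CV_L$ be the vertices on cycles of length $\ge 2$, and let $\CE_L$ be the edges of those cycles. A direct check using the definition of $\CS_\CG$ shows that the pairing $\CS_\CG$ of any admissible graph with this loop structure reproduces $\CS$ verbatim: a self-loop $e$ yields the dipole pair $\{e^+,e^-\}$, and a cycle $e_1\to\cdots\to e_p\to e_1$ yields the cross-pairs $\{e_i^+,e_{i+1}^-\}$. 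Hence $\prod_{\{i,j\}\in\CS_\CG}\J_{ij}^-=\prod_{\{i,j\}\in\CS}\J_{ij}^-$ with no loss.

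\textbf{Building the trees from $\mathbf{P}$.} Given $\mathbf{P}$ from Proposition~\ref{prop:bound--H1}, I would augment it to a set $\mathbf{P}'$ by adding, for each bad pair $e\in\CD^c$, one edge $(e,f)$ where $f$ is a witness from the definition of a bad pair. Negating \eref{e:conditionAsmall} and using the triangle inequality (as in Lemma~\ref{lem:smaller-product}) gives $\CA_{ef}\gtrsim 1$, so this ``free insertion'' only weakens the bound; moreover, since $\CS$ was built so that no two of its pairs violate \eref{e:conditionAsmall}, such a witness cannot be a pair of $\CS$, i.e.\ $f\in\CV_L$. Now every vertex of $\CV_T$ is incident to $\mathbf{P}'$ (good ones by condition (1) of Proposition~\ref{prop:bound--H1}, bad ones by the free insertions) and, by condition (2) together with the shape of the free insertions, no edge of $\mathbf{P}'$ joins two vertices of $\CV_L$. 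Contracting $\CV_L$ to a single vertex $v$, the free-insertion edges all meet $v$ and are otherwise disjoint, hence form a forest, so I can extend them to a spanning forest of the contracted graph by adding the remaining edges of $\mathbf{P}$ one at a time and discarding those that would close a cycle. Let $\CE_T\subseteq\mathbf{P}'$ be the edge set obtained this way (lifted back to $\CR$). Then every tree vertex keeps at least one $\CE_T$-edge, all free-insertion edges survive, and the discarded edges are edges of $\mathbf{P}$, which carry $\CA_{ef}\lesssim 1$ by condition (3); hence $\prod_{(e,f)\in\mathbf{P}}\CA_{ef}\lesssim\prod_{(e,f)\in\mathbf{P}'}\CA_{ef}\lesssim\prod_{(e,f)\in\CE_T}\CA_{ef}$.

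\textbf{Conclusion and the hard point.} It then remains to verify that $\CG\eqdef(\CR,\CE_L\sqcup\CE_T)$ with $\CR=\CV_L\sqcup\CV_T$ is admissible: $\CG_{\CV_L}$ has edge set $\CE_L$, a disjoint union of directed loops since no $\CE_T$-edge joins two $\CV_L$-vertices; $\CG^{\CV_L}$ is a forest by construction, with its $v$-component rooted at $v$ and every other component of size $\ge 2$ (a lone non-$v$ tree vertex would be $\CE_T$-isolated, which cannot happen); and every vertex has degree $\ge 1$. Chaining the displays of the previous two paragraphs, $\CH(x;\J)\lesssim\sum_{\mathbf{P}}\bigl(\prod_{\CS_\CG}\J^-\bigr)\bigl(\prod_{\CE_T}\CA_{ef}\bigr)$, and since the number of admissible $\mathbf{P}$'s (and of the graphs produced from them) is bounded by a constant depending only on $m$, this is dominated by the full sum over $\CG\in\mathbf{G}_\CR$, giving \eref{e:boundH8}. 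I expect the genuinely delicate step to be the bad-pair bookkeeping in the third paragraph: one must make sure the witness of a bad pair always lands in $\CV_L$, that its $\CA$-factor is bounded below so the insertion is harmless, and that these edges never obstruct the construction of a spanning forest; everything else is essentially a relabelling of Proposition~\ref{prop:bound--H1}.
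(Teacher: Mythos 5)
Your proposal is correct and follows essentially the same route as the paper: read off $\CV_L$, $\CV_T$, $\CE_L$ from the matching $\CS$ (self-loops versus nontrivial cycles), then build $\CE_T$ from $\mathbf{P}$ together with one ``free'' witness edge per bad pair (whose $\CA_{ef}\gtrsim 1$ allows the insertion and whose other endpoint necessarily lies in $\CV_L$), prune to a spanning forest of the contracted graph, and sum over the boundedly many resulting $(\mathbf{P},\CG)$. The only cosmetic difference is the order of operations — you add all free insertions up front and then select the spanning forest, whereas the paper first chooses a spanning forest $\CE_T^{(1)}\subset\mathbf{P}$ for $\CG_\P^{\CV_L}$ and only then appends witness edges $\CE_T^{(0)}$ for the bad pairs left uncovered by $\mathbf{P}$ — but both yield the same admissibility check and the same chain of inequalities on the $\CA$-factors.
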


\begin{proof}
Given a pairing $\CS$ (in practice we take the specific pairing selected 
in Section~\ref{sec:linear})
and a set $\mathbf{P} \subset \CR^2$ satisfying the conditions
listed in Proposition~\ref{prop:bound--H1},
we construct an admissible graph 
$\CG \in \mathbf{G}_\CR$ as follows. 

First, we define a set of oriented edges $\CE_L$ by setting
\begin{equ}
\CE_L = \{(e,f) \, :\, \{e^+,f^-\}\in\CS\setminus\CR\}\;.
\end{equ}
This set of edges has the property that if 
$(e,f) \in \CE_L$, then we necessarily have $\{e,f\} \subset \CR\setminus\CS$.
Furthermore, one can see that the graph $(\CR,\CE_L)$ consists of loops of length
at least two, as well as of singletons, with the singletons consisting of $\CR \cap \CS$, 
see Figure~\ref{fig:pairings}. 
(If we had only imposed that $\{e^+,f^-\}\in\CS$, the graph would consist
of loops with every vertex belonging to exactly one loop, but some loops could consist of
only one vertex.)
We therefore \textit{define} at this stage $\CV_L = \CR\setminus\CS$.

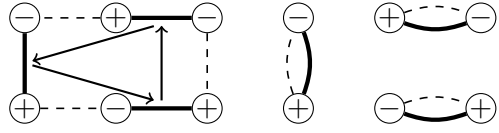
\begin{figure}
\centering
\begin{tikzpicture}[scale=0.8]
\node at (3,0) [charge] (4) {$+$};  \node at (3,1.5) [charge] (2) {$-$};
\node at (0,1.5) [charge] (3) {$-$};  \node at (1.5,1.5) [charge] (1) {$+$};
\node at (0,0) [charge] (5) {$+$};  \node at (1.5,0) [charge] (6) {$-$};

\node at (4.5,0) [charge] (11) {$+$};  \node at (6,0) [charge] (21) {$-$};
\node at (4.5,1.5) [charge] (31) {$-$};  \node at (6,1.5) [charge] (51) {$+$};
\node at (7.5,0) [charge] (41) {$+$};  \node at (7.5,1.5) [charge] (61) {$-$};

\draw [arrow] (2.25,1.4) to (0,0.75);  
\draw [arrow] (0,0.75) to (2.25,0.1);  
\draw [arrow] (2.25,0) to (2.25,1.5);  

\draw [renorm] (1) to (2);  
\draw [kerAlg2] (1) to (3); 
\draw [kerAlg2] (2) to (4); 
\draw [renorm] (3) to (5);  
\draw [renorm] (4) to (6);  
\draw [kerAlg2] (5) to (6); 

\draw [kerAlg2] (11) to [bend left=20] (31); 
\draw [renorm] (11) to [bend right=20] (31); 
\draw [kerAlg2] (21) to [bend left=20] (41); 
\draw [renorm] (21) to [bend right=20] (41); 
\draw [kerAlg2] (51) to [bend left=20] (61); 
\draw [renorm] (51) to [bend right=20] (61); 
\end{tikzpicture}
\caption{Generic situation for the construction of $\CE_L$: pairs in $\CR$ are drawn as 
thick lines and pairs in $\CS$ are drawn as dotted lines. The arrows show the edges 
belonging to $\CE_L$.
}\label{fig:pairings}
\end{figure}

We now consider the graph $\CG_\P = (\CR,\P)$. Here, we note that by the constraints on 
$\P$ given in Proposition~\ref{prop:bound--H1}, every edge in $\P$ contains at least one element of
$\CR \cap \CS$ (the ``singletons'' of the first step) so that the reduced graph
$\CG_\P^{\CV_L}$ contains the same edges as $\CG_\P$.
We then select an arbitrary spanning forest $\CE_T^{(1)} \subset \P$ for $\CG_\P^{\CV_L}$. 
In other words, $\CE_T$ is such that 
the connected components of $(\CR,\CT)^{\CV_L}$ are the same as those of $\CG_\P^{\CV_L}$, 
but each such component is a tree. (Here, the orientation of these edges is irrelevant.)

Finally, let $\CR^{(0)} \subset \CR$ be those vertices in $\CR \cap \CS$ that are not in
$\P'$. Because of the first condition on $\mathbf{P}$, any $e \in \CR^{(0)}$ necessarily
belongs to $\CD^c$, i.e.\ it is a ``bad pair''. 
Therefore, for every such $e$, there exists 
an $f_e\in\CR\setminus\CS$ such that \eref{e:bad} holds.
We then define $\CE_T^{(0)} = \{(e,f_e)\,:\,e \in \CR^{(0)}\}$.

With all of these definitions at hand, we now set $\CG = (\CR, \CE_L \cup \CE_T)$
with $\CE_T = \CE_T^{(0)} \cup \CE_T^{(1)}$,
which is indeed an admissible graph with decomposition
$\CV_L = \CR \setminus \CS$ and $\CV_T = \CR \cap \CS$. 
Furthermore, our construction and the definition of $\CS_\CG$ guarantee that 
one actually has $\CS_\CG=\CS$.
Finally, we claim that one has 
\begin{equ}[e:wantedBound]
\prod_{(e,f)\in\mathbf{P}}\mathcal{A}_{ef} 
\lesssim  \prod_{(e,f)\in \CE_T}\mathcal{A}_{ef}\;.
\end{equ}
Indeed, since $\CE_T^{(1)} \subset \P$ by construction and, for every 
$(e,f)\in\mathbf{P}\setminus\CE_T$,
one has $\CA_{ef}\lesssim 1$ by the last condition on $\mathbf{P}$,
this bound holds with $\CE_T$ replaced by $\CE_T^{(1)}$.
On the other hand, for every $(e,f)\in\CE_T^{(0)}$,
one has $\CA_{ef}\gtrsim1$ by combining the definition of a bad pair 
with Lemma~\ref{lem:smaller-product}, so that \eqref{e:wantedBound} does hold.
The claim now follows by applying the above inequality to the right hand side of
\eref{eq:Bound--H} and then bounding it by the sum over all 
possible admissible graphs.
\end{proof}

The bound \eqref{e:boundH8} has two major advantages: first, it does not make any reference to
the special pairing $\CS$ anymore, so that we now have one single bound which holds 
for any configuration of charges $x$. Second, the tree structure given by the notion
of ``admissible graph'' will make it possible to bound the integral \eqref{e:mth-moment} by inductively
integrating over the variables corresponding to the ``leaves'' until we are only left
with the ``loops'' which can then be handled separately.
Now we have all the elements in place to give the proof to  Theorem~\ref{theo:second-order}. For a simpler notation, from now on we will write
\begin{equ}
\bar \beta \; \eqdef \; {\beta^2 \over 2\pi} \;.
\end{equ}
We now have everything in place for the proof of Theorem~\ref{theo:second-order}.
We first give the proof of the bounds and convergence statements for $\Psi^{k\bar k}_\eps$.
In Section~\ref{sec:Psikk} below, we then bound the objects $\Psi^{k k}_\eps$, while the bounds
on $\Psi^{k k}_\eps$ and $\Psi^{k k}_\eps$ with $k \neq \ell$ are postponed to Section~\ref{sec:kneql}.

\subsection{Moments of $\Psi_\eps^{k\bar k}$}

\begin{proof}[of Theorem~\ref{theo:second-order} for $\Psi^{k\bar k}_\eps$]
We first prove the statements for $\Psi^{\pm}_\eps$ which is the harder case. The modifications
required to obtain the analogous statements for $\Psi^{k\bar k}_\eps$ with $k>1$ 
will be indicated at the end of the proof. Recall from \eqref{e:mth-moment}
that for $m=2N$, one has
\begin{equ}
\E |\scal{\phi_0^\lambda,  \Psi^{\pm}_\eps}|^{m}
=
\int \CH(x;\J_\eps) 
\prod_{e\in\CR}
\Big(
	\phi_0^\lambda(e_\downarrow)   K(e)
\Big)
\,dx\;.
\end{equ}
As a consequence of Proposition \ref{prop:boundH8}, we can bound this expression by
\begin{equ}
\E |\scal{\phi_0^\lambda,  \Psi^{\pm}_\eps}|^{m}
 \lesssim 
\sum_{\CG\in\mathbf{G}_\CR}
\int 
 \Big( \prod_{\{i,j\}\in \CS_\CG} \J_{ij,\eps}^-
 \prod_{(e,f)\in \CE_T}\CA_{ef} \Big) 
\prod_{e\in\CR}  \Big(
|\phi_0^\lambda(e_\downarrow)| \, | K(e) |
\Big)
\,dx.
\end{equ}

The proof of Theorem~\ref{theo:second-order} now goes by induction over $m$. 
Suppose that
for every  $\bar m< m$ and every $\delta > 0$, the bound
\begin{equ}[e:fullExprInductive]
\int 
 \Big( \prod_{\{i,j\}\in \CS_{\bar\CG}} \J_{ij,\eps}^-
 \prod_{(e,f)\in \bar\CE_T}\CA_{ef} \Big) 
\Big(
\prod_{e\in\bar\CR}
|\phi_0^\lambda(e_\downarrow)|\,    | K(e) |
\Big)
\,dx
\lesssim  \lambda^{(2-\bar\beta-\delta)\bar m}\;,
\end{equ}
holds uniformly over $\lambda \in (0,1]$
for every admissible graph $\bar \CG \in \mathbf{G}_{\bar\CR}$
over the set $\bar\CR$ of cardinality $\bar m$.
Here we are not assuming any more that
exactly half of the pairs in $\bar\CR$ are oriented from the positive to the negative
and the other half of the pairs the other way.
We aim to prove that in this case the analogous bound also holds
for every admissible graph $\CG \in \mathbf{G}_\CR$
over $\CR$ of cardinality $m$. 
(Again, not assuming that 

To make the calculations more clear and visual, we introduce some graphical notation.
A line 
\tikz[baseline=-3] {
\node [dot] at (0,0)  (0) {}; 
\node [dot] at (1.5,0)  (1) {};
\node [xshift=-8] at (0) {$x$};
\node [xshift=8] at (1) {$y$};
\draw [kernel] (0) to node[homoge] {$\alpha$} (1); } 
with a label $\alpha$ represents the function 
$\|x-y\|_s^\alpha$. 
A dashed line with an arrow
\tikz[baseline=-3] {
\node [dot] at (0,0)  (0) {}; 
\node [dot] at (1.5,0)  (1) {};
\node [xshift=-8] at (0) {$x$};
\node [xshift=8] at (1) {$y$};
\draw [KK,->] (0) to (1); } 
represents the function $K(x\to y) = K(x-y)-K(x)$. 
A gray dot 
\tikz[baseline=-3] {
\node [graydot] at (0,0)  {}; 
}
means that the point is integrated out, while
a black dot 
\tikz[baseline=-3] {
\node [dot] at (0,0)  {}; 
}
simply stands for a point without integration.
We then distinguish between the following two cases.

\noindent\textbf{Case~1.} In this case, we assume that $\CV_T\neq\emptyset$, where $\CV_T$ is
associated to the admissible graph $\CG$ as above. 
Since $\CG^{\CV_L}$ is a union of disjoint trees, one can always find a vertex $e$ such that
the degree of $e$ is one (namely, $e$ is a leaf.) 
Let $f$ be the unique pair in $\CR$ such that $(e,f)\in\CE_T$.
There are then two possible situations. The first situation is that
$f\in\CV_T$ and $f$ is also a leaf (see the bottom-right connected component of the graph in Figure \ref{fig:adm-graph}). In this situation, the integration over $e^{\pm}$
and $f^\pm$ factors out and is either of the form
\begin{equ}[e:form1]
I_1 \eqdef \int 
|\varphi_0^{\lambda}(e^+)|
|\varphi_0^{\lambda}(f^-)|
\, | K(e^- \to e^+) K(f^+ \to f^-) |   \; \hat\J_e \hat\J_f   \CA_{ef}
\,de^\pm df^\pm \;,
\end{equ}
where the integration is over $(\R^3)^4$, or of the form
\begin{equ}[e:form2]
I_2 \eqdef \int 
|\varphi_0^{\lambda}(e^+)|
|\varphi_0^{\lambda}(f^+)|
\, | K(e^- \to e^+) K(f^- \to f^+) |  \; \hat\J_e \hat\J_f   \CA_{ef}
\,de^\pm df^\pm \;.
\end{equ}
Of course, it could also be \eref{e:form1} or \eref{e:form2}
with all the signs flipped, but these can be reduced to the above two cases by symmetry. 
Leaving aside the test functions $\phi_0^\lambda$, 
the integrands in $I_1$ and $I_2$, are depicted by 
\begin{center}
\begin{tikzpicture} 
\node at (0,0) [graydot] (1) {};
\node at (2,0) [graydot] (0) {};
\node at (0,1.5) [graydot] (2) {};
\node at (2, 1.5) [graydot] (3) {};
\node [xshift=8] at (0) {$f^+$};
\node [xshift=-8]  at (1) {$f^-$};
\node [xshift=-8]  at (2) {$e^+$};
\node [xshift=8]  at (3) {$e^-$};
\draw [kernel] (0) to node [homoge] {$1-\bar \beta$} (1) ;
\draw [kernel] (2) to node[homoge] {$1-\bar\beta$} (3);
\draw [kernel] (0) to node[homoge] {$-1$} (3);
\draw [kernel] (1) to node[homoge] {$-1$} (2);
\draw[KK,->, bend right = 60] (0) to (1);
\draw[KK,->,bend left = 60] (3) to (2);
\node at (5,0) [graydot] (11) {};
\node at (7,0) [graydot] (10) {};
\node at (5,1.5) [graydot] (12) {};
\node at (7, 1.5) [graydot] (13) {};
\node [xshift=8] at (10) {$f^+$};
\node [xshift=-8]  at (11) {$f^-$};
\node [xshift=-8]  at (12) {$e^+$};
\node [xshift=8]  at (13) {$e^-$};
\draw [kernel] (10) to node[homoge] {$1-\bar \beta$} (11);
\draw [kernel] (12) to node[homoge] {$1-\bar\beta$} (13);
\draw [kernel] (10) to node[homoge] {$-1$} (13);
\draw [kernel] (11) to node[homoge] {$-1$} (12);
\draw[KK,->, bend left = 60] (11) to (10);
\draw[KK, ->,bend left = 60] (13) to (12);
\end{tikzpicture}
\end{center}
respectively. By Lemma \ref{lem:int-12} we have
\begin{equ} [e:bound12]
|I_1| + |I_2| \lesssim \lambda^{2\,(2-\bar\beta-\delta)}\;,
\end{equ}
for every sufficiently small $\delta>0$,
so that the statement follows from the induction hypothesis with
$\bar m = m-2$ and $\bar\CR=\CR\setminus\{e,f\}$. 

The second situation is that when either $f\notin\CV_T$ or $f\in\CV_T$ but has 
degree greater than one. In this situation, the integration over $e^{\pm}$ again factors out and
 has the form
\begin{equ}[e:form3]
I_3 \eqdef 
\int 
\varphi_0^{\lambda}(e^+)
| K(e^-\to e^+) | \; \hat\J_e   \CA_{ef}
\;de^+de^- \;,
\end{equ}
or the same expression with all the signs flipped. 
Graphically, ignoring again the test function, the integrand is given by
\begin{center}
\begin{tikzpicture} 
\node at (2,0) [dot] (f+) {};
\node at (0,0) [dot] (f-) {};
\node at (0,1.5) [graydot] (e+) {};
\node at (2, 1.5) [graydot] (e-) {};
\node [xshift=8] at (f+) {$f^+$};
\node [xshift=-8]  at (f-) {$f^-$};
\node [xshift=-8]  at (e+) {$e^+$};
\node [xshift=8]  at (e-) {$e^-$};
\draw [kernel] (f+) to node[homoge] {$1$} (f-);
\draw [kernel] (e+) to node[homoge] {$1-\bar\beta$} (e-);
\draw [kernel] (f+) to node[homoge] {$-1$} (e-);
\draw [kernel] (f-) to node[homoge] {$-1$} (e+);
\draw[KK, ->,bend left = 60] (e-) to (e+);
\end{tikzpicture}
\end{center}
Note now that the integrand in the full expression \eqref{e:fullExprInductive}
necessarily contains either a factor
$|\varphi_0^{\lambda}(f^+)|$ or a factor $|\varphi_0^{\lambda}(f^-)|$. Therefore,
we can restrict the integral to those configurations for which $\|f^+\|_\s \wedge \|f^-\|_\s \le \lambda$, which allows us to apply  Lemma~\ref{lem:int-3} below, thus yielding the bound
$|I_3| \lesssim \lambda^{2-\bar\beta-\delta}$,
for every $\delta>0$.
The required bound now follows by using the induction assumption with
$\bar m = m-1$, $\bar\CR=\CR\setminus\{e\}$.

\noindent\textbf{Case~2.} We now turn to the case when $\CV_T = \emptyset$ (which in particular implies that $\CE_T = \emptyset$), and therefore $\CE_L\neq \emptyset$. In this case, the integral
\eqref{e:fullExprInductive} factors according to the connected components of the graph $\CG$, which consist of loops.
The integral for a loop of size $n$ linking vertices $\{e_i\}_{i=1}^n \subset \CR$ is given by
\begin{equ}[e:formL]
I_L = \int 
\prod_{i=1}^n
\Big(
\varphi_0^{\lambda}(e_{i,\downarrow})
\, | K(e_i) | \,
\J_{e^-_i e^+_{i+1}}^-
\Big)  
\; 
\prod_{i=1}^n 
\Big(  de^+_i de^-_i  \Big)\;,
\end{equ}
where we made the identification $e_{n+1}\eqdef e_1$. 
Furthermore, each $e_i \in \CR$ comes with an arbitrary orientation which appears in the
definition of $K(e_i)$. Integrals of this type are bounded in Lemma~\ref{lem:int-L} below,
which yields
\begin{equ} [e:boundL]
|I_L| \lesssim \lambda^{(2-\bar\beta-\delta)n} \;,
\end{equ}
thus again allowing us to invoke the induction hypothesis with $\bar m = m - n$
and $\bar \CR = \CR \setminus \{e_1,\ldots,e_n\}$.

We now turn to prove the convergence statement of the theorem.
Define for $e=(e^+,e^-),f=(f^+,f^-)\in\R^3\times\R^3$
\begin{equ}
\CH_{\eps,\bar\eps}(e,f)  
\eqdef 
\frac{\J_{\eps,\bar\eps}(e^+ - f^+)\J_{\eps,\bar\eps}(e^- - f^-)}
{\J_{\eps}(e^+ - e^-)\J_{\bar\eps}(f^+ - f^-)\J_{\eps,\bar\eps}(e^+ - f^-)\J_{\eps,\bar\eps}(e^- - f^+)}
- \hat\J_{e,\eps} \hat\J_{f,\bar\eps}
\end{equ} 
where $\J_{\eps,\bar\eps}$ is defined in the proof of Theorem \ref{theo:convBasic}.
Then one has
\begin{equs}
\E |\scal{\phi_0^\lambda, \Psi^{\pm}_\eps & - \Psi^{\pm}_{\bar\eps}}|^2
=
\int 
(\CH_\eps (e,f)   + \CH_{\bar\eps} (e,f)   - 2 \CH_{\eps,\bar\eps}(e,f) ) \\
& \times \Big(
\phi_0^\lambda(e^+) \phi_0^\lambda(f^-)  
K(e^- \to e^+)  K(f^+ \to f^-)
\Big)
\,de^\pm df^\pm\;.
\end{equs}
Assume without loss of generality that $\bar\eps\le\eps$.
As in the proof of Theorem \ref{theo:convBasic}, one has $\CJ_\eps^- = \CJ^-\, \exp(\CM_\eps)$
and $\CJ_{\eps,\bar \eps}^- = \CJ^-\, \exp(\CM_{\eps,\bar \eps})$ where the functions $\CM_\eps$
and $\CM_{\eps,\bar \eps}$ are bounded by
$
|\CM_\eps(z)|  + |\CM_{\eps,\bar \eps}(z)| \lesssim \eps /\|z\|_\s
$
for all space-time points $z$ with $\|z\|_\s \ge \eps$,
and, the function $\CJ_{\eps,\bar \eps}^-$ also falls within the scope of Lemma  \ref{lem:Delta-by-A}
and therefore satisfies the bound \eref{e:Delta-by-A}.

For our collection of four charges $e^\pm,f^\pm$,
there are only two possible admissible graphs:
the first one is $\CV_L = \{e,f\}$ (i.e. $e$ and $f$ form a loop),
and the second one is $\CV_T = \{e,f\}$ (i.e. $e$ and $f$ form a tree).
It is then straightforward to show that
\begin{equs}
| \CH_\eps & (e,f)    + \CH_{\bar\eps} (e,f)   - 2 \CH_{\eps,\bar\eps}(e,f) | \\
& \lesssim \|e^+ -  f^-\|_\s^{-\bar\beta} \|e^- -  f^+\|_\s^{-\bar\beta}
 \sum_{x\neq y\in\{e^\pm,f^\pm\}} \Big(\frac{\eps}{\|x-y\|_\s} \wedge 1\Big) \\
& \quad + \|e^+ -  e^-\|_\s^{-\bar\beta} \|f^+ -  f^-\|_\s^{-\bar\beta}
\CA_{ef}
 \Big(\frac{\eps}{\|e^+-e^-\|_\s} \wedge 1+ \frac{\eps}{\|f^+-f^-\|_\s} \wedge 1 \Big)
\end{equs}
for all $e^\pm,f^\pm\in\R^3$.

Now to perform the integrations over $e^\pm, f^\pm$, 
one needs the following fact: suppose that 
$|D^k K_1 (x)| \lesssim (\frac{\eps}{\|x\|_\s} \wedge 1) \|x\|_\s^{\zeta_1 - |k|_\s}$,
and $K_2$ is of order $\zeta_2$, then
\begin{equs}[e:generalise-conv]
|D^k (K_1*K_2) (x)| \lesssim \eps^\kappa \|x\|_\s^{\bar\zeta - |k|_\s -\kappa}
\end{equs}
for sufficiently small $\kappa>0$ where $\bar\zeta = \zeta_1 + \zeta_2 -|\s| \notin \N$ and $k$   
is such that $\bar\zeta - |k|_\s <0$.
To prove \eref{e:generalise-conv},
observe that if $\|x\|_\s < 2\eps$, then one just bounds
$|D^k K_1 (x)|$ by  $\|x\|_\s^{\zeta_1 - |k|_\s}$ and then
uses $\|x\|_\s^\kappa \lesssim \eps^\kappa$ to obtain the desired bound.

If $\|x\|_\s \geq 2\eps$ on the other hand, writing 
$(K_1*K_2) (x)$ as $\int K_1 (y) K_2(x-y) dy$,
we distinguish three cases as in the proof
of \cite[Lemma~10.14]{Regularity}. The first case is that $\|y\|_\s < \|x\|_\s /2$:
we bound $\|x-y\|_\s$ by $\|x\|_\s$, and integrate $K_1 (y)$
following the steps above \eref{e:finalBoundPsi}.
The second case is that $\|y-x\|_\s < \|x\|_\s /2$ and therefore 
$\eps / \|y\|_\s  < 1$,
we can bound $K_1 (y)$ by $\eps^\kappa \|x\|_\s^{\zeta_1 -\kappa}$.
The third case is the complement of the above two regions,
where one still has $\eps / \|y\|_\s < 1$,
following the same arguments as in the proof
of \cite[Lemma~10.14]{Regularity} one obtains the desired bound.


We can then integrate over $e^\pm, f^\pm$ analogously as in the proof of
\ref{lem:int-3} and \ref{lem:int-L}.
One has
\begin{equ}
\E |\scal{\phi_0^\lambda, \Psi_\eps - \Psi_{\bar \eps}}|^2 \lesssim \eps^{2\kappa} \lambda^{-2\kappa+2(2-\bar\beta-\delta)}\;,
\end{equ}
so the second bound stated by the theorem follows by Cauchy-Schwarz inequality.

We now prove the bounds for $\Psi_\eps^{k\bar k}$ with $k>1$. One has
\begin{equ}
\E |\scal{\phi_0^\lambda,  \Psi^{k\bar k}_\eps}|^{m}
=  e^{-\beta^2 m \left(k^2 - 1 \right)Q_\eps(0)} 
\int \CH(x;\J_\eps^{k^2}) 
\prod_{e\in\CR}
\Big(
	\phi_0^\lambda(e)   K(e)
\Big)
\,dx\;.
\end{equ}
By Proposition \ref{prop:boundH8}, one has
\begin{equs}
\E |\scal{\phi_0^\lambda,  \Psi^{k\bar k}_\eps}|^{m}
 \lesssim  \eps^{{\beta^2\over 2\pi} m \left(k^2 - 1 \right)} 
\sum_{\CG\in\mathbf{G}_\CR}
& \int   
 \Big( \prod_{\{i,j\}\in \CS_\CG} \J_{ij,\eps}^{-k^2}
 \prod_{(e,f)\in \CE_T}\CA_{ef} \Big)   \\
&  \times 
\prod_{e\in\CR}  \Big(
\phi_0^\lambda(e)  | K(e) |
\Big)
\,dx\;.
\end{equs}
In the above expression, there are $m$ of  factors $\J_\eps^{-k^2}$.
In fact, for some sufficiently small parameter $\kappa>0$, one has
\begin{equ}
\eps^{{\beta^2\over 2\pi} \left(k^2 - 1 \right)}
 \J_{\eps} (x_i - x_j)^{-k^2}
\lesssim 
\eps^\kappa
\|x_i - x_j\|_\s^{{-\beta^2\over 2\pi} - \kappa } \;.
\end{equ}
Then, the required bounds follow in the same way 
as the case of $\Psi_\eps^\pm$,
except that $\eps^\kappa \to 0$ as $\eps\to 0$.
\end{proof}

Now we proceed to prove the bounds for all the integrals in the proof of the previous theorem. Notice that the entire integral comes with a test function $\varphi^\lambda_0(f^+)$ or $\varphi^\lambda_0(f^-)$, which justifies the assumption of the following lemma.

\begin{lemma} \label{lem:int-3}
Let $I_3$ be given by \eqref{e:form3}. Then the bound $|I_3| \lesssim \lambda^{2-\bar\beta-\delta}$ holds uniformly over all $f^\pm$ such that $\|f^+\|_\s \wedge \|f^-\|_\s \le \lambda$ and $\|f^+\|_\s \vee \|f^-\|_\s \lesssim 1$.
\end{lemma}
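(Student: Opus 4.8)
The plan is to bound the integrand of $I_3$ pointwise, to use the single power of $\|e^+-e^-\|_\s$ that sits inside $\CA_{ef}$ (together with the bound $\CA_{ef}\lesssim 1$) to render the otherwise non-integrable product $|K(e^-\to e^+)|\,\hat\J_e$ integrable across the diagonal $e^+=e^-$, and then to carry out the two integrations — first over $e^+$, localised by the test function to the parabolic ball $\{\|e^+\|_\s\lesssim\lambda\}$, then over $e^-$ — after splitting the domain according to the size of $\|e^-\|_\s$ relative to $\lambda$. Throughout, recall $\bar\beta=\beta^2/2\pi\in[2,3)$.

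First I would record the elementary pointwise bounds. By \eref{e:divergenceKeps} one has $\hat\J_e=\J_\eps^-(e^+-e^-)\lesssim\|e^+-e^-\|_\s^{-\bar\beta}$; by construction $K$ is supported in a fixed ball, smooth away from the origin, with $|K(z)|\lesssim\|z\|_\s^{-2}$, $|\partial_t K(z)|\lesssim\|z\|_\s^{-4}$, $|\nabla_x K(z)|\lesssim\|z\|_\s^{-3}$; and $|\varphi_0^\lambda|\lesssim\lambda^{-4}$ with support in $\{\|e^+\|_\s\lesssim\lambda\}$. Expanding $\CA_{ef}=\|e^+-e^-\|_\s\,\|f^+-f^-\|_\s\,\|e^+-f^-\|_\s^{-1}\,\|f^+-e^-\|_\s^{-1}$ and multiplying the explicit factor $\|e^+-e^-\|_\s$ against $|K(e^+-e^-)|\,\hat\J_e\lesssim\|e^+-e^-\|_\s^{-2-\bar\beta}$ produces a diagonal singularity of order $\|e^+-e^-\|_\s^{-1-\bar\beta}$, whose exponent exceeds $-|\s|=-4$ precisely because $\bar\beta<3$; this is exactly where the restriction $\beta^2<6\pi$ enters.

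Next I would split into $A=\{\|e^-\|_\s\le 2\lambda\}$ and $B=\{\|e^-\|_\s>2\lambda\}$. On $B$ one has $\|e^+\|_\s\le\lambda<\tfrac12\|e^-\|_\s$, so $\|e^+-e^-\|_\s\asymp\|e^-\|_\s$, and since $K(e^-\to e^+)=K(e^+-e^-)-K(-e^-)$ is the increment of $K(\cdot-e^-)$ between the origin and $e^+$, a first-order Taylor bound — using that $e^+$ has time component $\lesssim\lambda^2$ and space components $\lesssim\lambda$ while $\|z\|_\s\asymp\|e^-\|_\s\ge\lambda$ along the segment — gives $|K(e^-\to e^+)|\lesssim\lambda^2\|e^-\|_\s^{-4}+\lambda\|e^-\|_\s^{-3}\lesssim\lambda\,\|e^-\|_\s^{-3}$. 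Using $\CA_{ef}\lesssim 1$, the integrand on $B$ is $\lesssim\lambda^{-4}\lambda\,\|e^-\|_\s^{-3-\bar\beta}\mathbf{1}_{\|e^+\|_\s\lesssim\lambda}$; integrating $e^+$ over the $\lambda$-ball costs a bounded factor and $\int_{2\lambda\le\|e^-\|_\s\lesssim 1}\|e^-\|_\s^{-3-\bar\beta}\,de^-\lesssim\lambda^{1-\bar\beta}$ since $-3-\bar\beta<-4$, so $B$ contributes $\lesssim\lambda^{2-\bar\beta}$. On $A$ all points in sight lie within $O(\lambda)$ of the origin; using $|K(e^-\to e^+)|\lesssim\|e^+-e^-\|_\s^{-2}$ and the full strength of $\CA_{ef}\lesssim 1$ (keeping $\|f^+-f^-\|_\s\lesssim\|f^+\|_\s\vee\|f^-\|_\s$ rather than merely $\lesssim 1$, and invoking the hypothesis $\|f^+\|_\s\wedge\|f^-\|_\s\le\lambda$), a parabolic rescaling of $e^\pm$ to unit scale — after checking that the only genuine singularities, the factors $\|e^+-f^-\|_\s^{-1}$ and $\|f^+-e^-\|_\s^{-1}$, are locally integrable on a $\lambda$-ball — again yields $\lesssim\lambda^{2-\bar\beta}$, the estimate only improving if the ``far'' charge of $f$ sits at an intermediate scale $\mu\in(\lambda,1)$. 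Adding the two regions gives $|I_3|\lesssim\lambda^{2-\bar\beta}\le\lambda^{2-\bar\beta-\delta}$ for every $\delta>0$, since $\lambda\le 1$.

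The main obstacle is not any single estimate but the bookkeeping on region $A$: one must track which of $f^+,f^-$ has parabolic norm $\le\lambda$, whether the singular factors $\|e^+-f^-\|_\s^{-1}$ and $\|f^+-e^-\|_\s^{-1}$ of $\CA_{ef}$ are active inside or outside the relevant $\lambda$-ball, and verify that in every regime the exponents add up to at least $2-\bar\beta$. The other delicate point is the cancellation in $K(e^-\to e^+)$ on region $B$: because the parabolic scaling is anisotropic one has to treat $\partial_t K$ and $\nabla_x K$ separately and check that the worst contribution, $\lambda^2|\partial_t K|\sim\lambda^2\|e^-\|_\s^{-4}$, is still absorbed into $\lambda\,\|e^-\|_\s^{-3}$.
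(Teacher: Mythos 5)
Your dyadic split of the $e^-$ integration at scale $\lambda$, combined with a first-order Taylor bound on the outer region and a parabolic rescaling on the inner region, is a genuinely different route from the paper's, which instead uses the single interpolated bound $|K(e^-\to e^+)|\lesssim\|e^+\|_\s^{3-\bar\beta-\delta}\bigl(\|e^-\|_\s^{\bar\beta-5+\delta}+\|e^+-e^-\|_\s^{\bar\beta-5+\delta}\bigr)$ — valid on all of $\R^3\times\R^3$ without any splitting — together with a case analysis on $\|f^+\|_\s$ and a H\"older estimate for the more delicate of the two resulting terms. Your route is more elementary, and dimensional counting shows it targets the right power, but as written it has a genuine gap.

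The gap is the repeated invocation of $\CA_{ef}\lesssim 1$, in particular on your region $B$. That bound is \emph{not} among the hypotheses of Lemma~\ref{lem:int-3}: it is the conclusion of Lemma~\ref{lem:smaller-product}, which requires the geometric condition \eqref{e:conditionAsmall}, i.e.\ a constraint on the \emph{specific} configuration of the four charges — this is precisely what ``good pair'' encodes. In Lemma~\ref{lem:int-3} the variables $e^\pm$ range freely, so the condition cannot be assumed, and it in fact fails on a set of positive measure. Take $\|f^-\|_\s\le\lambda$, $\|f^+\|_\s\gtrsim 1$, $e^+$ in the $\lambda$-ball, and $e^-$ close to $f^+$ (so $\|e^-\|_\s>2\lambda$ and you are in region $B$): then $\|e^+-e^-\|_\s\,\|f^+-f^-\|_\s\gtrsim 1$ while $\|e^+-f^-\|_\s\,\|f^+-e^-\|_\s$ is arbitrarily small, so $\CA_{ef}$ is unbounded. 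The paper avoids this by never bounding $\CA_{ef}$ by a constant: the factors $\|e^+-f^-\|_\s^{-1}$ and $\|f^+-e^-\|_\s^{-1}$ are carried explicitly and then integrated out via convolution estimates. Your region-$B$ bound is probably salvageable (the singularity $\|f^+-e^-\|_\s^{-1}$ has exponent $-1>-|\s|$ and is locally integrable, so one can redo the $e^-$-integration with the full $\CA_{ef}$ and still land on $\lambda^{2-\bar\beta}$ after a short case analysis on $\|f^+\|_\s$), but the step as written is invalid. Two further points on region $A$: it is not true that ``all points in sight lie within $O(\lambda)$'' — only $e^\pm$ and whichever of $f^\pm$ has norm $\le\lambda$ are, while the other $f$-charge is merely $O(1)$, so the rescaling argument must distinguish cases according to where the far $f$-charge sits rather than treat it as a side remark; and the bound $|K(e^-\to e^+)|\lesssim\|e^+-e^-\|_\s^{-2}$ silently drops the term $|K(-e^-)|\lesssim\|e^-\|_\s^{-2}$, which produces an additional singularity at $e^-=0$ that has to be tracked. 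None of these is likely fatal, but collectively they mean the proposal is a sound sketch rather than a complete proof.
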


\begin{proof}
By the gradient theorem, one has
\begin{equs}
| K(e^- & \to e^+) |  \lesssim \left|K(e^{+}-e^{-})-K(e^{-})\right|   \\
& \lesssim \|e^+\|_\s^{3-\bar\beta-\delta} 
\Big(  \|e^-\|_\s^{\bar\beta-5+\delta} + \|e^+ - e^- \|_\s^{\bar\beta-5+\delta}  \Big)
\end{equs}
for every  $\delta>0$ sufficiently small, where we used the fact $3-\bar\beta \in (0,1]$.
By our definitions, the left hand side of \eref{e:form3} is bounded by
$\lambda^{-4} \|f^{-}-f^{+}\|_{\s} \,(T_1 + T_2)$, where
\begin{center}
\begin{tikzpicture} 
\node at (-1.5,1.5) {$T_1 \; =$} ;
\node at (1,3) [dot] (0) {};
\node at (2,0) [dot] (f+) {};
\node at (0,0) [dot] (f-) {};
\node at (0,1.5) [graydot] (e+) {};
\node at (2, 1.5) [graydot] (e-) {};
\node [yshift=8] at (0) {$0$};
\node [xshift=8] at (f+) {$f^+$};
\node [xshift=-8]  at (f-) {$f^-$};
\node [xshift=-8]  at (e+) {$e^+$};
\node [xshift=8]  at (e-) {$e^-$};
%
\draw [kernel] (e+) to node[homoge] {$-4+\delta$} (e-);
\draw [kernel] (f+) to node[homoge] {$-1$} (e-);
\draw [kernel] (f-) to node[homoge] {$-1$} (e+);
\draw [kernel,very thick] (e+) to node[homoge,semithick] {$3-\bar\beta-\delta$} (0);
%
\node at (3.8, 1.2) {$\,$};
\end{tikzpicture}
\begin{tikzpicture} 
\node at (-1.5,1.5) {$T_2 \; =$} ;
\node at (1,3) [dot] (0) {};
\node at (2,0) [dot] (f+) {};
\node at (0,0) [dot] (f-) {};
\node at (0,1.5) [graydot] (e+) {};
\node at (2, 1.5) [graydot] (e-) {};
\node [yshift=8] at (0) {$0$};
\node [xshift=8] at (f+) {$f^+$};
\node [xshift=-8]  at (f-) {$f^-$};
\node [xshift=-8]  at (e+) {$e^+$}; 
\node [xshift=8]  at (e-) {$e^-$};
%
\draw [kernel] (e+) to node[homoge] {$1-\bar\beta$} (e-);
\draw [kernel] (f+) to node[homoge] {$-1$} (e-);
\draw [kernel] (f-) to node[homoge] {$-1$} (e+);
\draw [kernel,very thick,bend left=40] (e+) to node[homoge,semithick] {$3-\bar\beta-\delta$} (0);
\draw [kernel,bend right=40] (e-) to node[homoge] {$\bar\beta-5+\delta$} (0);
\end{tikzpicture}
\end{center}
and the thick lines indicate that the corresponding (parabolic) distance
is restricted to taking values less than $\lambda$.

We bound the first term $T_1$. Integrating $e^-$ 
using \cite[Lemma~10.14]{Regularity}, one has
\begin{equ}[e:termT_1]
T_{1} 
 \lesssim
 \int_\Lambda \|e^{+}-f^{-}\|_{\s}^{-1} \, 
 	\|e^{+}-f^{+}\|_{\s}^{-1+\delta} \,
	\|e^{+}\|_{\s}^{3-\bar\beta-\delta} \,
	 de^{+}  \;,
\end{equ}
where $\Lambda$ denotes the ball of radius $\lambda$.
We now distinguish two cases. If $\|f^{+}\|_{\s}\geq2\lambda$, then one must have $\|f^{-}\|_\s \leq\lambda$ which together with $\|e^+\|_\s\le \lambda$ implies
$\|e^{+}-f^{+}\|_\s^{-1+\delta} \lesssim \|f^{-}-f^{+}\|_\s^{-1+\delta}$. 
Inserting this bound into \eref{e:termT_1} and integrating over $e^+$, one obtains
\begin{equ} [e:bound-T1]
T_{1} 
\lesssim
\lambda^{6-\bar\beta-\delta}  \, \|f^{-}-f^{+}\|_{\s}^{-1+\delta}  \;.
\end{equ}
If on the other hand $\|f^{+}\|_\s \leq2\lambda$, then one has $\|e^{+}-f^{+}\|_{\s}\lesssim\lambda$ and therefore 
$\|e^{+}-f^{+}\|_{\s}^{-1+\delta}  \lesssim  \lambda^{3}\|e^{+}-f^{+}\|_{\s}^{-4+\delta}$,
so that
\begin{equ}
T_{1}  \lesssim
 \lambda^{6-\bar \beta - \delta}\int_{\R^3} \|e^{+}-f^{-}\|_{\s}^{-1} \, 
 	\|e^{+}-f^{+}\|_{\s}^{-4+\delta} \,
	 de^{+}  \;.
\end{equ}
Integrating over $e^+$, one again obtains \eref{e:bound-T1}, which yields the required bound
on this term.

Next, we bound the term $T_2$. Define the quantity
\begin{equ}
S(e^{+},f^{+})\eqdef
\int\|e^{-}-f^{+}\|_{\s}^{-1}  \,
	\|e^{+}-e^{-}\|_{\s}^{-\bar\beta+1}  \,
	\|e^{-}\|_\s^{\bar\beta-5+\delta}\,de^{-}\;,
\end{equ} 
so that $T_2$ can be rewritten as
\begin{equ}[e:termT_2]
T_{2} 
= \int_\Lambda \|e^{+}-f^{-}\|_{\s}^{-1} \,
	\|e^{+}\|_{\s}^{3-\bar\beta-\delta} \,
	S(e^{+},f^{+}) \,de^{+}\;.
\end{equ}
We estimate $S(e^{+},f^{+})$ using Holder's inequality
\begin{equs}
S(e^{+},f^{+}) & =
\int
\Big(\|e^{-}\|_\s^{\bar\beta-\frac{7}{2}+\frac{\delta}{2}}
	\|e^{+}-e^{-}\|_\s^{-\bar\beta+2-\frac{\delta}{2}}\Big)
\Big(\|e^{-}\|_\s^{-\frac{3}{2}+\frac{\delta}{2}}
	\|f^{+}-e^{-}\|_\s^{-\frac{\delta}{2}}\Big)\\
&\qquad \times
\Big(\|e^{+}-e^{-}\|_\s^{-1+\frac{\delta}{2}}
	\|f^{+}-e^{-}\|_\s^{-1+\frac{\delta}{2}}\Big)\;de^{-}\\
& \lesssim
\Big\Vert  \|e^{-}\|_\s^{\bar\beta-\frac{7}{2}+\frac{\delta}{2}}\|e^{+}-e^{-}\|_{\s}^{-\bar\beta+2-\frac{\delta}{2}}\Big\Vert_{L^{\frac{8}{3}}}
\Big\Vert \|e^{-}\|_\s^{-\frac{3}{2}+\frac{\delta}{2}} \|f^{+}-e^{-}\|_{\s}^{-\frac{\delta}{2}}\Big\Vert_{L^{\frac{8}{3}}}\\
& \qquad \times
\Big\Vert \|e^{+}-e^{-}\|_{\s}^{-1+\frac{\delta}{2}} \|f^{+}-e^{-}\|_{\s}^{-1+\frac{\delta}{2}}\Big\Vert_{L^{4}}\\
&\lesssim
\|e^{+}-f^{+}\|_{\s}^{-1+\delta}
\end{equs}
where all the $L^p$ norms are defined on functions in the variable $e^-$. 
In fact the two $L^{\frac{8}{3}}$ norms are both bounded by constants.
We are now back to the same situation as \eqref{e:termT_1} for $T_1$, so that the claim follows. 
\end{proof}

In the sequel, we will make repeated use of the inequality
\begin{equ}  [e:boundz1]
\|z\|_\s^{-\alpha} \|\bar z\|_\s^{-\beta} \lesssim \|z\|_\s^{-\alpha-\beta} + \|\bar z\|_\s^{-\alpha-\beta}\;,
\end{equ}
which holds for every $z$, $\bar z$ in $\R^4$ and any two exponents $\alpha, \beta > 0$. 

\begin{lemma} \label{lem:int-12}
The bound \eref{e:bound12} holds for $I_1$ and $I_2$ given by \eqref{e:form1}
and \eqref{e:form2}.
\end{lemma}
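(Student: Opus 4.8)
The plan is to treat $I_{1}$ in detail, since $I_{2}$ is handled in exactly the same way: the only difference is that for $I_{2}$ the test function sits on $f^{+}$ rather than $f^{-}$, so that the Taylor expansion of $K(f^{-}\to f^{+})$ below is performed around $f^{+}$ instead of $f^{-}$, and all the resulting bounds are identical. Throughout, I write $\Lambda$ for the parabolic ball of radius $\lambda$ centred at the origin, so that $|\varphi_{0}^{\lambda}|\lesssim\lambda^{-4}\one_{\Lambda}$. First I would dispose of the factor $\hat\J_{e}\hat\J_{f}\,\CA_{ef}$: by the definitions \eref{e:defAij} and \eref{e:J_e-kbark} it equals $\J_{\eps}^{-}(e^{+}-e^{-})\,\J_{\eps}^{-}(f^{+}-f^{-})\,\|e^{+}-e^{-}\|_{\s}\|f^{+}-f^{-}\|_{\s}\|e^{+}-f^{-}\|_{\s}^{-1}\|f^{+}-e^{-}\|_{\s}^{-1}$, and Corollary~\ref{cor:Jeps} (that is, \eref{e:divergenceKeps}) bounds $\J_{\eps}^{-}(z)\lesssim\|z\|_{\s}^{-\bar\beta}$ uniformly over $\eps\in(0,1]$; the factor $\|e^{+}-e^{-}\|_{\s}$ coming from $\CA_{ef}$ then survives to partially tame the singularity of $\J_{\eps}^{-}(e^{+}-e^{-})$, leaving $\|e^{+}-e^{-}\|_{\s}^{1-\bar\beta}$, and likewise $\|f^{+}-f^{-}\|_{\s}^{1-\bar\beta}$. (Here $(e,f)$ arises as an edge of $\CE_{T}^{(1)}\subset\mathbf{P}$, so the last of the three conditions on $\mathbf{P}$ in Proposition~\ref{prop:bound--H1} moreover guarantees $\CA_{ef}\lesssim 1$; this confirms that no factor blows up, although it is not needed for the estimate.) For the two kernel increments I would use the bound already derived in the proof of Lemma~\ref{lem:int-3}: since $3-\bar\beta\in(0,1]$ for $\beta^{2}\in[4\pi,6\pi)$, for every small $\delta>0$ one has $|K(e^{-}\to e^{+})|\lesssim\|e^{+}\|_{\s}^{3-\bar\beta-\delta}\bigl(\|e^{-}\|_{\s}^{\bar\beta-5+\delta}+\|e^{+}-e^{-}\|_{\s}^{\bar\beta-5+\delta}\bigr)$, and similarly for $K(f^{+}\to f^{-})$, the expansion being taken around the ``small'' variable ($e^{+}$, resp.\ $f^{-}$), which is confined to $\Lambda$ by the test functions.

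Inserting all of this into \eref{e:form1} reduces the estimate to a sum of four integrals of the schematic form
\begin{equs}
\lambda^{-8}\int_{\Lambda}\int_{\Lambda}\|e^{+}\|_{\s}^{3-\bar\beta-\delta}&\|f^{-}\|_{\s}^{3-\bar\beta-\delta}\|e^{+}-f^{-}\|_{\s}^{-1}\\
&\times\Bigl(\int_{(\R^{3})^{2}}A(e^{-})\,B(f^{+})\,\|f^{+}-e^{-}\|_{\s}^{-1}\,de^{-}\,df^{+}\Bigr)\,de^{+}\,df^{-}\;,
\end{equs}
where $A$ is one of $\|e^{-}\|_{\s}^{\bar\beta-5+\delta}\|e^{+}-e^{-}\|_{\s}^{1-\bar\beta}$ or $\|e^{+}-e^{-}\|_{\s}^{-4+\delta}$, and $B$ is the corresponding function of $f^{+}$ and $f^{-}$. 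The next step is to integrate out the two ``free'' variables $e^{-}$ and $f^{+}$ by repeated use of the convolution estimate \cite[Lemma~10.14]{Regularity}, together with the elementary inequality \eref{e:boundz1} to separate products of singular factors sitting at a common scale. The delicate factor is $A=\|e^{-}\|_{\s}^{\bar\beta-5+\delta}\|e^{+}-e^{-}\|_{\s}^{1-\bar\beta}$, whose two exponents sum to $-4+\delta$: this is integrable over $\R^{3}$ (of homogeneous dimension $4$) only because $\delta>0$, i.e.\ only because $\beta^{2}<6\pi$, and one controls $\int A(e^{-})\,\|f^{+}-e^{-}\|_{\s}^{-1}\,de^{-}$ exactly as the quantity $S(e^{+},f^{+})$ in the proof of Lemma~\ref{lem:int-3}, splitting $A$ via H\"older's inequality and bounding it by a multiple of $\|e^{+}-f^{+}\|_{\s}^{-1+\delta}+\|f^{+}\|_{\s}^{-1+\delta}$. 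Integrating $f^{+}$ in the same fashion then leaves an integrand in $e^{+},f^{-}$ which is a product of negative powers of $\|e^{+}\|_{\s}$, $\|f^{-}\|_{\s}$ and $\|e^{+}-f^{-}\|_{\s}$, the worst single term being $\lambda^{-8}\|e^{+}\|_{\s}^{3-\bar\beta-\delta}\|f^{-}\|_{\s}^{3-\bar\beta-\delta}\|e^{+}-f^{-}\|_{\s}^{-2+\CO(\delta)}$.

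Finally I would integrate $e^{+},f^{-}$ over $\Lambda$ via the parabolic substitution $e^{+}=\lambda\hat e$, $f^{-}=\lambda\hat f$: since the residual kernel is, up to the $\delta$'s, homogeneous, this contributes $\lambda^{8}$ from the two Jacobians, $\lambda^{2(3-\bar\beta-\delta)}$ from the two extracted small factors, and $\lambda^{-2+\CO(\delta)}$ from $\|e^{+}-f^{-}\|_{\s}^{-2+\CO(\delta)}$, the residual integral over the unit balls being finite because $3-\bar\beta-\delta>-4$ and $-2+\CO(\delta)>-4$. Combined with the prefactor $\lambda^{-8}$ this yields $|I_{1}|\lesssim\lambda^{\,4-2\bar\beta+\CO(\delta)}$, and after relabelling $\delta$ one obtains $|I_{1}|\lesssim\lambda^{2(2-\bar\beta-\delta)}$; the identical computation for $I_{2}$ then gives \eref{e:bound12}. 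The main obstacle is not conceptual but one of bookkeeping: one has to verify that every intermediate integration over $e^{-}$ and $f^{+}$ converges at short distances — the only borderline case being the factor $A$ above, which is precisely why the strict inequality $\beta^{2}<6\pi$ (equivalently the loss $\delta>0$) cannot be avoided with this strategy — and that the accumulated negative power of $\lambda$ is no worse than $2(2-\bar\beta-\delta)$.
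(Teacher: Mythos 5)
Your treatment of $I_1$ is correct and is essentially the paper's argument, just with the bookkeeping flattened: the paper introduces the auxiliary function $F(z,w)$ (integrating over $e^-,f^+$ with the $K$'s evaluated at $z,w$) and bounds the double increment $|F(e^+,f^-)-F(0,f^-)-F(e^+,0)+F(0,0)|$ by estimating the two single increments $|F(e^+,f^-)-F(0,f^-)|$ and $|F(e^+,0)-F(0,0)|$ one at a time — i.e.\ expanding one kernel increment while keeping the other as a plain $K$ — whereas you expand both $K(e^-\to e^+)$ and $K(f^+\to f^-)$ at the outset via the same bound from Lemma~\ref{lem:int-3}. The homogeneity bookkeeping comes out the same, and your observation that the only near-critical intermediate integration is the one against $A=\|e^-\|_\s^{\bar\beta-5+\delta}\|e^+-e^-\|_\s^{1-\bar\beta}$ (handled exactly as $S(e^+,f^+)$ in Lemma~\ref{lem:int-3} via H\"older) is the right place to locate the $\beta^2<6\pi$ restriction.

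One small inaccuracy worth flagging: you claim $I_2$ is ``handled in exactly the same way'' with ``all the resulting bounds identical,'' but the two cases are structurally different, and the paper in fact uses a simpler argument for $I_2$ — it just splits $K(e^-\to e^+)K(f^-\to f^+)$ into its four constituent products and bounds each term directly with \eref{e:boundz1}, without any Taylor expansion. The reason this suffices is visible in your own setup: for $I_1$ the test functions sit on $e^+,f^-$, which are the endpoints of \emph{one} cross-edge, so the inner integral over $e^-,f^+$ is coupled through $\|e^--f^+\|_\s^{-1}$; for $I_2$ the test functions sit on $e^+,f^+$, so each cross-edge connects one test-function point to one integration point, and the inner integral over $e^-,f^-$ factors into two independent one-variable convolutions. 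Your uniform Taylor-based strategy still goes through for $I_2$ — the factoring only makes the bound easier — so this is a mild overstatement rather than a gap, but it is worth knowing that $I_2$ admits the shortcut and that its combinatorial structure is genuinely different.
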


\begin{proof}
We first show the bound for $I_1$.
Define a function (which also depends on $e^+, f^-$)
\begin{equs}
F(z,w)& \eqdef
\int
\|e^+ - e^-\|_\s^{-\bar\beta+1} K(e^- -z)
\|e^- - f^+\|_\s^{-1} \\
& \qquad\times
\|f^+ - f^-\|_\s^{-\bar\beta+1} K(f^+ - w)
\;de^-df^+
\end{equs}
for every $z,w\in\R^3$. Then
\begin{equs}
|I_1|  \lesssim &
\int 
 \Big| F(e^+,f^-) - F(0,f^-) 
  - F(e^+,0) + F(0,0)\Big| \\
& \qquad \times
\varphi_0^{\lambda}(e^+)
\varphi_0^{\lambda}(f^-)
 \|e^+ - f^-\|_\s^{-1} 
\;de^+df^- \;.
\end{equs}
Since $3-\bar\beta \in (0,1]$,
applying gradient theorem to $K(e^- \to e^+)$  as  in the proof of Lemma \ref{lem:int-3}, one has
\begin{equ}
 | F(e^+,f^-) - F(0,f^-) |  \lesssim H_1 + H_2  \lesssim H_1 + H_3 
\end{equ}
where $H_i$ are defined as follows, and in the last inequality \eref{e:boundz1} has been applied to $\|e^-\|_\s^{\bar\beta-5+\delta}\cdot \|e^+-e^-\|_\s^{1-\bar\beta}$.
\begin{center}
\begin{tikzpicture} 
\node at (1,0) {$H_1$} ;
\node at (1,4) [dot] (0) {};
\node at (0,0.8) [graydot] (f+) {};
\node at (2,0.8) [dot] (f-) {};
\node at (0,2.5) [dot] (e+) {};
\node at (2, 2.5) [graydot] (e-) {};
\node [yshift=8] at (0) {$0$};
\node [xshift=-8] at (f+) {$f^+$};
\node [xshift=8]  at (f-) {$f^-$};
\node [xshift=-8]  at (e+) {$e^+$};  \node [yshift=-10]  at (e+) {$\varphi$};
\node [xshift=8]  at (e-) {$e^-$};
\draw [kernel] (f+) to node[homoge] {$-\bar\beta-1$} (f-);
\draw [kernel] (e+) to node[homoge] {$-4+\delta$} (e-);
\draw [kernel] (f+) to node[homoge] {$-1$} (e-);
\draw [kernel] (e+) to node[homoge] {$3-\bar\beta-\delta$} (0);
%
\node at (3, 1.2) {$\,$};
\end{tikzpicture}
\begin{tikzpicture} 
\node at (1,0) {$H_2$} ;
\node at (1,4) [dot] (0) {};
\node at (0,0.8) [graydot] (f+) {};
\node at (2,0.8) [dot] (f-) {};
\node at (0,2.5) [dot] (e+) {};
\node at (2, 2.5) [graydot] (e-) {};
\node [yshift=8] at (0) {$0$};
\node [xshift=-8] at (f+) {$f^+$};
\node [xshift=8]  at (f-) {$f^-$};
\node [xshift=-8]  at (e+) {$e^+$};  \node [yshift=-10]  at (e+) {$\varphi$};
\node [xshift=8]  at (e-) {$e^-$};
\draw [kernel] (f+) to node[homoge] {$-\bar\beta-1$} (f-);
\draw [kernel] (e+) to node[homoge] {$1-\bar\beta$} (e-);
\draw [kernel] (f+) to node[homoge] {$-1$} (e-);
\draw [kernel,bend left=40] (e+) to node[homoge] {$3-\bar\beta-\delta$} (0);
\draw [kernel,bend right=40] (e-) to node[homoge] {$\bar\beta-5+\delta$} (0);
\node at (3, 1.2) {$\,$};
\end{tikzpicture}
\begin{tikzpicture} 
\node at (1,0) {$H_3$} ;
\node at (1,4) [dot] (0) {};
\node at (0,0.8) [graydot] (f+) {};
\node at (2,0.8) [dot] (f-) {};
\node at (0,2.5) [dot] (e+) {};
\node at (2, 2.5) [graydot] (e-) {};
\node [yshift=8] at (0) {$0$};
\node [xshift=-8] at (f+) {$f^+$};
\node [xshift=8]  at (f-) {$f^-$};
\node [xshift=-8]  at (e+) {$e^+$};  \node [yshift=-10]  at (e+) {$\varphi$};
\node [xshift=8]  at (e-) {$e^-$};
\draw [kernel] (f+) to node[homoge] {$-\bar\beta-1$} (f-);
\draw [kernel] (f+) to node[homoge] {$-1$} (e-);
\draw [kernel,bend left=40] (e+) to node[homoge] {$3-\bar\beta-\delta$} (0);
\draw [kernel,bend right=40] (e-) to node[homoge] {$-4+\delta$} (0);
\end{tikzpicture}
\end{center}
Performing the convolutions in $e^-$,
and then bounding $\|f^+\|_\s^{-1+\delta}$ by $\|f^+\|_\s^{-1-\delta}$
for $H_3$ and 
bounding $\|e^- - f^+\|_\s^{-1+\delta}$ by $\|e^- - f^+\|_\s^{-1-\delta}$
for $H_1$, and finally integrating $f^+$,
 we obtain
\begin{equ}
| F(e^+,f^-) - F(0,f^-) |
 \lesssim 
\|e^+\|_\s^{3-\bar\beta-\delta} 
\Big(
\|f^-\|_\s^{2-\bar\beta-\delta}
 + \|f^- - e^+\|_\s^{2-\bar\beta-\delta}
\Big)
\end{equ}

In the similar way, applying gradient theorem to $K(e^- \to e^+)$ again as above,
one obtains that $| F(e^+,0) - F(0,0) |$ is bounded by the sum of the following two terms
\begin{center}
\begin{tikzpicture} 
\node at (1,4) [dot] (0) {};
\node at (0,0.8) [graydot] (f+) {};
\node at (2,0.8) [dot] (f-) {};
\node at (0,2.5) [dot] (e+) {};
\node at (2, 2.5) [graydot] (e-) {};
\node [yshift=8] at (0) {$0$};
\node [xshift=-8,yshift=-5] at (f+) {$f^+$};
\node [xshift=8]  at (f-) {$f^-$};
\node [xshift=-8]  at (e+) {$e^+$};  \node [yshift=-10]  at (e+) {$\varphi$};
\node [xshift=8]  at (e-) {$e^-$};
\draw [kernel] (f+) to node[homoge] {$1-\bar\beta$} (f-);
\draw [kernel,bend left=90] (f+) to node[homoge,near start] {$-2$} (0);
\draw [kernel] (e+) to node[homoge] {$-4+\delta$} (e-);
\draw [kernel] (f+) to node[homoge] {$-1$} (e-);
\draw [kernel] (e+) to node[homoge] {$3-\bar\beta-\delta$} (0);
%
\node at (3, 1.2) {$\,$};
\end{tikzpicture}
\begin{tikzpicture} 
\node at (1,4) [dot] (0) {};
\node at (0,0.8) [graydot] (f+) {};
\node at (2,0.8) [dot] (f-) {};
\node at (0,2.5) [dot] (e+) {};
\node at (2, 2.5) [graydot] (e-) {};
\node [yshift=8] at (0) {$0$};
\node [xshift=-8,yshift=-5] at (f+) {$f^+$};
\node [xshift=8]  at (f-) {$f^-$};
\node [xshift=-8]  at (e+) {$e^+$};  \node [yshift=-10]  at (e+) {$\varphi$};
\node [xshift=8]  at (e-) {$e^-$};
\draw [kernel] (f+) to node[homoge] {$1-\bar\beta$} (f-);
\draw [kernel,bend left=90] (f+) to node[homoge,near start] {$-2$} (0);
\draw [kernel] (e+) to node[homoge] {$1-\bar\beta$} (e-);
\draw [kernel] (f+) to node[homoge] {$-1$} (e-);
\draw [kernel] (e+) to node[homoge] {$3-\bar\beta-\delta$} (0);
\draw [kernel,bend right=80] (e-) to node[homoge] {$\bar\beta-5+\delta$} (0);
\node at (3, 1.2) {$\,$};
\end{tikzpicture}
\end{center}
Applying \eref{e:boundz1} to $\|e^-\|_\s^{\bar\beta-5+\delta}\cdot \|e^+-e^-\|_\s^{1-\bar\beta}$, and to $\|f^+ - f^-\|_\s^{-\bar\beta+1} \cdot \|f^+ \|_\s^{-2} $,
it is then straightforward to obtain the bound

%
\begin{equ}
 | F(e^+,0) - F(0,0) |
 \lesssim 
\|e^+\|_\s^{3-\bar\beta-\delta} 
\Big(
\|f^-\|_\s^{2-\bar\beta+\delta}
 + \|e^+\|_\s^{2-\bar\beta+\delta}
  + \|f^- - e^+\|_\s^{2-\bar\beta+\delta} 
\Big)
\end{equ}
Then the integrations over $e^+, f^-$ are straightforward; 
this concludes the proof for the desired bound on $I_1$.

The bound for $I_2$ follows simply by expanding
$K_+(e) K_+(f) $ into four terms according to the definitions and then bounding the integral with each term
separately, using \eref{e:boundz1}.
\end{proof}

The bound \eref{e:boundL} holds as a consequence of the following result for integrating general ``cycles" or ``chains".
Before stating the result we introduce a notation.

We denote by $K_i^{\varphi}(x,y)$ 
 functions that are given by either $\varphi^\lambda_0(y) K(x\to y)$ or  $\varphi^\lambda_0(x) K(y \to x)$. 
Given real numbers $\{\alpha_1,...,\alpha_n,\alpha',\bar\alpha'\}$,
we aim to bound the integration of the following functions
\begin{equ}
F_\CL = F_\CL \Big(\{x\}_{i=1}^n, \{y\}_{i=1}^n \Big)
\eqdef \prod_{i=1}^n \Big( \left| K_i^\varphi(x_i, y_i) \right| \cdot
	\|y_i - x_{i+1}\|_\s^{\alpha_i} \Big)  
\end{equ}
 (with $n\ge 2$) where
$x_{n+1}$ is identified with $x_{1}$; and 
$F_\CC=F_\CC  \big (\{x\}_{i=1}^n, \{y\}_{i=1}^n ,z,\bar z \big)$ with
\begin{equ}
F_\CC  
 \eqdef  g(x_1,z) \, \bar g(y_{n},\bar z) 
 \prod_{i=1}^{n-1} 
\Big( \left| K_i^\varphi(x_i, y_i) \right| \cdot
	\|y_i - x_{i+1}\|_\s^{\alpha_i} \Big)
K_n^\varphi(x_{n}, y_{n})
\end{equ}
(with $n\ge 1$) where 
\begin{equ}
g(x_1,z) \eqdef \|x_1-z\|_\s^{\alpha'} \,\varphi^\lambda_0(z)
\end{equ}
and $\bar g$ is defined in the same way 
with change of roles $x_1 \leftrightarrow y_n$, $z \leftrightarrow \bar z$ and $\alpha' \leftrightarrow \bar\alpha'$.

\begin{remark}
By inspection, one can realise that 
$F_\CL$ corresponds to a cycle shaped graph $\CL$:
$(x_1 - y_1 - ... - x_n - y_n - x_1)$,
and, $F_\CC$ corresponds to a chain shaped graph $\CC$:
$(z - x_1 - y_1 - ... - x_n - y_n - \bar z)$.
All the variables $x_i,y_i$ and $z,\bar z$ will be integrated.
For the case of $F_\CC$, we will allow $\alpha'$ (the same discussion applies to $\bar\alpha'$) to be zero,
which means $g(x_1,z)=2 \varphi^\lambda_0(z)$ will be factored out
and the integral of it over $z$ gives a constant; in other words 
one can simply think of the chain as ending with the function $K(x_1,y_1)$.
Our notation is just in order to treat the chain in a unified way
no matter it ends with a function $K$ or $g$.
\end{remark}

\begin{lemma} \label{lem:int-L}
In the setting above, suppose that $n \ge 2$ and that $\alpha_i \in (-4,-2]$ for $i \in \{1,\ldots,n\}$.
Then  one has
\begin{equ} [e:int-loops]
	\int F_\CL \,dx\,dy
	\lesssim \lambda^{\mathfrak{h}(\CL)-\delta}  \;,
\end{equ}
for any $\delta>0$ arbitrarily small, where 
$\mathfrak{h}(\CL) \eqdef 2n+\sum_{i=1}^n \alpha_i$,
and the integration is over $x = \{x_i\}_{i=1}^n \in (\R^3)^n$,
$y = \{y_i\}_{i=1}^n  \in (\R^3)^n$.

Suppose additionally that $\alpha',\bar\alpha'\in(-4,-2]\cup\{0\}$,
and if $\alpha'=0$ (resp. $\bar\alpha'=0$) then
$x_1$ (resp. $y_n$) is an incoming point.
Then, for every $n \ge 1$, one has
\begin{equ} [e:int-chains]
	\int F_\CC \,dx\,dy\,dz\,d\bar z
	\lesssim \lambda^{\mathfrak{h}(\CC)-\delta}  \;,
\end{equ}
for any $\delta>0$ arbitrarily small, where 
$\mathfrak{h}(\CC)\eqdef 2n+\sum_{i=1}^{n-1} \alpha_i
+\alpha'+\bar\alpha'$ and
the integration is over $x = \{x_i\}_{i=1}^n \in (\R^3)^n$,
$y = \{y_i\}_{i=1}^n  \in (\R^3)^n$, and $z,\bar z\in\R^3$.
\end{lemma}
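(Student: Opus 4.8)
The plan is to prove both bounds \eqref{e:int-loops} and \eqref{e:int-chains} simultaneously by induction on $n$, peeling off one ``link'' of the loop or chain at a time and integrating out the corresponding pair of variables. The key engine is the convolution estimate \cite[Lemma~10.14]{Regularity}: if $K_1$ has degree $\zeta_1$ and $K_2$ has degree $\zeta_2$ with $\zeta_1,\zeta_2 > -|\s| = -4$ and $\bar\zeta = \zeta_1+\zeta_2+|\s| \notin \N$, then $K_1 * K_2$ has degree $\bar\zeta$. The factor $|K_i^\varphi(x_i,y_i)|$ is, by the gradient theorem applied to $K(x\to y) = K(x-y) - K(x)$ exactly as in the proof of Lemma~\ref{lem:int-3}, bounded by $\|y\|_\s^{3-\bar\beta-\delta}$ times a sum of terms of the form $\|x-y\|_\s^{\bar\beta-5+\delta}$ or $\|x\|_\s^{\bar\beta-5+\delta}$ (when the incoming point is $y$), and symmetrically when the incoming point is $x$; here one uses $3-\bar\beta\in(0,1]$, which holds precisely because $\beta^2 \in [4\pi,6\pi)$, i.e.\ $\bar\beta\in[2,3)$. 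Thus each link contributes effective homogeneity $(3-\bar\beta) + (\bar\beta - 5) + \alpha_i = \alpha_i - 2$ near its incoming vertex (before the $\delta$ losses), and the scaling-factored test function $\varphi_0^\lambda$ attached to each incoming point contributes the overall powers of $\lambda$.

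For the chain $F_\CC$, I would induct from the end: integrate out $z$ against $g(x_1,z) = \|x_1-z\|_\s^{\alpha'}\varphi_0^\lambda(z)$, which (using $\alpha' \in (-4,-2]$ so the singularity is integrable, or $\alpha'=0$ in which case it simply contributes a constant and the restriction that $x_1$ be an incoming point lets the attached $\varphi_0^\lambda(x_1)$ play the role of the localiser) produces a factor $\lambda^{\alpha'+4}$ times a bound $\|x_1\|_\s$-localised to the $\lambda$-ball, or more precisely leaves an expression of degree $\alpha'$ in $x_1$ supported near $0$ at scale $\lambda$. Then one integrates out $x_1$ and $y_1$ together: the path-difference bound on $K_1^\varphi(x_1,y_1)$ combined with $\|y_1 - x_2\|_\s^{\alpha_1}$ and the localisation in $x_1$ gives, via \cite[Lemma~10.14]{Regularity}, a contribution of degree $\alpha_1$ (up to $\delta$-losses) in the variable $x_2$, again localised at scale $\lambda$, together with a prefactor $\lambda$ to some power. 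This reduces the chain of length $n$ to one of length $n-1$ with the same structure, and the exponents add up exactly to $\mathfrak{h}(\CC) = 2n + \sum_{i=1}^{n-1}\alpha_i + \alpha' + \bar\alpha'$; the base case $n=1$ is a direct two- or three-variable integral handled as in Lemma~\ref{lem:int-12}. Throughout one repeatedly invokes the elementary inequality \eqref{e:boundz1} to split products of singular factors sharing a vertex into a sum of single singular factors, so that each integration variable sees only one singularity at a time.

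For the loop $F_\CL$, I would first break the cycle by using \eqref{e:boundz1} (or the gradient-theorem expansion of one distinguished link $K_1^\varphi$) to reduce it to a chain: write $|K_1^\varphi(x_1,y_1)|$ using the path bound, which separates the ``$K(x-y)$'' part from the ``$K(x)$'' (or ``$K(y)$'') part; the latter already contains a factor localised near the origin at scale $\lambda$, effectively cutting the cycle open at vertex $x_1$ (or $y_1$) and turning $F_\CL$ into an instance of $F_\CC$ with appropriate $\alpha', \bar\alpha'$ determined by the exponents of the broken link. One then applies the chain bound just proved. The condition $n\ge 2$ guarantees there is a genuine link to break; the homogeneity bookkeeping gives $\mathfrak h(\CL) = 2n + \sum_{i=1}^n\alpha_i$ as claimed.

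The main obstacle I expect is the careful tracking of the $\delta$-losses and of the various cases for which endpoint of each link $K_i^\varphi$ carries the incoming arrow: the gradient-theorem expansion produces different singular structures ($\|x_i-y_i\|_\s$ versus $\|x_i\|_\s$ or $\|y_i\|_\s$), and one must check in each branch that the exponent fed into \cite[Lemma~10.14]{Regularity} stays strictly above $-4$ and that the running homogeneity stays negative (so that the convolution lemma in the singular regime applies) until the very last integration. In particular, verifying that $3-\bar\beta > 0$ is what keeps all intermediate degrees in the admissible range, and one has to confirm that the accumulated $\delta$-errors from each of the $n$ steps can be absorbed into a single arbitrarily small $\delta$ in the final bound. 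The restriction $\alpha_i \in (-4,-2]$ is exactly what is needed: $\alpha_i > -4$ for integrability of the convolution, and $\alpha_i \le -2$ to ensure the running degree after each step is still negative so the singular case of the convolution lemma keeps applying rather than producing a bounded (non-scaling) contribution.
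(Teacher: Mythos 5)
Your proposal differs substantially from the paper's proof, and in its current form has at least one genuine gap plus several loose ends.

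The paper does \emph{not} peel links off one at a time. Instead it integrates out \emph{all} outgoing vertices in a single preparatory step, distinguishing three local configurations depending on whether the outgoing points are isolated or adjacent along the cycle/chain. Crucially, when two outgoing vertices are adjacent (that is, $y_i\to x_i$ and $x_{i+1}\to y_{i+1}$, so that the kernel $\|y_i-x_{i+1}\|_\s^{\alpha_i}$ sits between two outgoing points), the paper uses a genuinely second-order cancellation: writing $Q$ for the convolution, the expansion of the two factors $K(y_i\to x_i)K(x_{i+1}\to y_{i+1})$ leads to the combination $Q(x'-y')-Q(x')-Q(y')+Q(0)=\hat Q(x'-y')-\hat Q(x')-\hat Q(y')$ with $\hat Q(w)=Q(w)-Q(0)-w\cdot\nabla Q(0)$, which vanishes to \emph{second} order at the origin. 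A single application of the gradient theorem to each $K$, followed by taking absolute values as you propose, loses this cancellation. At the borderline exponent $\alpha_i=-2$ your argument produces logarithmic divergences that you claim can be absorbed into $\delta$, but you have not verified this, and the issue compounds across links. The paper's second-order subtraction avoids the problem cleanly. This is the missing idea: you need Lemma~\ref{lem:int-test}'s inputs to carry the right local homogeneity $4+\alpha_i-\delta$, and a single path-difference bound on each $K$ in isolation does not obviously produce it once absolute values are taken.

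The loop case has a larger gap. You propose to break the cycle by expanding $K(x_1\to y_1)=K(x_1-y_1)-K(x_1)$ and noting that the $K(x_1)$ term decouples. But the other term, $K(x_1-y_1)$, keeps the cycle intact, and you offer no argument for it. Iterating the same split around the cycle does not help: there remains a term in which every $K_i$ contributes its ``$K(x_i-y_i)$'' part, and the cycle never opens. The paper never breaks the cycle; instead, after integrating out all outgoing vertices one is left with a product of kernels $G_{l_i}$ over the $\lambda$-localised incoming points, and the cyclic structure is handled directly by Lemma~\ref{lem:int-test}, whose hypothesis $\bar\alpha>-4(n-1)$ (for $n\ge 2$) is exactly what is verified using $\alpha_i>-4$. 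Finally, a cosmetic but real issue: you repeatedly invoke $\bar\beta$ and the bound $3-\bar\beta\in(0,1]$ from Lemma~\ref{lem:int-3}, but Lemma~\ref{lem:int-L} is stated abstractly for arbitrary $\alpha_i\in(-4,-2]$; importing $\bar\beta$ into a proof of this general statement is a category error and obscures the actual hypothesis being used.
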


\begin{proof}
The integrand $F_\CL$, ignoring the test functions, can be depicted graphically by
the left picture below
\begin{center}
\begin{tikzpicture}[scale=1.2]
%
\node at (90:1) [dot] (0a) {}; \node [below] at (0a) {$x_6$}; 
\node at (90:2) [dot] (0b) {}; \node [left] at (0b) {$y_6$}; 
\node at (30:1) [dot] (1a) {}; \node [below] at (1a) {$x_1$}; 
\node at (30:2) [dot] (1b) {};\node [right] at (1b) {$y_1$}; 
\node at (-30:1) [dot] (2a) {}; \node [left] at (2a) {$x_2$}; 
\node at (-30:2) [dot] (2b) {};\node [right] at (2b) {$y_2$}; 
\node at (-90:1) [dot] (3a) {}; \node [above] at (3a) {$x_3$}; 
\node at (-90:2) [dot] (3b) {}; \node [right] at (3b) {$y_3$}; 
\node at (-150:1) [dot] (4a) {}; \node [right] at (4a) {$x_4$}; 
\node at (-150:2) [dot] (4b) {};\node [left] at (4b) {$y_4$}; 
\node at (150:1) [dot] (5a) {};\node [right] at (5a) {$x_5$};  
 \node at (150:2) [dot] (5b) {};\node [left] at (5b) {$y_5$}; 
\draw [kernel] (0b) to node[homoge] {$\alpha_6$} (1a);
\draw [kernel] (1b) to node[homoge] {$\alpha_1$} (2a);
\draw [kernel] (2b) to node[homoge] {$\alpha_2$} (3a);
\draw [kernel] (3b) to node[homoge] {$\alpha_3$} (4a);
\draw [kernel] (4b) to node[homoge] {$\alpha_4$} (5a);
\draw [kernel] (5b) to node[homoge] {$\alpha_5$} (0a);
\draw [KK,<-] (0b) to (0a);
\draw [KK,->] (1b) to (1a);
\draw [KK,->] (2b) to (2a);
\draw [KK,<-] (3b) to (3a);
\draw [KK,<-] (4b) to (4a);
\draw [KK,->] (5b) to (5a);

\draw (2.75,2) -- (2.75,-2);

\node [right=6.5cm] at (0b) [dot] (0b) {}; \node [left] at (0b) {$y_6$}; 
\node [right=6.5cm] at (1a) [dot] (1a) {}; \node [right] at (1a) {$x_1$}; 
\node [right=6.5cm] at (2a) [dot] (2a) {}; \node [right] at (2a) {$x_2$}; 
 \node [right=6.5cm] at (3b) [dot] (3b) {};\node [right] at (3b) {$y_3$}; 
 \node [right=6.5cm] at (4b) [dot] (4b) {};\node [left] at (4b) {$y_4$}; 
\node [right=6.5cm]  at (5a) [dot] (5a) {}; \node [left] at (5a) {$x_5$}; 
\node [right=6.5cm] at (4a) [dot,lightgray] (4a) {}; \node [right,lightgray] at (4a) {$x_4$}; 
\node [right=6.5cm] at (3a) [dot,lightgray] (3a) {}; \node [above,lightgray] at (3a) {$x_3$}; 
\node [right=6.5cm] at (0a) [dot,lightgray] (0a) {}; \node [below,lightgray] at (0a) {$x_6$}; 
 \node [right=6.5cm] at (5b) [dot,lightgray] (5b) {};\node [left,lightgray] at (5b) {$y_5$}; 
\node [right=6.5cm] at (1b) [dot,lightgray] (1b) {};\node [right,lightgray] at (1b) {$y_1$}; 
\node [right=6.5cm] at (2b) [dot,lightgray] (2b) {};\node [right,lightgray] at (2b) {$y_2$}; 

\draw [kernel] (0b) to node[homoge] {$\alpha_6$} (1a);
\draw [kernel] (1a) to node[homoge] {$2+\alpha_1$} (2a);
\draw [kernel] (2a) to node[homoge] {$4+\alpha_2$} (3b);
\draw [kernel] (3b) to node[homoge] {$2+\alpha_3$} (4b);
\draw [kernel] (4b) to node[homoge] {$\alpha_4$} (5a);
\draw [kernel] (5a) to node[homoge] {$4+\alpha_5$} (0b);
\end{tikzpicture}
\end{center}

The picture illustrates the generic situation (with $n=6$) showing that
the orientations of $\{x_i,y_i\}$ are arbitrary.
We will first integrate out all the outgoing points (see the definitions of outgoing / incoming points of oriented pairs in the beginning of Subsection \ref{subsec:PsiKBK-ren}).
 We claim that after these integrations, 
 one has the bound
\begin{equ} [e:claimLoop]
\Big| \int F_\CL \,dxdy\Big| \lesssim \int 
\prod_{i=1}^n
\Big(
	\varphi_0^{\lambda}(z_i) \; G_{l_i}(z_i,z_{i+1})  \Big)
\;  dz
\end{equ}
where the integration is over $z = \{z_i\}_{i=1}^n \in (\R^3)^n$, 
and $z_{n+1}=z_1$, $l_i\in (-4,2)$, 
$\sum_{i=1}^n l_i =\mathfrak{h}(\CL)-\delta$ and
\begin{equ} [e:def-G-cases]
G_{l_i}(x,y)=
\begin{cases}
\|x-y\|_\s^{l_i}
& \quad l_i\in(-4,-2] \;, \\
\|x-y\|_\s^{l_i} +  \|x\|_\s^{l_i} + \|y\|_\s^{l_i}
& \quad l_i\in(-2,0] \;, \\
\|x\|_\s^{l_i}
+ \|y\|_\s^{l_i}
& \quad l_i \in(0,2] \;.
\end{cases}
\end{equ}

The integrand of \eref{e:claimLoop}, ignoring the test functions, is drawn as the right picture above 
(where only the subscripts of $G$ are drawn; and the dummy $z$-variables are still written as $x$ or $y$-variables
to make a  clearer comparison with the left picture and the variables that have been 
integrated out are still indicated in light gray).
We substitute the definition of $G_k$ into \eref{e:claimLoop}
and expand, and obtain a sum where each summand falls into the scope of
\eref{e:test-loop} of Lemma~\ref{lem:int-test} below
(in fact $\alpha_i>-4$ implies $2n+\sum_{i=1}^n \alpha_i>-2n
\ge -4(n-1)$ for $n\ge 2$, so the assumption of \eref{e:test-loop} of Lemma~\ref{lem:int-test} is satisfied).
Therefore the bound \eref{e:int-loops}  follows.

To show the claim  \eref{e:claimLoop},
one needs to show the following bounds.
\begin{claim}
\item The case of integrating out a single point 
when its two neighboring points are both incoming points
(e.g. the $y_1$ in the picture):
for any $\alpha\in(-4,-2]$,
\begin{equ}
 \int 
| K(y \to x) | \cdot
\|y-z\|_\s^{\alpha} 
\;dy
\lesssim
\|z-x\|_\s^{2+\alpha-\delta} + \|z\|_\s^{2+\alpha-\delta}   \;;
\end{equ}
\item The case of two incoming points are adjacent so ``nothing has to be integrated" (e.g. in the picture, 
the successive charges $y_4$ and $x_5$ are both incoming points, so neither of them need to be integrated now); 
\item The case of integrating out two adjacent outgoing points  (e.g. in the picture, 
the successive charges $y_2$ and $x_3$ are both outgoing points, so both of them have to be integrated now):
for any $\alpha\in(-4,-2]$, 
\begin{equ} [e:int-two-charges]
 \iint 
| K(y \to x') | \cdot
\|y-x\|_\s^{\alpha}  \cdot
| K(x \to y') |
\;dxdy
\lesssim
G_{4+\alpha-\delta}(x'-y') 
 \;.
\end{equ}
\end{claim}
The first bound follows from  \cite[Lemma~10.14]{Regularity}.
The second case is trivial. 
To show the last bound, one writes
\begin{equ}
Q (x'-y')\eqdef \int  K(x' - y) \cdot
\|x-y\|_\s^{\alpha}  \cdot
K(y' -y) \;dxdy  \;.
\end{equ}
Then, the left hand side of \eref{e:int-two-charges} is given by
\begin{equs}
| Q(x'-y')  & - Q(x') - Q(y') + Q(0,0) | 
=| \hat Q(x'-y') - \hat Q(x') - \hat Q(y' ) |
\end{equs}
where
\begin{equ}
\hat Q (x)\eqdef  Q(x) - Q(0) - x\cdot\nabla Q(0)\;.
\end{equ}
It is then straightforward to show that 
$|\hat Q(x)|\lesssim \|x\|_\s^{4+\alpha-\delta}$.
Therefore \eref{e:int-two-charges} follows and we obtain \eref{e:claimLoop}.
This completes the proof of the bound for integration of $F_\CL$.

The integration of $F_\CC$ can be bounded analogously. 
Note first that $F_\CC$ can not simply be a function $K$, since if $n=1$
by assumption of the lemma 
one has $\alpha' \wedge \bar\alpha'<0$.
In fact,
there exist 
$l_i\in (-4,2)$ for $0\le i \le n+1$, 
and $\sum_{i=0}^{n+1} l_i =\mathfrak{h}(\CC)-\delta$,
 such that one has
\begin{equ} [e:claimChain]
\Big| \int F_\CC \,dxdydzd\bar z\Big| \lesssim \int 
\prod_{i=0}^{n+1}
	\varphi_0^{\lambda}(z_i) \; 
\prod_{i=0}^{n} 
	G_{l_i}(z_i,z_{i+1})
\;  dz
\end{equ}
where $G_{l_i}$ are defined in \eref{e:def-G-cases}  
and the integration is over $z_0,...,z_{n+1}$. 
The integration variables $z_0, z_{n+1}$ correspond to $z,\bar z$ respectively,
and $z_1,...,z_{n}$ correspond to the incoming points, i.e. the points that
have not been integrated.
To show \eref{e:claimChain}, we integrate out all the outgoing points
in the same way as above, except that we only need to treat the two ends 
of the chain separately.
Since the chain is symmetric under reflection we only consider the end at the function $g$.
If $\alpha'=0$, by assumption $x_1$ is an incoming point,
so we simply take $l_0=0$;
the factored function $\varphi^\lambda_0(z_0)$ can be simply 
integrated out over $z_0$ which gives a constant.
If  $\alpha'<0$, then arguing as above we can have the bound \eref{e:claimChain} 
with  $l_0=\alpha'\in(-4,-2]$ if $y_1\to x_1$, or $l_0=\alpha'+2\in(-2,0]$ if $x_1\to y_1$.

As before we can then expand the right hand side 
and obtain a sum in which each summand  falls into the scope  of 
\eref{e:test-chain} of Lemma~\ref{lem:int-test} below.
\end{proof}


\begin{lemma} \label{lem:int-test}
Given $n$ real numbers $\{\alpha_i\}_{i=1}^n$, 
let $g_{\alpha_i}(x,y)$ be one of the three functions:
$\|x-y\|_\s^{\alpha_i}$ with $\alpha_i>-4$, or, $\|x\|_\s^{\alpha_i}$ or $\|y\|_\s^{\alpha_i}$ with $\alpha_i>-2$.
Let $\bar \alpha=\sum_{i=1}^n \alpha_i$.
Then:

1) Assuming $n\geq 2$ and $\bar\alpha>-4(n-1)$, with $z_{n+1}$ identified with $z_1$, the following bound hold
\begin{equ} \label{e:test-loop}
\int_{(\R^3)^n} \prod_{i=1}^n
	g_{\alpha_i}(z_i,z_{i+1})  
\prod_{i=1}^n \Big( \varphi_0^{\lambda}(z_i) \;  dz_i  \Big)	
\lesssim \lambda^{\bar \alpha}  \;.
\end{equ}

2) Assuming $n \ge 1$, the following bound hold
\begin{equ} \label{e:test-chain}
\int_{(\R^3)^{n+1}}
\prod_{i=1}^n	
g_{\alpha_i}(z_i,z_{i+1})
 \prod_{i=1}^{n+1}
\Big( 	\varphi_0^{\lambda}(z_i)
\;  dz_i  \Big)
\lesssim \lambda^{\bar\alpha} \;.
\end{equ}
\end{lemma}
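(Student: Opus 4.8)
The plan is to first kill the parameter $\lambda$ by a parabolic rescaling, turning both bounds into pure convergence statements, and then establish those by integrating the variables out one at a time. Concretely, in \eref{e:test-loop} and \eref{e:test-chain} apply the parabolic dilation $z = (t,x) \mapsto \delta_\lambda z \eqdef (\lambda^2 t, \lambda x)$ in each variable $z_i$. One has $g_{\alpha_i}(\delta_\lambda z, \delta_\lambda \bar z) = \lambda^{\alpha_i} g_{\alpha_i}(z, \bar z)$, the Lebesgue measure scales by $\lambda^{4}$, and $\varphi_0^\lambda(\delta_\lambda z) = \lambda^{-4}\phi(z)$ with $\phi$ supported in the unit parabolic ball $B$ and bounded by $1$. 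The Jacobian factors $\lambda^{4}$ exactly cancel the factors $\lambda^{-4}$ coming from the test functions, while the kernels produce the total power $\lambda^{\sum_i \alpha_i} = \lambda^{\bar\alpha}$. Bounding $|\phi| \le \1_B$ then reduces \eref{e:test-loop} to the finiteness of $\int_{B^n}\prod_i g_{\alpha_i}(z_i, z_{i+1})\,dz$ (with $z_{n+1}=z_1$) under the hypothesis $\bar\alpha > -4(n-1)$, and \eref{e:test-chain} to the finiteness of $\int_{B^{n+1}}\prod_i g_{\alpha_i}(z_i, z_{i+1})\,dz$ with no further constraint on the exponents.

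For the convergence I would argue by induction on $n$, at each step integrating out a single vertex. It is convenient to regard each factor $\|z_i\|_\s^{\alpha_i}$ as an edge joining $z_i$ to the fixed point $0$, and each factor $\|z_i - z_{i+1}\|_\s^{\alpha_i}$ as an ordinary edge, so that every integrated vertex meets at most two edges. The building block is the bounded-domain convolution estimate (as in \cite[Lemma~10.14]{Regularity}): for $\mu,\nu>-4$ and $a,b$ ranging over a fixed ball,
\begin{equ}
\int_{\|z\|_\s \le C} \|z - a\|_\s^{\mu}\,\|z - b\|_\s^{\nu}\,dz
\;\lesssim\; \|a - b\|_\s^{(\mu + \nu + 4)\wedge 0}\,\bigl(1 + \bigl|\log\|a - b\|_\s\bigr|\bigr)\;,
\end{equ}
together with the trivial bound $\int_{\|z\|_\s\le C}\|z-a\|_\s^{\mu}\,dz\lesssim 1$ when only one kernel actually contains $z$. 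One checks that the admissible class is preserved: combining two ordinary edges ($\mu,\nu>-4$) yields an ordinary edge of exponent $\mu+\nu+4>-4$; combining an ordinary edge with an origin-edge ($\mu>-4$, $\nu>-2$) yields an origin-edge of exponent $\mu+\nu+4>-2$; and combining two origin-edges ($\mu,\nu>-2$) contracts fully to the origin, leaving a finite constant since $\mu+\nu>-4$. Logarithmic corrections generated in the marginal case compound at most $n$ times, and $\int_B (1 + |\log\|z\|_\s|)^k\,\|z\|_\s^{\beta}\,dz < \infty$ for every fixed $k$ and every $\beta>-4$, so they are harmless.

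For the chain \eref{e:test-chain} there is always a leaf ($z_1$ or $z_{n+1}$), which is integrated out at no cost, leaving a chain with one fewer vertex; the induction therefore closes with no hypothesis on the exponents. For the loop \eref{e:test-loop}, if some $g_{\alpha_i}$ is an origin-edge the ordinary-edge subgraph is a union of paths whose endpoints can again be peeled off for free, so the interesting case is when all $g_{\alpha_i}$ are ordinary edges. There, peeling off vertices one at a time turns the $n$-cycle into an $(n-1)$-cycle, then an $(n-2)$-cycle, and so on; after $n-2$ steps one is left with two vertices joined by two parallel edges whose exponents sum to $\bar\alpha + 4(n-2)$, and combining them and performing the final two integrations converges precisely when $\bar\alpha + 4(n-2) > -4$, i.e.\ when $\bar\alpha > -4(n-1)$, which is exactly the stated hypothesis. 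All intermediate cycles are legitimate since after $k$ peelings the running exponent of any ordinary edge is a partial sum of $k+1$ of the $\alpha_i$ plus $4k$, hence strictly larger than $-4(k+1)+4k=-4$.

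The step I expect to require the most care is precisely this bookkeeping: verifying that throughout the induction every freshly created kernel stays inside the admissible range (above $-4$ for ordinary edges, above $-2$ for edges to the origin), keeping track of which vertex carries which weight when several origin-type factors accumulate at one vertex, and controlling the compounding logarithms. Once the induction is set up with the correct invariant, each individual integration is a routine application of the convolution estimate above, and the only place where the loop hypothesis $\bar\alpha > -4(n-1)$ is actually consumed is the very last integration in the all-ordinary-edge case.
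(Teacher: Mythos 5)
Your proof is correct and arrives at the same answer via a cleaner packaging of the same underlying tool. The paper's own argument first replaces $\varphi_0^\lambda$ by $\lambda^{-4}\mathbf 1_\Lambda$, bounds the positive-exponent factors by $\lambda^{\alpha_i}$ and then performs an explicit case split on whether the remaining runs of negative-exponent kernels have zero, one or two endpoints anchored at the origin, or close up into a cycle; the $\lambda$-dependence is tracked through each case. Your parabolic rescaling removes all $\lambda$-dependence at the very start: the Jacobians cancel the test-function normalisations, every $g_{\alpha_i}$ produces $\lambda^{\alpha_i}$, and what remains to be proved is the plain finiteness of a $\lambda$-free integral over a fixed parabolic ball. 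This handles positive and negative $\alpha_i$ uniformly, makes the logarithmic corrections manifestly harmless (they only enter a fixed constant, rather than having to be compared to powers of $\lambda$), and collapses the paper's four cases into a single induction that peels off one degree-$\le 2$ vertex at a time. The one place where your write-up is slightly loose is the phrase ``peeled off for free'': when a path endpoint in the ordinary-edge subgraph carries an origin-edge, peeling it is not free but consumes your ``ordinary + origin $\Rightarrow$ origin'' combination rule; the correct invariant of the induction is not ``chain'' or ``cycle'' but ``every remaining integration variable has degree $\le 2$, ordinary-edge exponents stay $>-4$, origin-edge exponents stay $>-2$'', which your three combination rules do maintain. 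Your identification of where the hypothesis $\bar\alpha>-4(n-1)$ is actually consumed (the last integration of the all-ordinary-edge cycle) matches the paper's final case exactly.
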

\begin{proof}
First of all we bound all the functions $\varphi^\lambda_0 (z)$ 
by $\lambda^{-4}$ times the characteristic function for the set $\Lambda = \{z\,:\, \|z\|_\s \le \lambda\}$.
We can therefore bound every $g_{\alpha_i}(z_i,z_{i+1})$ with positive $\alpha_i$ 
by $\lambda^{\alpha_i}$ and restrict the integration of all the $z_k$ over $\Lambda$.
The integral then factorises into integrations of the form
\begin{equ}
\int \prod_{i=k}^{\ell-1}
	\|z_i  -  z_{i+1}\|_\s^{\alpha_i}  \, dz
\end{equ}
where $1\le k \le \ell \le n$ with $\alpha_k, ..., \alpha_{\ell-1} <0$, and 
the integration is one of the following cases
\begin{claim}
\item an integration over $z_k,...,z_{\ell}$;
\item an integration over $z_k,...,z_{\ell-1}$ with $z_\ell \equiv 0$;
or its ``symmetric" case: an integration over $z_{k+1},...,z_{\ell}$ with $z_k \equiv 0$;
\item an integration over $z_{k+1},...,z_{\ell-1}$, where $z_k=z_\ell \equiv 0$;
\item an integration over $z_1,...,z_n$, with $z_{n+1}$ identified with $z_1$.
\end{claim}
In the first case, one can successively integrate the variables
using the assumption $\alpha_i>-4$,
for instance 
\begin{equ}
	\int_\Lambda \prod_{i=k}^{\ell-1}
	\|z_i  -  z_{i+1}\|_\s^{\alpha_i}  \, dz_k  \lesssim 
	\lambda^{\alpha_k + 4}
	\prod_{i=k+1}^{\ell-1}   \|z_i - z_{i+1}\|_\s^{\alpha_i}
\end{equ}
The second case follows in a similar way by starting to integrate from $z_k$ or $z_l$ that is not the one fixed to be $0$.

For the third case, we can integrate $z_{k+1}$:
\begin{equ}
	\int_\Lambda   \|z_{k+1}\|_\s^{\alpha_k}
	\|z_{k+1}  -  z_{k+2}\|_\s^{\alpha_{k+1}}  \, dz_{k+1}  \lesssim 
	  \| z_{k+2}\|_\s^{\bar\alpha_{k+1}}
\end{equ}
where $\bar\alpha_{k+1} \eqdef \alpha_k+\alpha_{k+1}+4$.
If $\bar\alpha_{k+1}\geq 0$, then we bound the right hand side above 
by $\lambda^{\bar\alpha_{k+1}}$, and the rest of the integral falls into the second case. If $\bar\alpha_{k+1} < 0$, then note that
by the assumption of the Lemma, $\bar\alpha_{k+1} > -2 - 4 +4=-2$,
and therefore 
 we can proceed to integrate $z_{k+2}$ in the same way as $z_{k+1}$.
We iterate this procedure until either it reduces to the second case, or
 $k+2=\ell-1$, namely $z_{k+2}$ is the last integration variable
 and we are left with
 \begin{equ}
	\int_\Lambda   \|z_{\ell-1}\|_\s^{\bar \alpha_{\ell-2}}
	 \|z_{\ell-1}\|_\s^{\alpha_{\ell-1}}  \, dz_{\ell-1}  
\end{equ}
where $\bar \alpha_{\ell-2}=\sum_{i=k}^{\ell-2} \alpha_i + 4(\ell-k-2)$.
Then since $\bar\alpha_{\ell-2} + \alpha_{\ell-1} >-2-2=-4$, it is integrable 
and bounded by $\lambda^{\sum_{i=k}^{\ell-1} \alpha_i + 4(\ell-k-1)}$.
Since there is an overall factor $\lambda^{-4(\ell-k-1)}$ from all the functions $\varphi^\lambda_0$, one obtains the desired bound.

The last case happens only when $g_{\alpha_i}(x,y)=\|x-y\|_\s^{\alpha_i}$ for every $i\in\{1,...,n\}$, so that we are in a situation of a whole cycle consisting of $n$ points.
 We can integrate the variables one by one from $z_1$ to $z_{n-2}$ as in the third case, and the condition $\alpha_i>-4$ guarantees integrability. Then we are left with an integration of $ \|z_{n-1}-z_n\|_\s^{\bar\alpha+4(n-2)}$, and by the assumption on $\bar\alpha$ one has $\bar\alpha+4(n-2)>-4$,
 so we can integrate $z_{n-1},z_n$ over $\Lambda$ to get a factor $\lambda^{\bar\alpha+4n}$.
With the overall factor  $\lambda^{-4n}$ from test functions we obtain the desired bound.
\end{proof}

\subsection{Moments of $\Psi_\eps^{kk}$}
\label{sec:Psikk}

We now turn to consider the objects defined in \eref{e:def-PsiKL},
whose $m$-th moment, with $m = 2N$, can be expressed as
\begin{equ}[e:PsiPP]
\E |\scal{\phi_0^\lambda,  \Psi^{kl}_\eps}|^{m}
= \E \,\Big[ \Big| \iint \phi_0^\lambda(x) \,  (K(x-y)-K(-y))  \,
	\Psi^k_\eps(x) \Psi^l_\eps(y) 
	\, dx\, dy  \Big|^{2N}  \Big]
\;.
\end{equ}
In this subsection we show the required bounds for the case $k=l$.


Similarly as in Subsection~\ref{subsec:PsiKBK-ren}, 
we would like to rewrite the $2N$-th power of the integral
as an integral over $4N$ variables,
which again leads us to a situation with $2m=4N$ charges,
and we denote by $M$ a set of cardinality $2m$ indexing them.
Again, each charge $i\in M$ comes with a sign $\sigma_i \in \{\pm\}$,
an index $h_i \equiv k$, and a location $x_i \in \R^3$.
There are again $m$ positive charges
(corresponding to the arguments of $\Psi_\eps^k$)
and $m$ negative charges
(corresponding to the arguments of $\bar\Psi_\eps^k$).

Observe that in \eref{e:PsiPP}, $x$ and $y$ are both arguments of $\Psi^k_\eps$,
rather than one for $\Psi^k_\eps$ and the other for $\bar \Psi^k_\eps$ 
as in the discussion for $\Psi_\eps^{k\bar k}$ in Subsection~\ref{subsec:PsiKBK-ren}. Due to this difference, 
we abandon the notation $\CR$ defined in Subsection~\ref{subsec:PsiKBK-ren},  and consider
in this subsection the situation
where the $2m$ charges are partitioned into a set $\CR'$ of $m$ disjoint oriented pairs 
such that there are $N$ pairs containing two positive charges and $N$ pairs containing two negative charges.

\begin{proof}[of Theorem~\ref{theo:second-order} for $\Psi^{k k}_\eps$] 
Recall our notation that for any pair $e=\{i,j\}$ (not necessarily in $\CR'$),
we define a quantity $\hat\J^{(\eps)}_e$ by
\begin{equ}
\hat \J_{\eps,e} \eqdef 
\J_\eps(x_i - x_j)^{\sigma_i \sigma_j }  \;.
\end{equ}
It is straightforward to check that
\begin{equ}
\E |\scal{\phi_0^\lambda,  \Psi^{kk}_\eps}|^{m}
=  e^{-\beta^2 m \left(k^2-1 \right) Q_\eps(0)}
\!\int_{ (\R^3)^M} \Big(
\!\prod_{e \in \CR'} 
\phi_0^\lambda(e_\downarrow) 
K(e)    \Big) 
\Big( \prod_{e\in\mathcal E (M)}  \hat\J_{\eps,e}^{k^2} \Big)
\,dx.
\end{equ}
By the procedure in Section \ref{sec:linear},
with $\J $ chosen to be $\J_\eps^{k^2}$ which certainly satisfies \eref{e:boundKeps},
one obtains a pairing $\CS$ for each configuration of the $2m$ charges.
Therefore,
\begin{equ}
\E |\scal{\phi_0^\lambda,  \Psi^{kk}_\eps}|^{m}
\lesssim \eps^{{\beta^2 \over 2\pi} m (k^2 -1)}
\sum_\CS \int 
\Big(
\prod_{e \in \CR'} 
\phi_0^\lambda(e_\downarrow) 
\big| K(e) \big|
\Big) 
\Big( \prod_{f\in\CS}
\hat\J_{\eps,f}^{k^2} \Big)
\,dx\;,
\end{equ}
where the sum runs over all possible positive-negative pairings of the $2m$ charges.
Note that this time, for every factor $K(e)$ appearing in the integrand,
the two charges in $e$ have the \textit{same} sign,
while in every factor $\hat\J_{\eps,f}$ appearing above,
the two charges of the pair $f$ have opposite signs. In other words, we have
\begin{equ}
\CS \cap \CR' = \emptyset\;,
\end{equ}
for every $\CS$ in the summation. This makes the construction
of the objects $\Psi^{kk}$ much easier.
One can then bound the integral for each $\CS$.
When $k=1$, the integration falls into the scope of Lemma \ref{lem:int-L}
and the required bound for $\Psi_\eps^\oplus$ follows immediately. 
The bounds for $\Psi^{\oplus}_\eps - \Psi^{\oplus}_{\bar\eps}$
and the independence of mollifiers can be shown analogously as before.
When $k>1$, the arguments are the same as for the case of 
$\Psi^{k\bar k}_\eps$ and the moments converge to zero
due to the factors $\eps$.
\end{proof}

\section{Second-order process bounds for $k\neq l$}
\label{sec:kneql}

This final section contains the proof of Theorem~\ref{theo:second-order} for
$\Psi_\eps^{k\bar l}$ and $\Psi_\eps^{kl}$ with $k\neq l$. These are
only required for the full proof of Theorem~\ref{theo:main}, but are not 
needed for the actual definition of the limiting process
loosely described by \eqref{e:model}.

We now prove Theorem \ref{theo:second-order} for 
$\Psi_\eps^{k\bar l}$ and $\Psi_\eps^{k l}$ where $k\neq l$.
As before, the $m$-th moment can be expressed as integrals over $2m$ variables.
Therefore we are now again in a situation with $2m=4N$ charges,
and we still denote by $M$ the set of cardinality $2m$.
Each charge $i\in M$ comes with a sign $\sigma_i \in \{\pm\}$,
an index $h_i \in \{k,l\}$, and a location $x_i \in \R^3$.
There are exactly $m$ positive charges and $m$ negative charges.

The current situation differs from that of Subsection~\ref{subsec:PsiKBK-ren} or \ref{sec:Psikk} since indices of charges vary. Therefore we should
define new sets of pairs in place of $\CR$ and $\CR'$ above.
\begin{claim}
\item
For the case of $\Psi_\eps^{k\bar l}$ (resp. $\Psi_\eps^{kl}$), 
the $2m$ charges are  partitioned into $m$ disjoint  (oriented) pairs, and
we call this set of pairs $\CR_1$ (resp. $\CR_2$), such that:
there are exactly $N$ pairs in $\CR_1$ of the type $(-;l) \to (+;k)$
\footnote{In other words the outgoing point is negative and indexed by $l$,
and the incoming point is positive and indexed by $k$.},
and the other $N$ pairs in $\CR_1$ are of the type $(+;l) \to (-;k)$;
also, there are exactly $N$ pairs in $\CR_2$ of the type
$(+;l) \to (+;k)$, and the other $N$ pairs in $\CR_2$ are of the type $(-;l) \to (-;k)$.
\end{claim}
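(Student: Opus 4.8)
This is a purely combinatorial statement describing the structure of the $m$-th moment once it is written as an integral over $2m = 4N$ space-time variables, exactly in the manner of \eref{e:mth-moment} and \eref{e:PsiPP}; it contains no analytic content, only bookkeeping of signs, indices and orientations, and the plan is simply to carry out that expansion and read off the claimed structure.

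First I would use translation invariance to set $z=0$ and, noting that for $k\neq l$ the renormalisation term in \eref{e:Psipm2} vanishes, write each copy of the inner product as
\begin{equ}
\scal{\phi_0^\lambda,\Psi_\eps^{k\bar l}} = \iint \phi_0^\lambda(\bar z)\,\bigl(K(\bar z - w) - K(-w)\bigr)\,\Psi_\eps^k(\bar z)\,\bar\Psi_\eps^l(w)\,d\bar z\,dw\;,
\end{equ}
with \eref{e:def-PsiKL} giving the same expression for $\Psi_\eps^{kl}$ but with $\bar\Psi_\eps^l(w)$ replaced by $\Psi_\eps^l(w)$. Expanding $|\scal{\phi_0^\lambda,\Psi_\eps^{k\bar l}}|^{2N} = \scal{\phi_0^\lambda,\Psi_\eps^{k\bar l}}^N\,\overline{\scal{\phi_0^\lambda,\Psi_\eps^{k\bar l}}}^N$ and applying Fubini, one obtains an integral over $4N$ points together with an expectation of a product of $4N$ factors of the form $\Psi_\eps^{\cdot}$ or $\bar\Psi_\eps^{\cdot}$. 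Each of the $m=2N$ copies of the inner product contributes precisely two of these points: the argument $\bar z$ of its leading $\Psi$-factor and the convolution variable $w$ inside $K(\bar z - w) - K(-w)$. Comparing with the convention $K(e) = K(e_\downarrow - e_\uparrow) - K(-e_\uparrow)$ of Section~\ref{subsec:PsiKBK-ren}, this pair of points is exactly the oriented pair $w \to \bar z$, with $w$ the outgoing and $\bar z$ the incoming point. This is what produces the partition of the $4N = 2m$ charges into $m$ disjoint oriented pairs.

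It then remains to record signs and indices: a point is a positive charge when it is an argument of some $\Psi_\eps^h$ and a negative charge when it is an argument of some $\bar\Psi_\eps^h$, with index $h$ in either case. In a holomorphic copy of $\Psi_\eps^{k\bar l}$ the point $\bar z$ feeds $\Psi_\eps^k$ and $w$ feeds $\bar\Psi_\eps^l$, so the pair $w\to\bar z$ is of type $(-;l)\to(+;k)$; conjugation sends $\Psi_\eps^k \mapsto \bar\Psi_\eps^k$ and $\bar\Psi_\eps^l \mapsto \Psi_\eps^l$ (using that $K$ is real), so the $N$ antiholomorphic copies give pairs of type $(+;l)\to(-;k)$. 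Hence $\CR_1$ has exactly $N$ pairs of each of these two types, for a total of $m$ pairs, with $2N$ positive and $2N$ negative charges. For $\Psi_\eps^{kl}$ both $\Psi_\eps^k(\bar z)$ and $\Psi_\eps^l(w)$ are holomorphic, so the $N$ holomorphic copies give pairs of type $(+;l)\to(+;k)$ and, after conjugation, the $N$ antiholomorphic copies give $(-;l)\to(-;k)$, which is the asserted description of $\CR_2$. The only place where care is needed — and the sole possible source of a mistake — is keeping the orientation convention consistent with $K(e) = K(e_\downarrow - e_\uparrow) - K(-e_\uparrow)$, so that the convolution variable is always the outgoing point and the argument of the leading $\Psi$-factor always the incoming point; getting this right is precisely what makes the cancellation expansion of Proposition~\ref{prop:cancellations} and the subsequent estimates apply verbatim with $\CR$ replaced by $\CR_1$ or $\CR_2$.
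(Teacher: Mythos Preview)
Your explanation is correct and matches the paper's implicit reasoning: in the paper this claim functions as a definition of the combinatorial setup (it is stated without proof and immediately followed by the moment identity \eref{e:indexed}), and your argument is precisely the justification showing how that setup arises from expanding $|\scal{\phi_0^\lambda,\Psi_\eps^{k\bar l}}|^{2N}$ via Fubini and tracking the orientation convention $K(e)=K(e_\downarrow-e_\uparrow)-K(-e_\uparrow)$. The sign/index bookkeeping you give for the four pair types is exactly right.
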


\begin{proof}[of Theorem~\ref{theo:second-order} for $\Psi^{kl}_\eps$ and $\Psi^{k\bar l}_\eps$ with $k\neq l$]

For {\it any} pair $e=\{i,j\}$ (not necessarily in $\CR_{1,2}$),
we can define the quantity
\begin{equ}[e:J_e-kl]
\tilde \J_e^{(\eps)} \eqdef
	\J_\eps (x_i - x_j)^{\; \sigma_i \sigma_j h_i h_j}\;.
\end{equ}
It is then straightforward to check that
\begin{equ} [e:indexed]
\E |\scal{\phi_0^\lambda,  \Psi^{k\bar l}_\eps}|^{m}
=  e^{ {\beta^2\over 2} m \left(k^2 +l^2-2 \right)Q_\eps(0)}
\int_{(\R^3)^M} \Big(
\prod_{e \in \CR_1} 
\phi_0^\lambda(e_\downarrow) 
K(e)    \Big) 
\prod_{e\in\mathcal E (M)} \tilde\J_e^{(\eps)}
\,dx\;,
\end{equ}
and $\Psi^{kl}_\eps$ satisfies the same identity with $\CR_1$ replaced by $\CR_2$.

Our current situation is different from before and we can't directly apply
the procedure in Section \ref{sec:linear}, because
there is not a unique function which plays the role of $\CJ$ 
in the procedure of Section \ref{sec:linear} any more
(we have instead multiple ones $\CJ_\eps^{hh'}$ with $h,h'\in\{k,l\}$). In fact,
 when two charges of opposite signs
become close, the cancellations such as \eref{e:boundprodK} in that procedure 
do not necessarily hold anymore
since these two charges could have different indices.

Given such a configuration of indexed $2m$ charges, we construct a new configuration
of {\it un-indexed} $m(k+l)$ charges, in other words the charges all have index $1$. The new configuration
is simply defined as follows. For each of the $2m$ charges, assuming that it has a sign $\sigma$ and an index $h\in\{k,l\}$, one regards it 
as $h$ distinct charges, all having the sign $\sigma$ and the same location.
More formally, we denote by $\CM$ a set of cardinality $m(k+l)$ and we fix a map
$\pi \colon \CM \to M$ with the property that $|\pi^{-1}(i)| = h_i$.
For $a \in \CM$ we will make an abuse of notation and write again
$x_a$ for $x_{\pi(a)}$ and $\sigma_a$ for $\sigma_{\pi(a)}$.
We remark that we do not mean to integrate over these ``$m(k+l)$ space-time points": at the end 
we will still integrate over only $2m$ space-time points.
We claim that the following bound holds
\begin{equ} [e:unindexed]
\E |\scal{\phi_0^\lambda,  \Psi^{k\bar l}_\eps}|^{m}
\lesssim \eps^{ {\bar \beta m\over 2} \left(k +l-2 \right)} \!\!\!
\int_{(\R^3)^M}  \!\!\!
\Big(
\prod_{e \in \CR_1} 
\phi_0^\lambda(e_\downarrow) 
| K(e) |    \Big) 
\!\!\!\!   \prod_{ \{i,j\} \in\CE (\CM)}   \!\!\!\!
\J_\eps(x_i - x_j)^{\sigma_i \sigma_j}
\,dx
\end{equ}
where the integration is still over $x \in (\R^3)^{2m} $,
but the second product is now over pairs of un-indexed charges in $\CM$.
The function $\Psi^{kl}_\eps$ satisfies the same bound with $\CR_1$ replaced by $\CR_2$.

\begin{remark}
From now on we write $\CR$ as a shorthand for either $\CR_1$ or $\CR_2$, depending
on whether we are considering the bound for $\Psi^{k\bar l}_\eps$ or for
$\Psi^{k l}_\eps$. 
\end{remark}

To see that \eqref{e:unindexed} holds, note that for every $i\in M$ with index $h\in\{k,l\}$, a new factor $\J_\eps(x_i-x_i)^{{1\over 2} h(h-1)}$ appeared in the integrand when compared to \eqref{e:indexed}. In fact, the factor in front of the integral in \eref{e:indexed}
is bounded by $\eps^{{\bar \beta \over 2} m (k^2+l^2-2)}$.
For each $i\in M$ with index $h\in\{k,l\}$, we associate to it a factor
$\eps^{{\bar\beta \over 2} h(h-1)}  \lesssim \J_\eps(x_i-x_i)^{{1\over 2} h(h-1)}$.
There are then a total of ${\bar\beta m \over 2} (k(k-1)+l(l-1))$ 
factors of $\eps$ that are turned into the new factors $\J_\eps(0)$
in this way. We are left with a power of $\eps$
which is precisely the factor in front of the integral in \eref{e:unindexed}.

The above product of $\J_\eps$'s now falls again into the setting of Section \ref{sec:linear}
with the ``potential" function $\J$ simply being $\J_\eps$,
except that the points indexed by $\CM$ are not all distinct. This does not matter
because one can just start for $n$ sufficiently small so that 
\begin{equ}
\CA_n = \bigl\{A_1,A_2,\ldots, A_{2m}  \bigr\}
\end{equ}
where each of $A_p$ contains $k$ or $l$ un-indexed charges with the same sign at the same location.
The bound 
$\Bigl| \prod_{i\neq j \in A} \J^{\sigma_i\sigma_j}(x_i-x_j)\Bigr| \le C  \bar \J_n^{D_n(A)}$ then holds trivially for each $A\in\CA_n$ defined above and we can start the recursive construction of 
Section~\ref{sec:linear} from there.
That procedure then provides a pairing $\CS_\ast$ for $\CM$ and, writing
\begin{equ}
\CI_\eps \eqdef \eps^{ {\bar \beta m\over 2} \left(k +l-2 \right)}  \!\!\!
\prod_{ \{i,j\} \in\CE (\CM)} \J_\eps(x_i - x_j)^{\sigma_i \sigma_j}
\end{equ}
as a shorthand, one has the bound
\begin{equ}[e:boundIeps]
\CI_\eps \lesssim 
\eps^{ {\bar \beta m\over 2} \left(k +l-2 \right)}   \!\!\!
\prod_{ \{i,j\} \in \CS_\ast} \J_\eps^-(x_i - x_j) \;.
\end{equ}
Note that on the right hand side, the total number of factors $\J_\eps^-$ is ${m\over 2} (k+l)$, and the total number of factors $\eps^{ \bar \beta}$  is $ {m\over 2} (k +l-2 )$. 
In the following, we will use the fact that $ \eps^{\bar \beta}\lesssim\J_\eps$
to ``cancel" some of the factors $\J_\eps^-$ with the factors $\eps^{\bar \beta}$. 
We remark that after such cancellations the number of 
factors $\J_\eps^-$ will always be more by $m$ than the number of 
factors $\eps^{\bar\beta}$.

%

Given the pairing $\CS_\ast$, one can associate to it a graph $G$ with vertex set $M$ 
and edges $E$  
in such way that $\{i,j\} \in E$ if and only if there 
exist $a \in \pi^{-1}(i)$ and $b \in \pi^{-1}(j)$ such that $\{a,b\}\in\CS_\ast$.
Of course, one then automatically has $\sigma_i \sigma_j = -1$, i.e.\ the vertices
correspond to charges with opposite signs.
Using the bound $\eps^{\bar \beta}\J_\eps^- \lesssim 1$, we immediately obtain from \eqref{e:boundIeps} 
the bound
\begin{equ}[e:boundIepsGraph]
\CI_\eps \lesssim 
\eps^{ \bar \beta (|E| - m)}   \!\!\! \prod_{ \{i,j\} \in E} \J_\eps^-(x_i - x_j) \;.
\end{equ}
Since $S_\star$ is a pairing of $\CM$, every vertex in $G$ has degree at least one,
so that in particular $|E| \ge m$, but $G$ is not necessarily connected. 


%
%

The set of edges $E$ interplays with the set $\CR$ in the following way.
In the case of $\Psi_\eps^{k\bar l}$, every element in  $\CR = \CR_1$ is a pair
of charges having opposite signs. On the other hand, for the case $\Psi_\eps^{kl}$,
every element in $\CR = \CR_2$ is a pair
of charges having the same sign.
In both cases, every edge in $E$ connects two points of opposite signs, 
therefore $E\cap \CR_2=\emptyset$, while $E\cap \CR_1$ may not be empty.

We now proceed to simplify the graph $G$ in such a way that the bound \eqref{e:boundIepsGraph}
still holds at each stage of the simplification. Since $\eps^{\bar \beta}\J_\eps^- \lesssim 1$,
we are allowed to simply erase edges, but we want to do this in such a way that there are at least
$m$ edges left at the end (so that the prefactor contains a positive power of $\eps$) and
so that the resulting graph is as ``simple'' as possible.
This simplification step is slightly different between the bound on 
$\Psi_\eps^{k\bar l}$ and that on $\Psi_\eps^{kl}$.

%

For the case $\Psi_\eps^{kl}$, let $\CF_G$ be a spanning forest of $G$.
For each connected component $\CT_G$ (which is a tree) of  $\CF_G$,
let $i$ be a leaf of $\CT_G$, and $j$ be the unique vertex connected to $i$.
We erase all edges of the form $\{j,k\}$ where $k$ is a vertex but not a leaf of $\CT_G$.
We obtain in this way a connected component which is a star (consisting of at least two points) centred at $j$. 
Iterating this procedure, we can reduce ourselves to the case where every connected component of $G$ is 
a star with at least two vertices. Denote the resulting graph by $G_1$.
Note that the condition that every vertex has degree at least one still holds for $G_1$,
so that there are indeed still at least $m$ edges left.

In the case $\Psi_\eps^{k\bar l}$, we encounter one additional difficulty:
since $E \cap \CR$ may be non-empty in this case, the procedure described above
may create a graph in which one of the connected components is given by a
single edge $e$ which also happens to belong to $\CR$. Going back to 
\eqref{e:unindexed}, this implies that the right hand side is bounded by 
a quantity that containing a factor
\begin{equ}
\int |K(x \to y)|\,\J_\eps^-(x - y) \phi_0^\lambda(y)\,dx\,dy\;.
\end{equ}
Unfortunately, this quantity diverges as $\eps \to 0$, so we should avoid
such a situation.
The key observation is that since $k\neq l$, 
there does not exist any connected component of the original graph $G$
consisting of only one edge in $\CR$, so we tweak the procedure  described
above in order to avoid creating one.

As before, we consider a spanning forest $\CF_G$ of $G$, and we
denote by $E(\CF_G)$ the set of edges of  $\CF_G$. This time, we furthermore 
let $G_1$ be the graph defined by contracting all the edges in $E(\CF_G)\cap\CR$.
More precisely, 
define an equivalence relation $\sim$ on $M$ by: $i\sim j$ if and only if
$\{i,j\}\in E(\CF_G)\cap \CR$. Obviously each equivalence class consists of either 
one or two points of $M$. For every $i\in M$, write $[i]$ for its equivalence 
class. The contracted graph $\bar \CF_G$ has the set of equivalence classes as its 
set of vertices. 
for  $[i']\neq [j'] \in \bar \CF_G$,
$\{[i'],[j']\}$ is an edge of $\bar \CF_G$ if and only if there exist $i\in [i']$, $j\in [j']$ and 
$\{i,j\}$ is an edge of $\CF_G$.
Self loops of the form $\{[i],[i]\}$ are not considered as edges of $\bar \CF_G$.
Note that $\bar \CF_G$ is necessarily a forest,
with every tree component consisting of at least $2$ points.

We then erase edges of the forest  $\bar \CF_G$
according to the same procedure as in the case  $\Psi_\eps^{k l}$ and denote by 
$\bar E_1\subset E(\bar \CF_G)$ the set of erased edges. 
This procedure turns $\bar \CF_G$ into a graph $\bar G_1$ consisting of disjoint stars,
with each star consisting of at least two points. 
Each edge $e \in \bar E_1$ has an obvious counterpart in $E(\CF_G)$,
and we denote by $G_1 \subset \CF_G$ the graph obtained from $\CF_G$ by erasing these.
(In particular, $\bar G_1$ is obtained from $G_1$ via the contraction given by $\sim$.)
This graph has the following properties:
\begin{claim}
\item $E(G_1)\cap \CR = E(\CF_G)\cap \CR$, where $E(G_1)$ is the set of edges of $G_1$.
\item Every connected component $T$ of $G_1$ is
a tree, and contracting edges in $E(T)\cap \CR$ turns $T$ into a star.
\end{claim}
The two pictures below illustrates two possible configurations of 
such a connected component $T$, where solid lines stand for 
edges in $E(T)$ and dashed lines stand for edges in $\CR$.
\begin{equ}[e:starFigure]
\begin{tikzpicture}[baseline=0]
\node [dot] at (0,0) (0) {}; \node  [below] at (0) {$i_T$};
\node [dot] at (-1,1) (a) {};  \node  [left] at (a) {$i'$};
\node [dot] at (-1.5,0) (b1) {};  \node  [left] at (b1) {$j$};
\node [dot] at (-1,-1) (b2) {};  
\node [dot] at (1,1) (c1) {};  
\node [dot] at (1.5,0) (c2) {};\node  [above] at (c2) {$k$};
\node [dot] at (1,-1) (c3) {};    
\node [dot] at (2,1) (cc1) {};
\node [dot] at (2.5,0) (cc2) {};\node  [above] at (cc2) {$k'$};
\node [dot] at (2,-1) (cc3) {};
\draw [kernel] (0) to (a);  \draw [bend right=30,KK] (0) to (a);
\draw [kernel] (0) to (b1);
\draw [kernel] (0) to (b2);
\draw [kernel] (0) to (c1);
\draw [kernel] (0) to (c2);
\draw [kernel] (0) to (c3);
\draw [kernel] (c1) to (cc1); \draw [bend right=30,KK] (c1) to (cc1);
\draw [kernel] (c2) to (cc2); \draw [bend right=30,KK] (c2) to (cc2);
\draw [kernel] (c3) to (cc3); \draw [bend right=30,KK] (c3) to (cc3);
\end{tikzpicture}
\qquad
\begin{tikzpicture}[baseline=0]
\node [dot] at (0,0) (0) {};  \node [dot] at (1,0) (1) {};
\node  [below] at (0) {$i$}; \node  [below] at (1) {$i'$};
\node [dot] at (-1,1) (a) {};  
\node [dot] at (-1.5,0) (b1) {};  
\node [dot] at (-1,-1) (b2) {};  
\node [dot] at (2,1) (c1) {};  
\node [dot] at (2.5,0) (c2) {};
\node [dot] at (2,-1) (c3) {};    
\node [dot] at (3,1) (cc1) {};
\node [dot] at (3.5,0) (cc2) {};
\node [dot] at (3,-1) (cc3) {};
\draw [kernel] (0) to (1);
\draw [bend right=30,KK] (0) to (1);
\draw [kernel] (0) to (a);  
\draw [kernel] (0) to (b1);
\draw [kernel] (0) to (b2);
\draw [kernel] (1) to (c1);
\draw [kernel] (1) to (c2);
\draw [kernel] (1) to (c3);
\draw [kernel] (c1) to (cc1); \draw [bend right=30,KK] (c1) to (cc1);
\draw [kernel] (c2) to (cc2); \draw [bend right=30,KK] (c2) to (cc2);
\draw [kernel] (c3) to (cc3); \draw [bend right=30,KK] (c3) to (cc3);
\end{tikzpicture}
\end{equ}
Every connected component $T \subset G_1$ correspondes to a connected component $\bar T$ of $\bar G_1$,
which is a star by construction. Denote by $[i]$ the centre of that star, choosing any of its 
two vertices if it only consists of one edge.
If $[i] = \{i,i'\}\in E(T)\cap \CR_1$, then at least one of $i$ and $i'$ necessarily 
have degree strictly larger than $1$ in $G_1$,
for otherwise $\bar T$ would consist of only one point.
If both have degree strictly larger than $1$ (as in the right hand figure above), 
then we erase the edge $\{i,i'\}$ and obtain two 
connected components, both consisting of stars having at least two points.
In this way, we can reduce ourselves to the case when either $[i] = \{i\}$,
or $[i] = \{i,i'\}$ and $i'$ is of degree $1$ in $G_1$.
In either case, we call $i$ the root of the connected component $T$ and we denote it
by $i_T$.

By construction, the root $i_T$ may connect to three types of edges:
\begin{claim}
\item an edge $\{i_T, i'\}\in \CR$ such that $i'$ has degree $1$ - call it an edge of type $i'$;
\item an edge $\{i_T, j\}\notin \CR$ such that $j$ has degree $1$ - call it an edge of type $j$;
\item an edge $\{i_T, k\}\notin \CR$ such that $k$ has degree $2$, and there exists $k'\in T$ such that $\{k,k'\}\in E(T)\cap \CR$ - call it an edge of type $k$.
\end{claim} 
See the left hand figure in \eqref{e:starFigure} for an example showing each type of edge.
Furthermore, it follows from the construction that $i_T$ is connected to at most one edge of type $i'$
and to at least one edge of type $j$ or $k$.
Lemma~\ref{lem:make-real-star} below then allows us to integrate out all edges of type $k$.
More precisely, if there exist edges of the type $j$ connected to $i_T$,
then we apply the first bound of Lemma~\ref{lem:make-real-star} 
to integrate over the variables corresponding to the vertices $k$ and $k'$ of
all the edges of type $k$ connected to $i_T$. After performing
such an integration, the bound \eqref{e:unindexed} still holds, but with $m$ lowered by $1$
and the graph $G_1$ replaced by the new graph where the vertices $k$ and $k'$,
as well as the edges $\{i_T,k\}$ and $\{k,k'\}$ have been erased.
Since the number of edges is reduced by $2$ and $m$ is lowered by $1$, we should indeed
``use'' one power of $\eps^{\bar \beta}$, as required by Lemma~\ref{lem:make-real-star}.
Note also that the bound $\lambda^{2-\bar\beta}$ appearing in the right hand side of
Lemma~\ref{lem:make-real-star} is consistent with the bound \eqref{e:boundkbark}
we are aiming for.

If on the other hand there is no edge of type $j$ connected to $i_T$,
then we integrate out all edges of type $k$ except for one. If 
there then still remains an edge of type $i'$ we apply the second bound of Lemma~\ref{lem:make-real-star} to integrate the entire connected component.
Again, this preserves the bound \eqref{e:unindexed} provided that we decrease $m$ by $2$ and
remove the entire connected components (now consisting of $3$ edges and $4$ vertices) from $G_1$.

\begin{lemma} \label{lem:make-real-star}
Let $K^{\varphi}(x,y)$ 
be a function that is given by either $\varphi^\lambda_0(y) K(x\to y)$ or  $\varphi^\lambda_0(x) K(y \to x)$.  Then,
\begin{equs}
\eps^{\bar\beta}  \int \J_\eps^{-}(x-y) \Big(|K^\varphi(y,z)|  \J_\eps^{-}(y-z)\Big) \,dydz 
	& \lesssim \lambda^{2-\bar\beta} \;,  \\
\eps^{\bar\beta}  \int \J_\eps^{-}(x-y) \Big(|K^\varphi(w,x)| \J_\eps^{-}(w-x)\Big)
	\Big( |K^\varphi(y,z)| & \J_\eps^{-}(y-z)\Big) 
	 \lesssim \lambda^{4-2\bar\beta} \;.
\end{equs}
where the second integral is over $x,y,z$ and $w$. Both bounds hold with proportionality constants that
are uniform over $\eps,\lambda \in (0,1]$, and the
first bound is furthermore uniform over $x \in \R^3$.
\end{lemma}
\begin{proof}
For the first bound, assume that $K^{\varphi}(y,z)=\varphi^\lambda_0(z) (K(y-z)-K(y))$. We bound the integral involving the two $K$ terms separately. For the term with $K(y-z)$, it suffices to bound $\eps^{\bar\beta} \J_\eps^-(y-z)\lesssim 1$, then integrate over $y$, and finally use the fact that $\int \|x-z\|_\s^{2-\bar\beta}\varphi^\lambda_0(z) \,dz\lesssim \lambda^{2-\bar\beta}$. The latter bound 
is obtained by discussing the two cases $\|x\|_\s<3\lambda$ and $\|x\|_\s\geq 3\lambda$ separately.
For the term with $K(y)$, bound $\eps^{\bar\beta} \J_\eps^-(x-y)\lesssim 1$, then integrate over $y$ and follow the same estimate as above. 
Assume on the other hand that $K^{\varphi}(y,z)=\varphi^\lambda_0(y) (K(z-y)-K(z))$.
For the term with $K(z-y)$, integrating over $z$ yields a factor $\eps^{2-\bar\beta}$ which,
when multiplied by $\eps^{\bar\beta}  \J_\eps^{-}(x-y)$, can be bounded 
by $\|x-y\|_\s^{2-\bar\beta}$. It then remains to integrate over $y$ in the same way as above. 
For the term with  $K(z)$, we bound $\eps^{\bar\beta}  \J_\eps^{-}(x-y)\lesssim 1$
and then integrate over $z$ and $y$ similarly as above.

For the second bound, 
integrating out $y,z$ in the same way as above would result in a non-integrable function
$\J_\eps^-K$ of $w - x$. Instead,
we first integrate out the point in $\{x,w\}$ at which $\varphi^\lambda_0$ is not evaluated
making use of a factor $\eps^{\bar\beta \over 2}$. Since 
the techniques are analogous with that used above we only give the result:
\begin{equ}
\eps^{\bar\beta \over 2} \int \varphi^\lambda_0(x') \Big(\|x'-y\|_\s^{2-{3\over 2}\bar\beta} + \|y\|_\s^{2-{3\over 2}\bar\beta} \Big)
	\Big( |K^\varphi(y,z)|  \J_\eps^{-}(y-z)\Big)  \,dx'\,dy\,dz \;,
\end{equ}
where $x'$ is the variable in  $\{w,x\}$ that is not integrated.
This integral can be bounded in the analogous way as the first bound above.
\end{proof}

In this way we obtain a graph $G_1$ such that \eref{e:boundIepsGraph} still holds, and 
such that every connected component of $G_1$
is a star, which is the same situation as in the case $\Psi_\eps^{kl}$.
For both cases of $\Psi_\eps^{kl}$ and $\Psi_\eps^{k\bar l}$, 
if one of these stars consists of more than $3$ points (i.e.\ has more than $2$ leaves), 
we can perform an additional simplification as follows. 
Denote by $j$ the centre of the star and by $X$ the set of its leaves. 
Among all the distances $\|x_i-x_j\|_\s$ for $i \in X$, 
let $k$ be such that $\|x_k - x_j\|_\s$ is the shortest one, 
and pick an $i \in X \setminus \{k\}$ such that $\{i,k\}\notin \CR$,
which is always possible since one has at least 
two distinct choices for $i$.
We then
use the bound  $\J_\eps^-(x_i - x_j) \lesssim \J_\eps^-(x_i-x_k)$ 
to change the edge $\{i,j\}$ into the edge $\{i,k\}$ and erase the edge $\{j,k\}$
without violating the bound \eqref{e:boundIepsGraph}.
Since in the case of $\Psi_\eps^{k\bar l}$,
$i$ and $k$ necessarily have the same sign, the newly formed edge is
such that  $\{i,k\}\notin \CR$.

The following picture shows an example of this operation,
where each solid line stands for an edge in the star, i.e. a factor $\J_\eps^-$.
\begin{equ}[e:simplifyStar]
\begin{tikzpicture}[baseline=0]
\node at (1,-0.5) [dot] (0) {};   \node at (1,-0.8) {$j$};
\node at (0.3,0.5) [dot] (1) {};
\node at (1,0) [dot] (2) {};   \node at (1,0.3) {$k$};
\node at (1.8,0.7) [dot] (3) {};   \node at (1.8,1) {$i$};
\node at (2.3,0) [dot] (4) {};
\draw [kernel] (0) to (1);
\draw [kernel] (0) to (2);
\draw [kernel] (0) to (3);
\draw [kernel] (0) to (4);
\node at (3.6,0) {$\Rightarrow$};
\node at (5,-0.5) [dot] (0) {};   \node at (5,-0.8) {$j$};
\node at (4.3,0.5) [dot] (1) {};
\node at (5,0) [dot] (2) {};   \node at (5,0.3) {$k$};
\node at (5.8,0.7) [dot] (3) {};   \node at (5.8,1) {$i$};
\node at (6.3,0) [dot] (4) {};
\draw [kernel] (0) to (1);
\draw [kernel] (3) to (2);
\draw [kernel] (0) to (4);
\end{tikzpicture}
\end{equ}
Repeating this operation, we can reduce each star 
to disconnected components, where each component has
either two or three vertices.
Again, the condition that every vertex has degree at least one still holds, so that 
there are still at least $m$ edges left.

Summarising this discussion, we have just demonstrated that one can always build a graph $G_\star$ 
 consisting of disconnected components, where each component
is a star having either two or three vertices, and such that \eqref{e:boundIepsGraph} holds.
Furthermore, $G_\star$ can be chosen in such a way that 
its edges $E_\star = E(G_\star)$ satisfy $E_\star \cap \CR = \emptyset$.
In order to deal with the components with three vertices, we define the function
$T_\eps(x,y;z) \eqdef \J_\eps^-(x-z) \J_\eps^-(y-z)$.
Let $\tau$ be the total number of appearances of the factor $T_\eps$ in \eqref{e:boundIepsGraph}, i.e.\
the number of connected components of the type \tikz[scale=0.3,baseline=0.7] \draw [kernel] (0,0.7) node[dot] {} -- (1.5,0) node[dot] {} -- (3,0.7) node[dot] {}; in $G_\star$. 
Note that $\tau$ is necessarily an even number since the total number of charges is even
and each such component involves $3$ of them.

The total number of edges in the graph $G_\star$ is 
equal to $m + {\tau \over 2}$, so that the prefactor appearing in \eqref{e:boundIepsGraph} is given by
$\eps^{\bar \beta \tau / 2}$. In other words, \eqref{e:boundIepsGraph} contains exactly one factor $\J_\eps^-$ for each
connected component of the type \tikz[scale=0.3,baseline=-3] \draw [kernel] (0,0) node[dot] {} -- (2,0) node[dot] {}; and one factor $\eps^{\bar \beta / 2}T_\eps$ for
each connected component of the type \tikz[scale=0.3,baseline=0.7] \draw [kernel] (0,0.7) node[dot] {} -- (1.5,0) node[dot] {} -- (3,0.7) node[dot] {};.

We now return to the task of actually estimating the full right hand side of \eqref{e:unindexed}.
We can depict this by also drawing a dashed arrow \tikz[scale=0.3,baseline=-3] \draw [KK,->] (0,0) node[dot] {} -- (3,0) node[dot] {}; for every occurrence of $K(e)$, i.e.\ for every edge in $\CR$.
Consider now the graph $\hat G$ whose edges are the union of the ``plain'' edges in $G_\star$ and 
the ``dashed'' edges in $\CR$. 
If $\tau = 0$, then the topology of $\hat G$ is very simple: since the edges of $G_\star$ are disjoint from those of $\CR$ and since both sets of edges form a pairing of the vertex set $M$,
it simply consists of a finite number of cycles which 
alternate between plain and dashed edges, so that 
we are in the situation of \eref{e:int-loops} of Lemma~\ref{lem:int-L} with all the $\alpha_i$
given by $-\bar \beta \in(-4,-2]$. Furthermore, each of these cycles involves at least $4$ 
vertices as required by the definition of $F_\CL$ there, so that the assumptions of Lemma~\ref{lem:int-L} are satisfied
and do yield the required bound.

We therefore now consider all the possible ways in which the factors $T_\eps(x,y;z)$ can 
interplay with the kernels $K$ in the graph $\hat G$. The presence of these factors can either create
connections between cycles or it can terminate them and create ``ends''.
For $i=1,...,6$, denote by $V_i=V_i(x,y,z,\bar x,\bar y,\bar z)$ the 
following functions describing all possible ways of creating a connection, 
where a plain line connecting two variables $x$ and $y$
denotes a factor $(\|y-x\|_\s+\eps)^{-\bar\beta}$, which is an upper bound for $\J_\eps^-(x-y)$, 
and a dashed arrow connecting $x$ to $y$ denotes
a factor $|K(x\to y)|$. We ignore the presence of the test functions $\phi_0^\lambda$ in 
\eqref{e:unindexed} at this stage, but we will restrict ourselves to the situation where the corresponding variables
(i.e.\ the variables located at the tip of a dashed arrow) are of parabolic norm less than $\lambda$.
\begin{center}\def\scl{0.85}
\begin{tikzpicture}  [scale=\scl]
\node at (-1.5,0.5) {$V_1$:};
\node[cloud, cloud puffs=8.7, cloud ignores aspect, minimum width=1.6cm, minimum height=1.4cm,  draw, fill=black!7]  at (0,-1.2) {};
\node[cloud, cloud puffs=8.7, cloud ignores aspect, minimum width=1.6cm, minimum height=1.4cm,  draw, fill=black!7]  at (-1.2,2.1) {};
\node[cloud, cloud puffs=8.7, cloud ignores aspect, minimum width=1.6cm, minimum height=1.4cm,  draw, fill=black!7] at (1.2,2.1) {};
\node at (0,0) [dot] (0) {};  \node [right] at (0) {$z$};
\node at (-0.3,1) [dot] (1) {}; \node [left] at (1) {$x$};
\node at (-0.8,1.8) [dot] (2) {}; \node [left] at (2) {$\bar x$};
\node at (0.3,1) [dot] (3) {}; \node [right] at (3) {$y$};
\node at (0.8,1.8) [dot] (4) {}; \node [right] at (4) {$\bar y$};
\node at (0,-0.8) [dot] (5) {}; \node [right] at (5) {$\bar z$};
\draw [KK,->] (2) to (1);
\draw [KK,->] (4) to (3);
\draw [kernel] (0) to (1);
\draw [kernel] (0) to (3);
\draw [KK,->] (0) to (5);
\node  at (0,-1.4) {$\cdots$};
\node at (-1.3,2.2) {$\cdots$};
\node at (1.3,2.2) {$\cdots$};
\end{tikzpicture}
\quad
\begin{tikzpicture}  [scale=\scl]
\node at (-1.5,0.5) {$V_2$:};
\node[cloud, cloud puffs=8.7, cloud ignores aspect, minimum width=1.6cm, minimum height=1.4cm,  draw, fill=black!7] at (0,-1.2) {};
\node[cloud, cloud puffs=8.7, cloud ignores aspect, minimum width=1.6cm, minimum height=1.4cm,  draw, fill=black!7]  at (-1.2,2.1) {};
\node[cloud, cloud puffs=8.7, cloud ignores aspect, minimum width=1.6cm, minimum height=1.4cm,  draw, fill=black!7] at (1.2,2.1) {};
\node at (0,0) [dot] (0) {};  \node [right] at (0) {$z$};
\node at (-0.3,1) [dot] (1) {}; \node [left] at (1) {$x$};
\node at (-0.8,1.8) [dot] (2) {}; \node [left] at (2) {$\bar x$};
\node at (0.3,1) [dot] (3) {}; \node [right] at (3) {$y$};
\node at (0.8,1.8) [dot] (4) {}; \node [right] at (4) {$\bar y$};
\node at (0,-0.8) [dot] (5) {}; \node [right] at (5) {$\bar z$};
\draw [KK,->] (2) to (1);
\draw [KK,->] (4) to (3);
\draw [kernel] (0) to (1);
\draw [kernel] (0) to (3);
\draw [KK,<-] (0) to (5);
\node  at (0,-1.4) {$\cdots$};
\node at (-1.3,2.2) {$\cdots$};
\node at (1.3,2.2) {$\cdots$};
\end{tikzpicture}
\quad
\begin{tikzpicture} [scale=\scl]
\node at (-1.5,0.5) {$V_3$:};
\node[cloud, cloud puffs=8.7, cloud ignores aspect, minimum width=1.6cm, minimum height=1.4cm,  draw, fill=black!7]  at (0,-1.2) {};
\node[cloud, cloud puffs=8.7, cloud ignores aspect, minimum width=1.6cm, minimum height=1.4cm,  draw, fill=black!7]  at (-1.2,2.1) {};
\node[cloud, cloud puffs=8.7, cloud ignores aspect, minimum width=1.6cm, minimum height=1.4cm, align=center, draw, fill=black!7]  at (1.2,2.1) {};
\node at (0,0) [dot] (0) {}; \node [right] at (0) {$z$};
\node at (-0.3,1) [dot] (1) {}; \node [left] at (1) {$x$};
\node at (-0.8,1.8) [dot] (2) {}; \node [left] at (2) {$\bar x$};
\node at (0.3,1) [graydot] (3) {};\node [right] at (3) {$y$};
\node at (0.8,1.8) [dot] (4) {};\node [right] at (4) {$\bar y$};
\node at (0,-0.8) [dot] (5) {};\node [right] at (5) {$\bar z$};
\draw [KK,->] (2) to (1);
\draw [KK,<-] (4) to (3);
\draw [kernel] (0) to (1);
\draw [kernel] (0) to (3);
\draw [KK,<-] (0) to (5);
\node  at (0,-1.4) {$\cdots$};
\node at (-1.3,2.2) {$\cdots$};
\node at (1.3,2.2) {$\cdots$};
\end{tikzpicture}\\
\begin{tikzpicture} [scale=\scl]
\node at (-1.5,0.5) {$V_4$:};
\node[cloud, cloud puffs=8.7, cloud ignores aspect, minimum width=1.6cm, minimum height=1.4cm,  draw, fill=black!7] (cloud) at (0,-1.2) {};
\node[cloud, cloud puffs=8.7, cloud ignores aspect, minimum width=1.6cm, minimum height=1.4cm,  draw, fill=black!7] (cloud) at (-1.2,2.1) {};
\node[cloud, cloud puffs=8.7, cloud ignores aspect, minimum width=1.6cm, minimum height=1.4cm, draw, fill=black!7] (cloud) at (1.2,2.1) {};
\node at (0,0) [graydot] (0) {}; \node [right] at (0) {$z$};
\node at (-0.3,1) [dot] (1) {}; \node [left] at (1) {$x$};
\node at (-0.8,1.8) [dot] (2) {}; \node [left] at (2) {$\bar x$};
\node at (0.3,1) [graydot] (3) {};\node [right] at (3) {$y$};
\node at (0.8,1.8) [dot] (4) {};\node [right] at (4) {$\bar y$};
\node at (0,-0.8) [dot] (5) {};\node [right] at (5) {$\bar z$};
\draw [KK,->] (2) to (1);
\draw [KK,<-] (4) to (3);
\draw [kernel] (0) to (1);
\draw [kernel] (0) to (3);
\draw [KK,->] (0) to (5);
\node  at (0,-1.4) {$\cdots$};
\node at (-1.3,2.2) {$\cdots$};
\node at (1.3,2.2) {$\cdots$};
\end{tikzpicture}
\quad
\begin{tikzpicture} [scale=\scl]
\node at (-1.5,0.5) {$V_5$:};
\node[cloud, cloud puffs=8.7, cloud ignores aspect, minimum width=1.6cm, minimum height=1.4cm,  draw, fill=black!7] (cloud) at (0,-1.2) {};
\node[cloud, cloud puffs=8.7, cloud ignores aspect, minimum width=1.6cm, minimum height=1.4cm,  draw, fill=black!7] (cloud) at (-1.2,2.1) {};
\node[cloud, cloud puffs=8.7, cloud ignores aspect, minimum width=1.6cm, minimum height=1.4cm, draw, fill=black!7] (cloud) at (1.2,2.1) {};
\node at (0,0) [dot] (0) {}; \node [right] at (0) {$z$};
\node at (-0.3,1) [graydot] (1) {}; \node [left] at (1) {$x$};
\node at (-0.8,1.8) [dot] (2) {}; \node [left] at (2) {$\bar x$};
\node at (0.3,1) [graydot] (3) {};\node [right] at (3) {$y$};
\node at (0.8,1.8) [dot] (4) {};\node [right] at (4) {$\bar y$};
\node at (0,-0.8) [dot] (5) {};\node [right] at (5) {$\bar z$};
\draw [KK,<-] (2) to (1);
\draw [KK,<-] (4) to (3);
\draw [kernel] (0) to (1);
\draw [kernel] (0) to (3);
\draw [KK,<-] (0) to (5);
\node  at (0,-1.4) {$\cdots$};
\node at (-1.3,2.2) {$\cdots$};
\node at (1.3,2.2) {$\cdots$};
\end{tikzpicture}
\quad
\begin{tikzpicture} [scale=\scl]
\node at (-1.5,0.5) {$V_6$:};
\node[cloud, cloud puffs=8.7, cloud ignores aspect, minimum width=1.6cm, minimum height=1.4cm,  draw, fill=black!7] (cloud) at (0,-1.2) {};
\node[cloud, cloud puffs=8.7, cloud ignores aspect, minimum width=1.6cm, minimum height=1.4cm, draw, fill=black!7] (cloud) at (-1.2,2.1) {};
\node[cloud, cloud puffs=8.7, cloud ignores aspect, minimum width=1.6cm, minimum height=1.4cm, draw, fill=black!7] (cloud) at (1.2,2.1) {};
\node at (0,0) [graydot] (0) {}; \node [right] at (0) {$z$};
\node at (-0.3,1) [graydot] (1) {}; \node [left] at (1) {$x$};
\node at (-0.8,1.8) [dot] (2) {}; \node [left] at (2) {$\bar x$};
\node at (0.3,1) [graydot] (3) {};\node [right] at (3) {$y$};
\node at (0.8,1.8) [dot] (4) {};\node [right] at (4) {$\bar y$};
\node at (0,-0.8) [dot] (5) {};\node [right] at (5) {$\bar z$};
\draw [KK,<-] (2) to (1);
\draw [KK,<-] (4) to (3);
\draw [kernel] (0) to (1);
\draw [kernel] (0) to (3);
\draw [KK,->] (0) to (5);
\node  at (0,-1.4) {$\cdots$};
\node at (-1.3,2.2) {$\cdots$};
\node at (1.3,2.2) {$\cdots$};
\end{tikzpicture}
\end{center}
For example, one has 
\begin{equ}
V_1
= |K(\bar x\to x)K(\bar y \to y)K(z \to \bar z)|\, (\|z-x\|_\s+\eps)^{-\bar\beta} (\|z-y\|_\s+\eps)^{-\bar\beta}\;,
\end{equ}
and similarly for the other $V_i$.
Using the same graphical notation, the different possible ways of creating an ``ending'' 
are described by the following functions $E_i$ with $i \in \{1,2,3,4\}$:
\begin{center}
\begin{tikzpicture} 
\node at (-1.8,0.8) {$E_1$:};
\node[cloud, cloud puffs=8.7, cloud ignores aspect, minimum width=1.6cm, minimum height=1.4cm,  draw, fill=black!7] at (-1.2,1.8) {$\cdots$};
\node at (0,0) [dot] (0) {}; \node [left] at (0) {$z$};
\node at (-0.3,1) [dot] (1) {}; \node [left] at (1) {$x$};
\node at (0.3,1) [graydot] (3) {};\node [above] at (3) {$y$};
\node at (-1,1.5) [dot] (w) {}; \node [left] at (w) {$w$};
\draw [KK] (w) to (1);
\draw [KK,->] (3) to (0);
\draw [kernel] (0) to (1);
\draw [kernel] (1) to (3);
\end{tikzpicture}
$\;\;$
\begin{tikzpicture} 
\node at (-1.8,0.8) {$E_2$:};
\node[cloud, cloud puffs=8.7, cloud ignores aspect, minimum width=1.6cm, minimum height=1.4cm,  draw, fill=black!7] at (-1.2,1.8) {$\cdots$};
\node at (0,0) [dot] (0) {}; \node [left] at (0) {$z$};
\node at (-0.3,1) [dot] (1) {}; \node [left] at (1) {$x$};
\node at (0.3,1) [graydot] (3) {};\node [above] at (3) {$y$};
\node at (-1,1.5) [dot] (w) {}; \node [left] at (w) {$w$};
\draw [KK] (w) to (1);
\draw [KK,<-,bend right=60] (0) to (3);
\draw [kernel] (0) to (1);
\draw [kernel] (0) to (3);
\end{tikzpicture}
$\;\;$
\begin{tikzpicture} 
\node at (-1.8,0.8) {$E_3$:};
\node[cloud, cloud puffs=8.7, cloud ignores aspect, minimum width=1.6cm, minimum height=1.4cm,  draw, fill=black!7] at (-1.2,1.8) {$\cdots$};
\node at (0,0) [graydot] (0) {}; \node [left] at (0) {$z$};
\node at (-0.3,1) [graydot] (1) {}; \node [left] at (1) {$x$};
\node at (0.3,1) [dot] (3) {};\node [above] at (3) {$y$};
\node at (-1,1.5) [dot] (w) {}; \node [left] at (w) {$w$};
\draw [KK,<-] (w) to (1);
\draw [KK,->,bend right=60] (0) to (3);
\draw [kernel] (0) to (1);
\draw [kernel] (0) to (3);
\end{tikzpicture}
$\;\;$
\begin{tikzpicture} 
\node at (-1.8,0.8) {$E_4$:};
\node[cloud, cloud puffs=8.7, cloud ignores aspect, minimum width=1.6cm, minimum height=1.4cm,  draw, fill=black!7] at (-1.2,1.8) {$\cdots$};
\node at (0,0) [graydot] (0) {}; \node [left] at (0) {$z$};
\node at (-0.3,1) [dot] (1) {}; \node [above] at (1) {$x$};
\node at (0.3,1) [dot] (3) {};\node [above] at (3) {$y$};
\node at (-1,1.5) [dot] (w) {}; \node [left] at (w) {$w$};
\draw [KK,->] (w) to (1);
\draw [KK,->,bend right=60] (0) to (3);
\draw [kernel] (0) to (1);
\draw [kernel] (0) to (3);
\end{tikzpicture}
\end{center}
These are viewed as functions of $x,y,z$ and $w$. Note that $E_3$ and $E_4$ only
differ by the direction of an arrow between $x$ and $w$, while for $E_1$ and $E_2$ the direction
of that arrow is not important when bounding them.

In order to bound the contributions coming from these factors, we integrate them
over those variables that are depicted by a circle (as opposed to a black dot) in the 
above pictorial representations. Note that these integration variables are never located
at the tip of a dashed arrow, so we do not need to take into account
the presence of the test functions $\phi_0^\lambda$ when we integrate them out.
We further introduce the notation $V^{(x)}$ as a shorthand for the function 
$\int V_1(x,y,z,\bar x,\bar y,\bar z)\,dx$, where we integrated out the $x$ variable,
$V^{(x,y)}$ for the function obtained from $V$ by integrating out both the $x$ and the
$y$ variable, etc.

With this notation, we then have the following bounds:
\begin{lemma}\label{lem:boundVE}
Let $V_i$ and $E_i$ be defined as above and assume that the variables located at the tip
of a dashed arrow are bounded by $\lambda$.
Then, for $\bar\beta\in[2,8/3)$, one has the bound
\begin{equ}
\eps^{{\bar \beta \over 2}} V_1
 \lesssim |K(\bar x \to x)| \, |K(\bar y \to y)| \, \Big(
	\|x-z\|_\s^{-{3\over 2}\bar \beta} + \|y-z\|_\s^{-{3\over 2}\bar \beta} \Big) \,
	|K(z \to \bar z)| \;,
\end{equ}
uniformly over $\eps,\lambda \in (0,1]$,
and $\eps^{{\bar \beta \over 2}} V_2$ is bounded by the same expression with $K(z \to \bar z)$ replaced by $K(\bar z \to  z)$. We furthermore have the bounds
\begin{equs} [2]
\multicol{4}{
\eps^{{\bar \beta \over 2}}   V_3^{(y)}
	\lesssim \lambda^{2-{\bar\beta\over 2}} \, |K(\bar x \to x)| \, 
	 	\|x-z\|_\s^{-\bar \beta} \, 
		|K(\bar z \to z)|  \;, 
} \\
\eps^{{\bar \beta \over 2}}  V_4^{(y,z)} \vee  \eps^{{\bar \beta \over 2}}  V_5^{(y,z)}
 	&\lesssim \lambda^{4-{3\over 2}\bar\beta} \, |K(\bar x \to x)|  \;,  
&\qquad
\eps^{{\bar \beta \over 2}}  V_6^{(x,y,z)} 
	&\lesssim \lambda^{6-{3\over 2}\bar\beta} \;, 
\qquad
\\
\eps^{{\bar \beta \over 2}}  E_1^{(y)} \vee \eps^{{\bar \beta \over 2}}  E_2^{(y)}
	&\lesssim \lambda^{2-\frac{\bar\beta}{2}}
	\|x-z\|_\s^{-\bar\beta} |K(x,w)|\;,
&\qquad
\eps^{{\bar \beta \over 2}}  E_3^{(x,z)} 
	&\lesssim \lambda^{4-{3\over 2}\bar\beta} \;, 
\qquad
\\
\multicol{4}{
 \eps^{{\bar \beta \over 2}}  E_4^{(z)}
	\lesssim \lambda^{2-{\bar\beta\over 2}}
	\Big( \|y-x\|_\s^{-\bar\beta} + \|y\|_\s^{-\bar\beta} \Big) |K(w\to x)| \;.
}
\end{equs}
\end{lemma}
\begin{proof}
In the proof \cite[Lemma~10.14]{Regularity} will be repeatedly applied without 
explicitly mentioning it every time. 
The bound for $V_1$ is then obtained using \eref{e:boundz1} 
and the uniform bound 
\begin{equ}[e:boundJepshalf]
\eps^{{\bar\beta/ 2}} (\|x\|_\s+\eps)^{-\bar \beta}\lesssim 
\|x\|_\s^{-{\bar\beta/ 2}}\;.
\end{equ}
The bound for $V_2$ follows in the same way.

The bound for $V_3$ is obtained by 
using again \eqref{e:boundJepshalf}, then integrating over $y$:
\begin{equ}
\int \|y-z\|_\s^{-{\bar\beta\over 2}} |K(\bar y -y)-K(-y)|\,dy 
\lesssim \|\bar y\|_\s^{2-{\bar\beta\over 2}}
\lesssim \lambda^{2-{\bar\beta\over 2}}\;,
\end{equ}
where the last inequality  uses the fact that $\bar y$ is a variable located at the tip
of a dashed arrow (and therefore eventually arising as an argument of $\phi^\lambda_0$),
so that we assumed $\|\bar y\|_\s \le \lambda$. 
Regarding the bound for $V_4$, 
integrating over $y$ results in a factor 
$\|\bar y - z\|_\s^{2-\bar\beta} + \| z\|_\s^{2-\bar\beta} $.
Then one uses again \eqref{e:boundJepshalf} and
applies \eref{e:boundz1}, integrates over $z$,
and finally observes that 
$4-{3\over 2}\bar\beta >0$ and $x, \bar y,\bar z$ 
all have norms bounded by $\lambda$ by assumption.

To obtain the bound for $V_5$, 
one uses the bound 
\begin{equ}[e:boundbeta4]
\eps^{{\bar\beta / 4}}(\|x-z\|_\s+\eps)^{-\bar \beta} \lesssim
\|x-z\|_\s^{-3\bar \beta /4}\;,
\end{equ}
integrates out $x$ using the fact
that the function $K$ arises as a difference, 
and $2-{3\over 4}\bar\beta \in (0,1)$,
and finally observe that $\bar x,\bar y,z$ are all within a 
distance $\lambda$ from the origin.
Then one treats $y$ in the same way as $x$.

To obtain the bound for $V_6$,
one uses again \eqref{e:boundbeta4},
then use gradient theorem for $K(x\to\bar x)$
to obtain a factor $\|\bar x\|_\s \lesssim \lambda$
times a function of $x$ and $\bar x$ of 
homogeneity $-3$. Then integrate out $x$
and obtain a function of homogeneity
$1-{3\over 4}\bar\beta <0$.  
One then treats $y$ in the same way as $x$,
and apply \eref{e:boundz1} to get 
a function of homogeneity
$2-{3\over 2}\bar\beta <0$.
Finally, we integrate out $z$ using that $K$ arises as a difference,
and obtain a power $4-{3\over 2}\bar\beta \in(0,1]$ of $\lambda$.

To obtain the bound for $E_1$, we simply note that 
if we set 
$\bar y = x$ in $V_3$ divided by $K(\bar x \to x)$, then the resulting function
is equal to $E_1$ after an obvious relabeling of variables,
except that $\bar z\to z$ in $V_3$
while the corresponding arrow between $x,w$ in $E_1$ can be pointing to either direction.
Since the bound we obtained on $V_3^{(y)}$ is independent
of $\bar y$ and its proof did not use
the fact that $z$ is the tip of an arrow, it immediately implies the required bound on $E_1$. 

The bound for $E_2$ can be obtained analogously
as that for $E_1$ by taking $\bar y=z$ now in $V_3$,
and noting that the proof of the bound for $V_3$
did not use the fact that $x$ is the tip of an arrow.
The bound for $E_3$ can be shown by setting $\bar z=x$ in $V_4$.


Regarding the  bound for $E_4$,
note that one has 
\begin{equ}
|K(y -z)-K(-z)| \lesssim \|y\|_\s^{2-{\bar\beta\over 2}-\delta}
\Big(\|y-z\|_\s^{{\bar\beta\over 2}-4+\delta}+\|z\|_\s^{{\bar\beta\over 2}-4+\delta}\Big)
\end{equ}
for any small $\delta>0$.
The integration over $z$ involving the first term above
is performed by applying \eqref{e:boundJepshalf} to $y-z$ to get a factor $\|y-z\|_\s^{-{\bar\beta/ 2}}$, followed by a convolution.
Regarding the second term above, apply \eqref{e:boundJepshalf} to $x-z$ to get a factor $\|x-z\|_\s^{-{\bar\beta/ 2}}$,
then bound $\|x-z\|_\s^{-{\bar\beta/ 2}} \|z\|_\s^{{\bar\beta\over 2}-4+\delta}
 \lesssim \|x-z\|_\s^{-4+\delta} + \|z\|_\s^{-4+\delta} $, and finally integrate over $z$.
Noting that $\|y\|_\s \lesssim \lambda$,
we obtain the desired bound.
%
%
\end{proof}

\begin{remark} \label{rem:homo-VE}
The bounds obtained in Lemma~\ref{lem:boundVE} all preserve the natural homogeneities associated
to each of the expressions appearing there in the following way. 
The natural homogeneity associated to $\eps^{\bar \beta/2}V_i$ is $-6-3\bar \beta/2$, since
$K$ has a singularity of order $-2$ at the origin. Furthermore, the scaling dimension of
parabolic space-time is $4$, so that each integration should increase the homogeneity by $4$.
For example, $\eps^{\bar \beta/2}V_4^{(y,z)}$ then has natural homogeneity $2-3\bar \beta/2$,
which is also the case for $\lambda^{4-3\bar \beta/2} K$.
\end{remark}

For each of the $V_i$, the bound of Lemma~\ref{lem:boundVE} greatly simplifies the dependency structure of
the resulting integrand. In the case of $V_1$--$V_3$, the ``triple junction'' is replaced by 
a ``double junction'' and an ``endpoint'', while it is replaced by three ``endpoints''
in the case of $V_4$--$V_6$. 
After applying the bounds of Lemma~\ref{lem:boundVE} to each occurrence of $T_\eps$,
one may obtain ``singletons'', i.e., a factor of the type $\int\varphi^\lambda_0(x)\,dx$.
This happens for instance in the  situation where the oriented edge $x\to \bar x$ of one instance of $V_6$ is the same 
as the oriented edge $\bar x \to x$ of one instance of $V_4$.
In this case, the bounds in Lemma~\ref{lem:boundVE}
yield a factor $\int\varphi^\lambda_0(y)\,dy$, where $y$ is the integration variable depicted by the vertex
located at the end of that oriented edge.
Since $\varphi^\lambda_0$ integrates to a constant independent of $\lambda$, such ``singletons'' can simply be discarded.

As a consequence, we are left with only cycles and chains
consisting of functions with known homogeneities and $\varphi^\lambda_0\cdot K$'s in an 
alternative way (one $\varphi^\lambda_0(x)K(y\to x)$ followed by such a homogeneous function, 
then followed by another $\varphi^\lambda_0\cdot K$ etc.)
Here, a function in a cycle, or in a chain but not at the two ends of the chain, can be one 
of the following three functions
\begin{equ}
\varphi^\lambda_0(x)K(y\to x),  \qquad
\|x-y\|_\s^{-\bar\beta},  \qquad
\|x-y\|_\s^{-{3\over 2}\bar\beta}. 
\end{equ}
A function at an end of a chain can be one of the two functions
\begin{equ}
\varphi^\lambda_0(x)\,K(y\to x), \qquad
 \varphi^\lambda_0(x) \, \|x-y\|_\s^{-\bar \beta},
\end{equ}
where, in both cases, the variable $x$ is the one that terminates the chain.
Note that the bound for $E_4$ gives a term
$\lambda^{2-{\bar\beta\over 2}}
	 \|y\|_\s^{-\bar\beta}  |K(w\to x)|$,
and since there is a test function $\varphi^\lambda_0(y)$,
we simply integrate over $y$ and obtain an end of chain of the first type.

Let us recapitulate now the situation so far. Recall that our aim is to prove 
that the bound \eref{e:boundkbark} holds, where the right hand side
is given by $\lambda$ to the power $(2-\bar\beta)\,m$. The left hand side on the other hand
is given by \eref{e:unindexed}, which is also naturally associated with the homogeneity
$(2-\bar\beta)\,m$, 
provided that one associates homogeneity $1$ to each power of $\eps$,
homogeneity $-2$ to each factor $K$, 
$4$ to each space-time integration variable,
$-4$ to each factor $\varphi^\lambda_0$,
and $\bar\beta$ to each factor $\J_\eps$, noting that 
the total homogeneity contributed from the product of factors $\J_\eps$
is $-{\bar \beta m\over 2} \left(k +l\right)$.

All of the subsequent simplifications (applying the procedure in Section~\ref{sec:linear};
applying the bound $\eps^{\bar \beta}\J_\eps^- \lesssim 1$, applying 
Lemma~\ref{lem:make-real-star}, applying the bound \eqref{e:simplifyStar}, and finally
applying Lemma~\ref{lem:boundVE})
retain this homogeneity.
At this point, as a consequence of the right hand sides of the bounds appearing
in Lemma~\ref{lem:boundVE}, our bound does not contain any factor $\eps$ anymore.
Summarising, the right hand side of 
\eref{e:unindexed} is bounded by a sum such that each summand 
is of the type $\lambda^\gamma$ (for some $\gamma \ge 0$), multiplied by 
a product of terms that have precisely the form
of the left hand sides of \eqref{e:int-loops} and / or \eqref{e:int-chains}.
The sum of the natural homogeneities (counted in the same way as above) 
of these factors is precisely equal to $\lambda^{(2-\bar\beta)m-\gamma}$, so that
the claim follows if we can guarantee that the assumptions of 
Lemma~\ref{lem:int-L} are satisfied for each factor.
This is because the powers $\mathfrak h(\CL)$, $\mathfrak h(\CC)$ of $\lambda$ 
appearing in Lemma~\ref{lem:int-L} are indeed equal to the 
natural homogeneities associated with the corresponding integrals counted in the same way as above.

Since we are considering the regime $\bar \beta \in [2,{8\over 3})$, we have in particular
$2 \le \bar \beta < {3\bar \beta \over 2} < 4$, which shows that the exponents $\alpha_i$
appearing in the formulation of Lemma~\ref{lem:int-L} do indeed belong to $(-4,-2]$ as required.
Also, each cycle resulting from the formation of ``double junction"
after applying Lemma~\ref{lem:boundVE}
to $V_1,V_2,V_3$ obviously has at least $4$ points, so that
the assumptions of Lemma~\ref{lem:int-L} are indeed satisfied. 
This immediately yields the required bound \eref{e:boundkbark}
with $\kappa = 0$.
To conclude, we note that just as in the bound for $\Psi_\eps^{k\bar k}$
for $k>1$,
one can gain a factor $\eps^\delta$ for a sufficiently small $\delta>0$
by ``pretending'' that the homogeneity of $\J_\eps$ is slightly worse than
what it really is,
so that the required moments of $\Psi_\eps^{k\bar l}$ and $\Psi_\eps^{kl}$ 
actually converge to zero as $\eps\to 0$. The same argument also covers the 
borderline case $\bar \beta = 2$.
\end{proof}

\bibliographystyle{./Martin}
\bibliography{./refs}

\end{document}